\setlist[enumerate,2]{label={(\theenumi.\theenumii)},ref={(\theenumi.\theenumii)},leftmargin=1cm}
\setlist[enumerate,1]{label={(\theenumi)},ref={(\theenumi)},leftmargin=1cm}
\tikzset{>=latex'}
\def\restrict#1{\raise-.5ex\hbox{\ensuremath|}_{#1}}
\crefname{enumi}{}{}
\crefname{enumii}{}{}
\def\csname ver@etex.sty\endcsname{3000/12/31}
 \def\author@andify{
 \nxandlist {\unskip{\kern.3cm} \penalty-2}
 {\unskip {\kern.3cm} \penalty-2}
 {\unskip {\kern.3cm} \penalty-2}}
\newcommand{\orcid}[1]{\unskip {} \raisebox{.4ex}{\href{https://orcid.org/#1}{\resizebox{.25cm}{.25cm}{\includegraphics{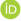}}}}}
\definecolor{citegreen}{rgb}{0,0.3,0}
\definecolor{refred}{rgb}{0.5,0,0}
\def\mathring#1{\accentset{\circ}{#1}}
\let\oldemail\email
\let\email\relax
\def\email#1{\oldemail{\href{mailto:#1}{\textcolor{black}{#1}}}}
\title[Fine properties of nonlinear potentials]{Fine properties of nonlinear potentials and a unified perspective on monotonicity formulas}
\author[L.~Benatti]{Luca Benatti\orcid{0000-0002-4685-7443}}
\address{L.~Benatti, Universit\`a di Pisa,
Largo Bruno Pontecorvo 5, 56127 Pisa, Italy}
\email{luca.benatti@dm.unipi.it}
\author[A.~Pluda]{Alessandra Pluda\orcid{0000-0003-4714-4119}}
\address{A.~Pluda, Universit\`a di Pisa,
Largo Bruno Pontecorvo 5, 56127 Pisa, Italy}
\email{alessandra.pluda@unipi.it}
\author[M.~Pozzetta]{Marco Pozzetta\orcid{0000-0002-2757-0826}}
\address{M.~Pozzetta (Corresponding Author), Politecnico di Milano, Via Bonardi 9, 20133 Milano, Italy}
\email{marco.pozzetta@polimi.it}
\renewcommand{\ncapa}{\mathfrak{c}}
\newcommand{\MF}{\mathscr{F}}
\newcommand{\IF}{\mathscr{G}}
\newcommand{\SQ}{\mathscr{Q}}
\newcommand{\Err}{\theta}
\newcommand{\sml}[1]{\scaleobj{.9}{(#1)}}
\begin{document}
 
\begin{abstract}
We rigorously show that a large family of monotone quantities along the weak inverse mean curvature flow is the limit case of the corresponding ones along the level sets of $p$-capacitary potentials. Such monotone quantities include  Willmore and Minkowski-type functionals on Riemannian manifolds with nonnegative Ricci curvature. In $3$-dimensional manifolds with nonnegative scalar curvature, we also recover the monotonicity of the Hawking mass and its nonlinear potential theoretic counterparts. This unified view is built on a refined analysis of $p$-capacitary potentials. We prove that they strongly converge in $W^{1,q}_{\loc}$ as $p\to 1^+$ to the inverse mean curvature flow and their level sets are curvature varifolds. Finally, we also deduce a Gauss--Bonnet-type theorem for level sets of $p$-capacitary potentials.
\end{abstract}  
\maketitle

\noindent MSC (2020): 
53C21, 
53E10, 
38C35, 
49J45. 

\smallskip

\noindent \underline{\smash{Keywords}}: monotonicity formulas, geometric inequalities, nonlinear potential theory, inverse mean curvature flow, curvature varifolds.

\section{Introduction}

The inverse mean curvature flow (IMCF for short) is a geometric flow in which the speed of the evolved hypersurfaces is the inverse of the mean curvature. Its weak formulation is given by the level sets of a proper locally Lipschitz solution $w_1$ of the following problem
    \begin{equation}\label{1-Laplaciano}
        \begin{cases}
            \div\left( \dfrac{\nabla w_1}{\abs{\nabla w_1}}\right)& =& \abs{\nabla w_1 } &\text{on $ M \smallsetminus \Omega $,}\\
         w_1&=&0& \text{on $\partial \Omega$.}
        \end{cases}
    \end{equation}
    Here, $\Omega$ is a bounded domain in a complete noncompact Riemannian manifold $(M,g)$. The solution to \cref{1-Laplaciano} should be understood in the nonstandard variational sense introduced by Huisken and Ilmanen \cite{huisken_inversemeancurvatureflow_2001}. In this groundbreaking paper, the monotonicity of the Hawking mass along the weak IMCF is the cornerstone of the proof of the Riemannian Penrose inequality. Although such monotonicity was well-known along the smooth IMCF \cite{geroch_energyextraction_1973}, the major issues in \cite{huisken_inversemeancurvatureflow_2001} are due to the mild regularity of solutions to \cref{1-Laplaciano} and the presence of fattening of its level sets.

Geometric inequalities proved through monotonicity formulas can be found in the seemingly distinct framework of linear potential theory. An early example of this approach is Colding's breakthrough \cite{colding_newmonotonicityformulasricci_2012}. Here, monotone quantities along the level sets of harmonic functions are used to estimate the distance between manifolds with nonnegative Ricci curvature and the spaces of cones. Later on, in \cite{colding_uniquenesstangentconeseinstein_2014}, both the monotonicity and the convergence rate of a suitably defined quantity imply the uniqueness of smooth tangent cones at infinity in Ricci-flat manifolds. In \cite{agostiniani_riemannianaspectspotentialtheory_2015}, a sphere theorem is proved for subsets of $\R^n$. A sharp Willmore-type inequality for bounded subsets of Riemannian manifolds with nonnegative Ricci curvature is established in \cite{agostiniani_monotonicityformulaspotentialtheory_2020, agostiniani_sharpgeometricinequalitiesclosed_2020}. This approach has also been effectively applied in $3$-dimensional manifolds with nonnegative scalar curvature. For instance, in \cite{munteanu_comparisontheorems3dmanifolds_2023}, a comparison theorem regarding the growth of Green’s functions is proved. In the same context, \cite{agostiniani_greenfunctionproofpositive_2024} provides a new proof of the Positive Mass theorem \cite{schoen_proofpositivemassconjecture_1979}, using monotonicity along Green function level sets. In \cite{Miao_2023}, the author established a lower bound of the mass-capacity ratio in terms of the Willmore functional at the boundary.

Moser \cite{moser_inversemeancurvatureflow_2007} (and the subsequent \cite{moser_inversemeancurvatureflow_2008,kotschwar_localgradientestimatesharmonic_2009,mari_flowlaplaceapproximationnew_2022}) has drawn the link between the two worlds of IMCF and harmonic potential. 
Taking $p \in [1,2]$, proper solutions $w_p$ to the family of problems involving the $p$-Laplace equation
    \begin{equation}\label{p-Laplaciano}
    \begin{cases}
        \div \left(\dfrac{\nabla w_p}{\abs{\nabla w_p}^{2-p}}\right)& =& \abs{\nabla w_p}^p &\text{on $M\smallsetminus \Omega $,}\\
     w_p&=&0& \text{on $\partial \Omega$,}
    \end{cases}
    \end{equation}
serve as an interpolation between the harmonic potential of $\Omega$ (which is given by $\ee^{-w_2}$) and the weak IMCF (which corresponds to the case $p=1$). More precisely, it can be proved that solutions to \cref{p-Laplaciano} converge to the weak IMCF locally uniformly as $p\to1^+$. 

As for $p=1$ and $p=2$, it is possible to build monotone quantities along the level sets of solutions to \cref{p-Laplaciano} for $p\in (1,2)$, and in turn to prove geometric and functional inequalities. For instance, a Minkowski-type inequality is provided in \cite{fogagnolo_geometricaspectscapacitarypotentials_2019,agostiniani_minkowskiinequalitiesnonlinearpotential_2022,benatti_minkowskiinequalitycompleteriemannian_2022} in Riemannian manifolds with nonnegative Ricci curvature. In $3$-dimensional Riemannian manifold with nonnegative scalar curvature, one can find proofs of the Riemannian Penrose inequality via nonnlinear potential theory \cite{agostiniani_riemannianpenroseinequalitynonlinear_2022}, its $p$-capacitary version \cite{xia_newmonotonicitycapacitaryfunctions_2024}, and also the growth comparison for the $p$-Green function \cite{chan_monotonicitygreenfunctions_2022}. In \cite{hirsch_monotonequantitiesharmonicfunctions_2024}, the results in \cite{Miao_2023} were extended to the case $p\in(1,2)$. 

Geometric and functional inequalities derived using the weak IMCF can be obtained as limits of their counterparts in nonlinear potential theory. This suggests that monotonicity formulas corresponding to the two approaches should be similarly related. Although this is formally true, it has never been rigorously proved in the literature. Only the paper by Moser \cite{moser_gerochmonotonicity_2015}  moves in this direction, attempting to derive the Huisken--Ilmanen's monotonicity for the Hawking mass using solutions to \cref{p-Laplaciano}. However, the corresponding monotonicity formulas for the solutions to \cref{p-Laplaciano} were only introduced subsequently in \cite{agostiniani_riemannianpenroseinequalitynonlinear_2022}.

In this paper, we propose a unified view of all these monotonicity formulas. Our first result can be stated as follows.

 \begin{theorem}[Monotonicity formulas]\label{main}
     Let $(M,g)$ be a complete noncompact Riemannian manifold of dimension $n\ge 3$.
     Let $\Omega \subset M$ be a closed bounded set with $\CS^{1,1}$-boundary. Suppose that $\Omega$ admits a proper solution $w_p$ to \cref{p-Laplaciano} for $p\in[1,2]$ and let $\Omega^{\sml{p}}_t \coloneqq \set{w_p \leq t}$.
     Then, the following statements hold:
     \begin{enumerate}
         \item almost every $\partial \Omega^{\sml{p}}_t$ is a curvature varifold with square integrable second fundamental form;
         \item for every $\alpha \geq (n-p)/(n-1)$ the function 
        \begin{equation}\label{funz-monotoni}
        \begin{split}
        \MF_{p}(t) \coloneqq\begin{multlined}[t][.6\textwidth]
            \ee^{\left(\frac{\alpha}{n-p} - 1\right)t} \int_{\partial \Omega^{\sml{p}}_t}\abs{\nabla w_p }^{\alpha+p-2} \left(\abs{\nabla w_p}\left(\frac{n-1}{n-p}- \frac{1}{\alpha}\right) -\H\right) \dif \Hff^{n-1} \\-\int_0^t \ee^{\left(\frac{\alpha}{n-p} - 1\right)s}\int_{\partial \Omega^{\sml{p}}_s} \abs{\nabla w_p}^{\alpha+p-3} \Ric(\nu,\nu)\dif \Hff^{n-1} \dif s
            \end{multlined}
            \end{split}
        \end{equation}
            is essentially monotone nondecreasing; and
     \item for every  nonnnegative function $\psi \in \Lip_c([0,\sup_M w_p))$ we have
        \begin{equation}\label{derivata_F_p}
            - \int_0^{+\infty} \psi' (t)  \MF_{p}(t) \dif t - \psi(0) \MF_{p}(0)\geq   \int_0^{+\infty} \psi(t) \ee^{\left(\frac{\alpha}{n-p} - 1 \right)t} \int_{\partial \Omega^{\sml{p}}_t} \abs{\nabla w_p}^{\alpha+p-3} \SQ_{p}\dif \Hff^{n-1} \dif t,
        \end{equation}
        where 
        \begin{equation}\label{Qdef-intro}
        \SQ_{p}\coloneqq 
            \left[\alpha-(2-p)\right] \frac{\abs{\nabla^\top \abs{\nabla w_p}}^2}{\abs{\nabla w_p}^2} + \abs{ \mathring{\h}}^2 + \frac{1}{p-1} \left[ \alpha- \frac{n-p}{n-1}\right] \left[ {\H} - \frac{n-1}{n-p}\abs{ \nabla w_p}\right]^2.
        \end{equation}
     \end{enumerate}
 \end{theorem}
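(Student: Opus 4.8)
The plan is to establish the three claims by working first on the smooth level sets of $w_p$ and then propagating the estimates through the coarea formula, using the a priori regularity available for $p$-capacitary potentials. To begin, recall that by standard regularity theory for the $p$-Laplacian (away from the critical set $\{\nabla w_p = 0\}$), $w_p$ is smooth and, by Sard's theorem applied to $w_p$, almost every level set $\partial\Omega^{\sml{p}}_t$ is a smooth hypersurface on which $\nabla w_p \neq 0$. On such a smooth regular level set I would compute the first variation of the quantity inside $\MF_p(t)$ along the flow generated by the normal velocity $\nu = \nabla w_p/|\nabla w_p|$. The key computational identity is the Bochner-type formula applied to $w_p$: differentiating the quantity $|\nabla w_p|^{\alpha+p-2}(c\,|\nabla w_p| - \H)$ along the level sets, using the equation $\div(|\nabla w_p|^{p-2}\nabla w_p) = |\nabla w_p|^p$ to express $\Delta w_p$ and the evolution of $\H$ under the level-set flow, and invoking Gauss's equation to trade ambient curvature for intrinsic quantities. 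After rearranging, the boundary term's derivative should produce exactly $-\ee^{(\frac{\alpha}{n-p}-1)t}\int_{\partial\Omega^{\sml{p}}_t} |\nabla w_p|^{\alpha+p-3}\Ric(\nu,\nu)\dif\Hff^{n-1}$ plus a nonnegative bulk term governed by $\SQ_p$; adding the integrated Ricci term that appears in the definition of $\MF_p(t)$ then cancels the Ricci contribution, yielding (3) on the set of regular times, and hence (2) by integration. The precise algebra producing the three nonnegative squares in $\SQ_{p}$ — the tangential-gradient term with coefficient $\alpha-(2-p)$, the traceless second fundamental form $|\mathring{\h}|^2$, and the umbilicity-type term with coefficient $\frac{1}{p-1}[\alpha - \frac{n-p}{n-1}]$ — is the heart of the matter: one completes the square in the Hessian of $w_p$ decomposed into its $\nu\nu$, mixed, and tangential blocks, exactly as in the $p=2$ Agostiniani--Fogagnolo--Mazzieri computation and its extensions, and the positivity of the coefficients under the hypothesis $\alpha \geq (n-1)/(n-p)$ is then immediate.

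The genuinely delicate point, and the one I expect to be the main obstacle, is passing from the smooth regular level sets to \emph{all} level sets, i.e.\ upgrading the pointwise/smooth computation to the distributional inequality (3) valid for \emph{every} nonnegative $\psi \in \Lip_c$, together with claim (1) that a.e.\ level set is a curvature varifold with $L^2$ second fundamental form. Here one cannot simply differentiate $\MF_p$ classically because of the critical set of $w_p$ and possible lack of smoothness of individual level sets. The strategy I would follow is to regularize: replace $|\nabla w_p|$ by $(|\nabla w_p|^2 + \varepsilon)^{1/2}$, perform the computation on the now-everywhere-smooth approximate problem, obtain a clean integral identity with an error term $\Err_\varepsilon$, and show $\Err_\varepsilon \to 0$ as $\varepsilon \to 0^+$ using the integrability of $|\nabla w_p|$ and of the second fundamental form that one has independently established. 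This last ingredient — square-integrability of $\h$ on a.e.\ level set — I would extract from the monotonicity argument itself run in integrated form: the inequality forces $\int_0^T \int_{\partial\Omega^{\sml{p}}_s} |\nabla w_p|^{\alpha+p-3}|\mathring\h|^2 \dif\Hff^{n-1}\dif s < \infty$, which combined with the umbilicity term controls $\int \H^2$ and hence $\int |\h|^2$ for a.e.\ $s$; the varifold structure then follows from the standard fact that a hypersurface with $L^2$ weak mean curvature and a uniform density bound is a curvature varifold in the sense of Hutchinson, using the coarea-based definition of the level-set varifolds and the first-variation identity derived above.

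Concretely, the steps in order are: \emph{(i)} fix $\alpha \geq (n-1)/(n-p)$ and verify all three coefficients in $\SQ_p$ are nonnegative; \emph{(ii)} on the $\varepsilon$-regularized problem, compute $\frac{\dif}{\dif t}$ of the regularized boundary integrand via Bochner plus the PDE plus Gauss equation, completing the square to isolate $\SQ_p^{(\varepsilon)} \geq 0$; \emph{(iii)} integrate against $\psi$, integrate by parts in $t$, and identify the boundary contributions at $t=0$ and $t \to \sup_M w_p$, noting the latter vanishes by compact support of $\psi$; \emph{(iv)} pass $\varepsilon \to 0^+$, controlling the error using the coarea formula and the a priori $W^{1,p}$ and second-order estimates for $w_p$; \emph{(v)} from the resulting inequality read off the $L^2$ bound on $\h$ for a.e.\ $t$, deduce (1), then (2) from (3) by a standard measure-theoretic argument (a function whose distributional derivative is bounded below by an $L^1$ density, once the singular part is shown to have the right sign, is essentially monotone). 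The interplay in step \emph{(iv)}–\emph{(v)} — where the regularity needed to justify the limit is itself a consequence of the inequality being proved — is the subtle circularity one must break, presumably by a bootstrap: first a weak form of the identity with a remainder, then the $L^2$ bound, then the clean identity.
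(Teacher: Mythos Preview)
Your proposal has two genuine gaps, one technical and one structural.

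First, the technical one: your claim that ``by Sard's theorem applied to $w_p$, almost every level set $\partial\Omega^{\sml{p}}_t$ is a smooth hypersurface'' is false for $p\in(1,2)$. The function $w_p$ is only $\CS^{1,\beta}$, so Sard's theorem does not apply; the set of $t$'s for which $\partial\Omega^{\sml{p}}_t$ fails to be smooth can have positive measure. The paper is explicit about this. What one does have is that $\partial\Omega^{\sml{p}}_t$ is smooth \emph{outside the critical set}, and $\Hff^{n-1}(\partial\Omega^{\sml{p}}_t\cap\Crit(w_p))=0$ for a.e.\ $t$. This forces you to work from the start with the vector field $W$ cut off by $\chi_\delta(|\nabla w_p|)$ near the critical set, and the passage $\delta\to 0$ requires a nontrivial argument: the paper isolates a boundary term $N(s)=\int_{\{|\nabla w_p|=s\}}\psi(w_p)\ip{Y|\nabla|\nabla w_p|}/|\nabla|\nabla w_p||$ and shows $N(s)\to 0$ as $s\to 0^+$ via a differential inequality for $N$. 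Your $\varepsilon$-regularization idea is used in the paper, but only to establish the curvature varifold structure (item (1)) via compactness of curvature varifolds with bounded $L^2$ second fundamental form; the monotonicity itself is proved directly on $w_p$, not by passing a monotonicity-along-$w^\varepsilon_p$ to the limit.

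Second, and more seriously: you give no indication that the case $p=1$ requires a completely separate argument. For $p=1$, $w_1$ is merely locally Lipschitz, there is no $\varepsilon$-regularized equation in the same sense, and the level sets can have singular sets and fatten. The paper proves the $p=1$ case by passing the $p>1$ monotonicity formula to the limit as $p\to 1^+$. This is far from automatic: it requires (a) upgrading the known locally uniform convergence $w_p\to w_1$ to \emph{strong} convergence $\nabla w_p\to\nabla w_1$ in $L^q_{\rm loc}$, which the paper deduces from the monotonicity formula itself (the term $\tfrac{1}{p-1}(\H-\tfrac{n-1}{n-p}|\nabla w_p|)^2$ in $\SQ_p$ forces $\H^{\sml{p}}-|\nabla w_p|\to 0$ in $L^2$, and varifold convergence of level sets then identifies the limit of $\nabla w_p$); (b) varifold convergence of $\partial\Omega^{\sml{p}}_t\to\partial\Omega^{\sml{1}}_t$ via uniform $L^2$ bounds on $\h^{\sml{p}}$; and (c) lower semicontinuity of each term in $\SQ_p$ under this convergence, including a delicate identification of the weak limit of $\nabla^\top\log|\nabla w_p|$ as $\nabla^\top\log\H^{\sml{1}}$. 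None of this is in your outline, and it constitutes roughly half the paper.
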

 
In the above statement, $\H$, $\mathring{\h}$ and $\nu$ respectively are the mean curvature, the traceless second fundamental form and the unit normal vector field of $\partial \Omega^{\sml{p}}_t$, while $\nabla^\top$ is the tangential gradient along $\partial \Omega^{\sml{p}}_t$. For $p=1$, the last term in $\SQ_{p}$ is understood to be zero. For $p>1$, we further get that $\MF_p$ belongs to $W^{1,1}_{\loc}$. The precise statements are given in \cref{thm:monotonicity-NPT} and \cref{thm:monotonicty_formula_IMCF}.

The function $\MF_p$ presents similarities with the expressions in \cite{agostiniani_sharpgeometricinequalitiesclosed_2020,agostiniani_riemannianpenroseinequalitynonlinear_2022,benatti_minkowskiinequalitycompleteriemannian_2022, hirsch_monotonequantitiesharmonicfunctions_2024,xia_newmonotonicitycapacitaryfunctions_2024,huisken_inversemeancurvatureflow_2001}. However, each of these papers assumes geometric or dimensional restrictions. Here, we avoid such hypotheses by incorporating in $\MF_p$ an additional term involving the Ricci tensor. Nevertheless, all the aforementioned results can be obtained by tailoring \cref{derivata_F_p} to each specific setting. Taking for example $p>1$ and $(M,g)$ with nonnegative Ricci curvature, the last integral in \cref{funz-monotoni} is manifestly nonpositive and one recovers the quantity appearing in \cite{benatti_minkowskiinequalitycompleteriemannian_2022}. This link is also technical. The proof of the monotonicity for $p>1$ (\cref{thm:monotonicity-NPT}) follows very closely the proof of \cite[Theorem 3.1]{benatti_minkowskiinequalitycompleteriemannian_2022}. As in that paper, we only exploit the properties of solutions to \cref{p-Laplaciano}. Differently from \cite{agostiniani_riemannianpenroseinequalitynonlinear_2022,xia_newmonotonicitycapacitaryfunctions_2024}, we do not exploit the quasi-monotonicity along the solution of the $\varepsilon$-regularized equation (see \cref{eq:eps_moser_potential} below).

In the proof of \cref{main}, $\varepsilon$-regularized approximations are employed exclusively for their original purpose of proving regularity. Indeed, this family of functions nicely converges to $w_p$ as $\varepsilon \to 0^+$, almost every of their level sets is smooth with a uniform $L^2$ bound for its second fundamental form. These properties imply that almost every $\partial \Omega^{\sml{p}}_t$ admits a notion of second fundamental form. In other words, almost every level set is curvature varifold. For the weak IMCF, such property already appears in some form in \cite{huisken_inversemeancurvatureflow_2001}. Here, the authors exploit the regularity of the level sets $\partial \Omega^{\sml{1}}_t$ of a weak IMCF to define (weak) mean curvature and second fundamental form. Indeed, $\partial \Omega^{\sml{1}}_t$ is of class $\CS^{1,\beta}$ (in dimension up to $7$) despite the solution to \cref{1-Laplaciano} is merely Lipschitz. For $p>1$, solutions to \cref{p-Laplaciano} are known to be of class $\CS^{1,\beta}_{\loc}$ and smooth out of the critical set. Although the function has this additional regularity, it is not known whether this guarantees the $\CS^{1}$ regularity of its level sets. The only exception is the case $p=2$ where almost every level set is smooth by Sard's theorem. For $p \in (1,2)$, $\partial \Omega^{\sml{p}}_t$ is smooth outside the critical set, which is only $\Hff^{n-1}$-negligible for almost every $t$. This information is still not enough to globally define the notions of either the second fundamental or the mean curvature of the level sets. In \cite{agostiniani_minkowskiinequalitiesnonlinearpotential_2022,benatti_minkowskiinequalitycompleteriemannian_2022,benatti_nonlinearisocapacitaryconceptsmass_2023a,xia_newmonotonicitycapacitaryfunctions_2024}, the lack of these notions does not pose a real obstacle. The mean curvature (and the second fundamental form) is replaced with the analytic expression that one can derive from the equation, which is
\begin{equation}\label{eq:curvatura_p_pot}
    \H =  \abs{\nabla w_p } - (p-1) \frac{ \ip{\nabla \abs{\nabla w_p } |\nabla w_p }}{\abs{\nabla w_p}^2 }
\end{equation}
at every point where $w_p$ is smooth. As a consequence of our result, \cref{eq:curvatura_p_pot} is an identity between two \textit{a priori} distinct quantities rather than a definition: the generalized mean curvature of the varifold $\partial \Omega^{\sml{p}}_t$ is almost everywhere equivalent to the expression on the right-hand side of \cref{eq:curvatura_p_pot}. Through this lens, the expressions of $\MF_p$ in \cref{funz-monotoni} and $\SQ_p$ in \cref{Qdef-intro} are to be understood in this new, yet equivalent, geometric sense.

This point of view is not merely superficial. The dual perspective in \cref{eq:curvatura_p_pot} helps us in studying the behavior of \cref{derivata_F_p} in the limit as $p\to 1^+$, which is how we ultimately prove \cref{thm:monotonicty_formula_IMCF}. Following this path clearly requires a deeper analysis of the relation between the solutions to \cref{p-Laplaciano} and \cref{1-Laplaciano}, as they shape the family of quantities in \cref{funz-monotoni}. Indeed, only the local uniform convergence $w_p \to w_1$ is addressed in the literature and even the limit behavior of gradient in the expression of $\MF_p$ is unknown. To make matters worse, $\SQ_p$ also contains terms with second-order derivatives of $w_p$. Monotonicity formulas \cref{derivata_F_p} help us to strengthen the convergence of $w_p\to w_1$ as $p\to 1^+$. Indeed, the boundedness of $\MF_p$ implies that all terms appearing in the expression of $\SQ_p$ are controlled on almost every level set of $w_p$. Consequently, since the traceless second fundamental form appears in the expression \cref{Qdef-intro}, its $L^2$ norm is controlled on the same level sets. Moreover, $(\H- \abs{\nabla w_p})^2$ divided by $(p-1)$ in the last term of 
\cref{Qdef-intro}. To preserve the boundedness for $p$ close to $1$, the expression  $(\H- \abs{\nabla w_p})^2$ must vanish, hence the mean curvature behaves like the gradient, when $p\to 1^+$. All in all, the $L^2$ norm of the second fundamental form is controlled. By compactness, $\partial \Omega^{\sml{p}}_t$ converges (up to subsequence)  to $\partial \Omega^{\sml{1}}_t$ in the sense of varifolds. In particular, the mean curvature $\vec{\H}^{\sml{p}}$ of $\partial \Omega^{\sml{p}}$, thus the gradient $\nabla w_p$, converges to the mean curvature $\vec{\H}^{\sml{1}}= \nabla w_1$ of $\partial \Omega^{\sml{1}}_t$. Summing up, we obtain the following second main result.

\begin{theorem}[Improved convergence]\label{main-convergences}
       Let $(M,g)$ be a complete noncompact Riemannian manifold of dimension $n\ge 3$ and $\Omega \subseteq M$ be a bounded closed subset with $\CS^{1,1}$-boundary. Suppose there exists a proper weak IMCF $w_1$ on $M\smallsetminus \Omega$ starting from $\Omega$. Let $D$ be a bounded open set containing $\Omega$. For every $p>1$ and $\Phi_p\in \Lip(\overline{D\smallsetminus \Omega})$, let $w_p$ be the solution to
        \begin{equation}\label{p-Laplaciano-inD}
            \begin{cases}
                \Delta_p w_p& =& \abs{\nabla w_p}^p &\text{on $D\smallsetminus \Omega $,}\\
                w_p&=&0& \text{on $\partial \Omega$,}\\
                w_p &= &\Phi_p & \text{on $\partial D$.}
            \end{cases}
        \end{equation}
       Then, for every $p>1$ we can chose $\Phi_p \in \Lip_c(\overline{D})$ such that 
       \begin{enumerate}
           \item the corresponding solutions $w_p$ converge to $w_1$ uniformly on $D\smallsetminus \Omega^\circ$ as $p\to 1^+$;
           \item\label{stronggradients} their gradients $\nabla w_p$ strongly converge to $\nabla w_1$ in $L^q_{\loc}(D\smallsetminus \Omega)$, for $q<+\infty$, as $p\to 1^+$;
           \item $\partial \Omega^{\sml{p}}_t$ converges (up to subsequences) to $\partial \Omega^{\sml{1}}_t$ in the sense of varifolds for almost every $t< \inf_{\partial D} w_1$ as $p\to 1^+$.
       \end{enumerate}
\end{theorem}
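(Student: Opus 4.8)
The plan is to obtain the three assertions in a cascade. First one fixes the boundary data so that $w_p$ is the restriction of Moser's global solution, which disposes of the first assertion and yields the elementary uniform bounds; then one extracts from \cref{main} the uniform-in-$p$ curvature estimates on the level sets; finally varifold compactness gives the third assertion and, together with the estimates, the strong convergence in the second. In detail: since \cref{p-Laplaciano-inD} satisfies a comparison principle, it suffices to take $\Phi_p:=w_p\restrict{\overline D}$, where $w_p\in\CS^{1,\beta}_{\loc}(M\smallsetminus\Omega)$ is the global proper solution of \cref{p-Laplaciano} produced by Moser's exhaustion; the solution of \cref{p-Laplaciano-inD} with this datum is precisely $w_p\restrict{D\smallsetminus\Omega}$, so all three assertions follow by restriction (enlarging $D$ if needed) from the corresponding statements for $\{w_p\}_p$ on $M\smallsetminus\Omega$. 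Moser's theorem gives $w_p\to\overline w$ locally uniformly with $\overline w$ a proper weak IMCF; by uniqueness of proper weak IMCF with datum $\Omega$ (\cite{huisken_inversemeancurvatureflow_2001}) $\overline w=w_1$, which is the first assertion. Comparison against barriers built from the $\CS^{1,1}$ boundary of $\Omega$ also yields, uniformly in $p\in(1,2]$, a lower bound $\abs{\nabla w_p}\ge c_0>0$ on $\partial\Omega$ (Hopf), Lipschitz bounds $\|\nabla w_p\|_{L^\infty(K)}\le C(K)$ for $K\Subset M\smallsetminus\Omega$ and up to $\partial\Omega$, and the exact flux identity $\int_{\partial\Omega^{\sml{p}}_t}\abs{\nabla w_p}^{p-1}\dif\Hff^{n-1}=\ee^{t}\int_{\partial\Omega}\abs{\nabla w_p}^{p-1}\dif\Hff^{n-1}$; the last two bound $\int_{\partial\Omega^{\sml{p}}_t}\abs{\nabla w_p}^{\beta}\dif\Hff^{n-1}$ uniformly for $\beta\ge p-1$ and, by the coarea formula, for a.e.\ $t$ also for $\beta$ slightly below $p-1$, as well as $\int_K\abs{\nabla w_p}\dif\mathrm{vol}$ and hence $\int_0^T\Hff^{n-1}(\partial\Omega^{\sml{p}}_t\cap K)\dif t$.

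For the uniform curvature estimates I would choose $\alpha:=3-p$, which satisfies $\alpha\ge(n-1)/(n-p)$ for $p$ in a right-neighbourhood $(1,p_0)$ of $1$ and makes the weights $\abs{\nabla w_p}^{\alpha+p-3},\,\abs{\nabla w_p}^{\alpha+p-2}$ in \cref{funz-monotoni} equal to $1$ and $\abs{\nabla w_p}$. The $\CS^{1,1}$ bound on $\partial\Omega$ and the gradient bound give $\abs{\MF_p(0)}\le C$ uniformly, while rewriting $\H$ through \cref{eq:curvatura_p_pot}, using the flux identity, and invoking the Bochner/divergence estimates for $\abs{\nabla w_p}$ underlying \cref{main} (as in the a priori bounds of \cite[Theorem~3.1]{benatti_minkowskiinequalitycompleteriemannian_2022}) bounds $\MF_p$ from above on $[0,T]$ for every $T<\inf_{\partial D}w_1$, uniformly in $p\in(1,p_0)$; since $\MF_p$ is nondecreasing, $\sup_{[0,T]}\abs{\MF_p}\le C(T)$ uniformly. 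Inserting this into \cref{derivata_F_p} with a fixed decreasing cutoff $\psi$ of $[0,T]$ with $\psi(0)=1$, and using that each summand of $\SQ_p$ is nonnegative for $\alpha\ge(n-1)/(n-p)$, one extracts, uniformly in $p$, an upper bound for $\int_0^T\!\int_{\partial\Omega^{\sml{p}}_t}\big(\abs{\mathring{\h}}^2+\abs{\nabla w_p}^{-2}\abs{\nabla^\top\abs{\nabla w_p}}^2\big)\dif\Hff^{n-1}\dif t$ and for $\tfrac{1}{p-1}\int_0^T\!\int_{\partial\Omega^{\sml{p}}_t}\big(\H-\tfrac{n-1}{n-p}\abs{\nabla w_p}\big)^2\dif\Hff^{n-1}\dif t$. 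Since $\H-\abs{\nabla w_p}=(\H-\tfrac{n-1}{n-p}\abs{\nabla w_p})+\tfrac{p-1}{n-p}\abs{\nabla w_p}$ and $\int_{\partial\Omega^{\sml{p}}_t}\abs{\nabla w_p}^2$ is uniformly bounded, the second estimate forces $\int_0^T\!\int_{\partial\Omega^{\sml{p}}_t}(\H-\abs{\nabla w_p})^2\dif\Hff^{n-1}\dif t\to 0$, whence (using $\abs{\h}^2=\abs{\mathring{\h}}^2+\H^2/(n-1)$) a uniform bound for $\int_0^T\!\int_{\partial\Omega^{\sml{p}}_t}\abs{\h}^2\dif\Hff^{n-1}\dif t$. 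Thus, for a.e.\ $t<\inf_{\partial D}w_1$ and along a subsequence $p_j\to1^+$, the hypersurfaces $\partial\Omega^{\sml{p_j}}_t$ — which by part (1) of \cref{main} are curvature varifolds, smooth away from an $\Hff^{n-1}$-null set — have uniformly bounded area and $\int_{\partial\Omega^{\sml{p_j}}_t}\abs{\h}^2$, and $\|\H^{\sml{p_j}}-\abs{\nabla w_{p_j}}\|_{L^2(\partial\Omega^{\sml{p_j}}_t)}\to 0$.

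Fix such a $t$. By the compactness theorem for curvature varifolds with bounded area and $L^2$ second fundamental form (Allard together with the results of Hutchinson and Mantegazza), a further subsequence of $\partial\Omega^{\sml{p_j}}_t$ converges as varifolds to an integral curvature varifold $V_t$. The uniform convergence $w_p\to w_1$ and the convergence $\mathbbm{1}_{\{w_p<t\}}\to\mathbbm{1}_{\{w_1<t\}}$ in $BV_{\loc}$ (valid for a.e.\ $t$) identify $\spt V_t$ with $\partial\Omega^{\sml{1}}_t$, and standard arguments on convergence of sets of finite perimeter together with the Huisken--Ilmanen regularity of weak IMCF level sets show $V_t$ is the multiplicity-one varifold carried by $\partial\Omega^{\sml{1}}_t$; passing to the limit in the first variation and recalling that its generalised mean curvature is $\nabla w_1$ gives $\vec\H^{\sml{p}}\,\mu_p\rightharpoonup\nabla w_1\,\mu_1$ and, with the uniform $L^2$ bound, weak $L^2$ convergence of mean curvature vectors — the third assertion. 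For the second, on $\partial\Omega^{\sml{p}}_t$ one has $\vec\H^{\sml{p}}=\H^{\sml{p}}\nu^{\sml{p}}$ and $\nabla w_p=\abs{\nabla w_p}\nu^{\sml{p}}$ with the same $\nu^{\sml{p}}=\nabla w_p/\abs{\nabla w_p}$, so $\|\vec\H^{\sml{p}}-\nabla w_p\|_{L^2(\partial\Omega^{\sml{p}}_t)}=\|\H^{\sml{p}}-\abs{\nabla w_p}\|_{L^2(\partial\Omega^{\sml{p}}_t)}\to 0$; combined with the weak convergence and with the convergence of Willmore-type energies $\int_{\partial\Omega^{\sml{p}}_t}(\H^{\sml{p}})^2\to\int_{\partial\Omega^{\sml{1}}_t}\abs{\nabla w_1}^2$ — which I would derive from $\MF_p(t)\to\MF_1(t)$ (passing to the limit in the linear-in-$\H$ boundary integral of $\MF_p$ via the weak varifold convergence and the previous estimate, and in the Ricci term via the weak convergence of $\nabla w_p$) together with lower semicontinuity — this yields $\nabla w_p\to\nabla w_1$ strongly in $L^2(\partial\Omega^{\sml{p}}_t)$ for a.e.\ $t$. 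Integrating over $t$ with the coarea formula and the uniform $L^\infty_{\loc}$ and area bounds gives $\|\nabla w_p\|_{L^3(K)}\to\|\nabla w_1\|_{L^3(K)}$ for $K\Subset M\smallsetminus\Omega$; since $\nabla w_p\rightharpoonup\nabla w_1$ weakly-$*$ in $L^\infty_{\loc}$, uniform convexity of $L^3$ promotes this to strong $L^3_{\loc}$ convergence, and interpolating with the uniform $L^\infty_{\loc}$ bound gives the second assertion for all $q<+\infty$.

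The two genuinely hard points are the uniform-in-$p$ upper bound on $\MF_p$ on compact time intervals, which must be squeezed from the flux identity and the Bochner/divergence structure behind \cref{eq:curvatura_p_pot} with no curvature hypothesis available, and the passage from weak to strong convergence of the mean curvature vectors — equivalently, the upper semicontinuity (hence convergence) of $\int(\H^{\sml{p}})^2$ along the level sets. This last point is precisely where the monotone quantities $\MF_p$, and their own convergence to $\MF_1$, do work that bare varifold compactness cannot.
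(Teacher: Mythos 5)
Your proposal contains three genuine gaps, the first of which is in fact addressed explicitly in the paper.

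\textbf{The choice of $\Phi_p$.} You propose to take $\Phi_p$ as the restriction of Moser's \emph{global} proper $p$-harmonic potential, obtained by exhausting $M\smallsetminus\Omega$. This requires $(M,g)$ to be strongly $p$-nonparabolic for $p>1$, which does not follow from the existence of a proper weak IMCF: a manifold can be strongly $1$-nonparabolic while being $p$-parabolic for every $p>1$, and in that case the global exhaustion produces only the trivial solution. The paper avoids this by attaching conical ends to a large compact piece of $M$ to form an auxiliary manifold $M_\tau$, solving on $M_\tau$, and using the Huisken--Ilmanen uniqueness theorem for proper weak IMCF together with comparison against the explicit conical flow to recover $w_1$ on the original manifold. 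Your cascade starts on the wrong foot, because the first assertion cannot be reduced to Moser's global theorem under the stated hypotheses.

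\textbf{Area convergence.} To identify the varifold limit of $\partial\Omega^{\sml{p}}_t$ with the multiplicity-one varifold over $\partial\Omega^{\sml{1}}_t$, and also to run the convergence-of-unit-normals argument needed for the strong $L^q$ convergence, you need $\abs{\partial\Omega^{\sml{p}}_t}\to\abs{\partial\Omega^{\sml{1}}_t}$ for a.e.\ $t$. You write this off as a consequence of ``$\mathbbm{1}_{\{w_p<t\}}\to\mathbbm{1}_{\{w_1<t\}}$ in $BV_{\loc}$'', but what is automatically available is only $L^1_{\loc}$ convergence of the indicators, which yields lower semicontinuity of the perimeters, not equality. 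The missing piece is genuinely substantive: the paper proves the area convergence by showing $\ncapa_p(\Omega,U^{\sml{p}}_T)\to\abs{\partial\Omega^*}/\abs{\S^{n-1}}$ (a local, relative version of a theorem of Fogagnolo--Mazzieri) and then comparing $\abs{\partial\Omega^{\sml{p}}_t}$ to the scaled $p$-capacity through the flux identity and the bound on $\abs{1-\abs{\nabla w_p}^{p-1}}$. Nothing in your outline produces the needed upper bound on $\limsup_p\abs{\partial\Omega^{\sml{p}}_t}$.

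\textbf{Circularity in the strong convergence.} Your route to $\int_{\partial\Omega^{\sml{p}}_t}\H^2\to\int_{\partial\Omega^{\sml{1}}_t}\abs{\nabla w_1}^2$ passes through $\MF_p(t)\to\MF_1(t)$. But with $\alpha=3-p$ the boundary term of $\MF_p(t)$ is (up to an exponential factor and the Ricci correction) $\int_{\partial\Omega^{\sml{p}}_t}\abs{\nabla w_p}\big(c_p\abs{\nabla w_p}-\H\big)\dif\Hff^{n-1}$, which after using $\|\H-\abs{\nabla w_p}\|_{L^2}\to0$ is controlled by $\int_{\partial\Omega^{\sml{p}}_t}\abs{\nabla w_p}^2$. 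So establishing $\MF_p(t)\to\MF_1(t)$ requires exactly the $L^2$ norm convergence of $\nabla w_p$ on level sets that you are trying to extract from it. The paper breaks the circle differently: from varifold convergence and $\|\H-\abs{\nabla w_p}\|_{L^2}\to0$ one gets $\int\abs{\nabla w_p}\ip{\nabla w_p|X}\to\int\abs{\nabla w_1}\ip{\nabla w_1|X}$ for continuous $X$, then by the uniform $L^2$ bound on $\abs{\nabla w_p}^2$ one extends this to $X=\nu^{\sml{1}}$, and finally the area convergence (via \cref{lem:convergence_of_unit_normal}) lets one replace $\nu^{\sml{1}}$ by $\nu^{\sml{p}}$ at the cost of $\big(\int_0^T\int_{\partial\Omega^{\sml{p}}_t}\abs{\nu^{\sml{1}}-\nu^{\sml{p}}}^2\big)^{1/2}\to0$. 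This produces the $L^2$ norm convergence without invoking any convergence of $\MF_p(t)$, which is instead established afterwards, once the improved gradient convergence is available.

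The remaining ingredients of your plan (taking $\alpha=3-p$ to normalize the weights, the uniform gradient bound, the $L^2$ bound on $\h$ from the boundedness of $\MF_p$, the vanishing of $\|\H-\abs{\nabla w_p}\|_{L^2}$, and the varifold compactness) do match the paper's argument.
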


Limited to the convergence aspect, the statement above enhances all the results in the literature \cite{moser_inversemeancurvatureflow_2007,moser_inversemeancurvatureflow_2008,kotschwar_localgradientestimatesharmonic_2009,mari_flowlaplaceapproximationnew_2022}. The convergence of the gradients cannot improve to $L^\infty$ since $w_p$ is of class $\CS^{1,\beta}$ while $w_1$ is just Lipschitz in general.

\cref{main-convergences} comes with all the tools required to send $p\to 1^+$ in \cref{derivata_F_p}. The strong convergence of the gradient and the weak convergence of the mean curvature imply that $\MF_p(t)$ converges to $\MF_1(t)$. The $L^\alpha$ norm of the mean curvature is lower semicontinuous yielding $\liminf_p \MF_p(0)\leq \MF_1(0) $. As far as $\SQ_p$ is concerned, the $L^2$ norm of the traceless second fundamental form is lower semicontinuous by the convergence of curvature varifolds. The lower semicontinuity of the tangential gradient of $\abs{\nabla w_p}$ requires additional work, built again on varifold convergence and the strong convergence of the gradients.

\medskip

As mentioned above, \cref{main} unifies a rich family of monotonicity formulas in literature and their geometric consequences. In Riemannian manifolds with nonnegative Ricci curvature, one can derive the Willmore-type inequality from \cite[Theorem 1.1]{agostiniani_sharpgeometricinequalitiesclosed_2020}, by setting $\alpha = n-p$, and the Minkowski-type inequality from \cite[Theorem 1.1]{benatti_minkowskiinequalitycompleteriemannian_2022}, taking $p=\alpha=1$. In the latter case, we also improve the rigidity statement \cite[Theorem 1.2]{benatti_minkowskiinequalitycompleteriemannian_2022} by removing some of the assumptions, which are deduced as a consequence of the equality instead. Such an improvement is possible since the Minkowski-type inequality directly follows from its monotonicity formula rather than through approximation by its counterparts for $p>1$. 

In Riemannian $3$-manifolds with nonnegative scalar curvature, \cref{main} also implies Huisken--Ilmanen's Geroch monotonicity formula \cite{huisken_inversemeancurvatureflow_2001} and its nonlinear potential theoretic generalizations \cite{agostiniani_riemannianpenroseinequalitynonlinear_2022,xia_newmonotonicitycapacitaryfunctions_2024}. Indeed, these monotone quantities are obtained combining $\MF_p$ for $\alpha =3-p$ with an exponentially growing term. The Ricci curvature in the expression of $\MF_p$ can be replaced using the Gauss equation
\begin{equation}\label{gauss}
    \sca^\top = \sca - 2 \Ric(\nu,\nu) + \H^2 - \abs{\h}^2.
\end{equation}
Here, $\sca$ and $\sca^\top$ are the scalar curvatures of $M$ and $\partial \Omega^{\sml{p}}_t$ respectively. Almost every $\partial \Omega^{\sml{p}}_t$ admits a square integrable second fundamental form. Hence, the integral of either side in \cref{gauss} over these levels sets is well-defined. To conclude the proof of the monotonicity, we now have to apply the Gauss--Bonnet theorem. For $p=2$, level sets are smooth, so this theorem applies. Gauss--Bonnet theorem also holds for the level sets of the weak IMCF (see \cite[Weak Gauss--Bonnet Formula 5.4]{huisken_inversemeancurvatureflow_2001}). For all other values of $p\in (1,2)$, one could define the Euler characteristic of the level sets as the integral of the induced scalar curvature (up to constant). However, such a definition does not encompass the topological properties of the level sets and does not lead to the desired monotonicity. To overcome this issue, we prove the following Gauss--Bonnet-type theorem.

\begin{theorem}[Gauss--Bonnet-type theorem]\label{GB}
    Let $(M,g)$ be a complete noncompact $3$-dimensional Riemannian manifold and $\Omega \subseteq M$ be a closed subset with $\CS^{1,1}$ boundary. Let $p \in [1,2]$ and $w_p$ be a proper solution to \cref{p-Laplaciano}. Then,
    \begin{equation}
        \int_{\partial \Omega^{\sml{p}}_t} \sca^{\top} \dif \Hff^{n-1} \in 8 \pi \Z
    \end{equation}
    for almost every $t \in [0,\sup_M w_p)$.    
\end{theorem}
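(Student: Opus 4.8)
The plan is to reduce the statement to the classical Gauss--Bonnet theorem for \emph{genuinely smooth} closed surfaces and then pass to the limit, exploiting that $8\pi\Z$ is discrete. Fix $p\in(1,2)$ and a value $t\in[0,\sup_M w_p)$ that is \emph{good}, that is: $\partial\Omega^{\sml{p}}_t$ is a curvature varifold with square integrable second fundamental form, the critical set $\{\nabla w_p=0\}$ meets it in an $\Hff^{2}$-negligible set, and it arises as a varifold limit $\Sigma_\varepsilon\to\partial\Omega^{\sml{p}}_t$ of smooth level sets $\Sigma_\varepsilon\coloneqq\{w_p^\varepsilon=s_\varepsilon\}$ of the $\varepsilon$-regularised approximations $w_p^\varepsilon\to w_p$ employed in the proof of \cref{main} (see \cref{eq:eps_moser_potential}), for suitable regular values $s_\varepsilon\to t$; by \cref{main} the set of such $t$ has full measure. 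Via \cref{gauss} together with the two-dimensional bound $\H^2\le 2\abs{\h}^2$, the integrand $\sca^\top=(\sca-2\Ric(\nu,\nu))+(\H^2-\abs{\h}^2)$ is then $\Hff^{2}$-integrable on $\partial\Omega^{\sml{p}}_t$.

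For each $\varepsilon$, the sublevel set $W_\varepsilon\coloneqq\Omega\cup\{w_p^\varepsilon\le s_\varepsilon\}$ is a compact $3$-manifold whose boundary is the smooth closed surface $\Sigma_\varepsilon$ (indeed $\partial\Omega$ lies in its interior, since $w_p^\varepsilon=0<s_\varepsilon$ there, and $s_\varepsilon$ is a regular value). Hence the classical Gauss--Bonnet theorem gives $\int_{\Sigma_\varepsilon}\sca^\top\,\dif\Hff^{2}=4\pi\,\chi(\Sigma_\varepsilon)$, while the doubling identity $\chi(\partial W_\varepsilon)=2\chi(W_\varepsilon)$ — obtained by computing, via inclusion--exclusion, the Euler characteristic of the closed $3$-manifold $W_\varepsilon\cup_{\Sigma_\varepsilon}W_\varepsilon$, which vanishes being odd-dimensional — forces $\chi(\Sigma_\varepsilon)$ to be \emph{even}, with no orientability or connectedness assumption. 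Thus $\int_{\Sigma_\varepsilon}\sca^\top\,\dif\Hff^{2}\in8\pi\Z$. To make these integers stabilise, bound $4\pi\abs{\chi(\Sigma_\varepsilon)}\le\int_{\Sigma_\varepsilon}\abs{\sca^\top}\,\dif\Hff^{2}\le C\big(\Hff^{2}(\Sigma_\varepsilon)+\int_{\Sigma_\varepsilon}\abs{\h}^2\,\dif\Hff^{2}\big)$ on the fixed compact region swept by the approximation (where the ambient curvature is bounded and $\H^2\le 2\abs{\h}^2$): the areas are uniformly bounded by the coarea formula and the equation, and the $L^2$ norms of the second fundamental forms of almost every level set of $w_p^\varepsilon$ are uniformly bounded, this being precisely the estimate behind \cref{main}. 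Hence $\chi(\Sigma_\varepsilon)$ ranges over finitely many integers, so along a subsequence $\chi(\Sigma_\varepsilon)\equiv\chi$ and $\int_{\Sigma_\varepsilon}\sca^\top\,\dif\Hff^{2}\equiv4\pi\chi$.

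It remains to pass to the limit $\varepsilon\to0^+$. Splitting $\sca^\top$ via \cref{gauss}, the ambient part is harmless: $\sca-2\Ric(\nu,\nu)$ is a fixed continuous, compactly supported function on the Grassmann bundle of $2$-planes, so $\int_{\Sigma_\varepsilon}(\sca-2\Ric(\nu,\nu))\,\dif\Hff^{2}$ converges to the corresponding integral over $\partial\Omega^{\sml{p}}_t$ by weak-$\ast$ convergence of the varifolds. Subtracting, $\int_{\Sigma_\varepsilon}(\H^2-\abs{\h}^2)\,\dif\Hff^{2}$ converges, and once one knows its limit equals $\int_{\partial\Omega^{\sml{p}}_t}(\H^2-\abs{\h}^2)\,\dif\Hff^{2}$ we obtain $\int_{\partial\Omega^{\sml{p}}_t}\sca^\top\,\dif\Hff^{2}=4\pi\chi\in8\pi\Z$, which is the claim. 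The remaining cases require nothing new: $p=2$ is the instance $\varepsilon=0$, since $w_2$ is smooth; and $p=1$ follows either from the weak Gauss--Bonnet formula of \cite{huisken_inversemeancurvatureflow_2001} or by running the same discreteness argument along the varifold convergence $\partial\Omega^{\sml{p}}_t\to\partial\Omega^{\sml{1}}_t$ from \cref{main-convergences}.

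The main obstacle is exactly the convergence $\int_{\Sigma_\varepsilon}(\H^2-\abs{\h}^2)\,\dif\Hff^{2}\to\int_{\partial\Omega^{\sml{p}}_t}(\H^2-\abs{\h}^2)\,\dif\Hff^{2}$, equivalently the convergence of the integrated intrinsic scalar curvature: this is a \emph{signed} quantity, whereas varifold convergence together with the bounds of \cref{main} only provide lower semicontinuity of $\int\abs{\h}^2$ and of $\int\H^2$ separately, which is not enough to control their difference. Equivalently, one must rule out that a portion of the curvature energy $\abs{\h}^2\,\Hff^{2}$ carried by $\Sigma_\varepsilon$ concentrates, in the limit, onto the $\Hff^{2}$-negligible critical set. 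I would attack this by combining the strong $W^{1,q}_{\loc}$-type convergence of the gradients along the approximation — which, on compact subsets of the regular part of $\partial\Omega^{\sml{p}}_t$, upgrades through the analytic expression \cref{eq:curvatura_p_pot} for $\H$ (and the analogous one for $\abs{\h}$) to strong $L^2$ convergence of the second fundamental forms — with a density estimate for the critical set and the integrability, guaranteed by \cref{main}, of the nonnegative terms $\abs{\mathring{\h}}^2$ and $\abs{\nabla^\top\abs{\nabla w_p}}^2/\abs{\nabla w_p}^2$ in $\SQ_p$, which constrain $\h$ near the critical set. An alternative is to notice that $\H^2-\abs{\h}^2$ is, on the regular part, a Hessian-determinant-type quantity in $w_p$, hence expected to be stable under the weak second-order convergence of $w_p^\varepsilon$ by compensated compactness; making this rigorous in the varifold setting seems to be the delicate point.
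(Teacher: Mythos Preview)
Your overall architecture matches the paper's: approximate $\partial\Omega^{\sml{p}}_t$ by smooth level sets of the $\varepsilon$-regularised problems, apply the classical Gauss--Bonnet there, and pass to the limit using that $8\pi\Z$ is closed. You also correctly isolate the crux: varifold convergence plus the $L^2$ bound on $\h$ gives only \emph{lower} semicontinuity of $\int\abs{\h}^2$ and $\int\H^2$ separately, which is useless for the signed difference $\H^2-\abs{\h}^2$. But you do not resolve this --- your two suggested routes (strong convergence away from the critical set plus a density argument, or a compensated-compactness identity) are sketches, and you yourself flag the second as ``the delicate point''. This is the genuine gap.

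The paper's mechanism is different from both of your suggestions and is worth knowing. The point is that for $p\in(1,2]$ the derivative formula in \cref{thm:monotonicity-NPT} is an \emph{equality}, obtained from the divergence theorem applied to the vector field $Y$ of \cref{eq:defY}. The analogous identity holds at the $\varepsilon$-level, and its left-hand side (a first-order expression in $w_p^\varepsilon$) converges as $\varepsilon\to0^+$ by the $\CS^1$ and weak $W^{1,2}$ convergence of \cref{prop:RegolaritaWp}. The right-hand side is a sum of three \emph{nonnegative} terms (essentially $\abs{\mathring\h}^2$, $\abs{\nabla^\top\abs{\nabla w_p^\varepsilon}}^2/\abs{\nabla w_p^\varepsilon}^2$, and a weighted $\abs{\nabla^\perp\abs{\nabla w_p^\varepsilon}}^2/\abs{\nabla w_p^\varepsilon}^2$) plus a lower-order part that converges trivially. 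Each nonnegative term is separately lower semicontinuous under varifold convergence; since their sum must equal a convergent quantity, none of them can drop strictly. This forces $\int\abs{\mathring\h}^2$ and the normal-derivative term to \emph{converge}, and from the latter together with \cref{eq:MeanCurvatureLevelWepsP} one deduces convergence of $\int\H^2$, hence of $\int\abs{\h}^2$. This ``no energy loss because the total is pinned by an equality'' argument is precisely \cref{prop:convergenza-seconda-forma}; it bypasses any analysis near the critical set and does not need compensated compactness.

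Two smaller remarks. First, the paper works at a \emph{fixed} level $t$ for all $\varepsilon_k$ (a full-measure set of such $t$ exists), not with moving levels $s_\varepsilon\to t$; this is cosmetic but cleaner. Second, your bound $\sup_\varepsilon\int_{\Sigma_\varepsilon}\abs{\h}^2<\infty$ is asserted as ``the estimate behind \cref{main}'', but note that estimate (\cref{eq:interior_regularity_eps}) is an integral in $t$; getting it for a.e.\ fixed $t$ uniformly along a subsequence is exactly the content of \cref{lem:ConvergenzaVarifoldEspToZero}, which you are implicitly invoking.
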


In the previous statement, the set $8 \pi \Z$ represents the possible values allowed by the classical Gauss--Bonnet theorem if $\partial \Omega^{\sml{p}}_t$ were smooth.
We get \cref{GB} by approximation with smooth surfaces. Suitable candidates are, as always, the $\varepsilon$-regularized level sets. The main difficulty here is to show the continuity of the integrals of $\H^2$ and $\abs{\h}^2$ in the limit as $\varepsilon \to 0^+$. Varifold convergence only implies the lower semicontinuity of these quantities. However, the equality in \cref{thm:monotonicity-NPT} prevents the loss of energy during the process.

In conclusion, we believe that one of the merits of this paper is to give a rigorous meaning to formal computations one performs on \cref{p-Laplaciano} so that one can avoid the repetition of the $\varepsilon$-regularization process in future uses.

\subsection{Structure of the paper}
In \cref{sec:preliminaries} we introduce the notation we are using throughout the paper and we revise the known regularity theory of solutions to \cref{p-Laplaciano}. We pass then to the weak inverse mean curvature flow and we prove a local approximation result of IMCF in terms of $p$-harmonic functions (\cref{thm:local-approximation}). In \cref{sec:monotonocity_nonlinear_potential}, after collecting all essential computations, we dive into the improved regularity for the level sets of the solutions to \cref{p-Laplaciano}. Subsequently, we carry on our analysis on the monotone quantities $\MF_p$. The section culminates in the proof of \cref{thm:monotonicity-NPT}.  \cref{sec:IMCF} is devoted to the IMCF counterpart of \cref{sec:monotonocity_nonlinear_potential}.
\cref{sec:geometric_consequences} contains the geometric consequences of the previous sections. First, we prove the Gauss--Bonnet type \cref{GB}. Then, we prove \cref{thm:minkowski} that covers at the same time Willmore and Minkowski inequalities with a new rigidity result. We conclude with the Geroch monotonicity formula. 
Finally, the paper includes \cref{sec:AppendiceVarifolds}, which systematically gathers various results on curvature varifolds in Riemannian manifolds and collects some technical results on the convergence of level sets of sequences of functions.

\subsection{Acknowledgments} The authors warmly thank Mattia Fogagnolo for many stimulating discussions on the topic of this paper.

The authors are members of INdAM - GNAMPA. Luca Benatti and Alessandra Pluda are partially supported by the BIHO Project ``NEWS - NEtWorks and Surfaces evolving by curvature'' and by the MUR Excellence Department Project awarded to the Department of Mathematics of University of Pisa. Luca Benatti is partially supported by the PRIN Project 2022PJ9EFL ``Geometric Measure Theory: Structure of Singular Measures, Regularity Theory and Applications in the Calculus of Variations''. Alessandra Pluda is partially supported by the PRIN Project 2022R537CS ``$\rm{NO}^3$ - Nodal Optimization, NOnlinear elliptic equations, NOnlocal geometric problems, with a focus on regularity''. Marco Pozzetta is partially supported by the PRIN Project 2022E9CF89 ``GEPSO - Geometric Evolution Problems and Shape Optimization'' -- PNRR Italia Domani, funded by EU Program NextGenerationEU.

\section{Preliminaries and notation}\label{sec:preliminaries}

This section sets the notation we will use throughout the paper. We also recall some results already known in the literature about the nonlinear potential theory and the inverse mean curvature flow. 
In what follows, $(M,g)$ will be a complete noncompact smooth Riemannian manifold of dimension $n\geq 3$. Whenever not specified, we assume that $M$ has no boundary.

\smallskip

If $E \subset M$ is a subset with $\CS^1$ boundary, we denote by $\nu$ the outward pointing unit normal along $\partial E$. If $E$ has $\CS^2$ boundary, we denote by $\vec{\H}$ the mean curvature vector of $\partial E$ and we set $\H\coloneqq- \ip{\vec{\H}| \nu }$, and we denote by $\h$ (resp. $\mathring{\h}$) the second fundamental form (resp. traceless second fundamental form) of $\partial E$. We will avoid subscripts and superscripts when geometric quantities like $\nu,\H,\h,\mathring{\h}$ are clearly referred to some given submanifold. In particular, when a geometric quantity appears without indices in an integral along the boundary of a set $\partial E$, it is understood that such quantity is referred to $\partial E$. We will also make use of the theory of varifolds on Riemannian manifolds. The definitions and the results we will use are recalled in \cref{sec:AppendiceVarifolds}. A completely analogous notation as the one introduced before is adopted for the weak notions of mean curvature and (traceless) second fundamental form defined on varifolds, see \cref{sec:AppendiceVarifolds}. Finally, we shall denote by $\Hff^d$ the $d$-dimensional Hausdorff measure induced by the Riemannian distance on $(M,g)$, for any $d\ge0$, and we will simply denote $\abs{E}\coloneqq \Hff^n(E)$ for any Borel set $E\subset M$ and $\abs{\partial E} \coloneqq \Hff^{n-1}(\partial E)$ for any set $E\subset M$ such that $\partial E$ is countably $(n-1)$-rectifiable. Given a set $E\subset M$ with $\CS^1$ boundary we will denote by
\begin{equation}
\begin{split}
    &\nabla^\perp f = \ip{\nabla f | \nu } \,\nu,\\
    &\nabla^\top f = \nabla f - \nabla^\perp f.  
\end{split}
\end{equation}
the projection of the gradient on the normal and the tangent space to $\partial E$, respectively.

\smallskip

We shall fix a closed bounded subset $\Omega$ of $M$ with connected $\CS^{1,\beta}$-boundary, for some $\beta>0$. The letter $D$ will always denote an open bounded subset of $M$ with smooth boundary such that $\Omega\subset D$. For any compact set $K\subseteq D \smallsetminus \Omega^\circ$, for any function $u:K\to\R$ with $u \in \CS^1(K)$ we denote
\begin{equation}
\Crit(u) \coloneqq \Omega^\circ \cup \set{u \in K  : \abs{\nabla u}(x)=0}.
\end{equation}
We use the same notation for functions that are only Sobolev, with the obvious modification. If $u$ is Lipschitz function and has compact sublevels in $D$, we shall denote $\Omega_t\coloneqq \set{u \leq t } \cup \Omega$ and $U_t\coloneqq \set{u <t} \cup \Omega$, hence $\Omega \subseteq U_t \subseteq  \Omega_t $. When referring to the unit normal to the level sets of a function $u$, we will also employ the symbol $\nu$ to the vector field equal to $\nabla u/\abs{\nabla u}$ almost everywhere outside $\Crit(u)$ and equal to $0$ almost everywhere on $\Crit(u)$. In case of possible confusion, suitable superscripts will be added to the symbols $\Omega_t, \nu$ in order to refer to some given function.

\subsection{Nonlinear potential theory}
Let $\Phi \in \Lip(\overline{D\smallsetminus \Omega})$ with $\Phi>0$ on $\partial D$. For $\varepsilon>0$ and $1<p\leq 2 $, let $w_p^\varepsilon$ be the solution to the following boundary value problem
\begin{equation}\label{eq:eps_moser_potential}
    \begin{cases}
        \Delta_{p}^\varepsilon \ee^{-\frac{w_p^\varepsilon}{p-1}}& =& 0 &\text{on $D \smallsetminus \Omega $,}\\
     w_p^\varepsilon&=&0& \text{on $\partial \Omega$,}\\
     w_p^\varepsilon&=&\Phi & \text{on $\partial D$,}
    \end{cases}
\end{equation}
 where 
\begin{align}
    \Delta_{p}^\varepsilon f \coloneqq \div\left( \abs{ \nabla f}_\varepsilon^{p-2} \nabla f\right) \text{ and } {\abs{}}_\varepsilon\coloneqq \sqrt{ \abs{}^2 +\varepsilon^2}.
\end{align}
The PDE in \cref{eq:eps_moser_potential} is a second-order elliptic equation, hence the solution $w^\varepsilon_p$ is smooth in $D\smallsetminus \Omega$ by classical elliptic regularity theory. We now recall some regularity estimates that are stable with respect to $\varepsilon\to0^+$. The following proposition collects results from \cite{dibenedetto_alphalocalregularityweak_1983,tolksdorf_dirichletproblemquasilinearequations_1983,lieberman_boundaryregularitysolutionsdegenerate_1988,lou_singularsetslocalsolutions_2008} (see also \cite{benatti_monotonicityformulasnonlinearpotential_2022} for a dissertation on the topic).

\begin{proposition}\label{prop:StimeBaseWepsilonP}
    Let $(M,g)$ be a Riemannian manifold and let $\Omega \subset M$ be closed and bounded with $\CS^{1,\beta}$-boundary, for some $\beta>0$. Let $w^\varepsilon_p$ be the solution to the problem \cref{eq:eps_moser_potential} for some $\Phi \in \Lip(\overline{D\smallsetminus \Omega})$, where $D$ is a smooth bounded open set such that $\Omega\subset D$. Then
    \begin{enumerate}
        \item for any compact $K\subset D \smallsetminus \Omega^\circ$ there exists a constant $\kst>0$ independent of $\varepsilon$ such that $\norm{w^\varepsilon_p}_{\CS^{1,\beta'}(K)} \leq \kst$ for some $\beta'\le   \beta$,
        \item $\abs{\nabla w^\varepsilon_p} \in W^{1,2}_{\loc}( D \smallsetminus \Omega)$ and for any compact $K'\subset D \smallsetminus \Omega$ there exists a constant $\kst'>0$ independent of $\varepsilon$ such that $\norm{\abs{\nabla w^\varepsilon_p} }_{W^{1,2}(K')} \leq \kst'$.
    \end{enumerate}
\end{proposition}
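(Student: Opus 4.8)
The plan is to collect the relevant citations and assemble them, since each ingredient is a known result in the quasilinear elliptic theory; the only genuine content is checking that the constants do not degenerate as $\varepsilon \to 0^+$. First I would rewrite the equation \cref{eq:eps_moser_potential} in a more standard form: setting $v_p^\varepsilon \coloneqq \ee^{-w_p^\varepsilon/(p-1)}$, the function $v_p^\varepsilon$ solves $\div(\abs{\nabla v_p^\varepsilon}_\varepsilon^{p-2}\nabla v_p^\varepsilon)=0$ with $v_p^\varepsilon$ bounded between two positive constants on $\overline{D\smallsetminus\Omega}$ by the boundary data and the maximum principle (here the lower bound is where one uses $\Phi>0$ on $\partial D$ together with Harnack, so that $w_p^\varepsilon$ stays bounded and hence $v_p^\varepsilon$ is bounded away from $0$). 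This is a uniformly elliptic-in-the-degenerate-sense equation of $p$-Laplacian type with a nondegeneracy parameter $\varepsilon$; the structural ellipticity and growth constants are those of the standard $p$-Laplacian and are \emph{independent} of $\varepsilon$, which is the whole point of introducing the regularization this way.

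For item (1): interior $\CS^{1,\beta'}_{\loc}$ estimates for equations of the form $\div(a(\abs{\nabla u})\nabla u)=0$ with $p$-Laplacian-type structure are exactly the content of \cite{dibenedetto_alphalocalregularityweak_1983,tolksdorf_dirichletproblemquasilinearequations_1983}; the point to verify is that the De Giorgi--Nash--Moser constants and the $\CS^{1,\alpha}$ exponent and constant depend only on $n$, $p$, the ellipticity ratio, and $\sup\abs{\nabla v_p^\varepsilon}$ (equivalently $\sup\abs{\nabla w_p^\varepsilon}$), but \emph{not} on $\varepsilon$. The uniform gradient bound, in turn, follows from a Bernstein-type argument or again from \cite{tolksdorf_dirichletproblemquasilinearequations_1983,lieberman_boundaryregularitysolutionsdegenerate_1988}; boundary regularity up to $\partial\Omega$ (which is only $\CS^{1,\beta}$) is the reason \cite{lieberman_boundaryregularitysolutionsdegenerate_1988} is cited, and there one gets $\beta'\le\beta$. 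Since $w_p^\varepsilon$ and $v_p^\varepsilon$ differ by a smooth, bounded, invertible change of variables with derivatives controlled on the range of $v_p^\varepsilon$ (which is a compact subinterval of $(0,\infty)$ uniform in $\varepsilon$), all these estimates transfer from $v_p^\varepsilon$ to $w_p^\varepsilon$ with $\varepsilon$-independent constants.

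For item (2): this is the Caccioppoli-type second-order estimate $\abs{\nabla v_p^\varepsilon}\in W^{1,2}_{\loc}$, obtained by differentiating the equation and testing with a cutoff; the standard reference is \cite{lou_singularsetslocalsolutions_2008} (and the survey \cite{benatti_monotonicityformulasnonlinearpotential_2022}). Concretely, one differentiates the regularized equation in a coordinate direction, tests against $\eta^2 \partial_k v_p^\varepsilon$, sums over $k$, and absorbs terms; the nondegeneracy $\abs{\cdot}_\varepsilon\ge\varepsilon$ is what makes the differentiated equation legitimate, while the resulting bound on $\int_{K'}\eta^2 \abs{\nabla\abs{\nabla v_p^\varepsilon}}^2$ depends only on $\sup\abs{\nabla v_p^\varepsilon}$, $\dist(K',\partial(D\smallsetminus\Omega))$, $n$, $p$ — again not on $\varepsilon$. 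Translating back to $w_p^\varepsilon$ via the same change of variables gives the claimed uniform $W^{1,2}(K')$ bound for $\abs{\nabla w_p^\varepsilon}$. The main obstacle, such as it is, is purely bookkeeping: tracking through each cited theorem that the constant it produces is a function only of $n,p$, the ellipticity ratio, and the (already $\varepsilon$-uniform) sup-norm of the gradient, and that the passage $w\leftrightarrow v=\ee^{-w/(p-1)}$ does not introduce $\varepsilon$-dependence — both of which are routine once the equation is written in the $v$ variable. There is no delicate limiting argument here: the proposition is a compilation, and the proof is essentially a pointer to \cite{dibenedetto_alphalocalregularityweak_1983,tolksdorf_dirichletproblemquasilinearequations_1983,lieberman_boundaryregularitysolutionsdegenerate_1988,lou_singularsetslocalsolutions_2008}.
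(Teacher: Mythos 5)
Your proposal matches the paper's own treatment: the paper gives no proof and simply cites the same four references \cite{dibenedetto_alphalocalregularityweak_1983,tolksdorf_dirichletproblemquasilinearequations_1983,lieberman_boundaryregularitysolutionsdegenerate_1988,lou_singularsetslocalsolutions_2008}, and your reduction to the $v_p^\varepsilon=\ee^{-w_p^\varepsilon/(p-1)}$ variable is precisely why the structural constants are $\varepsilon$-independent. One small correction: the two-sided positive bounds on $v_p^\varepsilon$ follow directly from the maximum/minimum principle applied to the divergence-form equation with boundary data $1$ on $\partial\Omega$ and $\ee^{-\Phi/(p-1)}$ on $\partial D$; neither Harnack nor the sign condition $\Phi>0$ is needed for that step (only $\Phi$ bounded, which is automatic since $\Phi$ is Lipschitz on a compact set).
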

For $w^\varepsilon_p$, we have that $\Omega_t^{\sml\varepsilon} \subset D$ for any $t \in[0,\inf_{\partial D} \Phi)$. Moreover, since $\partial \Omega$ is $\CS^{1,\beta}$, by Hopf maximum principle, we have that $\partial \Omega \cap \Crit(w^\varepsilon_p) = \varnothing$. 

\smallskip

We now pass to the analysis of the level set flow of $w^\varepsilon_p$. By the classical Sard theorem, $\partial \Omega^{\sml\varepsilon}_t$ is a smooth hypersurface for almost every $t \in [0,\inf_{\partial D} \Phi)$. We also have that $\nu = \nabla w^\varepsilon_p/ \abs{\nabla w^\varepsilon_p}$ at any point of $\partial \Omega^{\sml{\varepsilon}}_t \smallsetminus \Crit(w^\varepsilon_p)$. Moreover, for every $\varepsilon>0$ we denote
\begin{equation}\label{eq:DefinizioneThetaEpsilon}
    \Err_\varepsilon \coloneqq \frac{\varepsilon^2}{\displaystyle\abs{\nabla \ee^{-\frac{w^\varepsilon_p}{(p-1)}}}^2_\varepsilon}=1- \frac{\displaystyle\abs{\nabla \ee^{-\frac{w^\varepsilon_p}{(p-1)}}}^2}{\displaystyle\abs{\nabla \ee^{-\frac{w^\varepsilon_p}{(p-1)}}}^2_\varepsilon}\in [0,1].
\end{equation}
Exploiting the equation in \cref{eq:eps_moser_potential}, a direct computation shows that the mean curvature $\H$ of $\partial \Omega^{\sml\varepsilon}_t$ can be written as
\begin{equation}\label{eq:MeanCurvatureLevelWepsP}
    \H = \left( \abs{\nabla w^\varepsilon_p } - (p-1) \frac{ \ip{\nabla \abs{\nabla w^\varepsilon_p }|\nabla w^\varepsilon_p }}{\abs{\nabla w^\varepsilon_p}^2 }\right) \left(1+ \frac{2-p}{p-1} \Err_\varepsilon\right).
\end{equation}

\bigskip

We will be interested in the limit of the functions $w^\varepsilon_p$ as $\varepsilon\to 0^+$. We denote by $w_p$ the solution to the problem
\begin{equation}\label{eq:moser_potential}
    \begin{cases}
        \Delta_{p}w_p& =& \abs{\nabla w_p }^p &\text{on $ D \smallsetminus \Omega $,}\\
     w_p&=&0& \text{on $\partial \Omega$,}\\
     w_p&=&\Phi & \text{on $\partial D$,}
    \end{cases}
\end{equation}
where $\Delta_p f \coloneqq \div\left( \abs{ \nabla f}^{p-2} \nabla f\right)$ is the usual $p$-Laplace operator.

As a consequence of \cref{prop:StimeBaseWepsilonP} and classical regularity estimates, we have the following convergence and regularity result.
\begin{proposition}\label{prop:RegolaritaWp}
    Let $(M,g)$ be a Riemannian manifold and let $\Omega \subset M$ be closed and bounded with $\CS^{1,\beta}$-boundary, for some $\beta>0$. Let $w^\varepsilon_p$ (resp. $w_p$) be the solution to the problem \cref{eq:eps_moser_potential} (resp. \cref{eq:moser_potential}) for some $\Phi \in \Lip(\overline{D\smallsetminus\Omega})$, where $D$ is a smooth bounded open set such that $\Omega\subset D$. Then $w^\varepsilon_p$ converges to $w_p$ in $\CS^1_{\loc}(D \smallsetminus \Omega^\circ)$ and $\abs{\nabla w^\varepsilon_p }\rightharpoonup \abs{\nabla w_p}$ weakly in $W^{1,2}_{\loc} (D \smallsetminus \Omega)$. In particular, $w_p \in \CS^{1,\beta'}_{\loc}(D \smallsetminus \Omega^\circ)$ for some $\beta'\leq  \beta$ and $\abs{\nabla w_p}^{p-1} \in W^{1,2}_{\loc}(D\smallsetminus \Omega)$. Moreover, the function $w_p$ is smooth in $D \smallsetminus \Crit(w_p)$.
\end{proposition}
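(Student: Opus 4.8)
The plan is to pass to the limit $\varepsilon\to 0^+$ in the regularized family of \cref{eq:eps_moser_potential}, using the two uniform bounds of \cref{prop:StimeBaseWepsilonP}, to identify the limit with $w_p$ by a global variational comparison, and only then to read off the claimed regularity. First, from the uniform $\CS^{1,\beta'}$ bound on compact subsets of $D\smallsetminus\Omega^\circ$, Arzelà--Ascoli together with a diagonal argument over a compact exhaustion gives that every sequence $\varepsilon_j\to0^+$ admits a subsequence along which $w^{\varepsilon_j}_p\to\tilde w$ in $\CS^1_{\loc}(D\smallsetminus\Omega^\circ)$ for some $\tilde w\in\CS^{1,\beta'}_{\loc}$; from the uniform $W^{1,2}$ bound on compact subsets of $D\smallsetminus\Omega$ and reflexivity, along a further subsequence $\abs{\nabla w^{\varepsilon_j}_p}\rightharpoonup h$ weakly in $W^{1,2}_{\loc}(D\smallsetminus\Omega)$, and since $\abs{\nabla w^{\varepsilon_j}_p}\to\abs{\nabla\tilde w}$ locally uniformly one must have $h=\abs{\nabla\tilde w}$, so already $\abs{\nabla\tilde w}\in W^{1,2}_{\loc}(D\smallsetminus\Omega)$.

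The core step is to prove $\tilde w=w_p$. Via Moser's substitution $u=\ee^{-w/(p-1)}$ (a chain-rule identity), this amounts to showing that $\tilde u\coloneqq\ee^{-\tilde w/(p-1)}$ coincides with the $p$-harmonic function $u_p\coloneqq\ee^{-w_p/(p-1)}$. I would exploit that $u^\varepsilon\coloneqq\ee^{-w^\varepsilon_p/(p-1)}$ and $u_p$ are the unique minimizers, in the admissible class $\A\coloneqq\{v\in W^{1,p}(D\smallsetminus\Omega): v=1\text{ on }\partial\Omega,\ v=\ee^{-\Phi/(p-1)}\text{ on }\partial D\}$, of the strictly convex energies $E_\varepsilon(v)\coloneqq\tfrac1p\int_{D\smallsetminus\Omega}(\abs{\nabla v}^2+\varepsilon^2)^{p/2}$ and $E_0(v)\coloneqq\tfrac1p\int_{D\smallsetminus\Omega}\abs{\nabla v}^p$ respectively. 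Minimality, the inequality $(a+b)^{p/2}\le a^{p/2}+b^{p/2}$ valid for $p\le2$, and $\abs{D\smallsetminus\Omega}<+\infty$ give $E_0(u^\varepsilon)\le E_\varepsilon(u^\varepsilon)\le E_\varepsilon(u_p)\le E_0(u_p)+\tfrac{\varepsilon^p}{p}\abs{D\smallsetminus\Omega}\to E_0(u_p)$, hence $\{u^\varepsilon\}$ is bounded in $W^{1,p}(D\smallsetminus\Omega)$; every weak subsequential limit lies in $\A$ and has $E_0$-energy $\le E_0(u_p)$ by lower semicontinuity, so it equals $u_p$ by uniqueness of the minimizer, and moreover $E_0(u^\varepsilon)\to E_0(u_p)$. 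Uniform convexity of $L^p$ then upgrades weak to strong convergence $\nabla u^\varepsilon\to\nabla u_p$ in $L^p$, whence $u^\varepsilon\to u_p$ a.e.\ along a subsequence; matching with the $\CS^1_{\loc}$-limit $u^\varepsilon\to\tilde u$ forces $\tilde u=u_p$, i.e.\ $\tilde w=w_p$. Since the limit does not depend on the extracted subsequence, the whole families converge: $w^\varepsilon_p\to w_p$ in $\CS^1_{\loc}(D\smallsetminus\Omega^\circ)$ and $\abs{\nabla w^\varepsilon_p}\rightharpoonup\abs{\nabla w_p}$ in $W^{1,2}_{\loc}(D\smallsetminus\Omega)$.

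The remaining claims follow at once. We obtain $w_p=-(p-1)\log u_p\in\CS^{1,\beta'}_{\loc}(D\smallsetminus\Omega^\circ)$, since $u_p$, being the $\CS^1_{\loc}$-limit of the $\CS^{1,\beta'}_{\loc}$-equibounded family $\{u^\varepsilon\}$, lies in $\CS^{1,\beta'}_{\loc}$, while $0<\min_{\partial D}\ee^{-\Phi/(p-1)}\le u_p\le1$ by the comparison principle; we get $\abs{\nabla w_p}^{p-1}\in W^{1,2}_{\loc}(D\smallsetminus\Omega)$ by repeating the weak-compactness argument for $\abs{\nabla u^\varepsilon}_\varepsilon^{p-2}\nabla u^\varepsilon$, which is uniformly bounded in $W^{1,2}_{\loc}$ by the classical regularity theory underlying \cref{prop:StimeBaseWepsilonP}, combined with Kato's inequality $\abs{\nabla\abs{V}}\le\abs{\nabla V}$ and the locally Lipschitz factor relating $\abs{\nabla w_p}^{p-1}$ to $\abs{\nabla u_p}^{p-1}=\bigl|\,\abs{\nabla u_p}^{p-2}\nabla u_p\,\bigr|$; and on the open set $\{\nabla w_p\neq0\}$ away from $\overline{\Omega}$, \cref{eq:moser_potential} is uniformly elliptic with smooth coefficients, so a Schauder bootstrap yields $w_p\in\CS^\infty$ there.

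I expect the identification $\tilde w=w_p$ to be the main obstacle: one must pass to the limit in an equation whose ellipticity degenerates as $\varepsilon\to0^+$, and whose structure function $\abs{\nabla u}^{p-2}$ is moreover singular along the critical set when $p<2$, while simultaneously recovering the boundary datum on $\partial D$, where no uniform up-to-the-boundary estimate is available; the global energy comparison above dispatches both difficulties at once, which is why I prefer it to a purely local argument. If one instead wishes to pass to the limit directly in the weak formulation $\int\abs{\nabla u^\varepsilon}_\varepsilon^{p-2}\langle\nabla u^\varepsilon,\nabla\varphi\rangle=0$ for $\varphi\in\CS^\infty_c(D\smallsetminus\Omega)$, the key observation is that on $\spt\varphi$ the integrand is dominated by $\abs{\nabla u^\varepsilon}_\varepsilon^{p-1}\abs{\nabla\varphi}\le C\abs{\nabla\varphi}$ uniformly in $\varepsilon$, so dominated convergence gives $\int\abs{\nabla\tilde u}^{p-2}\langle\nabla\tilde u,\nabla\varphi\rangle=0$, i.e.\ $\tilde u$ is $p$-harmonic in the interior, after which a barrier argument is still required to fix its value on $\partial D$.
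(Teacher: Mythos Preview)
Your proposal is correct. The paper does not actually supply a proof of this proposition; it is stated as a direct consequence of \cref{prop:StimeBaseWepsilonP} together with the classical $p$-Laplace regularity results cited there (DiBenedetto, Tolksdorf, Lieberman, Lou), and your argument is precisely the standard way one makes that sentence rigorous. The global energy comparison $E_0(u^\varepsilon)\le E_\varepsilon(u^\varepsilon)\le E_\varepsilon(u_p)\le E_0(u_p)+\tfrac{\varepsilon^p}{p}\abs{D\smallsetminus\Omega}$ is a clean device for the identification step, and neatly sidesteps the need for a separate barrier argument at $\partial D$; the interior alternative you sketch (dominated convergence in the weak formulation) would also work but, as you note, requires an extra boundary step. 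One small point: the claim $\abs{\nabla w_p}^{p-1}\in W^{1,2}_{\loc}$ is not literally a consequence of $\abs{\nabla w_p}\in W^{1,2}_{\loc}$ when $p<2$ (the chain-rule factor $\abs{\nabla w_p}^{p-2}$ is singular on $\Crit(w_p)$), so you are right to go back to Lou's estimate on $\abs{\nabla u}^{p-2}\nabla u$ rather than trying to deduce it from the weak $W^{1,2}$ limit alone; this is exactly why the paper cites \cite{lou_singularsetslocalsolutions_2008} separately.
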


We remark that $\Crit(w_p)$ in \cref{prop:RegolaritaWp} may have positive $\Hff^n$-measure. Differently from $w^\varepsilon_p$ we can not apply Sard's theorem to $w_p$. Hence, the set of $t$'s in $[0,\inf_{\partial D} \Phi)$ such that $\partial \Omega^{\sml{p}}_t$ is not smooth could be of positive measure. Nevertheless, \cref{rmk:weakSard} ensures that for almost every $t \in [0,\inf_{\partial D}\Phi)$ it holds that $\partial \Omega^{\sml{p}}_t \cap \Crit(w_p)$ is $\Hff^{n-1}$-negligible. Also, the set $\partial \Omega^{\sml{p}}_t$ is smooth outside $\Crit(w_p)$. To be consistent with \cref{eq:DefinizioneThetaEpsilon} we impose $\Err_0\equiv 0$. Exploiting the equation \cref{eq:moser_potential}, a direct computation shows that the mean curvature $\H$ of $\partial \Omega^{\sml{p}}_t$ at regular points can be written as
\begin{equation}\label{eq:MeanCurvatureLevelWP}
    \H =  \abs{\nabla w_p } - (p-1) \frac{ \ip{\nabla \abs{\nabla w_p } |\nabla w_p }}{\abs{\nabla w_p}^2 },
\end{equation}
which is \cref{eq:MeanCurvatureLevelWepsP} for $\varepsilon=0$.

\smallskip
Solutions to \cref{eq:moser_potential} naturally come with the notion of $p$-capacity. For any compact $K \subseteq D$ and for $p>1$, we define the $p$-capacity relative to $D$, as
\begin{equation}\label{eq:capacitydefinition}
\ncapa_p(K, D) = \inf\set{\left(\frac{p-1}{n-p}\right)^{p-1} \frac{1}{\abs{\S^{n-1}}}\int_{D \smallsetminus K}\abs{\nabla \psi}^p \dif \Hff^n\st \psi \in \CS^\infty_c(D), \psi \geq 1 \text{ on }K}.
\end{equation}

We have the following classic result whose proof can be found in \cite[Lemma 3.8]{holopainen_nonlinearpotentialtheoryquasiregular_1990}.

\begin{lemma}\label{lem:p-cap_levelset}
Let $p>1$ and let $w_p$ be a solution to \cref{eq:moser_potential}. For every $\tau,t,T\in [0,\inf_{\partial D} \Phi)$ with $t<T$ we have
\begin{equation}
    \left(\ee^{-\frac{t}{p-1}}-\ee^{-\frac{T}{p-1}}\right)^{p-1}\ncapa_p(\Omega_t^{\sml{p}}, U_T^{\sml{p}}) = \frac{\ee^{-\tau}}{\abs{\S^{n-1}}} \int_{\partial \Omega_\tau^{\sml{p}}} \left( \frac{\abs{\nabla w_p}}{n-p}\right)^{p-1} \dif \Hff^{n-1}.
\end{equation}
\end{lemma}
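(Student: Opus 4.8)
The plan is to transfer everything to the $p$-harmonic function $u\coloneqq \ee^{-w_p/(p-1)}$ and reduce the claim to a pair of flux computations. The first thing I would record is the elementary chain rule $\nabla u=-\tfrac{u}{p-1}\nabla w_p$, which gives $\abs{\nabla u}=\tfrac{u}{p-1}\abs{\nabla w_p}$ and shows that the equation in \cref{eq:moser_potential} is equivalent to $\Delta_p u=0$ weakly on $D\smallsetminus\Omega$ (alternatively this follows by passing to the limit $\varepsilon\to0^+$ in \cref{eq:eps_moser_potential} via \cref{prop:RegolaritaWp}). In particular $u$ is a non-constant $p$-harmonic function, hence obeys the strong maximum principle, so for $t<T$ the set $U_T^{\sml{p}}\smallsetminus\Omega_t^{\sml{p}}$ is precisely the region $\{t\le w_p\le T\}$ with the level sets $\partial\Omega_t^{\sml{p}}=\{w_p=t\}$ and $\partial U_T^{\sml{p}}=\{w_p=T\}$ as its two boundary components, and on each of them $\nabla u$ is anti-parallel to the outer unit normal $\nu=\nabla w_p/\abs{\nabla w_p}$ (off $\Crit(w_p)$).

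Second, I would identify the $p$-capacitary potential of the condenser $(\Omega_t^{\sml{p}},U_T^{\sml{p}})$: since $\Omega_t^{\sml{p}}\subseteq U_T^{\sml{p}}$, the affinely normalised function
\[
v\coloneqq \frac{u-\ee^{-T/(p-1)}}{\ee^{-t/(p-1)}-\ee^{-T/(p-1)}}
\]
is $p$-harmonic on $U_T^{\sml{p}}\smallsetminus\Omega_t^{\sml{p}}$, takes the value $1$ on $\partial\Omega_t^{\sml{p}}$ and $0$ on $\partial U_T^{\sml{p}}$, and lies in $[0,1]$; by the variational characterisation of capacity and uniqueness of the $p$-capacitary potential it is the minimiser in \cref{eq:capacitydefinition}, so
\[
\ncapa_p(\Omega_t^{\sml{p}},U_T^{\sml{p}})=\Big(\tfrac{p-1}{n-p}\Big)^{p-1}\frac{1}{\abs{\S^{n-1}}}\int_{U_T^{\sml{p}}\smallsetminus\Omega_t^{\sml{p}}}\abs{\nabla v}^p\,\dif\Hff^n=\frac{(p-1)^{p-1}(n-p)^{-(p-1)}}{\abs{\S^{n-1}}\,\big(\ee^{-t/(p-1)}-\ee^{-T/(p-1)}\big)^{p}}\int_{U_T^{\sml{p}}\smallsetminus\Omega_t^{\sml{p}}}\abs{\nabla u}^p\,\dif\Hff^n .
\]

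The computation is then closed by two divergence identities for $u$. Since $\abs{\nabla u}^{p-2}\nabla u$ is divergence-free, the flux $\tau\mapsto\int_{\partial\Omega_\tau^{\sml{p}}}\abs{\nabla u}^{p-1}\,\dif\Hff^{n-1}$ is independent of $\tau$; using $\abs{\nabla u}=\tfrac{u}{p-1}\abs{\nabla w_p}$ and $u\equiv\ee^{-\tau/(p-1)}$ on $\partial\Omega_\tau^{\sml{p}}$, this says that $\ee^{-\tau}\int_{\partial\Omega_\tau^{\sml{p}}}\abs{\nabla w_p}^{p-1}\,\dif\Hff^{n-1}$ is constant in $\tau$. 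Moreover $\div\big(u\,\abs{\nabla u}^{p-2}\nabla u\big)=\abs{\nabla u}^p$, so integrating over $U_T^{\sml{p}}\smallsetminus\Omega_t^{\sml{p}}$ and using that $u$ takes the constant values $\ee^{-t/(p-1)}$ and $\ee^{-T/(p-1)}$ on the two boundary components together with the orientation of $\nu$ from Step 1 yields
\[
\int_{U_T^{\sml{p}}\smallsetminus\Omega_t^{\sml{p}}}\abs{\nabla u}^p\,\dif\Hff^n=\big(\ee^{-t/(p-1)}-\ee^{-T/(p-1)}\big)\int_{\partial\Omega_\tau^{\sml{p}}}\abs{\nabla u}^{p-1}\,\dif\Hff^{n-1}.
\]
Substituting this into the capacity formula and rewriting the right-hand flux back in terms of $w_p$ via $\abs{\nabla u}=\tfrac{u}{p-1}\abs{\nabla w_p}$ makes all the constants collapse to the asserted identity.

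The hard part is purely a regularity matter: $w_p$ is only $\CS^{1,\beta}_{\loc}$, $\Crit(w_p)$ may carry positive $\Hff^n$-measure, and $\partial\Omega_t^{\sml{p}}$ need not be smooth, so neither divergence theorem is immediate. I would handle this along the standard lines — invoke \cref{rmk:weakSard} so that for a.e.\ $t$ the set $\partial\Omega_t^{\sml{p}}\cap\Crit(w_p)$ is $\Hff^{n-1}$-negligible and $\partial\Omega_t^{\sml{p}}$ is a smooth hypersurface elsewhere, use $\abs{\nabla w_p}^{p-1}\in W^{1,2}_{\loc}$ from \cref{prop:RegolaritaWp} to make sense of the flux of $\abs{\nabla u}^{p-2}\nabla u$, and carry out the two integrations by parts through the coarea formula on the smooth approximations $w_p^\varepsilon$, passing to the limit $\varepsilon\to0^+$ again by \cref{prop:RegolaritaWp}. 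This is exactly the route followed in \cite[Lemma 3.8]{holopainen_nonlinearpotentialtheoryquasiregular_1990}.
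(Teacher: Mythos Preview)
Your proposal is correct and reconstructs precisely the classical argument that the paper defers to by citing \cite[Lemma 3.8]{holopainen_nonlinearpotentialtheoryquasiregular_1990}: pass to the $p$-harmonic function $u=\ee^{-w_p/(p-1)}$, identify its affine renormalisation as the $p$-capacitary potential of the condenser, and compute the energy via the constant flux of $\abs{\nabla u}^{p-2}\nabla u$. The regularity handling through $\varepsilon$-approximation and \cref{rmk:weakSard} is the standard route as well.
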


\subsection{Inverse mean curvature flow}
    We will assume the existence of a proper locally Lipschitz solution $w_1$ to
    \begin{equation}\label{eq:IMCF}
        \begin{cases}
            \div\left( \dfrac{\nabla w_1}{\abs{\nabla w_1}}\right)& =& \abs{\nabla w_1 } &\text{on $ M \smallsetminus \Omega $,}\\
         w_1&=&0& \text{on $\partial \Omega$,}
        \end{cases}
    \end{equation}
    in the sense of \cite{huisken_inversemeancurvatureflow_2001}. Within this work, we adopt the following weaker notion of properness for a weak IMCF: a locally Lipschitz function $w_1$ bounded from below is said to be proper if $\set{ w_1 \le T}$ is precompact for any $T< \sup w_1$. Although we can have many locally Lipschitz solutions to \cref{eq:IMCF}, we recall that they coincide as soon as level sets remain bounded (see \cite[Uniqueness Theorem 2.2]{huisken_inversemeancurvatureflow_2001}). In particular, the proper solution is unique.
    
    Since $w_1$ is locally Lipschitz, \cref{rmk:weakSard} implies that $\partial \Omega^{\sml{1}}_t\cap \Crit(w_1)$ is $\Hff^{n-1}$-negligible for almost every $t \in [0,\sup_M w_1)$. However, differently from $w_p$, we cannot say that $\partial \Omega_t^{\sml{1}}$ is smooth outside the critical set. A regularity result for the level sets $\partial \Omega^{\sml{1}}_t$ still holds. We will exploit the properties in the following proposition in which we combine various results that appeared in the literature (see for example \cite[Regularity Theorem 1.3]{huisken_inversemeancurvatureflow_2001}, \cite[Lemma 2.8]{xu_isoperimetrypropernessweakinverse_2023} and the references therein). 
    \begin{proposition}\label{prop:regularityLevelsetIMCF}
          Let $(M,g)$ be a Riemannian manifold and let $\Omega \subset M$ be closed and bounded with $\CS^{1,\beta}$-boundary, for some $\beta>0$. Let $w_1$ be the proper locally Lipschitz solution to \cref{eq:IMCF}. Then,
          \begin{enumerate}
              \item for each $t\geq 0$, $\partial \Omega^{\sml{1}}_t$ is locally a $\CS^{1,\alpha}$, for $\alpha <1/2$, hypersurface out of a closed set $S_t$ of Hausdorff dimension at most $n-8$;
              \item the set $\partial \Omega^{\sml{1}}_t\smallsetminus(\partial U^{\sml{1}}_t \cup S_t)$ is a minimal smooth hypersurfaces for each $t \geq 0$;
              \item \label{item:convergenceC1bIMCF}for every $t\geq 0$, $\partial U^{\sml{1}}_s$ converges in $\CS^{1,\beta'}$ to $\partial\Omega^{\sml{1}}_t$ around each point of $\partial \Omega^{\sml{1}}_t\smallsetminus S_t$ as $s \to t^+$, for $\beta'<\beta$;
              \item for every $t> 0$, $\partial U^{\sml{1}}_s$ converges in $\CS^{1,\beta'}$ to $\partial U^{\sml{1}}_t$ around each point of $\partial \Omega^{\sml{1}}_t\smallsetminus S_t$ as $s \to t^-$, for $\beta'<\beta$.
          \end{enumerate}
    \end{proposition}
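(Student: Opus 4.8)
The plan is to assemble the statement from the minimizing-hull formalism of \cite{huisken_inversemeancurvatureflow_2001} together with the refinements in \cite{xu_isoperimetrypropernessweakinverse_2023}; the proposition is essentially a bookkeeping of facts already available there. Throughout write $E_t \coloneqq \Omega^{\sml{1}}_t$ and $F_t \coloneqq U^{\sml{1}}_t$. The starting point is that, by the very definition of a weak solution to \cref{eq:IMCF}, for every $t\ge 0$ the sets $E_t$ and $F_t$ are minimizing hulls in $M\smallsetminus\Omega$: their reduced boundaries locally minimize perimeter plus a bounded bulk term (the one built from $\abs{\nabla w_1}$, which is bounded because $w_1$ is locally Lipschitz) among compactly supported perturbations; moreover $E_t$ is the strictly minimizing hull of $F_t$, and $\abs{\nabla w_1}=0$ a.e.\ on $E_t\smallsetminus F_t$, since a Lipschitz function is stationary a.e.\ on any positive-measure level set.

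For \emph{(1)} and \emph{(2)} I would invoke the regularity theory for minimizers of perimeter plus a bounded bulk term. By \cite[Regularity Theorem 1.3]{huisken_inversemeancurvatureflow_2001} (see also \cite[Lemma 2.8]{xu_isoperimetrypropernessweakinverse_2023}), $\partial E_t$ is, away from a relatively closed set $S_t$ with $\dim_{\Hff}S_t\le n-8$, a $\CS^{1,\alpha}$ hypersurface for every $\alpha<1/2$, the regularity near $\partial\Omega$ being inherited from the $\CS^{1,\beta}$ regularity of $\partial\Omega$. On the ``jump region'' $\partial E_t\smallsetminus(\partial F_t\cup S_t)$ the set $E_t$ is minimizing from both sides, because this piece is the portion of space swept instantaneously by the flow; hence it is a stable two-sided minimal boundary, smooth away from $S_t$ by the Schoen--Simon regularity theory, and its mean curvature vanishes since $\H=\abs{\nabla w_1}=0$ there. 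This gives (2).

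For \emph{(3)} and \emph{(4)} the key elementary observation is that, since $F_s=\set{w_1<s}\cup\Omega$, the family $s\mapsto F_s$ is nondecreasing with $\bigcup_{s<t}F_s=F_t$ and $\bigcap_{s>t}F_s=E_t$; in particular $\chi_{F_s}\to\chi_{F_t}$ in $L^1_{\loc}$ as $s\to t^-$ and $\chi_{F_s}\to\chi_{E_t}$ in $L^1_{\loc}$ as $s\to t^+$. Because each $F_s$ is a minimizing hull with locally uniformly bounded perimeter, this upgrades to convergence of $\partial F_s$, both as varifolds and in the local Hausdorff distance, to $\partial F_t$ (resp.\ $\partial E_t$). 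Finally, near a point $x\in\partial E_t\smallsetminus S_t$ the $\CS^{1,\beta'}$ estimates for the boundaries $\partial F_s$ are uniform for $s$ close to $t$ --- the constants depend only on a local $L^\infty$ bound for $\abs{\nabla w_1}$, on uniform density ratios, and, near $\partial\Omega$, on the $\CS^{1,\beta}$ norm of $\partial\Omega$ --- so along any sequence $s_k\to t$ a subsequence of $\partial F_{s_k}$ converges in $\CS^{1,\beta'}$ near $x$ to a hypersurface which the $L^1_{\loc}$ convergence identifies with $\partial F_t$ (if $s_k\to t^-$) or $\partial E_t$ (if $s_k\to t^+$); uniqueness of the limit promotes this to convergence of the whole family, exactly as in \cite[Regularity Theorem 1.3]{huisken_inversemeancurvatureflow_2001} and \cite[Lemma 2.8]{xu_isoperimetrypropernessweakinverse_2023}.

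The step I expect to require the most care is precisely this last upgrade, from $L^1_{\loc}$ and Hausdorff convergence of the sets $F_s$ to $\CS^{1,\beta'}$ convergence of their boundaries near points of $\partial E_t\smallsetminus S_t$: it hinges on the regularity estimates for the minimizing hulls $\partial F_s$ being \emph{uniform in $s$} near $t$, and on controlling the exceptional sets $S_t$ and $\partial F_t$, both of which may move with $t$. Both issues are handled exactly as in \cite{huisken_inversemeancurvatureflow_2001,xu_isoperimetrypropernessweakinverse_2023}, from which the proposition follows.
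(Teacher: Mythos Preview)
Your proposal is correct and follows essentially the same approach as the paper: both treat the proposition as a bookkeeping exercise that combines the regularity theory from \cite[Regularity Theorem 1.3]{huisken_inversemeancurvatureflow_2001} and \cite[Lemma 2.8]{xu_isoperimetrypropernessweakinverse_2023} with a uniform-estimate argument for the convergence assertions. The only difference is that the paper points more specifically to \cite[Propositions 1--2]{tamanini_boundariesCaccioppoliSets_1982} as the source of the $\CS^{1,\alpha}$ regularity and of the convergence scheme, noting that the almost-minimality estimate \cite[($*$)]{tamanini_boundariesCaccioppoliSets_1982} holds at a uniform scale for $\partial U^{\sml{1}}_s$ because $w_1$ is locally Lipschitz---which is exactly the uniform bound you identify as the crucial step.
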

    For the previous regularity result, the reader is referred to \cite[Propostion 2]{tamanini_boundariesCaccioppoliSets_1982} (see also \cite{miranda_FrontiereMinimaliOstacoli_1971}). The convergence assertions can be obtained as in \cite[Proposition 1]{tamanini_boundariesCaccioppoliSets_1982} by replacing \cite[(3.4)]{tamanini_boundariesCaccioppoliSets_1982} with the minimality property enjoyed by $U^{\sml{1}}_{s}$. Indeed, since the weak IMCF $w_1$ is locally Lipschitz, the estimate \cite[($*$)]{tamanini_boundariesCaccioppoliSets_1982} can be proved at a uniform scale for every $\partial U^{\sml{1}}_{s}$ with a uniform constant independent of $s$ around any $x \in \partial\Omega^{\sml{1}}_t\smallsetminus S_t$.

    The $\CS^{1,\beta}$-regularity is not sufficient in general to introduce a notion of mean curvature $\H$, even in a weak sense. However, this is possible for $\partial \Omega^{\sml{1}}_t$ since $w_1$ is a minimizer of a functional and its gradient is locally bounded. The mean curvature of the level sets of $w_1$ was already introduced in the work of Huisken and Ilmanen (see \cref{rmk:equivalent_formulation_monotonicity_IMCF} below). Assuming the function $w_1$ is smooth with nonvanishing gradient, the equation in \cref{eq:IMCF} gives
    \begin{equation}\label{eq:MeanCurvatureLevelW1}
        \H = \abs{\nabla w_1}.
    \end{equation}
    One can recognize \cref{eq:MeanCurvatureLevelWP} for $p=1$. We will show that this identity holds on $\partial \Omega^{\sml{1}}_t$ out of a $\Hff^{n-1}$-negligible set for almost every $t\in [0,\sup_M w_1)$. 

    Given $\Omega\subseteq M$ we will denote by $\Omega^*$ the strictly outward minimizing hull of $\Omega$ in $M$, which is the maximal volume solution to the least area problem with obstacle $\Omega$. A set $\Omega$ is strictly outward minimizing provided $\Omega=\Omega^*$. The reader is referred to \cite{fogagnolo_minimisinghullscapacityisoperimetric_2022} for a complete dissertation on this topic (see also \cite[Proposition 2]{tamanini_boundariesCaccioppoliSets_1982}). 
    \begin{proposition}\label{prop:outwardminimizinghull}
          Let $(M,g)$ be a Riemannian manifold and let $\Omega \subset M$ be closed and bounded with $\CS^{1,\beta}$-boundary, for some $\beta>0$. Let $w_1$ be the proper locally Lipschitz solution to \cref{eq:IMCF}. Then
          \begin{enumerate}
              \item  $\Omega^{\sml{1}}_t$ is strictly outward minimizing for every $t>0$;
              \item\label{item:hull0}  for $t=0$ we have  that $\Omega^{\sml{1}}_0 = \Omega^*$ and the singular set $S_0\subset \partial \Omega^{\sml{1}}_0$ is disjoint from $\partial \Omega$;
              \item for any $t>0$, $\abs{\partial \Omega^{\sml{1}}_t} = \abs{\partial U^{\sml{1}}_t}$; $\abs{\partial \Omega^{\sml{1}}_0} \leq \abs{\partial \Omega}$ and the equality holds provided $\partial \Omega$ is outward minimizing.
          \end{enumerate}
    \end{proposition}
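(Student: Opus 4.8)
The plan is to recognize the three items as a repackaging, in the language of strictly outward minimizing hulls recalled above, of Huisken and Ilmanen's Minimizing Hull Property, of the exponential growth of boundary area along the weak flow, and of the regularity theory for the obstacle problem. Throughout, recall that in our notation $U^{\sml{1}}_t=\set{w_1<t}\cup\Omega$, $\Omega^{\sml{1}}_t=\set{w_1\le t}\cup\Omega=\bigcap_{s>t}U^{\sml{1}}_s$, and $U^{\sml{1}}_0=\Omega$ since $w_1\ge0$.

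\emph{Proof of (1) and of the first identity in (3).} By \cite[Minimizing Hull Property 1.4]{huisken_inversemeancurvatureflow_2001}, $U^{\sml{1}}_t$ is a strictly minimizing hull for every $t>0$, $\Omega^{\sml{1}}_t$ is a minimizing hull for every $t>0$, and $\Omega^{\sml{1}}_0=\Omega^*$; identifying the notion of minimizing hull with that of (strictly) outward minimizing set recalled above (see \cite{fogagnolo_minimisinghullscapacityisoperimetric_2022}), this is in particular the set equality in (2). For the identity $\abs{\partial U^{\sml{1}}_t}=\abs{\partial\Omega^{\sml{1}}_t}$, $t>0$, I would use that boundary area is monotone along nested minimizing hulls, so that from $U^{\sml{1}}_s\subseteq\Omega^{\sml{1}}_t\subseteq U^{\sml{1}}_{s'}$ (valid for $s<t<s'$) one gets $\abs{\partial U^{\sml{1}}_s}\le\abs{\partial\Omega^{\sml{1}}_t}\le\abs{\partial U^{\sml{1}}_{s'}}$; the exponential growth lemma of \cite{huisken_inversemeancurvatureflow_2001} makes $s\mapsto\abs{\partial U^{\sml{1}}_s}$ continuous on $(0,\infty)$, and letting $s\uparrow t$, $s'\downarrow t$ squeezes out the equality. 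Finally $\Omega^{\sml{1}}_t$ is \emph{strictly} outward minimizing: were there $F\supsetneq\Omega^{\sml{1}}_t$ with $\abs{\partial F}\le\abs{\partial\Omega^{\sml{1}}_t}=\abs{\partial U^{\sml{1}}_t}$, then $F\supsetneq U^{\sml{1}}_t$ with no larger boundary area, contradicting the strict minimality of $U^{\sml{1}}_t$.

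\emph{Proof of (2) and of (3) at $t=0$.} The set equality $\Omega^{\sml{1}}_0=\Omega^*$ is already recorded. Since $\Omega^*$ is the maximal-volume minimizer of the least-area problem with obstacle $\Omega$, testing against the competitor $\Omega$ gives $\abs{\partial\Omega^{\sml{1}}_0}=\abs{\partial\Omega^*}\le\abs{\partial\Omega}$, with equality precisely when $\partial\Omega$ is itself outward minimizing. For $S_0\cap\partial\Omega=\varnothing$: wherever $\partial\Omega^*$ touches $\partial\Omega$ the two coincide and are tangent, so by the regularity theory for the obstacle problem — this is \cite[Proposition 2]{tamanini_boundariesCaccioppoliSets_1982} under the standing $\CS^{1,\beta}$ assumption on $\partial\Omega$ — the set $\partial\Omega^*$ is $\CS^{1,\beta'}$ near those points; hence $S_0\subseteq\partial\Omega^{\sml{1}}_0\smallsetminus\partial\Omega$, which by \cref{prop:regularityLevelsetIMCF} is a smooth minimal hypersurface away from $S_0$. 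In particular $S_0$ is disjoint from $\partial\Omega$.

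The main obstacle is the ``strictly'' in (1), equivalently ruling out a jump of $s\mapsto\abs{\partial U^{\sml{1}}_s}$ from the right at $t>0$: this is controlled by the exponential growth lemma, which is the one genuinely analytic ingredient here. Everything else — the minimizing hull property itself and the obstacle-problem regularity yielding $S_0\cap\partial\Omega=\varnothing$ — is quoted from \cite{huisken_inversemeancurvatureflow_2001,tamanini_boundariesCaccioppoliSets_1982,fogagnolo_minimisinghullscapacityisoperimetric_2022}, and the remaining effort is bookkeeping and reconciling the conventions ``minimizing hull'' versus ``(strictly) outward minimizing''.
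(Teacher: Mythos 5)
Your proof is correct and is essentially the same approach as the paper's — the paper itself gives no standalone argument but simply refers to \cite{fogagnolo_minimisinghullscapacityisoperimetric_2022} and \cite[Proposition~2]{tamanini_boundariesCaccioppoliSets_1982} for this material, and you have filled in the bookkeeping by combining the Minimizing Hull Property and Exponential Growth Lemma of \cite{huisken_inversemeancurvatureflow_2001} with Tamanini's obstacle regularity, which is exactly the intended route. One small imprecision worth fixing: the exponential growth lemma (\cref{lem:exp_areagrowth}) controls $t\mapsto\abs{\partial \Omega^{\sml{1}}_t}$, not directly $s\mapsto\abs{\partial U^{\sml{1}}_s}$; continuity of the latter is a \emph{consequence} of your sandwich $\abs{\partial \Omega^{\sml{1}}_s}\le\abs{\partial U^{\sml{1}}_t}\le\abs{\partial\Omega^{\sml{1}}_t}$ (valid for $s<t$, by the minimizing-hull property of $\Omega^{\sml{1}}_s$ and of $U^{\sml{1}}_t$) together with the explicit formula $\abs{\partial\Omega^{\sml{1}}_\sigma}=\ee^{\sigma}\abs{\partial\Omega^*}$, rather than something quoted directly from the lemma — so the logical order should be: sandwich first, then read off $\abs{\partial U^{\sml{1}}_t}=\abs{\partial\Omega^{\sml{1}}_t}$ and continuity simultaneously.
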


    The $p$-capacity \cref{eq:capacitydefinition} can be extended to $p=1$, with the obvious convention that $(p-1)^{p-1}=1$ for $p=1$. The $1$-capacity is intimately connected with the area minimization problem. Indeed,
    \begin{equation}
        \abs{\S^{n-1}}\ncapa_1(K,D) = \inf\set{\abs{\partial E} \st K\subseteq E \subseteq D, E\text{ with smooth boundary}}
    \end{equation}
    by \cite[§2.2.5]{mazya_sobolevspacesapplicationselliptic_2011}. Observe that if $w_1 >0$ on $\partial D$ then $\Omega^* = \Omega^{\sml{1}}_0 \subseteq D$. Then, for a set $\Omega$ with $\CS^{1,\beta}$ boundary we have
    \begin{equation}\label{eq:somh_and_1cap}
       \abs{\S^{n-1}}\ncapa_1(\Omega,D)= \abs{\partial \Omega^*},
    \end{equation}
    for every $D$ containing $\Omega^*$. Indeed, $\Omega^*$ minimizes the perimeter among a larger class of sets, hence one inequality is trivial. The other one follows by \cite[Theorem 2.23]{fogagnolo_minimisinghullscapacityisoperimetric_2022}, taking an approximation of $\Omega^*$ by sets with smooth boundary entirely contained in $D$. In view of \cref{eq:somh_and_1cap}, the following result follows (see \cite[Exponential Growth Lemma 1.6]{huisken_inversemeancurvatureflow_2001}).
    \begin{lemma}\label{lem:exp_areagrowth}
        Let $w_1$ be the proper locally Lipschitz solution to \cref{eq:IMCF}. Then
        \begin{equation}
            \abs{\partial \Omega^{\sml{1}}_t} = \ee^t \abs{\partial \Omega^*}.
        \end{equation}
    \end{lemma}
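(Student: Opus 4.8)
The plan is to deduce the area-growth formula from the characterization \cref{eq:somh_and_1cap} of the $1$-capacity together with the monotonicity and weak-minimization structure of the level sets $\Omega^{\sml 1}_t$. The heart of the matter is the Exponential Growth Lemma of Huisken--Ilmanen, which I would reprove here in the language already set up. First I would recall that, for a proper weak IMCF, the level sets satisfy the exponential growth of area $\abs{\partial \Omega^{\sml 1}_t} = \ee^t\abs{\partial\Omega^{\sml 1}_0}$ for a.e.\ $t$, and that by \cref{prop:outwardminimizinghull}\,\ref{item:hull0} we have $\Omega^{\sml 1}_0 = \Omega^*$; hence $\abs{\partial\Omega^{\sml 1}_0} = \abs{\partial\Omega^*}$. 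Combining these two facts gives $\abs{\partial\Omega^{\sml 1}_t} = \ee^t\abs{\partial\Omega^*}$ for a.e.\ $t$.

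Next I would upgrade ``for a.e.\ $t$'' to ``for every $t$''. The map $t\mapsto \abs{\partial\Omega^{\sml 1}_t}$ is monotone nondecreasing because each $\Omega^{\sml 1}_t$ is strictly outward minimizing (\cref{prop:outwardminimizinghull}) and the sets are nested, so $\partial\Omega^{\sml 1}_s$ is an admissible competitor showing $\abs{\partial\Omega^{\sml 1}_s}\le\abs{\partial\Omega^{\sml 1}_t}$ for $s<t$; a monotone function that agrees with the continuous function $\ee^t\abs{\partial\Omega^*}$ on a set of full measure must agree with it everywhere except possibly at jump points, and one rules those out using the $\CS^{1,\beta}$-convergence of $\partial U^{\sml 1}_s$ to $\partial\Omega^{\sml 1}_t$ as $s\to t^-$ and of $\partial U^{\sml 1}_s$ to $\partial\Omega^{\sml 1}_t$ as $s\to t^+$ (\cref{prop:regularityLevelsetIMCF}, items \ref{item:convergenceC1bIMCF} and the subsequent one) together with $\abs{\partial\Omega^{\sml 1}_t} = \abs{\partial U^{\sml 1}_t}$ from \cref{prop:outwardminimizinghull}; these give left- and right-continuity of $t\mapsto\abs{\partial\Omega^{\sml 1}_t}$ away from the singular strata, whose codimension bound makes them area-negligible.

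Alternatively, and perhaps more cleanly, I would invoke \cref{eq:somh_and_1cap} directly at each level: since $\Omega^{\sml 1}_t$ is strictly outward minimizing it equals its own strictly outward minimizing hull, so $\abs{\partial\Omega^{\sml 1}_t} = \abs{S^{n-1}}\,\ncapa_1(\Omega^{\sml 1}_t, D)$ for any $D$ containing it, and then the defining variational problem for weak IMCF (the fact that $w_1$ is a minimizer of the associated functional, Huisken--Ilmanen's calibration/comparison argument) yields the differential relation $\frac{d}{dt}\abs{\partial\Omega^{\sml 1}_t} = \abs{\partial\Omega^{\sml 1}_t}$ in the distributional sense; integrating and using the initial value $\abs{\partial\Omega^{\sml 1}_0}=\abs{\partial\Omega^*}$ closes the argument for every $t$.

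The main obstacle is the passage from the almost-everywhere identity to the everywhere identity, i.e.\ controlling the possible jump set of $t\mapsto\abs{\partial\Omega^{\sml 1}_t}$: this requires knowing that $\abs{\partial U^{\sml 1}_t}=\abs{\partial\Omega^{\sml 1}_t}$ for all $t>0$ and that the area functional is continuous along the $\CS^{1,\beta}_{\loc}$-convergence of the regular parts of the level sets, both of which are already packaged into \cref{prop:regularityLevelsetIMCF} and \cref{prop:outwardminimizinghull}. Everything else is a direct combination of the exponential area growth of the weak IMCF with the identification $\Omega^{\sml 1}_0=\Omega^*$ and the capacity formula \cref{eq:somh_and_1cap}.
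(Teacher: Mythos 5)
Your plan is correct and essentially coincides with the paper's, which simply cites Huisken--Ilmanen's Exponential Growth Lemma~1.6 and identifies $\abs{\partial\Omega^{\sml{1}}_0}=\abs{\partial\Omega^*}$ via \cref{prop:outwardminimizinghull} and \cref{eq:somh_and_1cap}. The ``a.e.\ to everywhere'' upgrade you flag is not an extra gap to bridge: Huisken--Ilmanen's lemma already holds for every $t$, and the continuity-and-monotonicity argument you sketch is precisely the mechanism built into their proof.
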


    \subsection{Local approximation of IMCF}
    We want to show that $w_1$ can be locally uniformly approximated on a given open set $D$ by the solutions $(w_p)_{p\geq 1}$ to \cref{eq:moser_potential} as $p\to 1^+$. We firstly need a uniform gradient bound for $(w_p)_{p \geq 1}$.
    
    \begin{proposition}[Uniform gradient bound]\label{prop:uniform_gradient}
    Let $(M,g)$ be a complete Riemannian manifold and let $\Omega \subseteq M$ be a bounded closed set with $\CS^{1,1}$-boundary. Let $D$ be a bounded open set containing $\Omega$ and let $w_p$ be the solution to \cref{eq:moser_potential} for $p\in(1,2]$, for some $\Phi \in \Lip(\overline{D\smallsetminus\Omega})$. Then for any compact set $K\subset D$ there exists a positive constant $\kst$ depending only on the boundary $\partial \Omega$, $n$, $K$ and a lower bound on the sectional curvature on some tubular neighbourhood of $K$, such that
    \begin{equation}
        \norm{\nabla w_p}_{L^\infty(K\smallsetminus\Omega^\circ)} \leq \kst.
    \end{equation}
    \end{proposition}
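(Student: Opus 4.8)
The plan is to derive the estimate via a barrier argument localized near $\partial\Omega$ combined with an interior gradient estimate that is stable as $p\to 1^+$. First I would reduce the problem to two separate bounds: a \emph{boundary gradient estimate} controlling $|\nabla w_p|$ on $\partial\Omega$, and an \emph{interior gradient estimate} controlling $|\nabla w_p|$ on compact subsets of $D\smallsetminus\Omega$ in terms of the oscillation of $w_p$ and the geometry. Since $w_p\ge 0$ with $w_p=0$ on $\partial\Omega$, the maximum principle applied to the equation $\Delta_p w_p=|\nabla w_p|^p\ge 0$ gives that $w_p$ is $p$-subharmonic, hence attains its maximum on $\partial D$; together with a lower barrier (e.g.\ comparison with the $p$-capacitary potential of $\Omega$ in $D$, or a radial sub/supersolution built from the distance to $\partial\Omega$) this yields a bound on $\sup_{K\smallsetminus\Omega^\circ} w_p$ depending only on $\partial\Omega$, $n$, $K$ and a curvature lower bound — uniformly in $p$.

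Next I would establish the boundary gradient bound. Because $\partial\Omega$ is $\CS^{1,1}$, it satisfies a uniform interior and exterior ball condition, so one can construct explicit radial barriers on annular regions: functions of the signed distance $d(x)=\dist(x,\partial\Omega)$ of the form $\phi(d)$ where $\phi$ solves an ODE obtained by plugging the ansatz into $\Delta_p(\cdot)=|\nabla(\cdot)|^p$ on a geodesic ball, using the Laplacian comparison $\Delta d\ge -(n-1)\sqrt{|\kappa|}\coth(\cdots)$ from the sectional curvature lower bound. The key point is that the resulting ODE and hence the bound $|\phi'(0)|$ can be taken uniform for $p\in(1,2]$ — the $p$-Laplacian structure only enters through the factor $|\phi'|^{p-2}$, which one arranges to be harmless by working with $\phi'$ bounded away from $0$ and $\infty$ on the barrier region. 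Comparison (using that $w_p\le \Phi$ on $\partial D$, suitably bounded, and $w_p=0=\phi$ on $\partial\Omega$) then pins $|\nabla w_p|\le\kst$ on $\partial\Omega$.

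For the interior estimate I would invoke the Cheng--Yau / Moser-type gradient estimate for the equation $\Delta_p w_p=|\nabla w_p|^p$, or equivalently pass to $v_p=\ee^{-w_p/(p-1)}$ which is $p$-harmonic and apply the standard interior gradient estimate for positive $p$-harmonic functions on Riemannian manifolds with Ricci (or sectional) curvature bounded below — this is exactly the estimate that, in the limit $p\to1^+$, degenerates to Huisken--Ilmanen's gradient bound for the weak IMCF. The constants in these estimates depend only on $n$, the lower curvature bound on a tubular neighbourhood of $K$, the distance from $K$ to $\partial D$, and $\sup v_p/\inf v_p$, all of which we have just controlled uniformly in $p$. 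Combining the boundary bound, the interior bound, and a covering argument to patch a neighbourhood of $\partial\Omega$ to the interior region gives the claim on all of $K\smallsetminus\Omega^\circ$. The main obstacle is ensuring genuine \emph{uniformity in $p$} as $p\to1^+$: the $p$-Laplacian becomes degenerate/singular and the natural constants in both the barrier ODE and the Cheng--Yau estimate a priori blow up; the crux is to track that the $p$-dependent factors $(p-1)$, $p$, and $|\nabla w_p|^{p-2}$ enter only through quantities we can bound above and below independently of $p$, which is precisely where the $\CS^{1,1}$ regularity of $\partial\Omega$ and the a priori sublevel bound are used.
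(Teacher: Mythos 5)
Your overall decomposition into a boundary gradient estimate near $\partial\Omega$ plus an interior gradient estimate for the positive $p$-harmonic function $v_p=\ee^{-w_p/(p-1)}$, suitably glued, is essentially the decomposition the paper uses: the two estimates you sketch correspond respectively to the boundary estimate of Kotschwar--Ni and to their scale-invariant interior bound on $\abs{\nabla \log v_p}$, and the paper glues them with a gradient \emph{maximum principle} of Mari--Rigoli--Setti (a pointwise maximum principle directly for $\abs{\nabla w_p}$), which replaces your covering argument and is what makes the final constant independent of $\Phi$ and $D$ essentially for free.

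However, your preliminary step contains a genuine error which then infects the interior estimate. You assert that $\sup_{K\smallsetminus\Omega^\circ} w_p$ can be bounded independently of $\Phi$; this is false, since $w_p$ is $p$-subharmonic, so $\sup_{\overline{D\smallsetminus\Omega}} w_p = \sup_{\partial D}\Phi$, and no lower barrier can remove this dependence. You then state that the interior Cheng--Yau estimate depends on $\sup v_p/\inf v_p$, and that this quantity has \enquote{just been controlled uniformly in $p$}. Neither claim is correct: the ratio equals $\exp\bigl(\operatorname{osc}_K w_p/(p-1)\bigr)$, so it blows up as $p\to1^+$ even if $\operatorname{osc}_K w_p$ were bounded (and you have not actually bounded it uniformly in $\Phi$). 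If the interior gradient estimate really required this ratio your argument would collapse. It works only because the correct Kotschwar--Ni/Cheng--Yau estimate is for $\abs{\nabla\log v_p}$ and is \emph{independent} of $\sup v_p/\inf v_p$; once this is recognised, the preliminary $\sup w_p$ step should simply be dropped and the scale-invariant interior bound applied directly.
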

    
    \begin{proof}
        Without loss of generality, we can assume that $\Omega^\circ\subset K$ and that $K$ has smooth boundary. We have $\ee^{-w_p/(p-1)}>0$ on $K\smallsetminus \Omega^\circ$. Hence \cite[Theorem 2.22]{mari_flowlaplaceapproximationnew_2022} applies, which yields
        \begin{equation}
            \sup_{K\smallsetminus\Omega^\circ} \abs{\nabla w_p} \leq \max\set{(n-1)\kappa, \limsup_{x\to \partial K \cup \partial \Omega} \abs{\nabla w_p}},
        \end{equation}
        where $\kappa$ is such that $\Ric \geq - (n-1)\kappa^2$ on some tubular neighbourhood of $K$. By \cite[Proposition 3.3]{kotschwar_localgradientestimatesharmonic_2009}, since $\abs{\nabla w_p}$ is continuous up to the boundary $\partial \Omega$, then $\limsup_{x\to \partial \Omega} \abs{\nabla w_p}$ is bounded by a constant depending on $\Omega$ and $\kappa$. On the other hand, $\partial K$ is in the interior of $D$. Hence, we can use \cite[Theorem 1.1]{kotschwar_localgradientestimatesharmonic_2009} to bound $\norm{\abs{\nabla w_p}}_{L^\infty(\partial K)}$ with a constant with same dependencies claimed in the statement.
    \end{proof}

    The following theorem is a local version of the Moser approximation \cite{moser_inversemeancurvatureflow_2007}. The statement says that any proper global solution to \cref{eq:IMCF} can be locally approximated by solutions to \cref{eq:moser_potential} on some bounded domain. The idea of the proof below was suggested to the authors by Mattia Fogagnolo.
    
    \begin{theorem}[Local approximation of IMCF]\label{thm:local-approximation}
    Let $(M,g)$ be a complete noncompact Riemannian manifold and $\Omega \subseteq M$ be a bounded closed subset with $\CS^{1,1}$-boundary. Suppose there exists a proper weak IMCF $w_1$ on $M\smallsetminus \Omega$ starting from $\Omega$. Let $D$ be a bounded open set containing $\Omega$. Then, there exists a family $(\Phi_p)_{p> 1}$ in $\Lip(\overline{D\smallsetminus \Omega})$ such that $\Phi_p\to w_1$ uniformly on $\partial D$ as $p\to 1^+$, and such that the solutions $(w_p)_{p>1}$ to \cref{eq:moser_potential} with $\Phi =\Phi_p$ converge uniformly to $w_1$ on $D\smallsetminus \Omega^\circ$ as $p\to 1^+$.
    \end{theorem}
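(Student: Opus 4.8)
The plan is to build the boundary data $\Phi_p$ from the weak IMCF itself and then use barrier arguments together with the uniform gradient bound of \cref{prop:uniform_gradient} to upgrade the known locally uniform convergence of $p$-capacitary potentials to the IMCF into uniform convergence on all of $D\smallsetminus\Omega^\circ$. Concretely, I would first fix a slightly larger bounded open set $D'$ with $\overline D\subset D'$ and $\overline{D'}$ precompact (possible since $w_1$ is proper), and then regularize $w_1$: since $w_1$ is locally Lipschitz, I can take $\Phi_p\coloneqq w_1+\eum_p$ on $\partial D$ for a sequence $\eum_p\downarrow 0$ chosen so that $\Phi_p>0$ on $\partial D$ and $\Phi_p\to w_1$ uniformly on $\partial D$; if one prefers genuinely Lipschitz data one mollifies $w_1$ on a collar of $\partial D$ first. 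The point of the additive shift is to give monotonicity room for the comparison principle.

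The core of the argument is a two-sided barrier estimate. For the lower bound, I would exploit that the known (Moser-type) approximation gives, for each fixed compact $K\subset M\smallsetminus\Omega^\circ$, solutions $\hat w_p$ of \cref{eq:moser_potential} \emph{on $D'$} with some admissible boundary data converging locally uniformly to $w_1$ as $p\to1^+$; restricting attention to the region between $\Omega$ and $\partial D$ and using the comparison principle for the $p$-Laplace-type operator in \cref{eq:moser_potential} (here is where the exponential change of variable $v_p=\ee^{-w_p/(p-1)}$, which turns \cref{eq:moser_potential} into the homogeneous equation $\Delta_p^0 v_p=0$, makes the maximum principle transparent), I get $w_p\ge w_1-\omega(p)$ on $D\smallsetminus\Omega^\circ$ for some modulus $\omega(p)\to0$, once I check that the true boundary values $\Phi_p$ dominate the comparison solution on $\partial D$ up to $\omega(p)$. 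For the upper bound I argue symmetrically: the uniform gradient bound \cref{prop:uniform_gradient} gives an equi-Lipschitz family $(w_p)_{p>1}$ on $\overline D\smallsetminus\Omega^\circ$, so by Arzelà--Ascoli every sequence $p_k\to1^+$ has a subsequence with $w_{p_k}\to w_\infty$ uniformly on $\overline D\smallsetminus\Omega^\circ$; one then shows $w_\infty$ is a weak IMCF on $D\smallsetminus\Omega$ with $w_\infty=0$ on $\partial\Omega$ and $w_\infty=w_1$ on $\partial D$ (passing to the limit in the variational characterization à la Moser--Huisken--Ilmanen), and invokes the comparison/uniqueness for the weak IMCF with these boundary data — using \cite[Uniqueness Theorem 2.2]{huisken_inversemeancurvatureflow_2001} and the fact that $w_1$ restricted to $D\smallsetminus\Omega$ is itself such a solution — to conclude $w_\infty=w_1$ on $D\smallsetminus\Omega^\circ$. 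Since the limit is independent of the subsequence, the full family converges uniformly.

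The main obstacle I expect is the passage to the limit in the weak IMCF variational inequality on the bounded domain $D\smallsetminus\Omega$ with the moving Dirichlet data $\Phi_p$, and the matching of boundary values: the $p$-energy functionals $\int |\nabla v|^p$ do not converge to $\int|\nabla v|$ in a way that is automatically compatible with the Huisken--Ilmanen minimization (which is over sets/functions comparing $|\nabla u|+u|\nabla u|$), so one has to redo the $\Gamma$-type convergence argument from \cite{moser_inversemeancurvatureflow_2007} in the localized setting, tracking that the boundary term contributes $w_1$ in the limit. A secondary technical point is ensuring the comparison solutions used for the lower barrier genuinely exist with controlled boundary data on $D'$ and that their restrictions are ordered correctly with $w_p$ near $\partial D$; this is handled by choosing $D'$ so that $w_1<\inf_{\partial D}\Phi_p$ on $\overline{D}$ with margin, and by the strict positivity built into $\Phi_p$. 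Once these are in place, the Lipschitz bound does the rest of the work and the theorem follows.
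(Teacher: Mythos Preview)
Your lower-barrier step is circular. You write that ``the known (Moser-type) approximation gives \ldots\ solutions $\hat w_p$ on $D'$ with some admissible boundary data converging locally uniformly to $w_1$'', and then compare $w_p$ to $\hat w_p$. But producing such $\hat w_p$ is precisely the content of the theorem. The Moser-type results in the literature obtain global $p$-capacitary potentials and only then pass to the limit; this requires $(M,g)$ to be strongly $p$-nonparabolic, which is \emph{not} assumed here. The paper stresses exactly this point after the proof: one may have a proper weak IMCF while $M$ is $p$-parabolic for every $p>1$, so no global $p$-potential exists and there is nothing ``known'' to borrow for a barrier on $D'$.

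The paper circumvents this by manufacturing a setting where barriers are explicit. It embeds a large superlevel set $K_\tau\supset\overline D$ into an auxiliary manifold $M_\tau$ obtained by attaching conical ends; on the cones the function $v=(n-1)\log(r/\delta_\tau)$ is a smooth IMCF with nonvanishing gradient, and a direct computation shows that $v_m=\tfrac{m-1}{m}v$ is a \emph{sub}solution of $\Delta_p u=|\nabla u|^p$ for $p$ close to $1$. One then solves the $p$-problem on the exhausting domains $E_m\subset M_\tau$ with constant boundary data, uses the comparison with $v_m$ to force any subsequential limit to dominate $v$ (hence to be proper on $M_\tau$), and invokes uniqueness of \emph{proper} weak IMCF on $M_\tau$ to identify the limit. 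The desired $\Phi_p$ and $w_p$ are simply the restrictions of these auxiliary solutions to $\overline{D\smallsetminus\Omega}$. Your upper-barrier half also leans on \cite[Uniqueness Theorem 2.2]{huisken_inversemeancurvatureflow_2001}, but that theorem needs compactness of sublevel sets (properness); on the bounded domain $D\smallsetminus\Omega$ with Dirichlet data you do not have a direct uniqueness statement to cite, and it is again the conical-end construction that supplies properness and makes the uniqueness theorem applicable.
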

    
    \begin{proof}
    Let $0<\tau<S \coloneqq\sup_{M}w_1 \in (0,+\infty]$ such that $\overline{D} \subseteq \set{w_1 \leq \tau}$. Let $K_\tau$ be a bounded connected open domain with smooth boundary which contains $\Omega^{\sml{1}}_\tau$. As observed in \cite[Theorem 3.1]{huisken_inversemeancurvatureflow_2001} (we refer to \cite[§4.1]{xu_isoperimetrypropernessweakinverse_2023} for the explicit construction), it is possible to attach to $K_\tau$ finitely many conical ends, thus generating a new Riemannian manifold formally given by $M_\tau\coloneqq K_\tau \cup (\partial K_\tau \times [0, +\infty))$(understanding the obvious identifications) endowed with a smooth metric $g_\tau$ with the following properties: $g_\tau=g$ on $\overline{K_\tau}$; $g_\tau> g$ on $\partial K_\tau \times (0,\delta_\tau]$ for some $\delta_\tau>0$; $g_\tau$ is a cone metric on $\partial K_\tau \times[\delta_\tau, +\infty)$. One can check that $\min\set{w_1, \tau}$ is a subsolution to the IMCF on $M_\tau$.

    Let $E_0 \coloneqq K_\tau \cup (\partial K_\tau \times [0,\delta_\tau])$. The function $v \coloneqq (n-1) \ln(r/\delta_\tau)$, where $r$ denotes distance from $E_0$ on $M_\tau$, is the unique smooth IMCF with never vanishing gradient on $M_\tau\smallsetminus E_0$ with initial condition $E_0$. For every $m\in \N$, let $E_m\coloneqq \set{ v\leq m}$, and for $p \in (1,2]$ let $w_{p,m}$ be the solution to the equation
        \begin{equation}\label{zzEq:AuxiliaryPB}
            \begin{cases}
                \Delta_{p}w_{p,m}& =& \abs{\nabla w_{p,m} }^p &\text{on $ E_m \smallsetminus \Omega $,}\\
             w_{p,m}&=&0& \text{on $\partial \Omega$,}\\
             w_{p,m}&=&m & \text{on $\partial E_m$.}
            \end{cases}
        \end{equation}     
    Denoting $v_m\coloneqq [(m-1)/m] v$, we have
        \begin{align}\label{eq:zzzcomparison_at_boundary}
            w_{p,m}\geq 0 = v_{m} \text{ on $\partial E_0$,} &&
            w_{p,m}=m \geq  v_{m} \text{ on $\partial E_{m}$.}
        \end{align}
    Moreover, on $E_m \smallsetminus \overline{E}_0$ we can compute
        \begin{align}
            \Delta_p v_m - \abs{\nabla v_m}^p &= \abs{\nabla v_m}^{p-1} \left( \Delta_1 v_m - \abs{\nabla v_m} - (p-1) \frac{\ip{\nabla \abs{\nabla v_m}| \nabla v_m}}{\abs{\nabla v_m}^2}\right)\\&\geq 
            \abs{\nabla v_m}^{p-1} \left( \frac{m}{m-1}\abs{\nabla v_m} - (p-1) \frac{\ip{\nabla \abs{\nabla v_m}| \nabla v_m}}{\abs{\nabla v_m}^2}\right).
        \end{align}
    Hence, for any $m\in\N$ there exists $p(m)>1$ such that $\Delta_p v_m- \abs{\nabla v_m}^p\geq0$ on $E_m \smallsetminus \overline{E}_0$, for any $p\leq p(m)$, i.e., $v_m$ is a subsolution for \cref{zzEq:AuxiliaryPB} for any $p\leq p(m)$ on $E_m \smallsetminus \overline{E}_0$. Without loss of generality, we can arrange that $p(m)$ is a sequence that decreases to $1$ as $m\to\infty$. For $p \in (1,2]$ let $m(p)=\sup\set{k\in \N \st p\leq p(k)}<+\infty$. In particular, $m(p)\to +\infty $ as $p\to 1$. Recalling \cref{eq:zzzcomparison_at_boundary}, by the comparison principle $w_{p,m(p)} \geq v_{m(p)}$ on $E_{m(p)}\smallsetminus E_0$. By \cref{prop:uniform_gradient}, there exists a (not relabeled) subsequence $p\to 1^+$ such that $u_p \coloneqq w_{p,m(p)}$ converges uniformly on any compact set. By \cite{moser_inversemeancurvatureflow_2007}, a local uniform limit $u_1$ of $u_p$ is a weak IMCF starting from $\Omega$. By construction $u_1\geq v$ on $M_\tau \smallsetminus E_0$, thus $u_1$ is the unique proper solution starting from $\Omega$ on $M_\tau$. Therefore, the whole family $u_p$ must converge to $u_1$ locally uniformly as $p\to1^+$.    
    Comparison principle yields $u_1 \geq \min\set{w_1,\tau}$ on $M_\tau$ since the latter is a subsolution to the IMCF on $M_\tau \smallsetminus \Omega$. In particular, for any given $\varepsilon>0$ there exists $\overline{p} = \overline{p}(\tau,\varepsilon)>1$ such that $u_p \ge w_1 -\varepsilon$ on $\Omega^{\sml{1}}_\tau \smallsetminus \Omega$ for any $p \in (1, \overline{p})$. 
    \smallskip  
    
    Let now $\tau = \tau_\varepsilon$, where $\tau_\varepsilon = S-\varepsilon$ for $S<+\infty$ and $\tau_\varepsilon=1/\varepsilon$ otherwise. Summing up, for any $\varepsilon>0$ there exist $\overline{p} = \overline{p}(\tau_\varepsilon, \varepsilon)>1$ such that for any $p \in (1, \overline{p})$ we found functions $u_p$ satisfying
    \begin{equation}
        \begin{cases}
            \Delta_{p} u_p &=& \abs{\nabla u_p}^{p} & \text{on $\Omega^{\sml{1}}_{\tau} \smallsetminus \Omega$,}\\
            u_p &=&0 & \text{on $\partial \Omega$,}\\
            u_p &\geq& w_1 - \varepsilon & \text{on $\Omega^{\sml{1}}_{\tau} \smallsetminus \Omega$.}
        \end{cases}
    \end{equation}
    Send now $\varepsilon\to0^+$. As before, there exists a (not relabeled) subsequence $p\to 1^+$ such that $u_{p}$ converges locally uniformly on $M\smallsetminus\Omega^\circ$ to a solution of \cref{eq:IMCF} starting from $\Omega$. Such a solution is bounded from below by $w_1$, thus is proper and by \cite[Uniqueness Theorem 2.2]{huisken_inversemeancurvatureflow_2001} must coincide with $w_1$ on $M$. The whole family $u_p$ converges to $w_1$ locally uniformly as $p\to1^+$. The statement follows by taking $\Phi_p$ and $w_p$ equal to the restrictions of $u_p$ to $\overline{D\smallsetminus \Omega}$ and ${D\smallsetminus \Omega}$ respectively.
    \end{proof}

     Differently from \cite{moser_inversemeancurvatureflow_2007,moser_inversemeancurvatureflow_2008,kotschwar_localgradientestimatesharmonic_2009,mari_flowlaplaceapproximationnew_2022}, we do not prove the existence of a proper IMCF, but we assume it. This approach has the advantage of not requiring that $(M,g)$ is strongly $p$-nonparabolic, i.e. it admits a solution to \cref{eq:moser_potential} with $D=M$ and $\Phi = +\infty$. Indeed, in general, a strongly $1$-nonparabolic manifold can be $p$-parabolic for any $p>1$. In this particular case, it is not even possible to prove that the $p$-capacities of $\Omega$ in $M$ converge to $\abs{\partial \Omega^*}$ as $p\to1^+$, as proved under further geometric conditions on the manifold in \cite[Theorem 1.2]{fogagnolo_minimisinghullscapacityisoperimetric_2022}. Indeed, $\ncapa_p(\Omega, M)$ may be identically $0$ for every $p>1$, while $\abs{\partial \Omega^*} >0$. However, this result remains true if one considers the relative capacities of the sequence $(w_p)_{p>1}$ obtained in \cref{thm:local-approximation}.

    \begin{lemma}\label{lem:continuity_p-capacity}
        Under the same assumptions of \cref{thm:local-approximation}, if $\inf_{\partial D}w_1 >0$ we have
        \begin{equation}\label{eq:capacity_convergence}
            \lim_{p\to 1^+} \ncapa_p(\Omega, U^{\sml{p}}_T) =  \frac{\abs{\partial \Omega^*}}{\abs{\S^{n-1}}}
        \end{equation}
        for every $0<T < \inf_{\partial D} w_1$.
    \end{lemma}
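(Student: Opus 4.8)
The plan is to combine the identity in \cref{lem:p-cap_levelset} with the uniform convergence $w_p\to w_1$ from \cref{thm:local-approximation}, together with \cref{lem:exp_areagrowth} and \cref{eq:somh_and_1cap} on the $p=1$ side. The key point is that $\ncapa_p(\Omega,U^{\sml{p}}_T)$ can be rewritten, via \cref{lem:p-cap_levelset} with $t=0$ and $\tau=0$, using the boundary value $\partial\Omega=\partial\Omega^{\sml{p}}_0$: indeed, since $w_p=0$ on $\partial\Omega$ and the $\CS^{1,\beta}$-regularity together with the Hopf lemma ensures $\partial\Omega\cap\Crit(w_p)=\varnothing$, one gets
\begin{equation*}
    \left(1-\ee^{-\frac{T}{p-1}}\right)^{p-1}\ncapa_p(\Omega,U^{\sml{p}}_T)=\frac{1}{\abs{\S^{n-1}}}\int_{\partial\Omega}\left(\frac{\abs{\nabla w_p}}{n-p}\right)^{p-1}\dif\Hff^{n-1}.
\end{equation*}
The factor $\bigl(1-\ee^{-T/(p-1)}\bigr)^{p-1}\to 1$ as $p\to1^+$ since $T>0$ is fixed, so it suffices to show that the right-hand integral converges to $\abs{\partial\Omega^*}/\abs{\S^{n-1}}$.

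For this I would argue in two steps. First, evaluate \cref{lem:p-cap_levelset} instead at a regular level $\tau=t$ and pass the identity to a form amenable to the coarea formula: multiplying through and integrating, one has for any $t<T$
\begin{equation*}
    \left(\ee^{-\frac{t}{p-1}}-\ee^{-\frac{T}{p-1}}\right)^{p-1}\abs{\S^{n-1}}\ncapa_p(\Omega^{\sml{p}}_t,U^{\sml{p}}_T)=\ee^{-t}\int_{\partial\Omega^{\sml{p}}_t}\left(\frac{\abs{\nabla w_p}}{n-p}\right)^{p-1}\dif\Hff^{n-1},
\end{equation*}
so the quantity $\ee^{-t}\int_{\partial\Omega^{\sml{p}}_t}(\abs{\nabla w_p}/(n-p))^{p-1}$ is in fact independent of $t$ (a standard fact, as in the proof of \cref{lem:p-cap_levelset}). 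Hence one is free to average in $t$: by the coarea formula,
\begin{equation*}
    \frac{1}{\abs{\S^{n-1}}}\int_{\partial\Omega}\left(\frac{\abs{\nabla w_p}}{n-p}\right)^{p-1}\dif\Hff^{n-1}=\frac{1}{(n-p)^{p-1}\abs{\S^{n-1}}}\,\frac{1}{\int_0^T\ee^{-s}\dif s}\int_{\{0<w_p<T\}}\ee^{-w_p}\abs{\nabla w_p}^{p}\dif\Hff^{n}.
\end{equation*}
On the region $\{0<w_p<T\}\subset D$ the uniform gradient bound \cref{prop:uniform_gradient} gives $\abs{\nabla w_p}\le\kst$, and $w_p\to w_1$ uniformly, so $\ee^{-w_p}\abs{\nabla w_p}^p\to \ee^{-w_1}\abs{\nabla w_1}$ should be extracted in the limit; the integral then converges, by the analogous coarea computation for $w_1$ and \cref{lem:exp_areagrowth}, to $(1-\ee^{-T})\,\abs{\partial\Omega^{\sml{1}}_0}=(1-\ee^{-T})\,\abs{\partial\Omega^*}$, which cancels $\int_0^T\ee^{-s}\dif s$ and yields the claim (the prefactor $(n-1)^{p-1}/(n-p)^{p-1}\to1$).

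The main obstacle is justifying the passage to the limit in the volume integral $\int_{\{0<w_p<T\}}\ee^{-w_p}\abs{\nabla w_p}^p\,\dif\Hff^n$, since only uniform convergence of $w_p$ and an $L^\infty$ bound on $\nabla w_p$ are available a priori, not strong convergence of the gradients on all of $D\smallsetminus\Omega$ (part \ref{stronggradients} of \cref{main-convergences} is for $L^q_{\loc}$ and is proved later using the very monotonicity machinery this section feeds into, so it should not be invoked here). I expect the cleanest route is to avoid pointwise gradient limits entirely: use the fact that $\ee^{-t}\int_{\partial\Omega^{\sml{p}}_t}(\abs{\nabla w_p})^{p-1}$ is constant in $t$ to reduce to comparing, via \cref{lem:exp_areagrowth}, the $1$-capacity-type quantity with the area of level sets, and then pass to the limit in the chain of inequalities $\ncapa_p(\Omega,U^{\sml{p}}_T)\ge(\text{something controlled below by }\abs{\partial\Omega^*}/\abs{\S^{n-1}})$ coming from the definition \cref{eq:capacitydefinition} as an infimum, together with the upper bound obtained by plugging a fixed competitor built from $w_1$ (e.g.\ a cutoff of $1-\ee^{-w_1/(p-1)}$ suitably normalized) into the $p$-capacity functional and using dominated convergence with the uniform bounds. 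Squeezing between these two bounds gives the limit; the liminf inequality is the delicate one and relies on lower semicontinuity of the $L^1$-type capacity under the uniform convergence $U^{\sml{p}}_T\to U^{\sml{1}}_T$ and $\abs{\S^{n-1}}\ncapa_1(\Omega,U^{\sml{1}}_T)=\abs{\partial\Omega^*}$ from \cref{eq:somh_and_1cap}.
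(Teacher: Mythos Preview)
Your squeeze strategy in the final paragraph is the right shape, and your upper bound is essentially the paper's: fix any $\psi\in\CS^\infty_c(U^{\sml{1}}_T)$ with $\psi\ge 1$ on $\Omega$, use uniform convergence to get $U^{\sml{1}}_T\subseteq U^{\sml{p}}_{T+\varepsilon}$, plug $\psi$ into \cref{eq:capacitydefinition}, and let $p\to 1^+$ by dominated convergence; then optimize over $\psi$. (Your suggested competitor $1-\ee^{-w_1/(p-1)}$ is unnecessarily baroque, and $w_1$ is only Lipschitz, so dominated convergence for $\abs{\nabla\psi}^p$ with a fixed smooth $\psi$ is cleaner.)

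The gap is the \emph{liminf} direction. Saying it ``relies on lower semicontinuity of the $L^1$-type capacity under the uniform convergence $U^{\sml{p}}_T\to U^{\sml{1}}_T$'' is not an argument: you need $\liminf_{p\to 1^+}\ncapa_p(\Omega,U^{\sml{p}}_T)\ge \ncapa_1(\Omega,U^{\sml{1}}_T)$, and this compares \emph{different} functionals, not the same capacity on varying domains, so no abstract lower semicontinuity applies. The definition \cref{eq:capacitydefinition} as an infimum only gives upper bounds when you insert competitors; it cannot by itself produce the lower bound you need. The paper's mechanism is a quantitative comparison of the two capacities: using $U^{\sml{p}}_{T-\varepsilon}\subseteq U^{\sml{1}}_T$ and a local Sobolev inequality (as in \cite[Theorem 1.2]{fogagnolo_minimisinghullscapacityisoperimetric_2022}, with the global Sobolev inequality there replaced by a local one), one has
\[
\ncapa_1(\Omega,U^{\sml{1}}_T)\le \kst_{n,p}\,\ncapa_p(\Omega,U^{\sml{p}}_{T-\varepsilon})^{\frac{n-1}{n-p}},\qquad \kst_{n,p}\to 1,
\]
and then \cref{lem:p-cap_levelset} removes the $\varepsilon$-shift in the limit. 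Without this (or an equivalent Sobolev/H\"older bridge between $\int\abs{\nabla\psi}$ and $\int\abs{\nabla\psi}^p$), your liminf step does not close. Your earlier coarea approach is correctly diagnosed as circular, so you should drop it entirely.
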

    
    \begin{proof}
        In view of \cref{eq:somh_and_1cap}, \cref{eq:capacity_convergence} follows from
        \begin{equation}\label{eq:zzzcontinuity_p-cap}
            \lim_{p \to 1^+}\ncapa_p(\Omega, U^{\sml{p}}_T) = \ncapa_1(\Omega, U^{\sml{1}}_T).
        \end{equation}
        Let $\psi \in \CS^\infty_c(U_T^{\sml{1}}) $ with $\psi \geq 1 $ on $\Omega$. Let $0< \varepsilon < \sup_{\partial D} w_1 -T$. Since $w_p\to w_1$ uniformly on $D$ by \cref{thm:local-approximation}, we then have $U^{\sml{1}}_T \subseteq U_{T+ \varepsilon}^{\sml{p}}$ for any $p$ sufficiently close to $1$. In particular,
        \begin{equation}
            \ncapa_p(\Omega, U^{\sml{p}}_{T+ \varepsilon})\leq \left(\frac{p-1}{n-p}\right)^{p-1} \frac{1}{\abs{\S^{n-1}}} \int_{D \smallsetminus\Omega} \abs{\nabla \psi}^p \dif \Hff^n.
        \end{equation}
        Taking the limit superior and recalling \cref{lem:p-cap_levelset}, we have
        \begin{equation}
            \limsup_{p \to 1^+} \ncapa_p(\Omega, U^{\sml{p}}_T) = \limsup_{p\to 1^+} \ncapa_p(\Omega, U^{\sml{p}}_{T+\varepsilon})\leq \frac{1}{\abs{\S^{n-1}}} \int_{D\smallsetminus \Omega} \abs{\nabla \psi} \dif \Hff^n.
        \end{equation}
        Taking the limit inferior, among all $\psi$ we conclude that
        \begin{equation}\label{eq:zzzusc_pcap}
            \limsup_{p \to 1^+} \ncapa_p(\Omega, U^{\sml{p}}_T) \leq \ncapa_1(\Omega, U^{\sml{1}}_T).
        \end{equation}
    
        \smallskip
    
        Let now $0< \varepsilon < T$. \cref{thm:local-approximation} imply again that $U_{T- \varepsilon}^{\sml{p}} \subseteq U^{\sml{1}}_T $ for any $p$ sufficiently close to $1$. Replacing in \cite[Theorem 1.2]{fogagnolo_minimisinghullscapacityisoperimetric_2022} the global Sobolev inequality with a local Sobolev inequality, we get that
        \begin{equation}
            \ncapa_1(\Omega, U^{\sml{1}}_T) \leq \kst_{n,p}\, \ncapa_p(\Omega, U_{T- \varepsilon}^{\sml{p}})^{\frac{n-1}{n-p}}
        \end{equation}
        for some constant $\kst_{n,p}\to 1$ as $p\to 1^+$. Taking the limit inferior and appealing again to \cref{lem:p-cap_levelset}, we get
        \begin{equation}\label{eq:zzzlsc_pcap}
            \ncapa_1(\Omega, U^{\sml{1}}_T) \leq \liminf_{p\to 1^+}\ncapa_p(\Omega, U_{T- \varepsilon}^{\sml{p}})^{\frac{n-1}{n-p}}= \liminf_{p \to 1^+} \ncapa_p(\Omega, U_{T}^{\sml{p}}).
        \end{equation}
        Since \cref{eq:zzzlsc_pcap,eq:zzzusc_pcap} imply \cref{eq:zzzcontinuity_p-cap}, the proof follows.
        \end{proof}
    
        \begin{corollary}\label{cor:energy_boundedness}
        Under the same assumptions of \cref{thm:local-approximation}, assuming $\inf_{\partial D} w_1 >0$, for every $0<T <\inf_{\partial D} w_1 $ there exists a constant $\kst= \kst(T)>0$ such that
            \begin{equation}
                \int_{\partial \Omega^{\sml{p}}_t} \abs{\nabla w_p}^{p-1} \dif \Hff^{n-1} \leq \kst(T) \ee^t
            \end{equation}
            for every $t\in [0,T]$ and $p\in (1,2]$.
        \end{corollary}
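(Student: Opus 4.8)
The plan is to read off $\int_{\partial\Omega^{\sml{p}}_t}\abs{\nabla w_p}^{p-1}\dif\Hff^{n-1}$ from the capacity identity of \cref{lem:p-cap_levelset} and then to bound the resulting right-hand side uniformly as $p\to1^+$ by means of \cref{lem:continuity_p-capacity}. Fix $T$ with $0<T<\inf_{\partial D}w_1$. Since the boundary data produced by \cref{thm:local-approximation} satisfy $\Phi_p\to w_1$ uniformly on $\partial D$, one has $\inf_{\partial D}\Phi_p>T$ for every $p$ sufficiently close to $1$; hence every $t\in[0,T]$ lies in the admissible range $[0,\inf_{\partial D}\Phi_p)$ of \cref{lem:p-cap_levelset}, $U^{\sml{p}}_T\Subset D$, and $w_p\ge0$ by the maximum principle, so that $\Omega^{\sml{p}}_0=\Omega$. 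Applying \cref{lem:p-cap_levelset} with the sublevels $\Omega^{\sml{p}}_0=\Omega$ and $U^{\sml{p}}_T$ and with integration level equal to $t$ gives
\[
\left(1-\ee^{-\frac{T}{p-1}}\right)^{p-1}\ncapa_p\bigl(\Omega,U^{\sml{p}}_T\bigr)=\frac{\ee^{-t}}{\abs{\S^{n-1}}}\int_{\partial\Omega^{\sml{p}}_t}\left(\frac{\abs{\nabla w_p}}{n-p}\right)^{p-1}\dif\Hff^{n-1},
\]
equivalently
\[
\int_{\partial\Omega^{\sml{p}}_t}\abs{\nabla w_p}^{p-1}\dif\Hff^{n-1}=(n-p)^{p-1}\abs{\S^{n-1}}\,\ee^{t}\left(1-\ee^{-\frac{T}{p-1}}\right)^{p-1}\ncapa_p\bigl(\Omega,U^{\sml{p}}_T\bigr).
\]

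It then only remains to bound the factor multiplying $\ee^{t}$ independently of $t$ and of $p$ in a right neighbourhood of $1$. The first two factors are handled by elementary inequalities: for $n\ge3$ and $p\in(1,2]$ one has $1\le n-p\le n-1$ and $0<p-1\le1$, hence $(n-p)^{p-1}\le n-1$, while $0<1-\ee^{-T/(p-1)}<1$ forces $(1-\ee^{-T/(p-1)})^{p-1}\le1$. For the capacity factor I invoke \cref{lem:continuity_p-capacity} — applicable precisely because $0<T<\inf_{\partial D}w_1$ — which gives $\ncapa_p(\Omega,U^{\sml{p}}_T)\to\abs{\partial\Omega^*}/\abs{\S^{n-1}}$ as $p\to1^+$, so $\ncapa_p(\Omega,U^{\sml{p}}_T)\le C_0$ for every $p$ close enough to $1$. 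Combining the three bounds, the assertion holds with $\kst(T)\coloneqq(n-1)\abs{\S^{n-1}}C_0$.

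I do not foresee a genuine obstacle: after \cref{lem:p-cap_levelset} is in place this is a substitution followed by the elementary estimates above and the already-established convergence of capacities. The only point demanding a little care is guaranteeing the simultaneous applicability of \cref{lem:p-cap_levelset} for \emph{every} $t\in[0,T]$, which is why one first secures $\inf_{\partial D}\Phi_p>T$ from the uniform convergence $\Phi_p\to w_1$. As a technically cleaner alternative I could exploit that \cref{lem:p-cap_levelset} makes $\ee^{-\tau}\int_{\partial\Omega^{\sml{p}}_\tau}\abs{\nabla w_p}^{p-1}\dif\Hff^{n-1}$ independent of $\tau\in[0,\inf_{\partial D}\Phi_p)$; evaluating at $\tau=0$ yields $\int_{\partial\Omega^{\sml{p}}_t}\abs{\nabla w_p}^{p-1}\dif\Hff^{n-1}=\ee^{t}\int_{\partial\Omega}\abs{\nabla w_p}^{p-1}\dif\Hff^{n-1}$, and the right-hand integral is at most $\kst^{p-1}\abs{\partial\Omega}\le\max\{1,\kst\}\abs{\partial\Omega}$, with $\kst$ the uniform-in-$p$ gradient bound of \cref{prop:uniform_gradient} on a collar of $\partial\Omega$. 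This renders the constant transparently independent of $p\in(1,2]$ and of $T$, covering the whole range of $p$ at once.
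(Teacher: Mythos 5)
Your main argument reproduces the paper's proof exactly: both read off the quantity from \cref{lem:p-cap_levelset} applied with sublevels $\Omega$ and $U^{\sml{p}}_T$, and then bound the resulting product of factors uniformly by invoking \cref{lem:continuity_p-capacity}, so the substance is the same, and the elementary bounds $(n-p)^{p-1}\le n-1$, $(1-\ee^{-T/(p-1)})^{p-1}\le 1$ you spell out are implicitly folded into the paper's $\kst(T)$.

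Your technically cleaner alternative is worth noting as a genuine variant. The paper's route routes through the full strength of \cref{lem:continuity_p-capacity} (itself a nontrivial $\Gamma$-type statement using the local uniform convergence $w_p\to w_1$ and the $1$-capacity / outward minimizing hull theory) only to extract the crude bound $\ncapa_p(\Omega,U^{\sml{p}}_T)\le C_0$. Your alternative instead observes that \cref{lem:p-cap_levelset} directly forces $\ee^{-\tau}\int_{\partial\Omega^{\sml{p}}_\tau}\abs{\nabla w_p}^{p-1}\dif\Hff^{n-1}$ to be constant in $\tau$, so evaluating at $\tau=0$ reduces everything to the interior $L^\infty$ gradient bound of \cref{prop:uniform_gradient} on a collar of $\partial\Omega$, which is independent of $p$; this buys you an explicit constant that manifestly covers the entire range $p\in(1,2]$ without appealing to the convergence of capacities, and is therefore the more elementary and self-contained of the two. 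The only caveat --- which affects the paper's one-line proof equally --- is that both versions silently require $T<\inf_{\partial D}\Phi_p$ so that $U^{\sml p}_T\Subset D$ and the level sets in the admissible range $[0,T]$ are interior; you flag this correctly, and it holds for $p$ close to $1$ by the uniform convergence $\Phi_p\to w_1$ on $\partial D$, which is the only regime in which the corollary is actually applied downstream.
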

    
        \begin{proof}
            By \cref{lem:p-cap_levelset} we have
            \begin{equation}
                \ee^{-t}\int_{\partial \Omega^{\sml{p}}_t} \abs{\nabla w_p}^{p-1} \dif \Hff^{n-1}  = \abs{\S^{n-1}} (n-p)^{p-1} \left(1- \ee^{-\frac{T}{p-1}}\right)^{p-1}\ncapa_p(\Omega, U^{\sml{p}}_T) \leq \kst(T),
            \end{equation}
            where the last inequality follows from \cref{lem:continuity_p-capacity}.
        \end{proof}

\section{Monotonicity formulas in nonlinear potential theory}\label{sec:monotonocity_nonlinear_potential}

Within this section, we fix a closed bounded set $\Omega \subset M$ with $\CS^{1,\beta}$-boundary, for some $\beta>0$. Let $w_p$ be the solution to the problem \cref{eq:moser_potential} for some $\Phi \in \Lip(\overline{D\smallsetminus \Omega})$, where $D$ is a smooth bounded open set such that $\Omega\subset D$, such that $\inf_{\partial D}\Phi>0$ and $1<p\le 2$. We define the function $\MF_{p}$ as
\begin{equation}\label{eq:monotonicty_formula}
\begin{split}
   \MF_{p}(t) \coloneqq\begin{multlined}[t][.6\textwidth]
   \ee^{\left(\frac{\alpha}{n-p} - 1\right)t} \int_{\partial \Omega^{\sml{p}}_t}\abs{\nabla w_p }^{\alpha+p-2} \left(\abs{\nabla w_p}\left(\frac{p-1}{n-p}- \frac{1}{\alpha}\right) +(p-1)\frac{\ip{\nabla \abs{\nabla w_p} | \nabla w_p}}{\abs{\nabla w_p}^2}\right) \dif \Hff^{n-1} \\-\int_0^t \ee^{\left(\frac{\alpha}{n-p} - 1\right)s}\int_{\partial \Omega^{\sml{p}}_s} \abs{\nabla w_p}^{\alpha+p-3} \Ric(\nu,\nu)\dif \Hff^{n-1} \dif s.
    \end{multlined}
\end{split}
\end{equation}
for almost every $t \in [0,\inf_{\partial D}\Phi)$ and by
\begin{equation}
    \MF_{p}(0) \coloneqq
             \int_{\partial \Omega}\abs{\nabla w_p }^{\alpha+p-2} \left[\abs{\nabla w_p}\left(\frac{n-1}{n-p}- \frac{1}{\alpha}\right) -\H\right] \dif \Hff^{n-1}.
\end{equation}
Well-posedness of the previous definition for almost every $t$ will be proved below in \cref{lem:continuity_G}. In this section we prove the following statement.
\begin{theorem}\label{thm:monotonicity-NPT}
    Let $p\in(1,2]$, $\alpha > (n-p)/(n-1)$. Let $\Omega \subset M$ be a closed bounded set with $\CS^{1,\beta}$-boundary, for some $\beta>0$. Let $w_p$ be the solution to the problem \cref{eq:moser_potential} for some $\Phi \in \Lip(\overline{D\smallsetminus \Omega})$, where $D$ is a smooth bounded open set such that $\Omega\subset D$, and $\inf_{\partial D}\Phi>0$. Fix $T<\inf_{\partial D} \Phi$. Assume that $\partial \Omega$ has generalized mean curvature $\H \in L^2(\partial \Omega)$. Then, the following statements hold:
    \begin{enumerate}
        \item almost every $\partial \Omega^{\sml{p}}_t$ is a curvature varifold with square integrable second fundamental form;
        \item the function $\MF_{p}$ defined in \cref{eq:monotonicty_formula} belongs to $W^{1,1}(0,T)$, it is essentially monotone nondecreasing and $\MF_p(t) \searrow\MF_p(0^+) \geq \MF_p(0)$ as $t \to 0^+$; and
        \item for every $\psi \in \Lip_c([0,T))$ we have 
        \begin{equation}\label{eq:derivative-monotonicity_formula-NPT}
            - \int_0^T \psi' (t)  \MF_{p}(t) \dif t - \psi(0) \MF_{p}(0^+)    = \int_0^T \psi(t) \ee^{\left(\frac{\alpha}{n-p} - 1 \right)t} \int_{\partial \Omega^{\sml{p}}_t} \abs{\nabla w_p}^{\alpha+p-3} \SQ_{p}\dif \Hff^{n-1} \dif t,
        \end{equation}
        where 
        \begin{equation}\label{eq:Qpdef}
        \SQ_{p}\coloneqq \begin{multlined}[t]
            \left[\alpha-(2-p)\right] \frac{\abs{\nabla^\top \abs{\nabla w_p}}^2}{\abs{\nabla w_p}^2} + \abs{ \mathring{\h}}^2 + \frac{1}{p-1} \left[ \alpha- \frac{n-p}{n-1}\right] \left[ {\H} - \frac{n-1}{n-p}\abs{ \nabla w_p}\right]^2
        \end{multlined}
        \end{equation}
        on $D \smallsetminus \Crit(w_p)$ and $\SQ_{p}\coloneqq0$ otherwise.
    \end{enumerate}
    Moreover, $\MF_p(0) =\MF_p(0^+)$ if $\partial \Omega$ is smooth.
\end{theorem}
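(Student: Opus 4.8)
The plan is to establish the three items in order, using the $\varepsilon$-regularized family $w^\varepsilon_p$ of \cref{eq:eps_moser_potential} only to produce the regularity behind item (1), and then to prove the monotonicity directly for $w_p$, following closely the argument of \cite[Theorem 3.1]{benatti_minkowskiinequalitycompleteriemannian_2022} but reading every geometric quantity in the curvature-varifold sense. For item (1): since $w^\varepsilon_p$ is smooth with $\nu = \nabla w^\varepsilon_p / \abs{\nabla w^\varepsilon_p}$ on its level sets, by Sard almost every $\partial\Omega^{\sml{\varepsilon}}_t$ is a smooth compact hypersurface, and on it one runs the classical Bochner-type computation: applying the Bochner identity to $w^\varepsilon_p$, using the pointwise decomposition $\abs{\nabla^2 w^\varepsilon_p}^2 = \abs{\h}^2 \abs{\nabla w^\varepsilon_p}^2 + \abs{\nabla^\top \abs{\nabla w^\varepsilon_p}}^2 + \abs{\nabla \abs{\nabla w^\varepsilon_p}}^2$ valid off the critical set, rewriting $\H$ through \cref{eq:MeanCurvatureLevelWepsP}, and integrating over $\partial\Omega^{\sml{\varepsilon}}_t$, one obtains an identity in which $\abs{\mathring{\h}}^2$, $\abs{\nabla^\top \abs{\nabla w^\varepsilon_p}}^2/\abs{\nabla w^\varepsilon_p}^2$ and $(\H - \frac{n-1}{n-p}\abs{\nabla w^\varepsilon_p})^2$ enter with the signs prescribed by $\SQ_p$, balanced by a boundary term, a $\Ric(\nu,\nu)$-term, and an error $O(\Err_\varepsilon)$. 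Choosing the free parameter so that the weight $\abs{\nabla w^\varepsilon_p}^{\alpha+p-3}$ is trivial (e.g.\ $\alpha = 3-p$, admissible since $3-p > (n-p)/(n-1)$), integrating in $t$, and invoking \cref{prop:StimeBaseWepsilonP}, $\Err_\varepsilon \le 1$ and the coarea formula, gives a uniform bound $\int_0^T \int_{\partial\Omega^{\sml{\varepsilon}}_t} \abs{\h}^2 \dif\Hff^{n-1}\dif t \le \kst$. Hence $\liminf_{\varepsilon\to 0^+}\int_{\partial\Omega^{\sml{\varepsilon}}_t}\abs{\h}^2 < +\infty$ for a.e.\ $t$; since $w^\varepsilon_p \to w_p$ in $\CS^1_{\loc}$ (\cref{prop:RegolaritaWp}) the level sets converge as varifolds, and the closure and compactness theorems for curvature varifolds collected in \cref{sec:AppendiceVarifolds} give that $\partial\Omega^{\sml{p}}_t$ is a curvature varifold with square integrable second fundamental form, whose generalized mean curvature coincides $\Hff^{n-1}$-a.e.\ with the right-hand side of \cref{eq:MeanCurvatureLevelWP} (using that $\partial\Omega^{\sml{p}}_t \cap \Crit(w_p)$ is $\Hff^{n-1}$-negligible for a.e.\ $t$).

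With item (1) in hand, I would obtain \cref{eq:derivative-monotonicity_formula-NPT} directly for $w_p$. The starting point is the weighted Bochner identity for $\Delta_p w_p$ on $D\smallsetminus\Crit(w_p)$, where $w_p$ is smooth; using the Hessian decomposition above together with $\abs{\h}^2 = \abs{\mathring{\h}}^2 + \H^2/(n-1)$ and \cref{eq:MeanCurvatureLevelWP}, one recasts it as a divergence identity whose boundary term across $\partial\Omega^{\sml{p}}_t$ is the boundary term in $\MF_p$ and whose bulk density equals $\abs{\nabla w_p}^{\alpha+p-3}(\SQ_p + \Ric(\nu,\nu))$. Testing against $\psi(w_p)$, integrating over $D\smallsetminus\Omega$, integrating by parts in the bulk, and converting back to integrals over the level sets by the coarea formula yields \cref{eq:derivative-monotonicity_formula-NPT} first for $\psi\in\Lip_c((0,T))$; for general $\psi\in\Lip_c([0,T))$ the contribution $-\psi(0)\MF_p(0^+)$ appears once $\MF_p(0^+) \coloneqq \lim_{t\to 0^+}\MF_p(t)$ is known to exist. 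The same manipulation shows that $\MF_p(t)$ is finite for a.e.\ $t$, which is the content of \cref{lem:continuity_G}, and that the density on the right of \cref{eq:derivative-monotonicity_formula-NPT} lies in $L^1(0,T)$.

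Turning to item (2): since $\alpha > (n-p)/(n-1)$ and $(n-p)/(n-1) \ge 2-p$ for $1 \le p \le 2$, all three coefficients in \cref{eq:Qpdef} are nonnegative, so the right-hand side of \cref{eq:derivative-monotonicity_formula-NPT} is nonnegative; combined with its $L^1$ bound this gives $\MF_p\in W^{1,1}(0,T)$ with $\MF_p'\ge 0$, so $\MF_p$ is essentially monotone nondecreasing and $\MF_p(t)\searrow\MF_p(0^+)$ as $t\to 0^+$. For the inequality $\MF_p(0^+)\ge\MF_p(0)$ one uses that, by the Hopf lemma, $w_p$ has no critical points near $\partial\Omega$, so the smooth level sets $\partial\Omega^{\sml{p}}_t$ converge to $\partial\Omega$ as $t\to 0^+$: the gradient factor in $\MF_p$ passes to the limit by $\CS^1$ convergence, and for the mean-curvature term one writes $\int_{\partial\Omega^{\sml{p}}_t}\abs{\nabla w_p}^{\alpha+p-2}\H = \int_{\partial\Omega^{\sml{p}}_t}\abs{\nabla w_p}^{\alpha+p-2}\div \nu$, applies the divergence theorem on $\Omega^{\sml{p}}_t\smallsetminus\Omega$ and the $W^{1,2}$ bound on $\abs{\nabla w_p}$, and then invokes lower semicontinuity of the Willmore-type energy together with the hypothesis $\H\in L^2(\partial\Omega)$; when $\partial\Omega$ is smooth the convergence of the level sets is smooth, no energy is lost, and $\MF_p(0^+) = \MF_p(0)$.

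The main obstacle is the second step: making the divergence identity rigorous at the regularity actually available, since $w_p\in\CS^{1,\beta'}_{\loc}$ is smooth only off $\Crit(w_p)$, a set that may carry positive $\Hff^n$-measure, so one must rule out any concentration of mass on $\Crit(w_p)$ in the integrations by parts and thereby obtain the \emph{equality} --- not merely an inequality --- in \cref{eq:derivative-monotonicity_formula-NPT}. This is exactly where item (1) is indispensable: the curvature-varifold structure of almost every $\partial\Omega^{\sml{p}}_t$, i.e.\ square integrability of $\h$ and the identification \cref{eq:MeanCurvatureLevelWP} of $\H$ with its analytic expression, provides the uniform integrability that legitimizes the coarea and divergence manipulations. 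A secondary delicate point is the sharp endpoint analysis at $t=0$ behind $\MF_p(0^+)\ge\MF_p(0)$ and its equality in the smooth case.
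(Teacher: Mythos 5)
Your overall blueprint matches the paper's: $\varepsilon$-regularization and the divergence computation to obtain the $L^2$ bound on $\h$ behind item~(1), then a weighted divergence identity tested against $\psi(w_p)$ with a cut-off $\chi_\delta(\abs{\nabla w_p})$ to obtain items~(2) and~(3), closely following \cite[Theorem~3.1]{benatti_minkowskiinequalitycompleteriemannian_2022}. Two steps are, however, underestimated, and one of them is a genuine gap.

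\emph{The equality in \cref{eq:derivative-monotonicity_formula-NPT}.} When you send $\delta\to0^+$ after testing against $\chi_\delta(\abs{\nabla w_p})\psi(w_p)$, you are left with an error term
$\int \chi'_\delta(\abs{\nabla w_p})\ip{W|\nabla\abs{\nabla w_p}}\psi(w_p)$. Its $J$-part indeed vanishes by a simple size estimate (and this is what your "uniform integrability'' observation covers), but its $Y$-part has a definite sign, because $\ip{Y|\nabla\abs{\nabla w_p}} = \abs{\nabla^\top\abs{\nabla w_p}}^2 + (p-1)\abs{\nabla^\perp\abs{\nabla w_p}}^2 \ge 0$. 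You therefore cannot discard it by dominated convergence, and dropping it only yields the inequality $-\int\psi'\MF_p \geq \int\psi\cdot(\text{bulk})$, not the stated equality. The paper closes this with a separate, nontrivial argument: it introduces
$N(s)\coloneqq\int_{\set{\abs{\nabla w_p}=s}}\psi(w_p)\ip{Y|\nabla\abs{\nabla w_p}}/\abs{\nabla\abs{\nabla w_p}}$, shows via the divergence theorem on the annuli $\set{r\le\abs{\nabla w_p}\le t}$ that $N$ minus a bounded continuous function is monotone, deduces that $N(0^+)$ exists, and concludes $N(0^+)=0$ because otherwise $N(s)/s$ would fail to be integrable near $0$. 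Without something of this sort the equality in item~(3), and thus the precise derivative formula on which the whole $p\to 1^+$ limit analysis hinges, is not established. This is the most delicate part of the proof and it is absent from your plan.

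\emph{The endpoint $\MF_p(0^+)\ge\MF_p(0)$ for non-smooth $\partial\Omega$.} You propose to let $t\to0^+$ in the level sets $\partial\Omega^{\sml{p}}_t$ and invoke "lower semicontinuity of the Willmore-type energy.'' Two things go wrong. First, the mean-curvature term in $\MF_p$ is $\int\abs{\nabla w_p}^{\alpha+p-2}\H$, a linear functional of $\H$, not the Willmore energy $\int\H^2$; its behavior under varifold convergence is governed by the weak convergence of the mean curvature measure, which gives neither sign of semicontinuity by itself. Second, the sign you need is that of \emph{upper}, not lower, semicontinuity of that integral: since $\MF_p$ carries $-\H$, $\MF_p(0^+)\ge\MF_p(0)$ is $\limsup_{t\to0^+}\int_{\partial\Omega^{\sml p}_t}\abs{\nabla w_p}^{\alpha+p-2}\H \le \int_{\partial\Omega}\abs{\nabla w_p}^{\alpha+p-2}\H$. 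There is also a more basic mismatch: for $t>0$ the mean curvature of $\partial\Omega^{\sml{p}}_t$ is identified with the analytic expression \cref{eq:MeanCurvatureLevelWP}, whereas on $\partial\Omega$ the hypothesis is only that the \emph{geometric} generalized mean curvature of the $\CS^{1,\beta}$ hypersurface lies in $L^2$, and the two are not linked when $w_p$ is not $\CS^2$ up to the boundary. The paper sidesteps all of this by approximating $\Omega$ from the inside with smooth sets $\Omega^{\sml{k}}\subset\Omega^\circ$ whose boundaries converge in $\CS^{1,\beta'}$ and whose mean curvatures converge in $L^2$ (here the graph-regularity $f\in W^{2,2}_{\loc}$ deduced from $\H\in L^2$ is used), establishing the exact identity for each $\Omega^{\sml{k}}$, and passing $k\to\infty$ with Fatou on the nonnegative bulk; this gives the required one-sided inequality in the right direction. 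Your level-set approach, as stated, does not.

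Aside from these two points, the strategy for item~(1) (fixing $\alpha=3-p$ so the weight trivializes, integrating, using \cref{prop:StimeBaseWepsilonP} and varifold compactness) and the overall structure of item~(3) are the same as the paper's.
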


\begin{remark}
    By \cref{rmk:weakSard}, $\Hff^{n-1}(\partial \Omega^{\sml{p}}_t \cap \Crit (w_p))=0 $ for almost every $t \in [0,T)$. Moreover, since $\partial \Omega^{\sml{p}}_t $ is a curvature varifold and $w_p\in \CS^\infty(\partial \Omega^{\sml{p}}_t \smallsetminus \Crit (w_p))$, \cref{thm:MeanCruvatureDivergence} and \cref{eq:MeanCurvatureLevelWP} imply that the generalized mean curvature satisfies
    \begin{equation}
        \vec{\H} = - \H \nu= \left(-\abs{\nabla w_p} + (p-1) \frac{\ip{\nabla \abs{\nabla w_p}|\nabla w_p}}{\abs{\nabla w_p}^2}\right) \frac{\nabla w_p}{\abs{\nabla w_p}}.
    \end{equation}
    Hence, the function \cref{eq:monotonicty_formula} can be rewritten as 
    \begin{equation}
        \begin{split}
        \MF_{p}(t) =\begin{multlined}[t][.6\textwidth]
            \ee^{\left(\frac{\alpha}{n-p} - 1\right)t} \int_{\partial \Omega^{\sml{p}}_t}\abs{\nabla w_p }^{\alpha+p-2} \left[\abs{\nabla w_p}\left(\frac{n-1}{n-p}- \frac{1}{\alpha}\right) -\H\right] \dif \Hff^{n-1} \\-\int_0^t \ee^{\left(\frac{\alpha}{n-p} - 1\right)s}\int_{\partial \Omega^{\sml{p}}_s} \abs{\nabla w_p}^{\alpha+p-3} \Ric(\nu,\nu)\dif \Hff^{n-1} \dif s.
            \end{multlined}
            \end{split}
    \end{equation}
\end{remark}

The monotonicity of \cref{eq:monotonicty_formula} can be proved even without giving the geometric meaning to $\H$ explained in the above remark. This approach is the one followed in literature so far (see \cite{benatti_minkowskiinequalitycompleteriemannian_2022,agostiniani_riemannianpenroseinequalitynonlinear_2022,benatti_nonlinearisocapacitaryconceptsmass_2023a,xia_newmonotonicitycapacitaryfunctions_2024}). This interpretation is possible since almost every level set $\partial \Omega^{\sml{p}}_t$ is regular around $\Hff^{n-1}$-almost every point and $w_p$ is smooth around these points. However, we will need that $\partial \Omega^{\sml{p}}_t$ is a curvature varifold in the approximation as $p \to 1^+$. 

We employ the classic approximation scheme \textit{via} the regularized problem \cref{eq:eps_moser_potential}. First, we compute the divergence of some vector fields related to the definition of \cref{eq:monotonicty_formula}. We use these formulas to control the $L^2$ norm of the second fundamental form of the level sets of solutions $w^\varepsilon_p$ to \cref{eq:eps_moser_potential}. Second, since this estimate is independent of $\varepsilon$, we pass it to the limit as $\varepsilon \to 0^+$, inferring that $\partial \Omega^{\sml{p}}_t$ is a curvature varifold for almost every $t$. In conclusion, we will show the monotonicity result with the expression of the derivative \cref{eq:derivative-monotonicity_formula-NPT}.

\subsection{Main computations}
We compute the divergence of the vector field that will be the core of the monotonicity formulas in \cref{thm:monotonicity-NPT} and of the regularity results we will prove in the following subsections. 

Given $\Omega\subseteq M$, $w_p$ and $w^\varepsilon_p$ will denote the solutions to \cref{eq:moser_potential} and \cref{eq:eps_moser_potential} associated to $\Omega$ respectively. We define the following vector field
\begin{equation}\label{eq:Xdef}
    J_\varepsilon \coloneqq \abs{\nabla w^\varepsilon_p}^{\alpha+ p-2}\nabla w^\varepsilon_p \in L^\infty(D \smallsetminus \Omega).
\end{equation}
We will write $J$ in place of $J_0$.
\begin{lemma} \label{lem:divX}
Let $J_\varepsilon$ be as in \cref{eq:Xdef}. Then,
    \begin{align}\label{eq:divX}
    &\div(J_\varepsilon) = [\alpha -(2-p)\Err_\varepsilon]  \abs{\nabla w^\varepsilon_p}^{\alpha+ p-3}\ip{\nabla \abs{\nabla w_p^\varepsilon}|\nabla w^\varepsilon_p} + \left(1+ \frac{2-p}{p-1} \Err_\varepsilon\right)\abs{\nabla w^\varepsilon_p}^{\alpha + p } \\
    \label{eq:divEX}
    &\div(\Err_\varepsilon J_\varepsilon) = \Err_\varepsilon[(\alpha-2)+p\Err_\varepsilon] \abs{\nabla w^\varepsilon_p}^{\alpha+ p-3}\ip{\nabla \abs{\nabla w_p^\varepsilon}|\nabla w^\varepsilon_p} + \Err_\varepsilon \left( \frac{p+1}{p-1} - \frac{p}{p-1} \Err_\varepsilon \right) \abs{\nabla w^\varepsilon_p }^{\alpha + p } 
\end{align}
in the sense of distributions for any $\varepsilon\geq0$,  where we set  $\Err_0\equiv 0$. In particular, $\div(J) \in L^2_{\loc}(D \smallsetminus \Omega)$
\end{lemma}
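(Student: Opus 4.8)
The plan is to prove \cref{lem:divX} by a direct, if careful, computation of the divergence of $J_\varepsilon$, exploiting the equation \cref{eq:eps_moser_potential} satisfied by $w^\varepsilon_p$ and then specializing to $\varepsilon = 0$. First I would rewrite \cref{eq:eps_moser_potential} in a form that isolates $\Delta w^\varepsilon_p$ (or, more precisely, the quantity $\ip{\nabla |\nabla w^\varepsilon_p| \,|\, \nabla w^\varepsilon_p}$). Writing $u = \exp(-w^\varepsilon_p/(p-1))$, expanding $\Delta_p^\varepsilon u = \div(|\nabla u|_\varepsilon^{p-2}\nabla u) = 0$, and using $\nabla u = -\tfrac{1}{p-1} u \nabla w^\varepsilon_p$ together with $|\nabla u|_\varepsilon^2 = \tfrac{1}{(p-1)^2} u^2 |\nabla w^\varepsilon_p|^2 + \varepsilon^2$, one obtains after the standard manipulation an identity of the schematic form
\begin{equation*}
\Delta w^\varepsilon_p = \bigl(1 + \tfrac{2-p}{p-1}\Err_\varepsilon\bigr)\bigl(|\nabla w^\varepsilon_p|^2 - (p-1)\ip{\nabla |\nabla w^\varepsilon_p| \,|\, \nu}\,|\nabla w^\varepsilon_p|\bigr)\big/|\nabla w^\varepsilon_p| + (p-1)\ip{\nabla|\nabla w^\varepsilon_p|\,|\,\nu}\big/|\nabla w^\varepsilon_p|\cdot\text{(correction)} ,
\end{equation*}
which is exactly the identity already recorded in \cref{eq:MeanCurvatureLevelWepsP} in the guise $\H = (\,\cdots\,)(1 + \tfrac{2-p}{p-1}\Err_\varepsilon)$ together with $\Delta w^\varepsilon_p = \H |\nabla w^\varepsilon_p| + \ip{\nabla|\nabla w^\varepsilon_p|\,|\,\nu}$ (Bochner-type splitting of the Laplacian along and across level sets). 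Using \cref{eq:MeanCurvatureLevelWepsP} directly is the cleanest route: it already packages the PDE.

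Next I would compute $\div(J_\varepsilon)$ by the product rule: with $J_\varepsilon = |\nabla w^\varepsilon_p|^{\alpha+p-2}\nabla w^\varepsilon_p$,
\begin{equation*}
\div(J_\varepsilon) = (\alpha+p-2)|\nabla w^\varepsilon_p|^{\alpha+p-3}\ip{\nabla |\nabla w^\varepsilon_p|\,|\,\nabla w^\varepsilon_p} + |\nabla w^\varepsilon_p|^{\alpha+p-2}\,\Delta w^\varepsilon_p .
\end{equation*}
Then I substitute $\Delta w^\varepsilon_p = |\nabla w^\varepsilon_p|\,\H + \ip{\nabla|\nabla w^\varepsilon_p|\,|\,\nu}$, and replace $\H$ via \cref{eq:MeanCurvatureLevelWepsP}. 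Writing $\ip{\nabla|\nabla w^\varepsilon_p|\,|\,\nu} = \ip{\nabla|\nabla w^\varepsilon_p|\,|\,\nabla w^\varepsilon_p}/|\nabla w^\varepsilon_p|$, everything collapses to a combination of $|\nabla w^\varepsilon_p|^{\alpha+p-3}\ip{\nabla|\nabla w^\varepsilon_p|\,|\,\nabla w^\varepsilon_p}$ and $|\nabla w^\varepsilon_p|^{\alpha+p}$; collecting the coefficients, the $\Err_\varepsilon$-dependence coming from \cref{eq:MeanCurvatureLevelWepsP} produces exactly $[\alpha - (2-p)\Err_\varepsilon]$ in front of the first and $(1 + \tfrac{2-p}{p-1}\Err_\varepsilon)$ in front of the second, giving \cref{eq:divX}. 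For \cref{eq:divEX} I would proceed similarly but first record $\nabla \Err_\varepsilon$. From \cref{eq:DefinizioneThetaEpsilon}, $\Err_\varepsilon = \varepsilon^2/|\nabla u|_\varepsilon^2$ with $u = \exp(-w^\varepsilon_p/(p-1))$; differentiating and using the PDE to eliminate second derivatives, one gets an expression for $\ip{\nabla\Err_\varepsilon\,|\,\nabla w^\varepsilon_p}$ in terms of $\Err_\varepsilon$, $\ip{\nabla|\nabla w^\varepsilon_p|\,|\,\nabla w^\varepsilon_p}$ and $|\nabla w^\varepsilon_p|^2$. Then $\div(\Err_\varepsilon J_\varepsilon) = \ip{\nabla\Err_\varepsilon\,|\,J_\varepsilon} + \Err_\varepsilon\div(J_\varepsilon)$, and substituting \cref{eq:divX} and the formula for $\ip{\nabla\Err_\varepsilon\,|\,\nabla w^\varepsilon_p}$ and simplifying yields \cref{eq:divEX}; the coefficients $\Err_\varepsilon[(\alpha-2)+p\Err_\varepsilon]$ and $\Err_\varepsilon(\tfrac{p+1}{p-1} - \tfrac{p}{p-1}\Err_\varepsilon)$ should drop out.

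The $\varepsilon = 0$ case of \cref{eq:divX} becomes $\div(J) = \alpha|\nabla w_p|^{\alpha+p-3}\ip{\nabla|\nabla w_p|\,|\,\nabla w_p} + |\nabla w_p|^{\alpha+p}$ in the sense of distributions on $D\smallsetminus\Omega$. To justify this for the merely $\CS^{1,\beta}_{\loc}$ function $w_p$ (smooth only off $\Crit(w_p)$, which may be fat), I would argue by approximation: the identities for $\varepsilon>0$ hold classically since $w^\varepsilon_p$ is smooth, and by \cref{prop:RegolaritaWp} we have $w^\varepsilon_p\to w_p$ in $\CS^1_{\loc}$ and $|\nabla w^\varepsilon_p|^{p-1}\rightharpoonup |\nabla w_p|^{p-1}$ weakly in $W^{1,2}_{\loc}$; since $\Err_\varepsilon\to 0$ pointwise and is bounded by $1$, one passes to the limit in the distributional formulation (testing against a fixed $\CS^\infty_c$ function, all terms are uniformly integrable on compact sets of $D\smallsetminus\Omega$ thanks to the uniform bounds in \cref{prop:StimeBaseWepsilonP}), obtaining \cref{eq:divX} for $\varepsilon = 0$. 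For the final claim $\div(J)\in L^2_{\loc}(D\smallsetminus\Omega)$: on $D\smallsetminus\Crit(w_p)$ the right-hand side of \cref{eq:divX} at $\varepsilon = 0$ equals $\alpha|\nabla w_p|^{\alpha+p-3}\ip{\nabla|\nabla w_p|\,|\,\nabla w_p} + |\nabla w_p|^{\alpha+p}$, and since $|\nabla w_p|$ is locally bounded (\cref{prop:RegolaritaWp}, or \cref{prop:uniform_gradient}) while $\nabla|\nabla w_p| = (p-1)^{-1}|\nabla w_p|^{2-p}\nabla(|\nabla w_p|^{p-1})\in L^2_{\loc}$ off the critical set by \cref{prop:RegolaritaWp}, the expression is in $L^2_{\loc}$; on $\Crit(w_p)$ it vanishes a.e.\ (Lebesgue points of $\nabla w_p = 0$ where $\nabla w_p$ is also a.e.\ differentiable with zero derivative), so the distributional divergence is represented by an $L^2_{\loc}$ function. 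The main obstacle I anticipate is bookkeeping: keeping the powers $\alpha+p-2, \alpha+p-3, \alpha+p$ straight and correctly tracking the $\Err_\varepsilon$-coefficients through the substitution of \cref{eq:MeanCurvatureLevelWepsP}, plus the care needed to differentiate $\Err_\varepsilon$ without introducing uncontrolled second-derivative terms — this is where using the PDE to eliminate $\Delta u$ is essential. The regularity/approximation step for $\varepsilon = 0$ is routine given \cref{prop:RegolaritaWp} but must be stated, since $w_p$ is not classically twice differentiable on all of $D\smallsetminus\Omega$.
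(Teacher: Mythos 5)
Your proposal is correct and follows essentially the same route as the paper: expand $\div(J_\varepsilon)$ by the Leibniz rule, substitute the PDE via \cref{eq:MeanCurvatureLevelWepsP}, and for $\div(\Err_\varepsilon J_\varepsilon)$ compute $\nabla\Err_\varepsilon$ and apply the product rule, with the paper leaving the $\varepsilon=0$ distributional interpretation and the $L^2_{\loc}$ claim to ``straightforward computations'' while you spell out the approximation from $\varepsilon>0$. One small clarification: the PDE is only needed to eliminate $\Delta w^\varepsilon_p$ (already packaged in \cref{eq:MeanCurvatureLevelWepsP}), whereas $\nabla\Err_\varepsilon=-2\Err_\varepsilon(1-\Err_\varepsilon)\bigl(\nabla\abs{\nabla w^\varepsilon_p}/\abs{\nabla w^\varepsilon_p}-\nabla w^\varepsilon_p/(p-1)\bigr)$ is a purely kinematic identity requiring no further use of the equation.
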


\begin{proof} By straightforward computations, we have
    \begin{align}
        \div(J_\varepsilon) &= (\alpha+ p -1)\abs{\nabla w^\varepsilon_p}^{\alpha+ p-3}\ip{\nabla \abs{\nabla w_p^\varepsilon}|\nabla w^\varepsilon_p} + \abs{\nabla w^\varepsilon_p}^{\alpha + p -1} \H \\
        \overset{\cref{eq:MeanCurvatureLevelWepsP}}&{=}[\alpha -(2-p)\Err_\varepsilon]  \abs{\nabla w^\varepsilon_p}^{\alpha+ p-3}\ip{\nabla \abs{\nabla w_p^\varepsilon}|\nabla w^\varepsilon_p} + \left(1+ \frac{2-p}{p-1} \Err_\varepsilon\right)\abs{\nabla w^\varepsilon_p}^{\alpha + p }.
    \end{align}
    Since $\Err_\varepsilon \in L^\infty \cap W^{1,2}_{\loc}(D \smallsetminus \Omega)$, by Leibniz rule we have
    \begin{align}
        \div(\Err_\varepsilon J_\varepsilon) &= \Err_\varepsilon \div J_\varepsilon + \ip{J_\varepsilon | \nabla \Err_\varepsilon}\\
        &=\Err_\varepsilon \div J_\varepsilon -2\Err_\varepsilon(1- \Err_\varepsilon) \ip{J_\varepsilon | \frac{\nabla \abs{\nabla w^\varepsilon_p}}{\abs{\nabla w^\varepsilon_p}}- \frac{\nabla w^\varepsilon_p}{p-1}}\\
        &= \Err_\varepsilon[(\alpha-2)+p\Err_\varepsilon] \abs{\nabla w^\varepsilon_p}^{\alpha+ p-3}\ip{\nabla \abs{\nabla w_p^\varepsilon}|\nabla w^\varepsilon_p} + \Err_\varepsilon \left( \frac{p+1}{p-1} - \frac{p}{p-1} \Err_\varepsilon \right) \abs{\nabla w^\varepsilon_p }^{\alpha + p }.
    \end{align}
    By \cref{prop:RegolaritaWp}, $\div(J)\in L^2_{\loc}(D \smallsetminus \Omega)$.
\end{proof}
We now define the following vector field
\begin{equation}\label{eq:defY}
    Y_\varepsilon \coloneqq \abs{\nabla w^\varepsilon_p}^{\alpha+p-3}\left(\nabla \abs{\nabla w^\varepsilon_p } +(p-2) (1- \Err_\varepsilon) \nabla^\perp\abs{\nabla w_p^\varepsilon} \right),
\end{equation}
at every point of $D\smallsetminus \Crit(w^\varepsilon_p)$. We will denote $Y$ in place of $Y_0$.

\begin{lemma}\label{lem:divY}
Let $Y_\varepsilon$ be as in \cref{eq:defY}. Then,
    \begin{equation}\label{eq:divY}
    \div(Y_\varepsilon) = \abs{\nabla w^\varepsilon_p}^{\alpha+p-2}(\mathcal{D}_+ + \mathcal{D}_\sigma)
\end{equation}
at every point of $D \smallsetminus \Crit(w^\varepsilon_p)$, where
\begin{align}
\mathcal{D}^{\sml{\varepsilon}}_+&\coloneqq\begin{multlined}[t][.8\textwidth]
         (p-1)^2 \left(1+ \frac{2-p}{p-1} \Err_\varepsilon\right) \left[\frac{\alpha+p-2}{p-1}-\frac{n-2}{n-1}\left(1+ \frac{2-p}{p-1}\Err_\varepsilon\right)\right]\frac{\abs{\nabla^\perp\abs{\nabla w^\varepsilon_p}}^2}{\abs{\nabla w^\varepsilon_p }^2}\\
         +(\alpha+ p -2)\frac{\abs{\nabla^\top \abs{\nabla w^\varepsilon_p}}^2}{\abs{\nabla w^\varepsilon_p }^2} + \abs{\mathring{\h}}^2,
    \end{multlined}\\
\mathcal{D}_{\mathrlap{\sigma}\hphantom{+}}^{\sml{\varepsilon}}&\coloneqq\begin{multlined}[t][.8\textwidth]
\Ric\left(\frac{\nabla w^\varepsilon_p}{\abs{\nabla w^\varepsilon_p }},\frac{\nabla w^\varepsilon_p}{\abs{\nabla w^\varepsilon_p }}\right)+ \left[ \frac{1}{n-1} \left( 1+ \frac{2-p}{p-1} \Err_\varepsilon \right)^2 +2 \frac{2-p}{(p-1)^2} (1- \Err_\varepsilon) \Err_\varepsilon \right]\abs{\nabla w^\varepsilon_p}^2\\
     +\left[2 \frac{(p-1)(n-2)}{n-1} \left( 1+ \frac{2-p}{p-1} \Err_\varepsilon \right)^2 + (2-p) (1- \Err_\varepsilon) \left( 1- \frac{p}{p-1} \Err_\varepsilon \right)\right] \ip{\nabla \abs{\nabla w^\varepsilon_p}| \frac{\nabla w^\varepsilon_p}{\abs{\nabla w^\varepsilon_p }}},
    \end{multlined}
\end{align}
with $\Err_0\equiv 0$. Moreover, $\mathcal{D}_\sigma \in L^2_{\loc}(D \smallsetminus \Omega)$ and, provided $\alpha \geq (3-p) - 1/(n-1)$, $\mathcal{D}_+ \geq 0$.
\end{lemma}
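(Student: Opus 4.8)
The plan is to carry out a pointwise Bochner-type computation on $D\smallsetminus\Crit(w^\varepsilon_p)$, where $w^\varepsilon_p$ is smooth (classical elliptic regularity if $\varepsilon>0$, \cref{prop:RegolaritaWp} if $\varepsilon=0$), so that \cref{eq:divY} is a genuine pointwise identity and no distributional subtlety enters. Write $u:=w^\varepsilon_p$, $\nu:=\nabla u/\abs{\nabla u}$, and $\lambda_\varepsilon:=1+\tfrac{2-p}{p-1}\Err_\varepsilon\in[1,\tfrac1{p-1}]$ (using $1<p\le2$, $\Err_\varepsilon\in[0,1]$). First I would record the only consequence of \cref{eq:eps_moser_potential} that is used: expanding the $\varepsilon$-regularized $p$-harmonicity of $\ee^{-u/(p-1)}$ gives the pointwise equation
\[
\Delta u=\lambda_\varepsilon\abs{\nabla u}^2+(2-p)(1-\Err_\varepsilon)\,\frac{\nabla^2u(\nabla u,\nabla u)}{\abs{\nabla u}^2},
\]
which, combined with the level-set decomposition $\Delta u=\nabla^2u(\nu,\nu)+\H\abs{\nabla u}$, is nothing but \cref{eq:MeanCurvatureLevelWepsP}. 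Setting $\Err_0\equiv0$, $\lambda_0\equiv1$ recovers the unregularized identities, so all $\varepsilon\ge0$ are treated at once.

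Next I would expand $\div(Y_\varepsilon)$. Using $\nabla^\perp\abs{\nabla u}=\ip{\nabla\abs{\nabla u}|\nu}\nu$, rewrite
\[
Y_\varepsilon=\abs{\nabla u}^{\alpha+p-3}\nabla\abs{\nabla u}+(p-2)(1-\Err_\varepsilon)\,\abs{\nabla u}^{\alpha+p-4}\ip{\nabla\abs{\nabla u}|\nu}\,\nabla u,
\]
and apply the Leibniz rule to each summand. The divergence of the first summand produces $\abs{\nabla u}^{\alpha+p-3}\Delta\abs{\nabla u}+(\alpha+p-3)\abs{\nabla u}^{\alpha+p-4}\abs{\nabla\abs{\nabla u}}^2$; for $\Delta\abs{\nabla u}$ I would use the Bochner formula $\Delta\tfrac12\abs{\nabla u}^2=\abs{\nabla^2u}^2+\ip{\nabla u|\nabla\Delta u}+\Ric(\nabla u,\nabla u)$, convert to $\Delta\abs{\nabla u}$, decompose $\abs{\nabla^2u}^2=(\nabla^2u(\nu,\nu))^2+2\abs{\nabla^\top\abs{\nabla u}}^2+\abs{\nabla u}^2\abs{\h}^2$ in a frame adapted to the level sets (the mixed block of $\nabla^2u$ being $\ip{\nabla^\top\abs{\nabla u}|\cdot}$, the tangential block being $\abs{\nabla u}$ times the second fundamental form), split $\abs{\h}^2=\abs{\mathring\h}^2+\H^2/(n-1)$, and eliminate $\H$ via \cref{eq:MeanCurvatureLevelWepsP}. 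The term $\ip{\nabla u|\nabla\Delta u}$ is obtained by differentiating the displayed equation along $\nabla u$; beyond $\ip{\nabla u|\nabla\abs{\nabla u}}=\abs{\nabla u}\ip{\nabla\abs{\nabla u}|\nu}$, the ingredients new with respect to the unregularized case are the identity $\ip{\nabla u|\nabla\Err_\varepsilon}=-2\Err_\varepsilon(1-\Err_\varepsilon)\bigl(\ip{\nabla\abs{\nabla u}|\nu}-\tfrac{\abs{\nabla u}^2}{p-1}\bigr)$ from the proof of \cref{lem:divX}, and the third-order term $\ip{\nabla u|\nabla(\nabla^2u(\nabla u,\nabla u))}$, which is reduced by a Ricci commutation identity together with the equation itself, leaving lower-order terms and a further $\Ric(\nu,\nu)$ contribution. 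The divergence of the second summand of $Y_\varepsilon$ is handled by the same rules.

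Once everything is substituted, all third-order terms cancel; this is precisely what the coefficient $(p-2)(1-\Err_\varepsilon)$ built into $Y_\varepsilon$ is designed for. The remainder factors as $\abs{\nabla u}^{\alpha+p-2}$ times a sum that I would organize by its homogeneity in $\ip{\nabla\abs{\nabla u}|\nu}$: the terms of degree $\ge 2$, together with $\abs{\mathring\h}^2$ and the $\abs{\nabla^\top\abs{\nabla u}}^2/\abs{\nabla u}^2$ term, form $\mathcal D_+$, while $\Ric(\nu,\nu)$, the terms quadratic in $\abs{\nabla u}$, and the terms linear in $\ip{\nabla\abs{\nabla u}|\nu}$ form $\mathcal D_\sigma$; reading off the coefficients gives \cref{eq:divY}, with in particular the coefficient of $\abs{\nabla^\perp\abs{\nabla u}}^2/\abs{\nabla u}^2$ equal to $(p-1)^2\lambda_\varepsilon\bigl[\tfrac{\alpha+p-2}{p-1}-\tfrac{n-2}{n-1}\lambda_\varepsilon\bigr]$. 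Finally I would verify the two closing assertions. That $\mathcal D_\sigma\in L^2_{\loc}(D\smallsetminus\Omega)$: $\Ric(\nu,\nu)$ is locally bounded as $\Ric$ is continuous, the coefficients are bounded since $\Err_\varepsilon\in[0,1]$ and $\abs{\nabla u}\in L^\infty_{\loc}$, and $\ip{\nabla\abs{\nabla u}|\nu}\in L^2_{\loc}$ --- from part (2) of \cref{prop:StimeBaseWepsilonP} if $\varepsilon>0$, and for $\varepsilon=0$ because $\nabla\abs{\nabla w_p}=\tfrac1{p-1}\abs{\nabla w_p}^{2-p}\nabla(\abs{\nabla w_p}^{p-1})$ with $\abs{\nabla w_p}^{p-1}\in W^{1,2}_{\loc}$ by \cref{prop:RegolaritaWp} and $2-p\ge0$. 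That $\mathcal D_+\ge0$ when $\alpha\ge(3-p)-\tfrac1{n-1}$: this hypothesis forces $\alpha+p-2\ge\tfrac{n-2}{n-1}>0$, so the $\abs{\mathring\h}^2$ and tangential terms are nonnegative; and the normal coefficient, a downward parabola in $\lambda_\varepsilon$ vanishing at $\lambda_\varepsilon=0$, stays nonnegative on all of $[1,\tfrac1{p-1}]$ exactly when $\tfrac1{p-1}\le\tfrac{(\alpha+p-2)(n-1)}{(p-1)(n-2)}$, that is when $\alpha+p-2\ge\tfrac{n-2}{n-1}$ --- the hypothesis again.

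I expect the main obstacle to be the Bochner step: carrying all the $\Err_\varepsilon$-dependent contributions correctly while differentiating the equation along $\nabla u$, eliminating the third derivatives through commutation, and then regrouping the dozen or so resulting terms into precisely $\mathcal D_+$ and $\mathcal D_\sigma$. This bookkeeping is heavier than in the $\varepsilon=0$ situation available in the literature owing to those extra terms; a reliable check is that the whole computation must collapse to the known $p$-Laplacian formula upon setting $\Err_\varepsilon\equiv0$, $\lambda_\varepsilon\equiv1$.
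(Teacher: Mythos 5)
Your proposal is correct, but it organizes the Bochner computation differently from the paper, and the paper's organization is noticeably cleaner. You split $Y_\varepsilon$ into the two summands visible in \cref{eq:defY}, compute $\Delta\abs{\nabla u}$ via Bochner, and then must evaluate $\ip{\nabla u|\nabla\Delta u}$ by \emph{differentiating the PDE}; this throws up genuine third-order terms $\ip{\nabla u|\nabla(\nabla^2u(\nabla u,\nabla u))}$ that you then cancel against the divergence of the second summand (and handle with commutation identities). The paper instead rewrites $Y_\varepsilon$ \emph{before} differentiating: using the equation only at the zeroth-derivative level (the mean-curvature identity \cref{eq:MeanCurvatureLevelWepsP}), it expresses
\[
Y_\varepsilon= -\abs{\nabla w^\varepsilon_p}^{\alpha+p-4}\Bigl[ \Delta w^\varepsilon_p\, \nabla w^\varepsilon_p - \tfrac{1}{2} \nabla \abs{\nabla w^\varepsilon_p}^2\Bigr] + \Bigl(1+ \tfrac{2-p}{p-1}\Err_\varepsilon\Bigr) J_\varepsilon,
\]
so that the divergence of the bracket produces exactly the combination $\tfrac12\Delta\abs{\nabla u}^2-\ip{\nabla\Delta u|\nabla u}$, which Bochner converts directly into $\Ric(\nabla u,\nabla u)+\abs{\nabla^2u}^2$ with \emph{no surviving third-order term and no need to differentiate the equation at all}. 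The remaining piece is handled by the already-computed $\div J_\varepsilon$, $\div(\Err_\varepsilon J_\varepsilon)$ of \cref{lem:divX}. Your route thus buys nothing over the paper's and costs the cancellation bookkeeping you yourself flag as the main hazard; both are correct, but the paper's rewriting is the standard trick for avoiding $\nabla^3u$ entirely and is worth internalizing. One small simplification in your closing checks: for $\varepsilon=0$ you can quote $\abs{\nabla w_p}\in W^{1,2}_{\loc}(D\smallsetminus\Omega)$ directly from \cref{prop:RegolaritaWp} (it is the weak limit of $\abs{\nabla w_p^\varepsilon}$), rather than routing through $\abs{\nabla w_p}^{p-1}$ and the chain rule. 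Your verification of $\mathcal D_+\ge 0$ — reducing to the parabola in $\lambda_\varepsilon=1+\tfrac{2-p}{p-1}\Err_\varepsilon$ on $[1,\tfrac1{p-1}]$ and checking the right endpoint — is correct and matches the threshold $\alpha\ge(3-p)-\tfrac1{n-1}$.
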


\begin{proof}
    Using the equation of $w^\varepsilon_p$, the vector field $Y_\varepsilon$ can be also rewritten as
\begin{equation}
    Y_\varepsilon= -\abs{\nabla w^\varepsilon_p}^{\alpha+p-4}\left[ \Delta w^\varepsilon_p \nabla w^\varepsilon_p - \frac{1}{2} \nabla \abs{\nabla w^\varepsilon_p}^2\right] + \left(1+ \frac{2-p}{p-1}\Err_\varepsilon\right) J_\varepsilon,
\end{equation}
where $J_\varepsilon$ is defined in \cref{eq:divX}.
Hence, we have 
\begin{equation}
    \div(Y_\varepsilon) =\begin{multlined}[t][.8\textwidth](\alpha + p-4)\abs{\nabla w^\varepsilon_p}^{\alpha+p-5} \left( \abs{\nabla w^\varepsilon_p} \abs{\nabla \abs{\nabla w^\varepsilon_p}}^2-\Delta w^\varepsilon_p\ip{\nabla \abs{\nabla w^\varepsilon_p}| \nabla w^\varepsilon_p}\right)\\+\abs{\nabla w^\varepsilon_p }^{\alpha+p-4}\left( \frac{1}{2} \Delta \abs{\nabla w^\varepsilon_p }^2-\ip{\nabla \Delta w^\varepsilon_p | \nabla w^\varepsilon_p }- (\Delta w^\varepsilon_p)^2\right)  +\div \left(\left(1+ \frac{2-p}{p-1} \Err_\varepsilon\right)J_\varepsilon\right).
    \end{multlined}
\end{equation}
We already computed the divergence of $J_\varepsilon $ and $\Err_\varepsilon J_\varepsilon$ in \cref{eq:divX,eq:divEX}. The Laplacian of $w_\varepsilon^p$ can be computed using
\begin{equation}
    \Delta w^\varepsilon_p = \abs{\nabla w^\varepsilon_p} \H  + \frac{\ip{\nabla \abs{\nabla w^\varepsilon_p}| \nabla w^\varepsilon_p}}{\abs{\nabla w^\varepsilon_p}},
\end{equation}
and replacing $\H$ with its expression in \cref{eq:MeanCurvatureLevelWepsP}. To conclude, by combining Kato's and Bochner's identities, one has
\begin{align}
    \frac{1}{2} \Delta \abs{\nabla w^\varepsilon_p }^2 -\ip{\nabla \Delta w^\varepsilon_p | \nabla w^\varepsilon_p }&= \Ric(\nabla w^\varepsilon_p,\nabla w^\varepsilon_p) +\abs{\nabla^2 w^\varepsilon_p }^2\\& =\abs{\nabla w^\varepsilon_p}^2 \left(\Ric(\nu,\nu) +\abs{\mathring\h}^2 + 2\frac{\abs{\nabla^\top\abs{\nabla w^\varepsilon_p}}^2}{\abs{\nabla w^\varepsilon_p}^2} + \frac{\abs{\nabla^\perp \abs{\nabla w^\varepsilon_p }}^2}{\abs{\nabla w^\varepsilon_p}^2}+ \frac{\H^2}{n-1}\right).
\end{align}
The claimed \cref{eq:divY} follows employing again \cref{eq:MeanCurvatureLevelWepsP}.
\end{proof}

\subsection{Regularity of level sets of \texorpdfstring{$w_p$}{wp}}
We will now use the above computations to get regularity estimates on geometric quantities (namely the mean curvature and the second fundamental form).
Within this section we denote by $\Omega\subseteq M$ the given bounded closed subset of $M$ and by $w_p$ and $w^\varepsilon_p$ the solutions to \cref{eq:moser_potential} and \cref{eq:eps_moser_potential} associated to $\Omega$ respectively.

We will show that for almost every $t \in [0,\inf_{\partial D}\Phi)$ the level set $\partial \Omega^{\sml{p}}_t$ is a curvature varifold (see \cref{lem:ConvergenzaVarifoldEspToZero}).

\begin{proposition}[Regularity estimate]\label{prop:interior_regularity_estimate}
   Assume $\Omega \subseteq  M $ is smooth. For every $0<\tau< T<\inf_{\partial D} \Phi$ and $\tau<\inf_{\partial D} \Phi - T$, it holds
\begin{equation}\label{eq:interior_regularity_eps}
        \int_\tau^{T} \int_{\partial \Omega^{\sml\varepsilon}_s} \abs{{\h}}^2 \dif \Hff^{n-1} \dif s\leq \kst\left(1+ \int_{\tau/2}^{T+\tau} \int_{\partial \Omega_s^{\sml{\varepsilon}}} \abs{\nabla w^\varepsilon_p}^2\dif \Hff^{n-1} \dif s\right),
    \end{equation}
    for some positive constant $\kst$ depending on $\tau$, $T$, $D$, $p$, $n$ and a lower bound on the Ricci tensor.
\end{proposition}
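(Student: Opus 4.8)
The plan is to integrate the identity for $\div(Y_\varepsilon)$ from \cref{lem:divY} against a suitable cutoff and use the nonnegativity of $\mathcal D_+$ to extract an $L^2$ bound on the second fundamental form. More precisely, first I would observe that since $\abs{\mathring\h}^2 + \H^2/(n-1) = \abs{\h}^2$ pointwise, the term $\mathcal D_+^{\sml\varepsilon}$ controls $\abs{\mathring\h}^2$ from above up to the (nonnegative, once $\alpha\ge(3-p)-1/(n-1)$) gradient terms; to recover the full $\abs\h^2$ one also needs to control $\H^2$, which by \cref{eq:MeanCurvatureLevelWepsP} is a pointwise quantity in $\abs{\nabla w^\varepsilon_p}$ and $\ip{\nabla\abs{\nabla w^\varepsilon_p}|\nabla w^\varepsilon_p}$, hence controlled once the first-order gradient terms are controlled. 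So the real content is to bound $\int\int \abs{\nabla^2 w^\varepsilon_p}^2/\abs{\nabla w^\varepsilon_p}^2$-type quantities on the (precompact) region $\set{\tau\le w^\varepsilon_p\le T}$ in terms of $\int\int\abs{\nabla w^\varepsilon_p}^2$ there, plus a constant.

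Second, I would rewrite $\div(Y_\varepsilon) = \abs{\nabla w^\varepsilon_p}^{\alpha+p-2}(\mathcal D^{\sml\varepsilon}_+ + \mathcal D^{\sml\varepsilon}_\sigma)$ and note that the \enquote{bad} part $\mathcal D^{\sml\varepsilon}_\sigma$ is, by the explicit formula in \cref{lem:divY}, bounded in absolute value by a constant (depending only on $n$, $p$, and a lower Ricci bound) times $\abs{\nabla w^\varepsilon_p}^2 + \abs{\nabla\abs{\nabla w^\varepsilon_p}}\,\abs{\nabla w^\varepsilon_p}$; the latter can be absorbed by Young's inequality into an $\eta$-fraction of the good quadratic gradient terms in $\mathcal D^{\sml\varepsilon}_+$ plus $C_\eta \abs{\nabla w^\varepsilon_p}^2$. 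Testing the distributional identity $\int Y_\varepsilon\cdot\nabla\chi = -\int \chi\,\div(Y_\varepsilon)$ with a cutoff $\chi = \chi(w^\varepsilon_p)$ that is $1$ on $[\tau,T]$ and supported in $(\tau/2,T+\tau)$ (here one uses the hypothesis $\tau<\inf_{\partial D}\Phi - T$ to guarantee $\set{w^\varepsilon_p\le T+\tau}\Subset D$), the left-hand side is estimated by $\sup\abs{Y_\varepsilon}\cdot\int_{\set{\tau/2\le w^\varepsilon_p\le T+\tau}}\abs{\nabla w^\varepsilon_p}$ using $\abs{\chi'(w^\varepsilon_p)\nabla w^\varepsilon_p}$, and $\sup\abs{Y_\varepsilon}$ is bounded uniformly in $\varepsilon$ by \cref{prop:StimeBaseWepsilonP}(1) on the relevant compact set; alternatively one uses the coarea formula to turn the bulk integrals into the stated integrals over level sets $\partial\Omega^{\sml\varepsilon}_s$. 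Rearranging, the good terms of $\mathcal D^{\sml\varepsilon}_+$ integrated over $\set{\tau\le w^\varepsilon_p\le T}$ are bounded by $\kst(1+\int_{\tau/2}^{T+\tau}\int_{\partial\Omega^{\sml\varepsilon}_s}\abs{\nabla w^\varepsilon_p}^2)$, and since near the critical set $\abs\h$ is anyway controlled (the relevant integrand $\abs{\nabla w^\varepsilon_p}^{\alpha+p-2}\abs\h^2$ degenerates where $\nabla w^\varepsilon_p$ is small, but on the level-set formulation $\partial\Omega^{\sml\varepsilon}_s$ is smooth with $\abs{\nabla w^\varepsilon_p}$ bounded above and — by the Hopf-type lower bounds near $\partial\Omega$ and Sard for a.e.\ $s$ — one works on genuine smooth hypersurfaces), the claimed inequality follows.

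Two technical points need care. The first is the passage from bulk integrals over $\set{\tau\le w^\varepsilon_p\le T}$ to the iterated (coarea) integrals $\int_\tau^T\int_{\partial\Omega^{\sml\varepsilon}_s}$: this is legitimate because for a.e.\ $s$ the level set is smooth by Sard, $\abs{\nabla w^\varepsilon_p}$ is bounded away from zero on it away from $\Crit$, and the coarea factor $1/\abs{\nabla w^\varepsilon_p}$ combines with the power $\abs{\nabla w^\varepsilon_p}^{\alpha+p-2}$ to leave integrable expressions (one should record that $\alpha+p-3 > -1$ is ensured by $\alpha>(n-p)/(n-1)\ge\dots$, or more crudely restrict to the region where $\abs{\nabla w^\varepsilon_p}$ is comparable to a constant, which is where the $\abs\h^2$ contribution is non-degenerate). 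The second, and the step I expect to be the main obstacle, is the absorption argument: one must check that the coefficient of $\abs{\nabla^\perp\abs{\nabla w^\varepsilon_p}}^2/\abs{\nabla w^\varepsilon_p}^2$ in $\mathcal D^{\sml\varepsilon}_+$ stays \emph{bounded below by a positive constant uniformly in $\varepsilon\in(0,1]$ and in the relevant region} — its sign was only asserted for $\alpha\ge(3-p)-1/(n-1)$ and it involves the factors $(1+\frac{2-p}{p-1}\Err_\varepsilon)$ with $\Err_\varepsilon\in[0,1]$, so one needs a short monotonicity-in-$\Err_\varepsilon$ check — and that the cross term in $\mathcal D^{\sml\varepsilon}_\sigma$ is genuinely subordinate (linear in $\nabla\abs{\nabla w^\varepsilon_p}$, hence Young-absorbable) so nothing of quadratic order is left uncontrolled. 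Once this algebra is in place, the constant $\kst$ depends only on $\tau$, $T$, $D$, $p$, $n$ and the lower Ricci bound, as claimed, and the estimate is manifestly stable as $\varepsilon\to0^+$, which is exactly what is needed for \cref{lem:ConvergenzaVarifoldEspToZero}.
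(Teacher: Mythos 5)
Your proposal follows essentially the same route as the paper's proof: integrate the divergence identity from \cref{lem:divY} against a cutoff depending on $w^\varepsilon_p$, use the nonnegativity of $\mathcal{D}_+^{\sml{\varepsilon}}$ to extract the $\abs{\mathring{\h}}^2$-bound, absorb the cross terms in $\mathcal{D}_\sigma^{\sml{\varepsilon}}$ by Young's inequality, and pass from bulk to level-set integrals by coarea. Two points you flag as ``technical'' are, however, exactly the parts that need to be carried out, and neither is as routine as a mention.

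First, you should fix $\alpha=3-p$ at the outset (the paper does so). Then $\alpha+p-2=1$, so the coarea weight $\abs{\nabla w^\varepsilon_p}^{\alpha+p-2}$ cancels the coarea Jacobian exactly and the first of your ``technical points'' disappears; and the coefficient of $\abs{\nabla^\perp\abs{\nabla w^\varepsilon_p}}^2/\abs{\nabla w^\varepsilon_p}^2$ in $\mathcal{D}_+^{\sml{\varepsilon}}$ becomes $(p-1)^2\, s\,\bigl[\tfrac{1}{p-1}-\tfrac{n-2}{n-1}\,s\bigr]$ with $s=1+\tfrac{2-p}{p-1}\Err_\varepsilon\in\bigl[1,\tfrac{1}{p-1}\bigr]$. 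On that interval the bracket is positive (the roots of the parabola in $s$ are $0$ and $\tfrac{n-1}{(n-2)(p-1)}>\tfrac{1}{p-1}$), and evaluating at the endpoints gives the lower bound $\tfrac{p-1}{n-1}$, uniformly in $\varepsilon\in[0,1]$. So your second ``technical point'' checks out — but you should actually record the computation rather than treat it as a worry.

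Second, and more substantively: the ``distributional identity $\int\ip{Y_\varepsilon|\nabla\chi}=-\int\chi\,\div(Y_\varepsilon)$'' that you propose to test cannot simply be invoked. The vector field $Y_\varepsilon$ is not smooth across $\Crit(w^\varepsilon_p)$, $\div Y_\varepsilon$ is only computed in \cref{lem:divY} away from the critical set, and the term $\abs{\nabla w^\varepsilon_p}^{\alpha+p-2}\abs{\mathring{\h}}^2$ in $\div Y_\varepsilon$ is a priori not locally integrable near $\Crit(w^\varepsilon_p)$. The resolution is a second cutoff in the \emph{gradient} variable: let $\chi_\delta$ be a smooth increasing cutoff with $\chi_\delta=0$ on $[0,\delta]$ and $\chi_\delta=1$ on $[2\delta,\infty)$, set $Y^{\sml{\delta}}_\varepsilon=\chi_\delta(\abs{\nabla w^\varepsilon_p})Y_\varepsilon$ (which is genuinely smooth and vanishes on $\Crit$), apply the classical divergence theorem, and let $\delta\to0$. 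The boundary-of-cutoff term $\chi'_\delta(\abs{\nabla w^\varepsilon_p})\ip{Y_\varepsilon|\nabla\abs{\nabla w^\varepsilon_p}}$ is nonnegative (by the structure of $Y_\varepsilon$), so upon dropping it one obtains an \emph{inequality}, which is all you need; the remaining limits are justified by dominated convergence for $\ip{Y_\varepsilon|\nabla[\varphi(w^\varepsilon_p)]}$ and for the $\mathcal{D}_\sigma^{\sml{\varepsilon}}$-part (both are $L^2_{\loc}$), and by monotone convergence for the $\mathcal{D}_+^{\sml{\varepsilon}}$-part (using $\mathcal{D}_+^{\sml{\varepsilon}}\ge0$). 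Without this step your argument leaves a genuine gap: the pointwise formula for $\div Y_\varepsilon$ off the critical set does not by itself give a weak identity testable against Lipschitz cutoffs.
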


\begin{proof}
Fix $\alpha = 3-p$.   Observe that $\mathcal{D}^{\sml{\varepsilon}}_+\geq 0$ under this assumption. The proof is divided into two steps. First, we prove that
    \begin{equation}\label{eq:zzintegration}
        \begin{multlined}[t][.8\textwidth]
        -\int_0^{T+\tau}\varphi'(t) \int_{\partial \Omega_t^{\sml{\varepsilon}}}\ip{Y_\varepsilon | \nu^{\sml{\varepsilon}}} \dif \Hff^{n-1} \dif t\geq \int_0^{T+\tau}\varphi(t) \int_{\partial \Omega_t^{\sml{\varepsilon}}}(\mathcal{D}_+^{\sml{\varepsilon}} + \mathcal{D}_\sigma^{\sml{\varepsilon}}) \dif \Hff^{n-1} \dif t
        \end{multlined}
    \end{equation}
    for every nonnegative $\varphi \in \Lip_{c}(0,T+\tau)$. Second, we will use \cref{eq:zzintegration} to infer \cref{eq:interior_regularity_eps}.

    \begin{step}[Proof of \cref{eq:zzintegration}]\label{step:interior_estimate_integration_by_part}
    Let $\chi_\delta$ be a smooth cut-off function satisfying 
    \begin{equation}
        \chi_\delta(t) \coloneqq\begin{cases}
            &\chi_\delta(t) =0& & \text{if $t \leq \delta$,}\\
            &0\leq \chi'_\delta(t) \leq 2/\delta&& \text{if $t \in [\delta, 2 \delta]$,}\\
            &\chi_\delta(t)=1& &\text{if $t \geq2 \delta$.}\\
        \end{cases}
    \end{equation}
    Define $Y^{\sml{\delta}}_\varepsilon \coloneqq \chi_\delta(\abs{\nabla w^\varepsilon_p }) Y_\varepsilon$. Then, $Y^{\sml{\delta}}_\varepsilon$ is smooth for every $\delta >0$ and it vanishes identically on $\Crit(w^\varepsilon_p)$. Since $\ip{Y^{\sml{\delta}}_\varepsilon|\nabla [\varphi(w^\varepsilon_p)]}= \varphi'(w^\varepsilon_p) \ip{Y^{\sml{\delta}}_\varepsilon | \nabla w^\varepsilon_p } \in L^2_{\loc}(D \smallsetminus \Omega)$, by dominated convergence theorem we have that
    \begin{align}\label{eq:zz}
       \int_{D \smallsetminus \Omega} \ip{Y_\varepsilon |\nabla [\varphi(w^\varepsilon_p)]}\dif \Hff^n =\lim_{\delta \to 0} \int_{D \smallsetminus \Omega} \ip{Y^{\sml{\delta}}_\varepsilon | \nabla [\varphi(w^\varepsilon_p)]}\dif \Hff^n
    \end{align}
    Moreover, by divergence theorem, we have that
    \begin{equation}
        \int_{D \smallsetminus \Omega} \ip{Y^{\sml{\delta}}_\varepsilon | \nabla[\varphi(w^\varepsilon_p)]} \dif \Hff^n=\int_{D \smallsetminus \Omega} [\chi'_\delta (\abs{\nabla w^\varepsilon_p })\ip{Y_\varepsilon | \nabla \abs{\nabla w^\varepsilon_p }} + \chi_\delta(\abs{\nabla w^\varepsilon_p }) \div(Y_\varepsilon ) ]\varphi(w^\varepsilon_p)\dif \Hff^n.
    \end{equation}
    The very definition of $Y_\varepsilon$ implies
    \begin{equation}\label{eq:zznonegativity}
        \ip{Y_\varepsilon| \nabla \abs{\nabla w^\varepsilon_p}} =  \left[\abs{\nabla^\top \abs{\nabla w^\varepsilon_p }}^2 + (p-1) \left(1+ \frac{2-p}{p-1} \Err\right) \abs{\nabla^\perp \abs{\nabla w^\varepsilon_p }}^2\right]\geq 0 
    \end{equation}
    
    Recalling that $\varphi \abs{\nabla w }\mathcal{D}^{\sml{\varepsilon}}_\sigma \in L^2(\Omega_T \smallsetminus \Omega)$, since $\mathcal{D}^{\sml{\varepsilon}}_+\geq0 $, we have
    \begin{align}\label{eq:limite pijamini divergenza}
    \lim_{\delta \to 0} \int_{D \smallsetminus \Omega}  \chi_\delta(\abs{\nabla w^\varepsilon_p })\varphi(w^\varepsilon_p)\div(Y_\varepsilon)\dif \Hff^n&=\lim_{\delta \to 0}\int_{D \smallsetminus \Omega} \chi_\delta(\abs{\nabla w^\varepsilon_p })\varphi(w^\varepsilon_p)({\mathcal{D}_+^{\sml{\varepsilon}}+ \mathcal{D}^{\sml{\varepsilon}}_\sigma}){\abs{\nabla w^\varepsilon_p}}\dif \Hff^n\\&= \int_{D \smallsetminus \Omega}\varphi(w^\varepsilon_p)({\mathcal{D}^{\sml{\varepsilon}}_++ \mathcal{D}^{\sml{\varepsilon}}_\sigma})(w^\varepsilon_p){\abs{\nabla w^\varepsilon_p }}\dif \Hff^n,
    \end{align}
    where we used the dominated convergence theorem for passing to the limit $\delta\to0$ for the term containing $\mathcal{D}_{\sigma}$, and the monotone convergence theorem for the other one, since $\mathcal{D}_+\geq 0$. Recalling \cref{eq:zz,eq:zznonegativity}, we can conclude that    \begin{equation}\label{eq:integrazione per parti campo X}
        - \int_{D \smallsetminus \Crit(w^\varepsilon_p)} \ip{Y_\varepsilon|\nabla [\varphi(w^\varepsilon_p)]}\dif \Hff^n\geq \int_{D \smallsetminus \Crit(w^\varepsilon_p)}\varphi(w^\varepsilon_p)({\mathcal{D}_+^{\sml{\varepsilon}}+ \mathcal{D}^{\sml{\varepsilon}}_\sigma}){\abs{\nabla w^\varepsilon_p }}\dif \Hff^n,
    \end{equation}
    holds for every nonnegative function $\varphi \in \Lip_c(0,T)$. Thus \cref{eq:zzintegration} follows by coarea formula.
    \end{step}
    \begin{step}[Proof of \cref{eq:interior_regularity_eps}]\label{step:interior_estimate} Take now $\varphi(s)= 1$ for $s\in[\tau,T]$, $\varphi(s)=0$ for $s\in\R\smallsetminus[\tau/2, T+\tau]$ and $\abs{\varphi'(s)}\leq 8/\tau$, and plug $\varphi^2$ into \cref{eq:zzintegration}. Hence, by Young's inequality, for every $\eta>0$ we have
    \begin{equation}
        \ip{Y_\varepsilon | \nabla [\varphi(w^\varepsilon_p)]^2} \geq - \eta^2 [\varphi(w^\varepsilon_p)]^2 \frac{\abs{\nabla \abs{\nabla w^\varepsilon_p }}^2}{\abs{\nabla w^\varepsilon_p}}- \frac{\kst}{\eta^2}[\varphi'(w^\varepsilon_p)]^2 \abs{\nabla w^\varepsilon_p}^3,
    \end{equation}
    and
    \begin{equation}
        [\varphi(w^\varepsilon_p)]^2(\mathcal{D}^{\sml{\varepsilon}}_+ + \mathcal{D}^{\sml{\varepsilon}}_\sigma) \abs{\nabla w^\varepsilon_p} \geq [\varphi(w^\varepsilon_p)]^2\left[\left( \kst - \eta^2 \right)\frac{\abs{\nabla \abs{\nabla w^\varepsilon_p }}^2}{\abs{\nabla w^\varepsilon_p}^2} + \abs{\mathring{\h}}^2 - \frac{\kst}{\eta^2} \abs{\nabla w^\varepsilon_p}^2 - \kappa\right]\abs{\nabla w^\varepsilon_p} ,
    \end{equation}
    where $\kst>0$ is a positive constant depending only on $p$ and $n$, and $\kappa$ is any lower bound on $\Ric$. Moreover, since $w^\varepsilon_p$ is smooth up to the boundary, \cref{eq:MeanCurvatureLevelWepsP} holds on $\partial \Omega$. Choosing $\eta\ll \kst/4$ and using coarea formula, we obtain that
    \begin{equation}
\int_{\tau}^t \int_{\partial\Omega^{\sml{\varepsilon}}_s}\abs{\mathring{\h}}^2 + \frac{\abs{\nabla \abs{\nabla w^\varepsilon_p }}^2}{\abs{\nabla w^\varepsilon_p}^2} \dif \Hff^{n-1} \dif s   \leq \kst \left(1+ \int_{\tau/2}^{T} \int_{\partial\Omega^{\sml{\varepsilon}}_{s}} \abs{\nabla w^\varepsilon_p}^2 \dif \Hff^{n-1} \dif s \right),
    \end{equation}
for some constant $\kst>0$ with the same dependencies of the statement. The expression \cref{eq:MeanCurvatureLevelWepsP} of the mean curvature gives that
    \begin{equation}\label{eq:zz2ffestimate}
        \abs{\h}^2 \leq \abs{\mathring{\h}}^2+ \frac{1}{n-1} \abs{\H}^2\leq \kst \left(\abs{\mathring{\h}}^2+ \abs{\nabla w^\varepsilon_p}^2 + \frac{\abs{\nabla \abs{\nabla w^\varepsilon_p}}^2}{\abs{\nabla w^\varepsilon_p}^2}\right)
    \end{equation}
    holds at any regular point of $\partial \Omega^{\sml{\varepsilon}}_s$, for some constant $\kst$ with the same dependencies of the statement.  \qedhere  
    \end{step}
\end{proof}

For smooth boundaries \cref{prop:interior_regularity_estimate} can be promoted to a global estimate.

\begin{corollary}\label{cor:global_regularity_estimate}
    Assume $\Omega \subseteq  M $ is smooth.
    Then, for every $0<T< \inf_{\partial D} \Phi$  and $0< \tau<\inf_{\partial D} \Phi - T$ we have
    \begin{equation}\label{eq:global_regularity_eps}
        \int_0^T \int_{\partial \Omega^{\sml\varepsilon}_s} \abs{{\h}}^2 \dif \Hff^{n-1} \dif s \leq \kst\left(1+ \int_{0}^{T+\tau} \int_{\partial \Omega_s^{\sml{\varepsilon}}} \abs{\nabla w^\varepsilon_p}^2\dif \Hff^{n-1} \dif s+\int_{\partial \Omega}\H^2+ \abs{\nabla w^\varepsilon_p}^2 \dif \Hff^{n-1} \right),
    \end{equation}
    for some positive constant $\kst$ depending on $T-t$, $D$, $p$, $n$ and any lower bound on the Ricci tensor.
\end{corollary}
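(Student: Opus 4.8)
The plan is to run the two-step argument in the proof of \cref{prop:interior_regularity_estimate} with a test function that does not vanish at $t=0$; the only new ingredient is then a single boundary integral over $\partial\Omega$, which is controlled because $\partial\Omega$ is smooth and, by Hopf, disjoint from $\Crit(w^\varepsilon_p)$. Throughout I keep the choice $\alpha=3-p$, so that $\mathcal{D}^{\sml\varepsilon}_+\ge0$ and the weights in \cref{eq:defY} and \cref{eq:divY} are $\abs{\nabla w^\varepsilon_p}^{\alpha+p-3}=1$ and $\abs{\nabla w^\varepsilon_p}^{\alpha+p-2}=\abs{\nabla w^\varepsilon_p}$.

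First I would revisit \cref{step:interior_estimate_integration_by_part}. Since $\Omega$ is smooth and $\varepsilon>0$, $w^\varepsilon_p$ is smooth up to $\partial\Omega$ with non-vanishing gradient there, so for small $\delta$ the cut-off $\chi_\delta(\abs{\nabla w^\varepsilon_p})$ equals $1$ near $\partial\Omega$ and $Y^{\sml\delta}_\varepsilon$ is smooth up to $\partial\Omega$. Applying the divergence theorem to $\ip{Y^{\sml\delta}_\varepsilon|\nabla[\varphi(w^\varepsilon_p)]}$ on $D\smallsetminus\Omega$ with a nonnegative $\varphi\in\Lip_c([0,T+\tau))$ not required to vanish at $0$: the contribution along $\partial D$ vanishes since $\varphi(\Phi)=0$ there (because $\inf_{\partial D}\Phi>T+\tau$), while along $\partial\Omega$ the outer normal of $D\smallsetminus\Omega$ is $-\nu=-\nabla w^\varepsilon_p/\abs{\nabla w^\varepsilon_p}$, leaving $-\varphi(0)\int_{\partial\Omega}\ip{Y^{\sml\delta}_\varepsilon|\nu}\dif\Hff^{n-1}$. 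Passing to the limit $\delta\to0$ as in the original proof (monotone convergence for the $\mathcal{D}_+$ part, dominated convergence for the $\mathcal{D}_\sigma$ part and, since $\chi_\delta\to1$ on $\partial\Omega$, for the boundary term) and using coarea, I obtain, for every nonnegative $\varphi\in\Lip_c([0,T+\tau))$,
\begin{equation*}
-\int_0^{T+\tau}\varphi'(t)\int_{\partial\Omega^{\sml\varepsilon}_t}\ip{Y_\varepsilon|\nu^{\sml\varepsilon}}\dif\Hff^{n-1}\dif t\ \ge\ \int_0^{T+\tau}\varphi(t)\int_{\partial\Omega^{\sml\varepsilon}_t}\left(\mathcal{D}^{\sml\varepsilon}_++\mathcal{D}^{\sml\varepsilon}_\sigma\right)\dif\Hff^{n-1}\dif t+\varphi(0)\int_{\partial\Omega}\ip{Y_\varepsilon|\nu}\dif\Hff^{n-1},
\end{equation*}
i.e.\ \cref{eq:zzintegration} with the extra last term.

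Then I would repeat \cref{step:interior_estimate} verbatim with $\varphi$ equal to $1$ on $[0,T]$, vanishing outside $[0,T+\tau]$ and $\abs{\varphi'}\le 2/\tau$, inserting $\varphi^2$ into the displayed inequality: the Young inequality with a small parameter, the sign of $\ip{Y_\varepsilon|\nabla\abs{\nabla w^\varepsilon_p}}$ from \cref{eq:zznonegativity}, and the absorption of the Ricci term go through unchanged, and they yield a bound for $\int_0^T\int_{\partial\Omega^{\sml\varepsilon}_s}(\abs{\mathring{\h}}^2+\abs{\nabla\abs{\nabla w^\varepsilon_p}}^2\abs{\nabla w^\varepsilon_p}^{-2})\dif\Hff^{n-1}\dif s$ by $\kst(1+\int_0^{T+\tau}\int_{\partial\Omega^{\sml\varepsilon}_s}\abs{\nabla w^\varepsilon_p}^2\dif\Hff^{n-1}\dif s)$ plus the new term $-\int_{\partial\Omega}\ip{Y_\varepsilon|\nu}\dif\Hff^{n-1}$ (since $\varphi(0)^2=1$), moved to the right. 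To estimate it, on $\partial\Omega$ one has $\nabla w^\varepsilon_p=\abs{\nabla w^\varepsilon_p}\,\nu$, so \cref{eq:defY} with $\alpha=3-p$ gives $\ip{Y_\varepsilon|\nu}=\bigl(p-1+(2-p)\Err_\varepsilon\bigr)\ip{\nabla\abs{\nabla w^\varepsilon_p}|\nu}$; solving \cref{eq:MeanCurvatureLevelWepsP} for $\ip{\nabla\abs{\nabla w^\varepsilon_p}|\nu}$ and using $0\le\Err_\varepsilon\le1$, $1<p\le2$ and $ab\le(a^2+b^2)/2$ yields the $\varepsilon$-independent bound $\abs{\ip{Y_\varepsilon|\nu}}\le\kst\bigl(\abs{\nabla w^\varepsilon_p}^2+\H^2\bigr)$ on $\partial\Omega$, with $\H$ the mean curvature of $\partial\Omega$ (finite, as $\Omega$ is smooth). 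Hence $-\int_{\partial\Omega}\ip{Y_\varepsilon|\nu}\dif\Hff^{n-1}\le\kst\int_{\partial\Omega}(\abs{\nabla w^\varepsilon_p}^2+\H^2)\dif\Hff^{n-1}$, and finally \cref{eq:zz2ffestimate} converts the left-hand side into $\int_0^T\int_{\partial\Omega^{\sml\varepsilon}_s}\abs{\h}^2\dif\Hff^{n-1}\dif s$, giving \cref{eq:global_regularity_eps}.

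There is no real obstacle: the whole argument is localized near $\partial\Omega$ and uses only that $\partial\Omega$ avoids the critical set. The one point worth noting is that, with $\alpha=3-p$, the weight $\abs{\nabla w^\varepsilon_p}^{\alpha+p-3}$ in $Y_\varepsilon$ equals $1$, so the boundary integrand $\ip{Y_\varepsilon|\nu}$ on $\partial\Omega$ is controlled by $\abs{\nabla w^\varepsilon_p}^2+\H^2$ \emph{without} any lower bound on $\abs{\nabla w^\varepsilon_p}$, which is precisely what keeps the resulting constant independent of $\varepsilon$.
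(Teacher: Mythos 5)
Your proof is correct and follows essentially the same route as the paper's: Hopf maximum principle so that $Y_\varepsilon$ is smooth up to $\partial\Omega$, the divergence theorem on $D\smallsetminus\Omega$ with a test function not vanishing at $0$ (the paper equivalently first does the integration by parts near $\partial\Omega$ for $\varphi \in \Lip_c([0,\tau))$ and then splices it with Step~1), and a Young-type estimate $\abs{\ip{Y_\varepsilon|\nu}}\lesssim 1 + \abs{\nabla w^\varepsilon_p}^2 + \H^2$ on $\partial\Omega$ to absorb the new boundary term. The only pedantic point is that the pointwise bound on $\ip{Y_\varepsilon|\nu}$ needs the additive constant $1$ (to handle the linear terms $\abs{\nabla w^\varepsilon_p}$ and $\H$), which is harmless since the right-hand side of \cref{eq:global_regularity_eps} already carries a $1$.
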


\begin{proof}
    Since $\partial \Omega$ is smooth, by Hopf's maximum principle there exists $\tau>0$ such that $\Crit(w_p^\varepsilon)\cap (\Omega_{\tau}^{\sml{\varepsilon}}\smallsetminus \Omega) = \varnothing$. Therefore, as the vector field $Y_\varepsilon$ is smooth in $\Omega_{\tau}^{\sml{\varepsilon}}\smallsetminus \Omega $, we can apply the divergence theorem coupled with the coarea formula
    \begin{align}
        -\int_0^{T+\tau}\varphi'(s) \int_{\partial \Omega_t^{\sml{\varepsilon}}}\ip{Y_\varepsilon | \nu^{\sml{\varepsilon}}} \dif \Hff^{n-1} \dif s &= -\int_{D\smallsetminus \Omega}\ip{Y_\varepsilon | \nabla [\varphi(w^\varepsilon_p)]}\dif \Hff^n \\\overset{\cref{eq:divY}}&{\geq}\varphi(0) \int_{\partial \Omega} \ip{Y_\varepsilon| \nu} \dif \Hff^{n-1}+ \int_{D\smallsetminus \Omega}\varphi( w^\varepsilon_p)\abs{\nabla w^\varepsilon_p} ({\mathcal{D}_+^{\sml{\varepsilon}}+ \mathcal{D}^{\sml{\varepsilon}}_\sigma})\dif \Hff^n
        \\&=  \varphi(0) \int_{\partial \Omega} \ip{Y_\varepsilon| \nu} \dif \Hff^{n-1}+  \int_0^{T}\varphi(s) \int_{\partial \Omega_s^{\sml{\varepsilon}}}({\mathcal{D}_+^{\sml{\varepsilon}}+ \mathcal{D}^{\sml{\varepsilon}}_\sigma}) \dif \Hff^{n-1} \dif s.
    \end{align}
    for every $\varphi \in \Lip_c([0,\tau))$. By \cref{step:interior_estimate_integration_by_part} of \cref{prop:interior_regularity_estimate}, the above identity holds actually for every $\varphi \in \Lip_c([0,T+\tau))$. The proof of \cref{eq:global_regularity_eps} now follows as in \cref{step:interior_estimate} of \cref{prop:interior_regularity_estimate}. Indeed, one can take $\varphi \in \Lip_c([0,T+\tau)$, $\varphi(s)= 1$ for $s\in[0,T]$, $\varphi(s)=0$ for $s\geq T+\tau$ and $\abs{\varphi'(s)}\leq 8/\tau$, and plug $\varphi^2$ in the above identity. The additional term can be ruled employing Young's inequality, since 
    \begin{equation}
        \int_{\partial \Omega } \ip{Y_\varepsilon| \nu }\dif \Hff^{n-1} \leq \kst\int_{\partial \Omega} \abs{\H}^2 + \abs{\nabla w^\varepsilon_p}^2\dif \Hff^{n-1},
    \end{equation}
    for some positive constant $\kst>0$ depending only on $p$ and $n$. 
\end{proof}

We are now ready to prove that almost every level of $w_p$ is a curvature varifold, addressing the first part of \cref{thm:monotonicity-NPT}.

\begin{theorem}\label{lem:ConvergenzaVarifoldEspToZero}
    Let $p\in(1,2]$ and let $\Omega \subset M$ be a closed bounded set with smooth boundary.
    Let $T>0$ and let $w_p$ (resp. $w^\varepsilon_p$) be the solution to the problem \cref{eq:moser_potential}  (resp. \cref{eq:eps_moser_potential}) for some $\Phi \in \Lip(\overline{D\smallsetminus \Omega})$, where $D$ is a smooth bounded open set such that $\Omega\subset D$, such that $\inf_{\partial D}\Phi>0$. Fix $T<\inf_{\partial D}\Phi$ and $0<\tau< \inf_{\partial D} \Phi -T$.  Then, there exists a vanishing sequence $(\varepsilon_k)_{k \in \N}$, such that $\partial \Omega^{\sml{\varepsilon_k}}_t$ converges in the sense of varifold to $\partial \Omega^{\sml{p}}_t$ as $k\to +\infty$, for almost every $t \in [0,T]$. Moreover, $\partial \Omega^{\sml{p}}_t$ is a curvature varifold satisfying the properties in \cref{lem:ConditionsEquivalentSFF} for almost every $t \in [0,T]$. Moreover,
    the following regularity estimate holds
    \begin{equation}\label{eq:interior_esitmate_2ff_p}
       \int_0^T \int_{\partial \Omega^{\sml{p}}_t} \abs{{\h}}^2 \dif \Hff^{n-1} \dif t \leq \kst\left(1+ \int_0^{T+\tau} \int_{\partial \Omega_s^{\sml{p}}} \abs{\nabla w_p}^2\dif \Hff^{n-1} \dif t+\int_{\partial \Omega}\H^2+ \abs{\nabla w_p}^2 \dif \Hff^{n-1} \right),
    \end{equation}
    for some constant $\kst>0$ depending on $T$, $\tau$, $D$, $p$, $n$ and any lower bound on the Ricci curvature tensor on $D$.
\end{theorem}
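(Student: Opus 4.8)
The plan is to combine the uniform (in $\varepsilon$) second fundamental form estimates of \cref{prop:interior_regularity_estimate,cor:global_regularity_estimate} with the $\CS^1_{\loc}$ convergence $w_p^\varepsilon\to w_p$ from \cref{prop:RegolaritaWp}, and then pass to the limit $\varepsilon\to0^+$ using compactness for curvature varifolds. First I would use coarea and Fatou to select a good sequence: the right-hand side of \cref{eq:global_regularity_eps} converges as $\varepsilon\to0^+$ (by $\CS^1_{\loc}$ convergence of $w_p^\varepsilon$, dominated convergence, and the fact that $\partial\Omega$ is smooth with $\H$ and $\abs{\nabla w_p^\varepsilon}$ converging there), so $\int_0^T\!\int_{\partial\Omega_s^{\sml\varepsilon}}\abs{\h}^2\,\dif\Hff^{n-1}\dif s$ is bounded uniformly in $\varepsilon$. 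Writing $f_\varepsilon(t)\coloneqq \int_{\partial\Omega_t^{\sml\varepsilon}}(1+\abs{\h}^2)\,\dif\Hff^{n-1}$, Fatou applied along any vanishing sequence gives $\int_0^T \liminf_\varepsilon f_\varepsilon(t)\,\dif t<\infty$, and hence there is a vanishing subsequence $(\varepsilon_k)$ and a full-measure set of $t\in[0,T]$ along which $\liminf_k f_{\varepsilon_k}(t)<\infty$; passing to a further ($t$-dependent, then diagonalized via a countable dense set of thresholds, or simply kept as a subsequence since the bound is integral) subsequence, $\sup_k f_{\varepsilon_k}(t)<\infty$ for a.e.\ $t$.

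Next, fix such a $t$. The hypersurfaces $\partial\Omega_t^{\sml{\varepsilon_k}}=\{w_p^{\varepsilon_k}=t\}$ have areas bounded uniformly (again by coarea and the uniform gradient/regularity estimates of \cref{prop:StimeBaseWepsilonP}), so as integral varifolds $V_{\varepsilon_k}\coloneqq\mathbf{v}(\partial\Omega_t^{\sml{\varepsilon_k}})$ have uniformly bounded mass on compact subsets of $D\smallsetminus\Omega$; and by \cref{eq:MeanCurvatureLevelWepsP}, $\abs{\H^{\sml{\varepsilon_k}}}\le \kst$ on $\partial\Omega_t^{\sml{\varepsilon_k}}\cap K$ for any compact $K\subset D\smallsetminus\Crit(w_p)$ (since there $\abs{\nabla w_p^{\varepsilon_k}}$ is bounded below and $w_p^{\varepsilon_k}\to w_p$ in $\CS^1_{\loc}$), while the uniform $L^2$ bound on $\abs{\h}$ upgrades this to a uniform bound on the full second fundamental form in $L^2$. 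Curvature varifold compactness (the version recorded in \cref{sec:AppendiceVarifolds}) then yields, up to a further subsequence, varifold convergence $V_{\varepsilon_k}\to V$ to a curvature varifold $V$ with $L^2$ generalized second fundamental form, and lower semicontinuity $\int\abs{\h^V}^2\le\liminf_k\int\abs{\h^{\varepsilon_k}}^2$. It remains to identify the limit: since $w_p^{\varepsilon_k}\to w_p$ in $\CS^1_{\loc}(D\smallsetminus\Omega^\circ)$ and $\Hff^{n-1}(\partial\Omega_t^{\sml p}\cap\Crit(w_p))=0$ for a.e.\ $t$ by \cref{rmk:weakSard}, the boundaries converge in $\CS^1_{\loc}$ away from $\Crit(w_p)$, so $\spt V=\partial\Omega_t^{\sml p}$ and, being smooth there with the classical $\h$, $V=\mathbf{v}(\partial\Omega_t^{\sml p})$ is the varifold associated to the level set. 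Thus $\partial\Omega_t^{\sml p}$ is a curvature varifold satisfying the properties in \cref{lem:ConditionsEquivalentSFF}.

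Finally, \cref{eq:interior_esitmate_2ff_p} follows by taking $\liminf_k$ in \cref{eq:global_regularity_eps}: the left side is lower semicontinuous under the varifold convergence just established (or directly by Fatou along $\varepsilon_k$, using that $V_{\varepsilon_k}\to\mathbf{v}(\partial\Omega_t^{\sml p})$ for a.e.\ $t$ together with lower semicontinuity of $\int\abs{\h}^2$), while the right side converges: $\int_0^{T+\tau}\!\int_{\partial\Omega_s^{\sml{\varepsilon_k}}}\abs{\nabla w_p^{\varepsilon_k}}^2\,\dif\Hff^{n-1}\dif s=\int_{\{t\le w_p^{\varepsilon_k}\le T+\tau\}\cap(D\smallsetminus\Omega)}\abs{\nabla w_p^{\varepsilon_k}}^3\,\dif\Hff^n\to\int\abs{\nabla w_p}^3$ by dominated convergence and $\CS^1_{\loc}$ convergence (the domains converge in measure since $\abs{\Crit(w_p)\cap\partial\{w_p=c\}}$ contributes nothing), and the boundary term on $\partial\Omega$ converges since $w_p^{\varepsilon_k}\to w_p$ in $\CS^1$ up to $\partial\Omega$ and $\partial\Omega$ is smooth. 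I expect the main obstacle to be the bookkeeping of the subsequence extraction so that a \emph{single} vanishing sequence $(\varepsilon_k)$ works for \emph{almost every} $t$ simultaneously (rather than a $t$-dependent one); this is handled by first extracting via the integral Fatou bound, which is $t$-independent, and only afterwards passing, for each individual good $t$, to the further varifold-convergent subsequence while noting the curvature and area bounds are already uniform along the fixed sequence, so the limit identification does not require re-extraction.
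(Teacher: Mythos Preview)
Your overall strategy matches the paper's: exploit the uniform estimate of \cref{cor:global_regularity_estimate}, pass the right-hand side to the limit via $\CS^1$ convergence, use Fatou to find a single vanishing sequence $(\varepsilon_k)$ along which the second fundamental form of $\partial\Omega^{\sml{\varepsilon_k}}_t$ is bounded in $L^2$ for a.e.\ $t$, and then invoke curvature-varifold compactness and lower semicontinuity (\cref{thm:ProprietaConvergenzeVarifold}). The final estimate \cref{eq:interior_esitmate_2ff_p} is then obtained exactly as you indicate.

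The only substantive difference, and the one place where your argument has a gap, is the \emph{identification of the limit varifold}. You argue that $\partial\Omega^{\sml{\varepsilon_k}}_t\to\partial\Omega^{\sml p}_t$ in $\CS^1_{\loc}$ on $D\smallsetminus\Crit(w_p)$ and conclude directly that $V=\mathbf{v}(\partial\Omega^{\sml p}_t)$. The $\CS^1_{\loc}$ convergence indeed gives $V\res(D\smallsetminus\Crit(w_p))=\Hff^{n-1}\res\partial\Omega^{\sml p}_t$, but it does \emph{not} by itself rule out that the limit varifold carries extra mass on $\Crit(w_p)\cap\{w_p=t\}$: near critical points the approximating level sets are uncontrolled and could in principle pile up mass there. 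To close this, you would still need to argue that the integer-rectifiable limit $\mu_V=\theta\,\Hff^{n-1}\res\Sigma$ satisfies $\mu_V(\Crit(w_p))=0$, which follows because $\Sigma\subseteq\{w_p=t\}$ (from $\spt V\subseteq\{w_p=t\}$) and $\Hff^{n-1}(\{w_p=t\}\cap\Crit(w_p))=0$ for a.e.\ $t$ by Federer's coarea formula; you only cite the weaker fact $\Hff^{n-1}(\partial\Omega^{\sml p}_t\cap\Crit(w_p))=0$ and do not carry out this step.

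The paper avoids this subtlety altogether: instead of arguing locally, it first establishes \emph{area convergence} $\abs{\partial\Omega^{\sml{\varepsilon_k}}_t}\to\abs{\partial\Omega^{\sml p}_t}$ for a.e.\ $t$ via \cref{lem:area_convergence_under_BV_convergence} (applied to the $\CS^1$-convergent sequence $w_p^\varepsilon\to w_p$), and then invokes \cref{cor:ConvergenzaLevelSetsVersusVarifolds} to conclude that any varifold limit is exactly $\Hff^{n-1}\res\partial\Omega^{\sml p}_t$ with multiplicity one. This global mass argument is cleaner and automatically precludes concentration on the critical set. Also, your remark that $\abs{\H^{\sml{\varepsilon_k}}}\le\kst$ pointwise on compacta of $D\smallsetminus\Crit(w_p)$ is not justified (the formula \cref{eq:MeanCurvatureLevelWepsP} involves $\nabla\abs{\nabla w_p^\varepsilon}$, for which only $L^2$ bounds are available), but this claim is not needed anyway.
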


\begin{proof}
Since $\partial \Omega$ smooth, then $w^\varepsilon_p \to w_p$ in $\CS^1(D \smallsetminus \Omega^\circ)$. Observe that the right-hand side of \cref{eq:global_regularity_eps} passes to the limit as $\varepsilon\to 0$, yielding the right-hand side of \cref{eq:interior_esitmate_2ff_p}. Fix a vanishing sequence $(\varepsilon_k)_{k \in \N}$. By \cref{eq:global_regularity_eps}, we find a set of full measure in $(0,T)$ such that the level set $\partial \Omega^{\sml{\varepsilon_k}}_t$ is smooth, for any $t$ in such set and any $k$.
By \cref{lem:area_convergence_under_BV_convergence} we can also assume that $\abs{\partial \Omega^{\sml{\varepsilon_k}}_t} \to \abs{\partial \Omega^{\sml{p}}}$ for almost every $t\in(0,T)$, up to passing to a subsequence.
Moreover, by \cref{eq:global_regularity_eps}, up to passing to a further subsequence, we can assume that
\begin{equation}\label{eq:zzinferiorlimit2ff}
        \sup_k \int_{\partial \Omega^{\sml{\varepsilon_k}}_t} \abs{\h^{\sml{\varepsilon_k}}}^2 \dif \Hff^{n-1} < +\infty,
\end{equation}
for almost every $t\in(0,T)$. Fix such a $t$. Hence, \cref{thm:ProprietaConvergenzeVarifold,cor:ConvergenzaLevelSetsVersusVarifolds} imply that $\partial \Omega^{\sml{\varepsilon_k}}_t \to \partial \Omega^{\sml{p}}_t$ converges in the sense of varifolds and $\partial \Omega^{\sml{p}}_t$ is a curvature varifold.
Finally, by \cref{thm:ProprietaConvergenzeVarifold} and Fatou's lemma, we get that \cref{eq:global_regularity_eps} implies \cref{eq:interior_esitmate_2ff_p}.

\end{proof}

\begin{remark}\label{rem:ConvergenzaVarifoldEspToZeroNONsmoothOmega}
    Both estimates \cref{eq:global_regularity_eps,eq:interior_esitmate_2ff_p} also hold assuming $\partial \Omega \in \CS^{1,\beta}$ with generalized mean curvature $\H\in L^2(\partial \Omega)$. Indeed, one can approximate $\partial \Omega$ with subsets with smooth boundary in such a way that \cref{eq:global_regularity_eps,eq:interior_esitmate_2ff_p} are stable. A more detailed explanation of this approximation procedure will be presented in the proof of \cref{thm:monotonicity-NPT}.
\end{remark}

\subsection{Well-posedness of the monotone quantity}\label{sec:well-posedness}
Before proving \cref{thm:monotonicity-NPT}, we show that the definition of the function $\MF_p$ in \cref{eq:monotonicty_formula} is well-posed for almost every $t\in[0,\inf_{\partial D}\Phi)$. This assertion is clearly true in the presence for regular values. Indeed, everything is smooth as the gradient does not vanish in a tubular neighbourhood of the regular level set. Also, taking into account the regularity of $w_p$, the expression of the mean curvature \cref{eq:MeanCurvatureLevelWP}, and \cref{rmk:weakSard}, we notice that the integrand in the definition \cref{eq:monotonicty_formula} of $\MF_p$ is well defined $\Hff^{n-1}$-a.e. along almost every level set. We wish to prove now that, in fact, $\MF_p$ is an $L^1_{\loc}$ function.

\medskip
We define
    \begin{equation}\label{eq:pezzo-di-gradiente}
        \IF_p(t)\coloneqq  \ee^{\left(\frac{\alpha}{n-p}-1\right) t} \int_{\partial \Omega^{\sml{p}}_t} \abs{\nabla w_p}^{\alpha+p-1} \dif \Hff^{n-1} 
    \end{equation}
for every $t\in [0,\inf_{\partial D}\Phi)$.

\begin{lemma}\label{lem:continuity_G}
    Under the same hypotheses of \cref{thm:monotonicity-NPT}, the function $\IF_p$ is continuous in $(0,T)$ and belongs to $W^{1,1}_{\loc}(0,T)$. Moreover, there holds
    \begin{equation}\label{eq:defivata-di-G}
        \IF_p'(t) = \frac{1}{p-1} \left( \IF_p(t) + \alpha \MF_p(t) + \alpha \int_0^t \ee^{\left(\frac{\alpha}{n-p}-1\right)s}\int_{\partial \Omega_s^{\sml{p}}} \abs{\nabla w_p}^{\alpha + p-3} \Ric(\nu,\nu) \dif \Hff^{n-1} \dif s\right)
    \end{equation}
    for almost every $t\in (0,T)$, and $\MF_p$ belongs to $L^1_{\loc}(0,T)$.
\end{lemma}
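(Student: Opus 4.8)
The plan is to establish continuity and Sobolev regularity of $\IF_p$ directly from the coarea formula and the divergence identity in \cref{lem:divX}, then read off the integrability of $\MF_p$ from \cref{eq:defivata-di-G}. First, I would observe that $\IF_p(t) = \ee^{(\frac{\alpha}{n-p}-1)t}\int_{\partial\Omega^{\sml p}_t}\abs{\nabla w_p}^{\alpha+p-1}\dif\Hff^{n-1}$ is, by the coarea formula, the (exponentially-weighted) $t$-derivative of the locally $L^1$ function $t\mapsto \int_{\Omega^{\sml p}_t\smallsetminus\Omega}\abs{\nabla w_p}^{\alpha+p}\dif\Hff^n$; since the integrand $\abs{\nabla w_p}^{\alpha+p}$ is bounded (by \cref{prop:uniform_gradient}, or simply by $w_p\in\CS^{1,\beta'}_{\loc}$) and the sets $\Omega^{\sml p}_t$ are precompact in $D$, this already gives that $\IF_p\in L^\infty_{\loc}(0,T)$ and hence $\IF_p\in L^1_{\loc}$. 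To upgrade to $W^{1,1}_{\loc}$ and identify the derivative, I would test the distributional identity $\div(J) = [\alpha-(2-p)\Err_0]\abs{\nabla w_p}^{\alpha+p-3}\ip{\nabla\abs{\nabla w_p}\,|\,\nabla w_p} + \abs{\nabla w_p}^{\alpha+p}$ from \cref{lem:divX} (with $\Err_0\equiv0$, so the coefficient is $\alpha$) against $\varphi(w_p)$ for $\varphi\in\Lip_c(0,T)$, using that $J\in L^\infty$ and $\div(J)\in L^2_{\loc}(D\smallsetminus\Omega)$. The divergence theorem gives $-\int_{D\smallsetminus\Omega}\varphi'(w_p)\abs{\nabla w_p}^{\alpha+p}\dif\Hff^n = \int_{D\smallsetminus\Omega}\varphi(w_p)\,\div(J)\dif\Hff^n$, and after applying the coarea formula to both sides one obtains a distributional relation between $\int_{\partial\Omega^{\sml p}_t}\abs{\nabla w_p}^{\alpha+p-1}$ and $\int_{\partial\Omega^{\sml p}_t}\abs{\nabla w_p}^{\alpha+p-3}\div(J)$; combined with the defining identity \cref{eq:MeanCurvatureLevelWP} for $\H$, this reorganizes exactly into \cref{eq:defivata-di-G} in the sense of distributions.

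Once \cref{eq:defivata-di-G} is established distributionally, the right-hand side is manifestly a sum of: $\IF_p$ (already $L^1_{\loc}$), the time-integral $\int_0^t(\cdots)\Ric(\nu,\nu)\dif\Hff^{n-1}\dif s$ (which is absolutely continuous in $t$, being the integral of a function that is $L^1_{\loc}$ by the coarea formula since $\abs{\nabla w_p}^{\alpha+p-3}\Ric(\nu,\nu)$ is comparable to $\abs{\nabla w_p}^{\alpha+p-2}\times$(bounded) away from $\Crit(w_p)$, and by \cref{rmk:weakSard} the critical set is $\Hff^{n-1}$-negligible on a.e.\ level), and the term $\alpha\MF_p$. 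Solving for $\alpha\MF_p$ shows $\MF_p = (p-1)\IF_p' - \IF_p - \alpha\int_0^t(\cdots)$ as distributions; since the distributional derivative $\IF_p'$ of the $L^\infty_{\loc}$ function $\IF_p$ must then equal an $L^1_{\loc}$ function (the other three terms are all $L^1_{\loc}$), we conclude $\IF_p\in W^{1,1}_{\loc}(0,T)$ and $\MF_p\in L^1_{\loc}(0,T)$. Continuity of $\IF_p$ on $(0,T)$ then follows because a $W^{1,1}_{\loc}$ function of one variable has an absolutely continuous representative; to pin down that this representative agrees with the pointwise definition \cref{eq:pezzo-di-gradiente} for a.e.\ $t$, I would invoke the coarea-type lower/upper semicontinuity of $t\mapsto\int_{\partial\Omega^{\sml p}_t}\abs{\nabla w_p}^{\alpha+p-1}$ at regular values (or, more robustly, the BV-convergence of the sets $\Omega^{\sml p}_t$ and \cref{lem:area_convergence_under_BV_convergence}-type arguments from the appendix applied to the finite Radon measure $\abs{\nabla w_p}^{\alpha+p-1}\Hff^{n-1}\restrict{\partial\Omega^{\sml p}_t}$).

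The main obstacle I expect is the passage through the coarea formula for the term involving $\div(J)$: $\div(J)$ is only $L^2_{\loc}$ (not continuous) and involves $\ip{\nabla\abs{\nabla w_p}\,|\,\nabla w_p}$, which is a genuinely second-order quantity that is controlled only in $L^2_{\loc}(D\smallsetminus\Omega)$ via $\abs{\nabla w_p}^{p-1}\in W^{1,2}_{\loc}$ (\cref{prop:RegolaritaWp}); care is needed because the coarea slicing of an $L^2$ function is $L^1$ on a.e.\ level but the exponent $\alpha+p-3$ may be negative, so one must verify near-critical-level integrability, i.e.\ that $\abs{\nabla w_p}^{\alpha+p-3}\ip{\nabla\abs{\nabla w_p}\,|\,\nabla w_p}$ is $L^1$ along a.e.\ level. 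This is handled by writing $\abs{\nabla w_p}^{\alpha+p-3}\ip{\nabla\abs{\nabla w_p}\,|\,\nabla w_p} = \tfrac{1}{p-1}\abs{\nabla w_p}^{\alpha-1}\ip{\nabla\abs{\nabla w_p}^{p-1}\,|\,\nabla w_p/\abs{\nabla w_p}}\cdot\abs{\nabla w_p}$ (so that the singular factor $\abs{\nabla w_p}^{p-2}$ is absorbed into $\nabla\abs{\nabla w_p}^{p-1}$), yielding the bound $C\abs{\nabla w_p}^{\alpha}\,\abs{\nabla\abs{\nabla w_p}^{p-1}}$, which is a product of a bounded function with an $L^2_{\loc}$ function, hence $L^1_{\loc}(D\smallsetminus\Omega)$; the coarea formula then gives $L^1$ restriction to a.e.\ level, which is exactly what is needed. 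A secondary technical point is handling the behavior near $t=0^+$, but the statement only asserts regularity on the open interval $(0,T)$, so this can be deferred to the proof of \cref{thm:monotonicity-NPT} itself.
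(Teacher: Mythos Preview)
Your approach is correct and uses the same key ingredient as the paper---the divergence identity for $J$ from \cref{lem:divX} together with coarea---but the execution differs in a way worth noting. The paper does not test against $\varphi(w_p)$ distributionally; instead it regularizes $J$ by the cutoff $J^{\sml{\delta}}=\chi_\delta(\abs{\nabla w_p})J$, applies the \emph{classical} divergence theorem on the slab $\Omega^{\sml{p}}_t\smallsetminus\Omega^{\sml{p}}_\tau$ for \emph{arbitrary} $0<\tau<t<T$, and then sends $\delta\to0^+$ by dominated convergence (checking $\abs{\div J^{\sml{\delta}}}\le 4(2\delta)^{\alpha+p-2}\abs{\nabla\abs{\nabla w_p}}+\abs{\div J}\in L^2_{\loc}$). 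This yields directly
\[
\ee^{-\left(\frac{\alpha}{n-p}-1\right)t}\IF_p(t)-\ee^{-\left(\frac{\alpha}{n-p}-1\right)\tau}\IF_p(\tau)=\int_{\Omega^{\sml{p}}_t\smallsetminus\Omega^{\sml{p}}_\tau}\div J\,\dif\Hff^n,
\]
which shows continuity of the \emph{pointwise-defined} $\IF_p$ at every $t\in(0,T)$, and then coarea plus $\div J\in L^2_{\loc}$ gives $\IF_p\in W^{1,1}_{\loc}$. Your route establishes $W^{1,1}_{\loc}$ first and then invokes the absolutely continuous representative, which leaves the extra step (you flag it) of matching that representative with the pointwise integral \cref{eq:pezzo-di-gradiente} for \emph{every} $t$; the paper's slab argument sidesteps this entirely. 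One expository wrinkle: your sentence ``since the distributional derivative $\IF_p'$ \ldots\ must then equal an $L^1_{\loc}$ function (the other three terms are all $L^1_{\loc}$)'' reads circularly---you should instead say that $\IF_p'\in L^1_{\loc}$ follows immediately from $\div J\in L^2_{\loc}\subset L^1_{\loc}$ via coarea, and \emph{then} solve \cref{eq:defivata-di-G} for $\MF_p$ to get $\MF_p\in L^1_{\loc}$.
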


\begin{proof}
    Let $\delta>0$ and let $\chi_\delta$ be a smooth cut-off function satisfying 
    \begin{equation}
        \chi_\delta(t) \coloneqq\begin{cases}
            &\chi_\delta(t) =0& & \text{if $t \leq \delta$,}\\
            &0\leq \chi'_\delta(t) \leq 2/\delta&& \text{if $t \in [\delta, 2 \delta]$,}\\
            &\chi_\delta(t)=1& &\text{if $t \geq2 \delta$,}\\
        \end{cases}
    \end{equation}
    and consider the vector field $J^{\sml{\delta}}=\chi_{\delta} (\abs{\nabla w_p})J$, where $J$ is defined in \cref{eq:Xdef}. Observe that $J^{\sml{\delta}}$ is smooth. Hence, we can apply the divergence theorem for $0<\tau<t<T$ to get
    \begin{align}\label{eq:zzzdivergenceJcontinuity}
        \int_{\partial \Omega^{\sml{p}}_t} \ip{J^{\sml{\delta}}| \nu^{\sml{p}}} \dif \Hff^{n-1} - 
        \int_{\partial \Omega^{\sml{p}}_\tau} \ip{J^{\sml{\delta}}| \nu^{\sml{p}}} \dif \Hff^{n-1} &= \int_{\Omega^{\sml{\delta}}_t \smallsetminus \Omega^{\sml{\delta}}_\tau}  \div(J^{\sml{\delta}})\dif \Hff^n
        \end{align}
    Observe that $\ip{J|\nu^{\sml{p}}} = \abs{\nabla w_p}^{\alpha+p-1}$. Hence, the dominated convergence theorem gives
    \begin{equation}
        \lim_{\delta \to 0^+}\int_{\partial \Omega^{\sml{p}}_t} \ip{J^{\sml{\delta}}| \nu^{\sml{p}}} \dif \Hff^{n-1} = \int_{\partial \Omega^{\sml{p}}_t} \ip{J | \nu^{\sml{p}}} \dif \Hff^{n-1} =\ee^{- \left(\frac{\alpha}{n-p}-1\right)t}\IF_p(t).
    \end{equation}
    On the other hand
    \begin{equation}
         \abs{\div(J^{\sml{\delta}})}\leq\abs{\chi'_\delta(\abs{\nabla w_p}) \ip{\nabla \abs{\nabla w_p}|J
         } }+ \abs{\chi_\delta ( \abs{\nabla w_p})\div(J)}\leq  4(2\delta)^{\alpha+p-2} \abs{\nabla \abs{\nabla w_p}}+ \abs{\div(J)}
    \end{equation}
    which belongs to $L^2_{\loc}(D \smallsetminus \Omega)$ by \cref{prop:RegolaritaWp} and \cref{lem:divX}. By dominated convergence theorem, we can now pass to the limit as $\delta \to 0^+$ in \cref{eq:zzzdivergenceJcontinuity} to get
        \begin{align}
       \ee^{- \left(\frac{\alpha}{n-p}-1\right)t}\IF_p(t) - \ee^{- \left(\frac{\alpha}{n-p}-1\right)\tau}\IF_p(\tau) &= \int_{\Omega^{\sml{\delta}}_t \smallsetminus \Omega^{\sml{\delta}}_\tau}  \div(J)\dif \Hff^n,
        \end{align}
    which implies that $\IF_p(t)$ is continuous in $(0,T)$. Since $\div(J) \in L^2_{\loc}(D \smallsetminus \Omega)$ and $\div(J)$ vanishes almost everywhere on $\Crit(w_p)$, by coarea formula
    \begin{equation}
        \ee^{- \left(\frac{\alpha}{n-p}-1\right)t}\IF_p(t) - \ee^{- \left(\frac{\alpha}{n-p}-1\right)\tau}\IF_p(\tau)= \int_\tau^t \int_{\partial \Omega^{\sml{p}}_s} \frac{\div(J)}{\abs{\nabla w_p}} \dif \Hff^{n-1} \dif s
    \end{equation}
    and in particular $\IF_p$ belongs to $W^{1,1}_{\loc}(0,T)$. Finally, one can obtain the formula for the distributional derivative of $\IF_p$ from the computations in \cref{lem:divX} and the definition of $\MF_p$. Since  $\Ric(\nu, \nu) \in L^\infty(\Omega_T^{\sml{p}}\smallsetminus \Omega)$ and $\alpha + p-2 >0$, $\MF_p$ is well-defined and belongs to $L^1_{\loc}(0,T)$.
\end{proof}

\begin{remark}\label{rmk:regularity_G}
A posteriori, under the assumptions and notation of \cref{thm:monotonicity-NPT}, combining the regularity of the function $\MF_p$ stated in \cref{thm:monotonicity-NPT} with \cref{eq:defivata-di-G}, we actually infer that the function $\IF_p$ is of class $W^{2,1}(0,T)$, for $T < \inf_{\partial D}\Phi$, hence it admits a $C^1([0,T])$ representative.
\end{remark}

\subsection{Proof of \texorpdfstring{\cref{thm:monotonicity-NPT}}{Theorem 3.1}}
\pushQED{\qed}
In the previous section, we already proved that $\partial \Omega^{\sml{p}}_t$ is a curvature varifold for almost every $t \in [0,T)$. Hence, we are only going to prove that $\MF_p$ belongs to $W^{1,1}(0,T)$ and that \cref{eq:derivative-monotonicity_formula-NPT} holds. The proof is divided into two parts. In the first one, we prove \cref{eq:derivative-monotonicity_formula-NPT} for every $\psi \in \Lip_c(0,T)$, showing in particular that $\MF_p(t)$ admits a locally absolutely continuous monotone nondecreasing representative on $(0,T)$. This part of the proof is obtained by adapting \cite[Theorem 3.1]{benatti_minkowskiinequalitycompleteriemannian_2022}. In the second part we will prove that $\MF_p(0^+) \geq \MF_p(0)$, in particular inferring that $\MF_p \in W^{1,1}(0,T)$ and that \cref{eq:derivative-monotonicity_formula-NPT} holds. The case where $\partial \Omega$ is smooth will be a step in this last part of the proof.

\medskip

    Let $\delta>0$ and let $\chi_\delta$ be a smooth cut-off function satisfying 
    \begin{equation}
        \chi_\delta(t) \coloneqq\begin{cases}
            &\chi_\delta(t) =0& & \text{if $t \leq \delta$,}\\
            &0\leq \chi'_\delta(t) \leq 2/\delta&& \text{if $t \in [\delta, 2 \delta]$,}\\
            &\chi_\delta(t)=1& &\text{if $t \geq2 \delta$.}\\
        \end{cases}
    \end{equation}
    Moreover, let  
    \begin{equation}
        W \coloneqq \ee^{\left(\frac{\alpha}{n-p} - 1 \right) w_p}\left[ Y+ \left(\frac{p-1}{n-p} - \frac{1}{\alpha}  \right)J\right],
    \end{equation}
    and define $W^{\sml{\delta}} \coloneqq \chi_\delta(\abs{\nabla w_p }) W$. The vector fields $J$ and $Y$ are defined in \cref{eq:Xdef} and \cref{eq:defY} respectively.
\begin{step} Let $\psi \in \Lip_c(0,T)$.
    By the divergence theorem we have
    \begin{equation}\label{eq:zzdelta0divergencetheroem}
        - \int_{D \smallsetminus \Omega} \ip{\nabla [\psi(w_p)]| W^{\sml{\delta}}}\dif \Hff^{n} = \int_{D \smallsetminus \Omega} [\chi'_\delta (\abs{\nabla w_p })\ip{W| \nabla \abs{\nabla w_p }} + \chi_\delta(\abs{\nabla w_p }) \div(W) ]\psi(w_p)\dif \Hff^n,
    \end{equation}
    for any $\psi \in \Lip_c(0,T)$. Assume now that $\psi$ is nonnegative.  We now want to send $\delta \to 0^+$. It is easy to see that $\abs{\ip{W^{\sml{\delta}}|\nabla [\psi(w_p)]} }\leq\abs{\ip{W|\nabla [\psi(w_p)]}}$ and  $\ip{W^{\sml{\delta}}|\nabla [\psi(w_p)]} \to  \ip{W|\nabla [\psi(w_p)]}$ almost everywhere on $D \smallsetminus \Crit(w_p)$. Moreover, we have
        \begin{align}\label{eq:zzboundonWnablaw}
        \begin{split}
            \abs{W} &= \abs{\nabla w_p}^{\alpha+p-2} \abs{\left(\frac{p-1}{n-p}- \frac{1}{\alpha} \right) \nabla w_p +(p-1) \frac{\nabla \abs{\nabla w_p}}{\abs{\nabla w_p}}} \\
            &  \leq \kst  \abs{\nabla w_p}^{\alpha+ p-3} \left( \abs{\nabla w_p}^2 + \abs{\nabla \abs{\nabla w_p}}\right)\leq  \kst \abs{\nabla w_p}^{\alpha+p-2} \left( \abs{\nabla w_p} + \abs{\h}\right)
        \end{split}
        \end{align}
        by \cref{eq:MeanCurvatureLevelWP}. Hence, $\ip{W| \nabla [\psi(w_p)]}$ belongs to $L^2(D \smallsetminus \Crit(w_p))$ by \cref{lem:ConvergenzaVarifoldEspToZero} and \cref{rem:ConvergenzaVarifoldEspToZeroNONsmoothOmega}. The dominated convergence theorem yields
        \begin{equation}\label{eq:zzfirstorderdelta0}
            \int_{D \smallsetminus \Omega} \ip{\nabla [\psi(w_p)]| W^{\sml{\delta}}}\dif \Hff^n\xrightarrow{\delta \to 0^+}\int_{D \smallsetminus \Crit(w_p)} \ip{\nabla [\psi(w_p)]| W} \dif\Hff^n.
        \end{equation}
        \smallskip
         On the other hand, using \cref{lem:divX,lem:divY} and the expression of the mean curvature, one can easily check that
        \begin{equation}
            \div W  =  \ee^{\left(\frac{\alpha}{n-p} - 1 \right) w_p}\abs{\nabla w_p}^{\alpha+p-2} \left[\SQ_p+ \Ric\left(\frac{\nabla  w_p}{\abs{\nabla w_p}},\frac{\nabla w_p}{\abs{\nabla w_p}}\right)\right].
        \end{equation}
        Boundedness of $\abs{\nabla w_p}$, $\alpha+p-2>0$ and dominated convergence theorem also give
        \begin{align}\label{eq:zzriccitermdelta0}
        \begin{multlined}[c][.8\textwidth]
            \int_{D \smallsetminus \Omega} \chi_\delta (\abs{\nabla w_p})\ee^{\left(\frac{\alpha}{n-p}-1\right)w_p}\abs{\nabla w_p}^{\alpha+p-2} \Ric\left(\frac{\nabla  w_p}{\abs{\nabla w_p}},\frac{\nabla w_p}{\abs{\nabla w_p}}\right) \psi(w_p)\dif \Hff^n\\
            \xrightarrow{\delta \to 0^+}\int_{D \smallsetminus \Crit(w_p)} \ee^{\left(\frac{\alpha}{n-p}-1\right)w_p}\abs{\nabla w_p}^{\alpha+p-2} \Ric\left(\frac{\nabla  w_p}{\abs{\nabla w_p}},\frac{\nabla w_p}{\abs{\nabla w_p}}\right)\psi(w_p)\dif \Hff^n.
        \end{multlined}
        \end{align}
        Since $\SQ_p\geq 0$, monotone convergence implies
          \begin{align}\label{eq:zzSQlimitdelta0}
            \begin{multlined}[c][.8\textwidth]
            \int_{D \smallsetminus \Omega} \chi_\delta (\abs{\nabla w_p})\psi(w_p)\ee^{\left(\frac{\alpha}{n-p}-1\right)w_p}\abs{\nabla w_p}^{\alpha+p-2} \SQ_p\dif \Hff^n \\ \xrightarrow{\delta \to 0^+}\int_{D \smallsetminus \Crit(w_p)}\psi(w_p) \ee^{\left(\frac{\alpha}{n-p}-1\right)w_p}\abs{\nabla w_p}^{\alpha+p-2} \SQ_p\dif \Hff^n.
        \end{multlined}
        \end{align}
        \medskip
        
        We only need to show that the remaining term in \cref{eq:zzdelta0divergencetheroem} vanishes. Observe that $\chi'_\delta (\abs{\nabla w_p}) \abs{J} \leq \kst\delta^{\alpha+p-2}$. Hence, by H\"older's inequality and \cref{eq:MeanCurvatureLevelWP} we have 
        \begin{align}\label{eq:zzXintegravanishes}
        \begin{split}
        \begin{multlined}[t][.8\textwidth]
             \abs{\int_{D \smallsetminus \Omega}\ee^{\left( \frac{\alpha}{n-p}-1\right)w_p} \chi'_\delta (\abs{\nabla w_p })\ip{J| \nabla \abs{\nabla w_p }}\psi(w_p)\dif\Hff^n}^2\\
             \begin{aligned}
             &  \leq \ee^{2\left( \frac{\alpha}{n-p}-1\right)T} \left(\int_{D\smallsetminus \Omega}  (\chi'_\delta(\abs{\nabla w_p}))^2\abs{J}^2\dif\Hff^n\right)\left(\int_{D\smallsetminus \Omega} \psi(w_p)^2 \abs{\nabla  \abs{\nabla w_p}}^2\dif\Hff^n\right)\\
                 & \leq \kst \ee^{2\left( \frac{\alpha}{n-p}-1\right)T} \abs{D} \delta^{2(\alpha+ p -2)}\left(\int_{\Omega^{\sml{p}}_T\smallsetminus \Omega}  2\abs{\nabla w_p}^2 \abs{\h}^2 +\abs{\nabla w_p}^4 \dif\Hff^n\right)\xrightarrow{\delta \to 0^+}0,
            \end{aligned}
        \end{multlined}
        \end{split}
        \end{align}
        by \cref{lem:ConvergenzaVarifoldEspToZero} and \cref{rem:ConvergenzaVarifoldEspToZeroNONsmoothOmega}, since $\alpha +p -2 >0$. 

        \smallskip
     
        The remaining part requires some further analysis. Observe that $\psi\geq 0$ and $\ip{Y | \nabla \abs{\nabla w_p}}\geq 0$ on $D \smallsetminus \Crit(w_p)$. Moreover, by Sard's theorem $\set{\abs{\nabla w_p} =s}$ is smooth for almost every $s \in (\delta, 2 \delta)$ and the function $w_p$ is smooth where $\chi'_\delta(\abs{\nabla w_p})>0$. By coarea formula (see \cite[Proposition 3.7]{benatti_asymptoticbehaviourcapacitarypotentials_2022} for more details), we have
        \begin{equation}
            \int_{D \smallsetminus \Omega} \psi(w_p)\chi_\delta'(\abs{\nabla w_p}) \ip{Y|\nabla \abs{\nabla w_p}} \dif \Hff^n \leq \frac{2}{\delta}\int_{\delta}^{2 \delta} \int_{\set{\abs{\nabla w_p}=s}}\psi(w_p) \frac{\ip{Y | \nabla \abs{\nabla w_p}}}{\abs{\nabla \abs{\nabla w_p}}} \dif \Hff^{n-1}\dif s.
        \end{equation}
        Let $0<t$ be a regular value for $\abs{\nabla w_p}$ small enough so that $\set{\abs{\nabla w_p} < t} \cap \partial \Omega = \varnothing$, whose existence is granted by Hopf lemma. Define the function
        \begin{equation}
            N(s)\coloneqq \int_{\set{\abs{\nabla w_p}=s}}\psi(w_p) \frac{\ip{Y | \nabla \abs{\nabla w_p}}}{\abs{\nabla \abs{\nabla w_p}}} \dif \Hff^{n-1}
        \end{equation}
        for almost every $s \in (0,t)$ and observe that $N(s)\geq 0$. We claim that $N(s)\to 0$ as $s \to 0^+$ which is enough to conclude the proof. Indeed,
        \begin{equation}\label{eq:zzequationvanishes}
             \frac{2}{\delta}\int_{\delta}^{2 \delta} \int_{\set{\abs{\nabla w_p}=s}}\psi(w_p) \frac{\ip{Y | \nabla \abs{\nabla w_p}}}{\abs{\nabla \abs{\nabla w_p}}} \dif \Hff^{n-1}\dif s\leq 2 \sup_{s \in[\delta,2 \delta]} N(s) \xrightarrow{\delta \to 0^+} 0.
        \end{equation}
        By the divergence theorem, given $r\in(0,t)$ regular value for $\abs{\nabla w_p}$, we have 
        \begin{align}\label{eq:zzdivergencefunctionNimprovement}
            N(t)-N(r) &=\int_{\set{r \leq \abs{\nabla w_p}\leq t}}\div(\psi(w_p) Y) \dif \Hff^n  = \int_{\set{r \leq \abs{\nabla w_p}\leq t}}\psi'(w_p)\ip{\nabla w_p | Y} + \psi(w_p) \div Y\dif \Hff^n 
        \end{align}
        Since $\alpha > (n-p)/(n-1)$, by \cref{lem:divY} we have
        \begin{equation}\label{eq:zzdivergenceYforNimprovement}
            \div Y \geq \kst \frac{\ip{Y| \nabla \abs{\nabla w_p}}}{\abs{\nabla w_p}} + \abs{\nabla w_p}^{\alpha + p-2} \mathcal{D}_\sigma
        \end{equation}
        with $\mathcal{D}_\sigma \in L^2_{\loc}(D \smallsetminus \Omega)$. Since also $\ip{\nabla w_p| Y } \in L^2_{\loc}(D \smallsetminus \Omega) $, the function 
        \begin{equation}\label{eq:zzdivergenceforNimprovmentRdef}
             R(s) \coloneqq\int_{\set{0<\abs{\nabla w_p}\leq s}} \psi'(w_p)\ip{\nabla w_p | Y} + \psi(w_p)\abs{\nabla w_p}^{\alpha + p-2} \mathcal{D}_\sigma\dif \Hff^n  
        \end{equation}
        is a bounded continuous function on $[0,t]$ and $R(s)\to 0^+$ as $s\to 0$. Moreover, the function $N(s)-R(s)$ is monotone increasing since $\ip{Y|\nabla\abs{\nabla w_p}}\geq 0$. Therefore, $N(s)= N(s)-R(s) + R(s)$ admits a (finite) limit as $s\to 0^+$, being the sum of a monotone and a bounded continuous function. Extend $N(s)$ by continuity to $0^+$. Applying coarea formula to \cref{eq:zzdivergencefunctionNimprovement}, taking into account \cref{eq:zzdivergenceYforNimprovement,eq:zzdivergenceforNimprovmentRdef} and recalling the definition of $N(s)$, we get
        \begin{align}
            [N(t)-R(t)]- [N(0)-R(0)] &\geq \kst  \int_0^t \int_{\set{\abs{\nabla w_p}=s} }\psi(w_p) \frac{\ip{Y | \nabla \abs{\nabla w_p}}}{\abs{\nabla \abs{\nabla w_p}}} \dif \Hff^{n-1} \dif s = \kst \int_0^t \frac{N(s)}{s} \dif s.
        \end{align}
        If $N(0)\neq 0$, then $N(s)/s$ would not belong to $L^1(0,t)$ contradicting the boundedness of $N(t)-R(t)$.

        \smallskip 

        Sending $\delta \to 0^+$ and plugging \cref{eq:zzSQlimitdelta0,eq:zzriccitermdelta0,eq:zzXintegravanishes,eq:zzfirstorderdelta0,eq:zzequationvanishes} into \cref{eq:zzdelta0divergencetheroem} we get \cref{eq:derivative-monotonicity_formula-NPT} for every nonnegative $\psi\in \Lip_c(0,T)$. By \cref{lem:continuity_G}, $\MF_p \in L^1_{\loc}(0,T)$. Together with the just proved \cref{eq:derivative-monotonicity_formula-NPT} for nonnegative $\psi \in \Lip_c(0,T)$, it actually implies that $\abs{\nabla w_p}^{\alpha + p-2} \SQ_p \in L^1(\Omega^{\sml{p}}_T \smallsetminus \Omega)$. We conclude that \cref{eq:derivative-monotonicity_formula-NPT} holds for every $\psi \in \Lip_c(0,T)$ by splitting it into its positive and negative parts. In particular, $\MF_p$ belongs to $W^{1,1}_{\loc}(0,T)$ and admits a locally absolutely continuous monotone nondecreasing representative. 
\end{step}
\begin{step} We want to prove that $\MF_p(0^+) \geq \MF_p(0)$. If $\partial \Omega$ is smooth, by Hopf maximum principle we get that $w_p$ is smooth with nonnvanishing gradient in a neighbourhood of $\partial \Omega$. In particular, the classical divergence theorem holds there both implying $\MF_p(0)= \MF_p(0^+)$ and that \cref{eq:derivative-monotonicity_formula-NPT} holds for every $\psi \in \Lip_c([0,T))$. 

    \medskip

     Let now $\Omega$ be as in the assumptions. Writing $\partial \Omega$ in a neighbourhood of some $x \in \partial\Omega$ in a local chart as the graph of a $\CS^{1,\beta}$ function $f:\R^{n-1} \to \partial \Omega$ over the tangent space, one checks that $f$ solves 
    \begin{equation}
        \div \left( \frac{\nabla f }{\sqrt{1+ \abs{\nabla f}^2}}\right) = \H
    \end{equation}
    in the weak sense. Since $\H \in L^2(\partial\Omega)$, then $f \in W^{2,2}_{\loc}$. Hence, there exists a sequence of smooth closed sets $\Omega^{\sml{k}}\subset \Omega^\circ$ such that $\abs{\Omega^{\sml{k}} \smallsetminus \Omega} \to 0$,  $\partial \Omega^{\sml{k}} \to \partial \Omega$ in $\CS^{1, \beta'}$ for some $\beta'<\beta$, and  with mean curvature $\H^{\sml{k}}$ such that $\norm{\H^{\sml{k}}}_{L^2(\partial\Omega^{\sml{k}})} \to \norm{\H}_{L^2(\partial\Omega)}$. Since $\partial \Omega^{\sml{k}} \to \partial \Omega$ in $\CS^1$, up to extracting a subsequence, we can assume that $\vec{\H}^{\sml{k}}\Hff^{n-1}\res \partial \Omega^{\sml{k}} \to \vec{\H} \Hff^{n-1} \res \partial \Omega$ in duality with continuous vector fields.

    Let $w_p^{\sml{k}}$ be the solution to \cref{eq:eps_moser_potential} associated with $\Omega^{\sml{k}}$. We will use subscript or superscript $(k)$ to denote the quantities associated with $w^{\sml{k}}_p$. Applying \cref{eq:derivative-monotonicity_formula-NPT}, we have
    \begin{equation}\label{eq:zznonsmoothdivergencetheorem}
        - \int_0^T \psi'(t) \MF_p^{\sml{k}}(t) \dif t  - \psi(0)\MF^{\sml{k}}_p(0)=  \int_{D \smallsetminus \Omega^{\sml{k}}}\psi(w_p^{\sml{k}})\ee^{\left(\frac{\alpha}{n-p} - 1 \right)  w^{\sml{k}}_p}\abs{\nabla  w^{\sml{k}}_p}^{\alpha+p-2} \SQ^{\sml{k}}_p\dif \Hff^n.
    \end{equation}
    We aim to send $k\to +\infty$ in \cref{eq:zznonsmoothdivergencetheorem}. Since $\partial \Omega^{\sml{k}} \to \partial \Omega$ in $\CS^{1,\beta'}$, the functions $w^{\sml{k}}_p$ have uniformly bounded $\CS^{1,\beta''}$ norm on $ \set{0\leq w_p \leq T}$ for some $\beta''\le \beta'$, see \cite[Theorem 1]{lieberman_boundaryregularitysolutionsdegenerate_1988}. In particular, $w^{\sml{k}}_p \to w_p$ in $\CS^1(\Omega^{\sml{p}}_T \smallsetminus \Omega^\circ)$ as $k\to +\infty$. Hence, the construction of the approximating hypersurfaces gives for free that
    \begin{equation}\label{eq:zznonsmoothsecondconvergence}
    \abs{\MF_p^{\sml{k}}(0)- \MF_p(0)} \xrightarrow{k\to+\infty}0.
    \end{equation}
     Since the estimate in \cref{lem:ConvergenzaVarifoldEspToZero} is uniform with respect to $k$, $(W^{\sml{k}})_{k \in \N}$ are uniformly bounded in $L^2$. Moreover, we have
    \begin{equation}
        \abs{\abs{\nabla w_p^{\sml{k}}}^{\alpha+p-2} \Ric\left(\frac{\nabla w_p^{\sml{k}}}{\abs{\nabla w_p^{\sml{k}}}}, \frac{\nabla w_p^{\sml{k}}}{\abs{\nabla w_p^{\sml{k}}}}\right)} \leq \norm{\Ric}_{L^\infty(D)}\abs{\nabla w_p^{\sml{k}}}^{\alpha+p-2}.
    \end{equation}
    Hence, up to extracting a subsequence $\MF_p^{\sml{k}}\rightharpoonup \MF_p$ 
    \begin{equation}
        \int_0^T\MF_p^{\sml{k}}(t) \psi'(t) \dif t \xrightarrow{k \to +\infty} \int_0^T\MF_p(t) \psi'(t) \dif t,
    \end{equation}
    for every $\psi \in \Lip_c([0,T))$. Then, the left-hand side of \cref{eq:zznonsmoothdivergencetheorem} converges to the expected quantity.

    Consider now the right-hand side. Fix $\delta >0$. Schauder's estimates imply that $\chi_{\delta}(\abs{\nabla w_p^{\sml{k}}})\SQ^{\sml{k}}_p\to \chi_{\delta}(\abs{\nabla w_p})\SQ_p$ uniformly on any compact subset of $D\smallsetminus \Omega$ as $k\to +\infty$. By Fatou's lemma and $\SQ_p^{\sml{k}}\geq \chi_{\delta}(\abs{\nabla w_p^k})\SQ^{\sml{k}}_p$, one has
    \begin{equation}
    \begin{multlined}[c][.8\textwidth]
         \liminf_{k\to +\infty} \int_{D \smallsetminus \Omega^{\sml{k}}}\psi(w^{\sml{k}}_p) \ee^{\left(\frac{\alpha}{n-p} - 1 \right)  w^{\sml{k}}_p}\abs{\nabla  w^{\sml{k}}_p}^{\alpha+p-2} \SQ^{\sml{k}}_p\dif \Hff^n \\\geq\int_{D \smallsetminus \Omega}\psi(w_p)\ee^{\left(\frac{\alpha}{n-p} - 1 \right)  w_p}\abs{\nabla  w_p}^{\alpha+p-2} \chi_\delta(\abs{\nabla w_p})\SQ_p \dif \Hff^n
         \end{multlined}
    \end{equation}
    provided $\psi\in \Lip_c([0,T))$ is nonnegative. Sending $\delta \to 0^+$ with the aid of the monotone convergence theorem, we showed that $\abs{\nabla w_p}^{\alpha + p-2}\SQ_p$ is integrable on compact subsets of $D \smallsetminus \Omega^\circ$. In particular, $\MF'_p \in L^1(0,T)$ and arguing as in the previous step we can prove that \cref{eq:derivative-monotonicity_formula-NPT} holds. Since for every nonnegative $\psi\in\Lip_c(0,T)$ we also proved that 
    \begin{equation}
        -\int_0^T \psi'(t) \MF_p (t) \dif t  - \psi(0) \MF_p(0) \geq \int_0^t \MF'_p(t) \psi(t) \dif t =  -\int_0^T \psi'(t) \MF_p (t) \dif t  - \psi(0) \MF_p(0^+),
    \end{equation}
     we also have that $\MF_p(0) \leq \MF_p(0^+)$.\qedhere
     \end{step}

    \begin{remark}\label{rmk:casolimite}
        Here we discuss the possibility of taking $\alpha = \alpha_* \coloneqq (n-p)/(n-1)$ (see also \cite{benatti_monotonicityformulasnonlinearpotential_2022}) in \cref{thm:monotonicity-NPT}, by passing \cref{eq:derivative-monotonicity_formula-NPT} to the limit as $\alpha\to  \alpha_*^+$. For any $\alpha\geq\alpha_*$, we now denote $\MF^{\sml{\alpha}}_p$ and $\SQ^{\sml{\alpha}}_p$ the functions defined in \cref{eq:monotonicty_formula} and \cref{eq:Qpdef} respectively. Observe that
        \begin{equation}
            \SQ^{\sml{\alpha}}_p \geq [\alpha-(2-p)]\frac{\abs{\nabla^\top\abs{\nabla w_p}}^2}{\abs{\nabla w_p}^2} + \abs{\mathring{\h}}^2\xrightarrow{\alpha \to \alpha^+_*} \frac{(n-2)(p-1)}{(n-1)} \frac{\abs{\nabla^\top\abs{\nabla w_p}}^2}{\abs{\nabla w_p}^2} + \abs{\mathring{\h}}^2 = \SQ^{\sml{\alpha_*}}_p
        \end{equation}
        holds almost everywhere on $\Omega^{\sml{p}}_T\smallsetminus \Omega$. Therefore, we can applly Fatou's lemma to the right-hand side of \cref{eq:derivative-monotonicity_formula-NPT}, for every nonnegative $\psi \in \Lip_c([0,T))$. On the other hand, by dominated convergence one has
        \begin{equation}
            \lim_{\alpha\to \alpha_*^+} \int_0^T \psi' (t)  \MF^{\sml{\alpha}}_{p}(t) \dif t + \psi(0) \MF^{\sml{\alpha}}_p(0) = \int_0^T \psi' (t)  \MF^{\sml{\alpha_*}}_{p}(t) \dif t + \psi(0) \MF^{\sml{\alpha_*}}_p(0).
        \end{equation}
        Hence, \cref{eq:derivative-monotonicity_formula-NPT} passes to the limit as $\alpha \to \alpha_*^+$. The same approach will be used to deduce the case $\alpha =1$ in \cref{thm:monotonicty_formula_IMCF} once we will have proved that \cref{eq:derivative-monotonicity_formula-IMCF} holds for $\alpha >1$.
    \end{remark}

\section{Monotonicity formula for the IMCF}\label{sec:IMCF}

This section is devoted to proving monotonicity formulas along the weak IMCF. For this reason, we will assume that $(M,g)$ admits a proper weak solution $w_1$ to \cref{eq:IMCF}. We then define the function $\MF_1$ as
\begin{equation}\label{eq:monotonicty_formula_IMCF}
    \MF_1(t)\coloneqq -\frac{1}{\alpha}\ee^{\left(\frac{\alpha}{n-1} -1\right)t} \int_{\partial \Omega^{\sml{1}}_t }\abs{\nabla w_1}^{\alpha}\dif \Hff^{n-1} - \int_0^t\ee^{\left(\frac{\alpha}{n-1} -1\right)s}\int_{\partial \Omega^{\sml{1}}_s} \abs{\nabla w_1}^{\alpha-2} \Ric(\nu,\nu) \dif \Hff^{n-1} \dif s,
\end{equation}
for almost every $t \in (0,\sup w_1)$, and by
\begin{equation}
    \MF_1(0) \coloneqq - \frac{1}{\alpha} \int_{\partial \Omega} \abs{\H}^\alpha.
\end{equation}
Using coarea formula as in \cref{sec:well-posedness}, the function $\MF_1$ is a well-defined function in $L^1_{\loc}(0,\sup w_1)$ for $\alpha\geq 1 $. Our aim is to prove the following statement.

\begin{theorem}\label{thm:monotonicty_formula_IMCF}
Let  $\alpha \geq 1$. Let $\Omega \subset M$ be a closed bounded set with $\CS^{1,1}$-boundary which admits a proper solution $w_1$ to \cref{eq:IMCF}. Then, the following hold
\begin{enumerate}
    \item almost every $\partial \Omega^{\sml{1}}_t$ is a curvature varifold with square integrable second fundamental form; 
    \item the function $\MF_{1}$ defined in \cref{eq:monotonicty_formula_IMCF} is essentially nondecreasing; and
    \item for every nonnegative $\psi \in \Lip_c([0,\sup w_1))$ we have 
    \begin{equation}\label{eq:derivative-monotonicity_formula-IMCF}
            - \int_0^{+\infty} \psi' (t)  \MF_{1}(t) \dif t - \psi(0) \MF_{1}(0)\geq  \int_0^{+\infty} \psi(t) \ee^{\left(\frac{\alpha}{n-1} - 1 \right)t} \int_{\partial \Omega^{\sml{1}}_t}\H^{\alpha-2} \SQ_{1}\dif \Hff^{n-1} \dif t,
        \end{equation}
  where 
    \begin{equation}\label{eq:Q1def}
    \SQ_{1}= \begin{multlined}[t]
        (\alpha-1) \frac{\abs{\nabla^\top\H}^2}{\H^2} + \abs{ \mathring{\h}}^2 
    \end{multlined}
    \end{equation}
    on $D \smallsetminus \Crit(w_1)$ and $\SQ^{\sml{1}}\coloneqq0$ otherwise.
\end{enumerate}
\end{theorem}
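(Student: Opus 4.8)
The plan is to obtain \cref{thm:monotonicty_formula_IMCF} as the limit $p\to1^+$ of \cref{thm:monotonicity-NPT}, using the approximation package of \cref{main-convergences}. Fix $\alpha>1$ (the endpoint $\alpha=1$ is recovered at the end by letting $\alpha\to1^+$, exactly as in \cref{rmk:casolimite}) and a nonnegative $\psi\in\Lip_c([0,\sup w_1))$; choose a smooth bounded $D\supset\Omega$ with $T\coloneqq\max\spt\psi<\inf_{\partial D}w_1$, so also $\inf_{\partial D}w_1>0$. By \cref{main-convergences} there are data $\Phi_p$ whose solutions $w_p$ to \cref{eq:moser_potential} on $D\smallsetminus\Omega$ satisfy, along some subsequence $p\to1^+$ (not relabelled), $w_p\to w_1$ uniformly, $\nabla w_p\to\nabla w_1$ strongly in $L^q_{\loc}(D\smallsetminus\Omega)$ for every $q<\infty$, and $\partial\Omega^{\sml{p}}_t\to\partial\Omega^{\sml{1}}_t$ as varifolds for a.e.\ $t<\inf_{\partial D}w_1$. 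Since $\partial\Omega\in\CS^{1,1}$ has $\H\in L^\infty(\partial\Omega)\subset L^2(\partial\Omega)$, \cref{thm:monotonicity-NPT} applies to each $w_p$.

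The first step is to extract estimates uniform in $p\in(1,2]$. Applying \cref{thm:monotonicity-NPT} with $\alpha=3-p$ (so the weight $\abs{\nabla w_p}^{\alpha+p-3}$ disappears), the function $\MF^{\sml{3-p}}_p$ is bounded in $L^\infty(0,T)$ uniformly in $p$ by \cref{lem:continuity_G} and \cref{cor:energy_boundedness}; hence, testing \cref{eq:derivative-monotonicity_formula-NPT} with a $\psi$ that is $\equiv1$ near $0$,
\begin{equation*}
\int_0^T\psi(t)\,\ee^{\left(\frac{3-p}{n-p}-1\right)t}\int_{\partial\Omega^{\sml{p}}_t}\SQ^{\sml{3-p}}_p\dif\Hff^{n-1}\dif t\ \le\ C
\end{equation*}
uniformly in $p$. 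Since $\SQ^{\sml{3-p}}_p\ge\abs{\mathring{\h}}^2$ and $\SQ^{\sml{3-p}}_p\ge\frac{c}{p-1}\bigl[\H-\tfrac{n-1}{n-p}\abs{\nabla w_p}\bigr]^2$ for some $c>0$, and using $\H^2\le 2\bigl[\H-\tfrac{n-1}{n-p}\abs{\nabla w_p}\bigr]^2+C\abs{\nabla w_p}^2$ together with the uniform gradient bound \cref{prop:uniform_gradient}, this yields both $\int_0^T\int_{\partial\Omega^{\sml{p}}_t}\abs{\h}^2\dif\Hff^{n-1}\dif t\le C$ and $\int_0^T\int_{\partial\Omega^{\sml{p}}_t}\bigl[\H-\tfrac{n-1}{n-p}\abs{\nabla w_p}\bigr]^2\dif\Hff^{n-1}\dif t\le C(p-1)\to0$. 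As $\nabla w_p\to\nabla w_1$ in $L^2_{\loc}$ gives $\int_0^T\int_{\partial\Omega^{\sml{p}}_t}\abs{\nabla w_p-\nabla w_1}^2\dif\Hff^{n-1}\dif t\to0$ (coarea and the gradient bound), we conclude $\H^{\sml{p}}\to\abs{\nabla w_1}$ in $L^2(\dif\Hff^{n-1}\dif t)$; together with the varifold convergence this forces $\vec{\H}^{\sml{1}}=-\abs{\nabla w_1}\,\nu^{\sml{1}}=-\nabla w_1$, i.e.\ the a.e.\ identity \cref{eq:MeanCurvatureLevelW1}. Part (1) now follows: for a.e.\ $t$ (selected by Fatou from the uniform $L^2$ bound) we have $\partial\Omega^{\sml{p}}_t\to\partial\Omega^{\sml{1}}_t$ as varifolds with $\sup_p\int_{\partial\Omega^{\sml{p}}_t}\abs{\h}^2<\infty$, so by the closure results for curvature varifolds (\cref{thm:ProprietaConvergenzeVarifold}, \cref{cor:ConvergenzaLevelSetsVersusVarifolds}, \cref{lem:ConditionsEquivalentSFF}) the limit $\partial\Omega^{\sml{1}}_t$ is a curvature varifold with $\h\in L^2$, along which $\int\abs{\h}^2$ and $\int\abs{\mathring{\h}}^2$ are lower semicontinuous.

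Next I would show $\MF_p(t)\to\MF_1(t)$ for a.e.\ $t$, working with the given $\alpha>1$ and the form of $\MF_p$ in the remark after \cref{thm:monotonicity-NPT}. By coarea the Ricci double integral equals $\int_{\Omega^{\sml{p}}_t\smallsetminus\Omega}\ee^{(\frac{\alpha}{n-p}-1)w_p}\abs{\nabla w_p}^{\alpha+p-2}\Ric(\nu^{\sml{p}},\nu^{\sml{p}})\dif\Hff^n$, which converges to its $p=1$ analogue for a.e.\ $t$ by the strong $L^q_{\loc}$ convergence of $\nabla w_p$ and the boundedness of $\Ric$ on $\overline D$. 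In the leading term, $\int_{\partial\Omega^{\sml{p}}_t}\abs{\nabla w_p}^{\alpha+p-1}\dif\Hff^{n-1}$ converges (up to a further subsequence, for a.e.\ $t$) to $\int_{\partial\Omega^{\sml{1}}_t}\abs{\nabla w_1}^\alpha\dif\Hff^{n-1}$, because $\int_0^t\int_{\partial\Omega^{\sml{p}}_s}\abs{\nabla w_p}^{\alpha+p-1}\dif\Hff^{n-1}\dif s=\int_{\Omega^{\sml{p}}_t\smallsetminus\Omega}\abs{\nabla w_p}^{\alpha+p}\dif\Hff^n\to\int_{\Omega^{\sml{1}}_t\smallsetminus\Omega}\abs{\nabla w_1}^{\alpha+1}\dif\Hff^n$; and $\int_{\partial\Omega^{\sml{p}}_t}\abs{\nabla w_p}^{\alpha+p-2}\H\dif\Hff^{n-1}=-\int_{\partial\Omega^{\sml{p}}_t}\ip{\vec{\H}^{\sml{p}}|\abs{\nabla w_p}^{\alpha+p-3}\nabla w_p}\dif\Hff^{n-1}$ converges to $\int_{\partial\Omega^{\sml{1}}_t}\abs{\nabla w_1}^{\alpha-1}\H^{\sml{1}}\dif\Hff^{n-1}$ by the convergence of the curvature varifolds carrying their mean curvature (\cref{thm:ProprietaConvergenzeVarifold}) together with $\H^{\sml{p}}\to\abs{\nabla w_1}$ and $\nabla w_p\to\nabla w_1$. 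Using $\H^{\sml{1}}=\abs{\nabla w_1}$, these combine to $\MF_p(t)\to\MF_1(t)$ for a.e.\ $t$, and $\sup_p\norm{\MF_p}_{L^\infty(0,T)}<\infty$ (again by \cref{lem:continuity_G} and \cref{cor:energy_boundedness}), so dominated convergence is available.

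Finally I pass \cref{eq:derivative-monotonicity_formula-NPT} to the limit. Discarding the nonnegative last term of $\SQ_p$ and using $\MF_p(0^+)\ge\MF_p(0)$,
\begin{equation*}
-\int_0^T\psi'(t)\MF_p(t)\dif t-\psi(0)\MF_p(0)\ \ge\ \int_0^T\psi(t)\,\ee^{\left(\frac{\alpha}{n-p}-1\right)t}\int_{\partial\Omega^{\sml{p}}_t}\abs{\nabla w_p}^{\alpha+p-3}\Bigl([\alpha-(2-p)]\tfrac{\abs{\nabla^\top\abs{\nabla w_p}}^2}{\abs{\nabla w_p}^2}+\abs{\mathring{\h}}^2\Bigr)\dif\Hff^{n-1}\dif t
\end{equation*}
for each $p>1$. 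The first term on the left converges to $-\int_0^T\psi'\MF_1$ by the previous paragraph. For the boundary term, Young's inequality $\abs{\nabla w_p}^{\alpha+p-2}\H\le\tfrac{\alpha+p-2}{\alpha+p-1}\abs{\nabla w_p}^{\alpha+p-1}+\tfrac1{\alpha+p-1}\abs{\H}^{\alpha+p-1}$ on $\partial\Omega$, together with $\tfrac{n-1}{n-p}-\tfrac1\alpha-\tfrac{\alpha+p-2}{\alpha+p-1}=(p-1)\bigl(\tfrac1{n-p}-\tfrac1{\alpha(\alpha+p-1)}\bigr)\to0$ and \cref{prop:uniform_gradient}, gives $\MF_p(0)\ge-\tfrac1{\alpha+p-1}\int_{\partial\Omega}\abs{\H}^{\alpha+p-1}+o(1)\to-\tfrac1\alpha\int_{\partial\Omega}\abs{\H}^\alpha=\MF_1(0)$, hence $\limsup_p\bigl(-\psi(0)\MF_p(0)\bigr)\le-\psi(0)\MF_1(0)$. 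On the right-hand side, $\int\abs{\mathring{\h}}^2$ is lower semicontinuous by Part (1), while the term $\abs{\nabla^\top\abs{\nabla w_p}}^2/\abs{\nabla w_p}^2$ is handled by passing through the curvature-varifold quantity $\abs{\nabla^\top\H^{\sml{p}}}^2/(\H^{\sml{p}})^2$, whose lower semicontinuity is part of the convergence of curvature varifolds, and transferring it to $\abs{\nabla w_p}$ via the $L^2$-vanishing of $\H^{\sml{p}}-\abs{\nabla w_p}$ and the strong $L^q_{\loc}$ convergence of $\nabla w_p$; the weights $\abs{\nabla w_p}^{\alpha+p-3}\to\abs{\nabla w_1}^{\alpha-2}$ away from $\Crit(w_1)$, where $\SQ_1$ is supported. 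Taking $\liminf_p$ of the displayed inequality thus yields \cref{eq:derivative-monotonicity_formula-IMCF} with $\SQ_1$ as in \cref{eq:Q1def}; testing this with $\psi\ge0$, $\psi(0)=0$, shows $\MF_1'\ge0$ distributionally on $(0,T)$ for every $T<\sup w_1$, i.e.\ Part (2), and $\alpha=1$ then follows by letting $\alpha\to1^+$ (Fatou on the right, dominated convergence on the left, as in \cref{rmk:casolimite}). I expect the last lower-semicontinuity step to be the main difficulty: unlike $\abs{\mathring{\h}}^2$, the quantity $\abs{\nabla^\top\abs{\nabla w_p}}^2$ is not directly encoded in the varifold structure, and reconciling it with $\abs{\nabla^\top\H^{\sml{p}}}^2$ requires combining the weak convergence of the varifolds with their mean curvature, the strong $L^q_{\loc}$ convergence of the gradients, and the $L^2$ smallness of $\H^{\sml{p}}-\abs{\nabla w_p}$.
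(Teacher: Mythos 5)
Your overall strategy — approximate by $w_p$ via \cref{main-convergences}, extract uniform estimates by plugging $\alpha=3-p$ into \cref{eq:derivative-monotonicity_formula-NPT}, deduce the $L^2$-vanishing of $\H^{\sml{p}}-|\nabla w_p|$, pass level sets to the limit as curvature varifolds, and finally take $\liminf$ of the $p$-monotonicity — is exactly the paper's, and the parts addressing (1), the convergence $\MF_p(0)\to\MF_1(0)$, and the lower semicontinuity of $\int|\mathring{\h}|^2$ are sound (the last via \cref{thm:ProprietaConvergenzeVarifold}, after a further $\alpha$-calibration to control the weights, as the paper does in \cref{lem:tracelessLSC}).

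However, there is a genuine gap at the step you yourself flag as "the main difficulty." You assert that the term $\abs{\nabla^\top\abs{\nabla w_p}}^2/\abs{\nabla w_p}^2$ can be "handled by passing through $\abs{\nabla^\top\H^{\sml p}}^2/(\H^{\sml p})^2$, whose lower semicontinuity is part of the convergence of curvature varifolds." This is false. The curvature-varifold structure carries the generalized second fundamental form (and hence the mean curvature) in a weak-$L^1$ sense, so integrals of $|\mathring{\h}|^2$, $|\h|^2$, $|\vec{\H}|^2$ are lower semicontinuous along curvature-varifold convergence. But $\nabla^\top\H$ is a \emph{tangential derivative of the mean curvature}, a third-order quantity; it is not encoded in a curvature varifold at all. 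There is no closure or lower-semicontinuity theorem for $\int|\nabla^\top\H|^2$ attached to \cref{thm:ProprietaConvergenzeVarifold}, so the sentence quoted above does not reduce to anything in the varifold toolbox and cannot be fixed by merely "combining" the $L^q_{\loc}$ gradient convergence with the smallness of $\H^{\sml p}-|\nabla w_p|$.

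The paper's resolution of this point is the substantive \cref{lem:LSCgradTangenzialeH}: one regularizes $\log(\abs{\nabla w_p}+\delta)$, uses the joint lower-semicontinuity lemma \cref{lem:JointConvergenceLSC} to extract an $L^2$-weak limit $Y_\delta$ of the weighted tangential gradients, and then \emph{characterizes} $Y_\delta$ as $\nabla^\top\log(\abs{\nabla w_1}+\delta)$ through three additional steps that do not appear in your outline: (i) showing $Y_\delta$ is $\Hff^{n-1}$-a.e.\ tangent to $\partial\Omega^{\sml{1}}_t$ by testing against $\nabla w_1$ and exploiting the strong $L^q_{\loc}$ convergence; (ii) deriving the weak integration-by-parts identity \cref{eq:IntegrationByPartsSobolevOnVarifold} for $\log(\abs{\nabla w_p}+\delta)$ on the $p$-level sets and passing it to the limit, which crucially uses the $C^{1,\beta}$ regularity of $\partial\Omega^{\sml 1}_t\smallsetminus S_t$ from \cref{prop:regularityLevelsetIMCF}; and (iii) inferring from (ii) that $\log(\H^{\sml 1})\in W^{1,2}(\partial\Omega^{\sml 1}_t\smallsetminus S_t)$ — a new tangential Sobolev regularity for the weak IMCF — so that $Y_\delta$ is identified. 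Without something equivalent, the right-hand side of \cref{eq:derivative-monotonicity_formula-IMCF} is not established for $\alpha>1$; you only get the weaker monotonicity coming from the $|\mathring{\h}|^2$ piece, which, while enough for \cref{thm:Geroch-montonicity} (where $\alpha=2$ and one could drop the gradient term), is not the statement being proved.

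A smaller point: the claim $\sup_p\norm{\MF_p}_{L^\infty(0,T)}<\infty$ is not what \cref{prop:boundedness_Fp} gives; only $\int_0^T\abs{\MF_p}+\abs{\MF_p(0)}$ is bounded uniformly. The paper correspondingly argues via weak $L^2(0,T)$ convergence of the time-maps (splitting $\MF_p$ through coarea), not via pointwise a.e.\ convergence plus dominated convergence. Your argument for pointwise convergence of the level-set integral from the convergence of its time-integral is not valid without additional control.
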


\begin{remark}\label{rmk:equivalent_formulation_monotonicity_IMCF}
    By the very definition of weak IMCF in \cite{huisken_inversemeancurvatureflow_2001}, the function $w_1$ minimizes the functional
    \begin{equation}
        J_{w_1}(v) = \int_{K} \abs{\nabla v}+ v\abs{\nabla w_1} \dif \Hff^n
    \end{equation}
    among all locally Lipschitz functions $v$ such that $\set{v \neq w_1}\subseteq K$ for a compact $K$ in $M$. Take then any vector field $X$ compactly supported in $K \subseteq D$ and let $\Theta_s:M\to M$ the family of diffeomorphisms generated by it. Then,
    \begin{align}
        0 &= \left.\frac{\dd}{\dd s}\right\vert_{s=0} J_{w_1}(w_1 \circ \Theta_s) = \left.\frac{\dd}{\dd s}\right\vert_{s=0}\int_\R \int_{\partial \Omega^{\sml{1}}_t\cap K} \abs{\det \nabla^\top\Theta_{-s}} \dif \Hff^{n-1}\dif t +\int_{K} w_1(\Theta_s) \abs{\nabla w_1}\dif \Hff^n\\
        &=\int_\R\int_{\partial \Omega^{\sml{1}}_t\cap K} -\div_{\top} X + \ip{X|\nabla w_1} \dif \Hff^{n-1} \dif t \overset{\cref{eq:MeanCurvVarifoldManifoldIntegrationbyParts}}{=} \int_\R\int_{\partial \Omega^{\sml{1}}_t\cap K}  \ip{X|\nabla w_1 + \vec{\H}} \dif \Hff^{n-1} \dif t.
    \end{align}
    In \cite{huisken_inversemeancurvatureflow_2001}, the last identity is the definition of the weak mean curvature. Here, we already have a notion of mean curvature for which \cref{eq:MeanCurvVarifoldManifoldIntegrationbyParts} holds since $\partial \Omega^{\sml{1}}_t$ is a curvature varifold. We conclude that $\vec{\H}= -\nabla w_1$ (and \cref{eq:MeanCurvatureLevelW1}) holds almost everywhere on $\partial \Omega^{\sml{1}}_t$ for almost every $t \in [0,\sup_M w_1)$. In particular, the functional $\MF_1$ can be equivalently rewritten as
    \begin{equation}
        \MF_1(t)\coloneqq -\frac{1}{\alpha}\ee^{\left(\frac{\alpha}{n-1} -1\right)t} \int_{\partial \Omega^{\sml{1}}_t }\abs{\H}^{\alpha}\dif \Hff^{n-1} - \int_0^t\ee^{\left(\frac{\alpha}{n-1} -1\right)s}\int_{\partial \Omega^{\sml{1}}_s} \abs{\H}^{\alpha-2} \Ric(\nu,\nu) \dif \Hff^{n-1} \dif s.
    \end{equation}
\end{remark}

This family of monotonicity formulas, for different values of $\alpha$, contains both \cite[$\H^2$ Growth Formula 5.7]{huisken_inversemeancurvatureflow_2001} and the limit case of monotonicity formulas \cite[Theorem 3.1]{benatti_minkowskiinequalitycompleteriemannian_2022}, \cite[Theorem 1.1]{agostiniani_riemannianpenroseinequalitynonlinear_2022} and \cite[Theorem 1.1]{xia_newmonotonicitycapacitaryfunctions_2024}. 

Rather than using the approximation via $\varepsilon$-regularization as in \cite{huisken_inversemeancurvatureflow_2001}, we follow the spirit of these last papers. We prove \cref{thm:monotonicty_formula_IMCF} by sending $p\to 1^+$ in \cref{thm:monotonicity-NPT}.

The convergence result stated in \cref{thm:local-approximation} is not sufficient to conclude. Indeed, whether $\abs{\nabla w_p}$ converges to  $\H = \abs{\nabla w_1}$ is not clear at this level. Our first aim is to upgrade the uniform convergence of $w_p$ to $w_1$ to a convergence in $W^{1,q}$, for every $q <+ \infty$.

\subsection{Improved convergence to the weak IMCF}
In the first place, we aim to prove a regularity result for the level sets of the weak IMCF. We want to mirror \cref{lem:ConvergenzaVarifoldEspToZero} in the limit as $p\to 1^+$. There are two main ingredients in the proof. The first one is the convergence of the area of level sets. Here, we can not rely on the convergence of the gradient as in the limit $\varepsilon \to 0^+$. We will achieve the result by exploiting an energy argument which involves the convergence of the $p$-capacities.

\begin{lemma}\label{thm:area_convergence}
   Assume the hypotheses of \cref{thm:monotonicty_formula_IMCF} are met. Let $D$ be a open subset with smooth boundary such that $\inf_{\partial D} w_1 >0$ and fix $T < \inf_{\partial D} w_1$. Let $w_p$ be solutions to \cref{eq:moser_potential} for $p\in(1,2]$ and for $\Phi_p$ as in \cref{thm:local-approximation}.Then
    \begin{equation}\label{eq:areaconv}
        \lim_{p\to 1^+}\int_0^T \abs{\abs{\partial \Omega^{\sml{p}}_t} -\abs{\partial \Omega^{\sml{1}}_t}} \dif t = 0.
    \end{equation}
    In particular, we also have
    \begin{equation}\label{eq:unitp-unit1-conv}
        \lim_{p\to 1^+}\int_0^T \int_{\partial \Omega^{\sml{p}}_t} \abs{\nu^{\sml{p}}- \nu^{\sml{1}}}^2 \dif \Hff^{n-1} \dif t=0,
    \end{equation}
    where $\nu^{\sml{p}}$ and $\nu^{\sml{1}}$ represent the unit normal vector field to $w_{p}$ and $w_1$ respectively.
\end{lemma}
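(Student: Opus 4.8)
The plan is to reduce everything to an energy identity for the relative $p$-capacity together with an elementary comparison between $|\nabla w_p|^p$ and $|\nabla w_p|$. First I would record the consequence of \cref{lem:p-cap_levelset}: since its right-hand side does not depend on $\tau$, comparing the cases $\tau=0$ and general $\tau$ gives the exponential growth of the $p$-energy,
$\int_{\partial \Omega^{\sml{p}}_t}\abs{\nabla w_p}^{p-1}\dif \Hff^{n-1}=\ee^{t}\int_{\partial \Omega}\abs{\nabla w_p}^{p-1}\dif \Hff^{n-1}$,
while the case $t=0$ yields
$\int_{\partial \Omega}\abs{\nabla w_p}^{p-1}\dif \Hff^{n-1}=\abs{\S^{n-1}}(n-p)^{p-1}(1-\ee^{-T/(p-1)})^{p-1}\,\ncapa_p(\Omega,U^{\sml{p}}_T)$,
which converges to $\abs{\partial \Omega^*}$ as $p\to 1^+$ by \cref{lem:continuity_p-capacity} (here $(n-p)^{p-1},(1-\ee^{-T/(p-1)})^{p-1}\to 1$). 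Integrating the first identity on $[0,T]$ and applying the coarea formula, $\int_{\{0<w_p<T\}}\abs{\nabla w_p}^{p}\dif \Hff^n=(\ee^{T}-1)\int_{\partial \Omega}\abs{\nabla w_p}^{p-1}\dif \Hff^{n-1}\to(\ee^{T}-1)\abs{\partial \Omega^*}$.

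Next I would upgrade this to $\abs{\nabla w_p}$ in place of $\abs{\nabla w_p}^{p}$. By \cref{thm:local-approximation} and \cref{prop:uniform_gradient}, for $p$ close to $1$ the set $\{0\le w_p\le T\}$ lies in a fixed compact $K\subset D$ on which $\abs{\nabla w_p}\le \kst$; then the elementary bound $\big|\,\abs{\nabla w_p}-\abs{\nabla w_p}^{p}\,\big|\le \rho_p\to 0$ as $p\to 1^+$ (using $x-x^{p}\le x(p-1)\ln(1/x)\le (p-1)/\ee$ on $[0,1]$, and $x^{p}-x\le \kst^{p}-\kst$ on $[1,\kst]$) gives $\int_{\{0<w_p<T\}}\abs{\nabla w_p}\dif \Hff^n\to(\ee^{T}-1)\abs{\partial \Omega^*}$. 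Since by the coarea formula $\int_0^T\abs{\partial \Omega^{\sml{p}}_t}\dif t=\int_{\{0<w_p<T\}}\abs{\nabla w_p}\dif \Hff^n$ and, by \cref{lem:exp_areagrowth}, $\int_0^T\abs{\partial \Omega^{\sml{1}}_t}\dif t=(\ee^{T}-1)\abs{\partial \Omega^*}$, this already gives $\int_0^T\abs{\partial \Omega^{\sml{p}}_t}\dif t\to \int_0^T\abs{\partial \Omega^{\sml{1}}_t}\dif t$. To promote this to the $L^1$-in-$t$ statement \cref{eq:areaconv}, I would use that $w_p\to w_1$ uniformly forces $\chi_{\Omega^{\sml{p}}_t}\to \chi_{\Omega^{\sml{1}}_t}$ in $L^1(D)$ for a.e. $t$, so lower semicontinuity of the perimeter — together with the regularity of the level sets in \cref{prop:RegolaritaWp,prop:regularityLevelsetIMCF} and \cref{rmk:weakSard}, which let us identify the perimeter with $\Hff^{n-1}$ of the (topological) boundary — yields $\abs{\partial \Omega^{\sml{1}}_t}\le \liminf_{p}\abs{\partial \Omega^{\sml{p}}_t}$ for a.e. $t$. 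Hence $\big(\abs{\partial \Omega^{\sml{p}}_t}-\abs{\partial \Omega^{\sml{1}}_t}\big)^-\to 0$ a.e., dominated by $\ee^{t}\abs{\partial \Omega^*}\in L^1(0,T)$; combining dominated convergence for the negative part with the convergence of the integrals gives \cref{eq:areaconv} (reducing first, by monotonicity of the integral in $T$, to $T$ with $\abs{\{w_1=T\}}=0$).

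Finally, for \cref{eq:unitp-unit1-conv} I would again apply the coarea formula to get $\int_0^T\int_{\partial \Omega^{\sml{p}}_t}\abs{\nu^{\sml{p}}-\nu^{\sml{1}}}^2\dif \Hff^{n-1}\dif t=\int_{\{0<w_p<T\}}\abs{\nu^{\sml{p}}-\nu^{\sml{1}}}^2\abs{\nabla w_p}\dif \Hff^n$, and use the pointwise bound $\abs{\nu^{\sml{p}}-\nu^{\sml{1}}}^2\abs{\nabla w_p}\le 2\abs{\nabla w_p}-2\ip{\nabla w_p|\nu^{\sml{1}}}$ (an identity where $\nabla w_1\ne 0$, and an inequality on $\{\nabla w_1=0\}$, where the left-hand side is just $\abs{\nabla w_p}$). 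The first term integrates to $(\ee^{T}-1)\abs{\partial \Omega^*}+o(1)$ by the previous step. For the second, the uniform gradient bound and $w_p\to w_1$ in $L^1$ give $\nabla w_p\rightharpoonup \nabla w_1$ weakly in $L^2(K)$, hence $\int_{\{0<w_1<T\}}\ip{\nabla w_p|\nu^{\sml{1}}}\dif \Hff^n\to \int_{\{0<w_1<T\}}\abs{\nabla w_1}\dif \Hff^n=(\ee^{T}-1)\abs{\partial \Omega^*}$; the replacement of the $p$-dependent domain $\{0<w_p<T\}$ by $\{0<w_1<T\}$ costs only an error over their symmetric difference, which — up to a set of vanishing measure — is contained in $\Omega^*\smallsetminus \Omega\subseteq \{\nabla w_1=0\}$, where $\nu^{\sml{1}}=0$ and the integrand vanishes. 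Therefore $0\le \int_{\{0<w_p<T\}}\abs{\nu^{\sml{p}}-\nu^{\sml{1}}}^2\abs{\nabla w_p}\dif \Hff^n\le 2(\ee^{T}-1)\abs{\partial \Omega^*}-2(\ee^{T}-1)\abs{\partial \Omega^*}+o(1)=o(1)$, giving \cref{eq:unitp-unit1-conv}. The main obstacle is exactly this last point: the capacity only delivers the $p$-energy $\int\abs{\nabla w_p}^{p}$ for free, so the whole scheme hinges on the elementary upgrade to $\int\abs{\nabla w_p}$ and then on matching the shifting domains $\{0<w_p<T\}$ with the fixed set $\{0<w_1<T\}$ while controlling the normal defect on the region $\Omega^*\smallsetminus \Omega$ where $w_1$ is constant but $w_p$ is not.
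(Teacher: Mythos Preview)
Your proof is correct and uses the same core ingredients as the paper: the convergence of relative $p$-capacities (\cref{lem:continuity_p-capacity}), the exponential law for $\int_{\partial\Omega^{\sml{p}}_t}|\nabla w_p|^{p-1}$ coming from \cref{lem:p-cap_levelset}, and the elementary bound $\big||\nabla w_p|-|\nabla w_p|^p\big|\to 0$ uniformly.

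The only organisational difference is in how you extract the $L^1$-in-$t$ statement \cref{eq:areaconv}. You first prove convergence of the totals $\int_0^T|\partial\Omega^{\sml{p}}_t|\,\dif t\to\int_0^T|\partial\Omega^{\sml{1}}_t|\,\dif t$ and then invoke lower semicontinuity of perimeter to kill the negative part $(\,|\partial\Omega^{\sml{p}}_t|-|\partial\Omega^{\sml{1}}_t|\,)^-$; this is legitimate but introduces an extra tool. The paper instead observes that both areas are already close, \emph{pointwise in $t$}, to the same explicit exponential: by \cref{lem:p-cap_levelset} one has $\int_{\partial\Omega^{\sml{p}}_t}|\nabla w_p|^{p-1}=C_p\,\ee^t$ with $C_p\to|\partial\Omega^*|$, so
\[
\big|\,|\partial\Omega^{\sml{p}}_t|-C_p\,\ee^t\,\big|\le \int_{\partial\Omega^{\sml{p}}_t}\big|1-|\nabla w_p|^{p-1}\big|\,\dif\Hff^{n-1},
\]
while $|\partial\Omega^{\sml{1}}_t|=\ee^t|\partial\Omega^*|$ exactly by \cref{lem:exp_areagrowth}. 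Integrating the displayed inequality in $t$ (coarea turns the right-hand side into your $\int||\nabla w_p|-|\nabla w_p|^p|$) and using the triangle inequality gives \cref{eq:areaconv} directly, with no appeal to lower semicontinuity. For \cref{eq:unitp-unit1-conv} you essentially reprove inline the appendix lemma (\cref{lem:convergence_of_unit_normal}) that the paper simply cites; your handling of the domain mismatch on $\Omega^*\smallsetminus\Omega$ is fine.
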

\begin{proof}
    By coarea formula
    \begin{equation}
        \abs{\int_{\Omega^{\sml{p}}_T\smallsetminus \Omega} \abs{\nabla w_p}\dif \Hff^n - \int_{\Omega^{\sml{1}}_T\smallsetminus \Omega} \abs{\nabla w_1} \dif \Hff^n} \leq \int_0^T \abs{\abs{\partial \Omega^{\sml{p}}_t} -\abs{\partial \Omega^{\sml{1}}_t}} \dif t.
    \end{equation}
    Hence, \cref{eq:unitp-unit1-conv} follows from \cref{lem:convergence_of_unit_normal}, once we prove \cref{eq:areaconv}.
    By \cref{lem:continuity_p-capacity}, we have $\ncapa_p(\Omega, \Omega^{\sml{p}}_T)\to \abs{\partial \Omega^*}/\abs{\S^{n-1}}$. In particular, recalling that $\abs{\partial \Omega_t^{\sml{1}}} = \ee^t \abs{\partial \Omega^*}$ we have 
    \begin{equation}
        \lim_{p \to 1^+}\int_0^T\abs{ \abs{\partial \Omega_t^{\sml{1}}}- (n-p)^{p-1} \abs{\S^{n-1}}\ee^t\ncapa_p(\Omega, \Omega^{\sml{p}}_T)\left(1- \ee^{-\frac{T}{p-1}}\right)^{p-1}}\dif t = 0.
    \end{equation}
    Hence, the conclusion follows if we prove that
    \begin{equation}
        \lim_{p \to 1^+}\int_0^T\abs{ \abs{\partial \Omega^{\sml{p}}_t}- (n-p)^{p-1} \abs{\S^{n-1}} \ee^t\ncapa_p(\Omega, \Omega^{\sml{p}}_T)\left(1- \ee^{-\frac{T}{p-1}}\right)^{p-1}}\dif t =0.
    \end{equation}
    By coarea formula, we have
    \begin{equation}
        \int_0^{T} \int_{\partial \Omega^{\sml{p}}_t} \abs{1 - \abs{ \nabla w_p}^{p-1}}\dif\Hff^{n-1}\dif t = \int_{\Omega^{\sml{p}}_{T}\smallsetminus\Omega} \abs{ \abs{\nabla w_p }-\abs{\nabla w_p}^p} \dif \Hff^n
    \end{equation}
    $\abs{ \abs{\nabla w_p }-\abs{\nabla w_p}^p} \leq \kst$ and $t\mapsto\abs{\abs{t}-\abs{t}^p}$ converges to zero uniformly on compact sets as $p\to 1^+$. Dominated convergence theorem yields
    \begin{equation}
         \lim_{p \to 1^+}\int_0^{T} \int_{\partial \Omega^{\sml{p}}_t} \abs{1 - \abs{ \nabla w_p}^{p-1}}\dif \Hff^{n-1}\dif t  = 0.
    \end{equation}
    Since by \cref{lem:p-cap_levelset}
    \begin{equation}
        \abs{\abs{\partial \Omega_t^{\sml{p}}}- (n-p)^{p-1} \abs{\S^{n-1}}\left(1- \ee^{-\frac{T}{p-1}}\right)^{p-1} \ee^t\ncapa_p(\partial \Omega)} \leq \int_{\partial \Omega^{\sml{p}}_t} \abs{1 - \abs{ \nabla w_p}^{p-1}}\dif \Hff^{n-1},
    \end{equation}
    we deduce \cref{eq:areaconv}.   \qedhere
\end{proof}

The second ingredient is the upper bound for the $L^2$ norm of the second fundamental form of the level sets of $w_p$. We can not directly use \cref{eq:interior_esitmate_2ff_p}. Indeed, the constant appearing in that estimate might blow up in the limit as $p\to 1^+$. However, the monotonicity formula \cref{eq:derivative-monotonicity_formula-NPT} suggests that a sharp bound on $\MF_p(t)$ is what we actually aim for. We now prove an $L^1$ upper bound for $\MF_p(t)$ which does not depend on $p$.

\begin{lemma}\label{prop:boundedness_Fp}
 Assume the hypotheses of  \cref{thm:monotonicty_formula_IMCF} are met. Let $D$ be a open subset with smooth boundary such that $\inf_{\partial D} w_1 >0$ and fix $T < \inf_{\partial D} w_1$. Let $w_p$ be solutions to \cref{eq:moser_potential} for $p\in(1,2]$ and for $\Phi_p$ as in \cref{thm:local-approximation}. Let $\alpha^*, \alpha$ such that $(n-p)/(n-1)< \alpha < \alpha^*$. Given $\MF_p$ in \cref{eq:monotonicty_formula}, we have
    \begin{equation}\label{eq:boundedness_Fp}
        \int_0^{T}\abs{\MF_p(t)}\dif t + \abs{\MF_p(0)}\leq \kst \left(\norm{\nabla w_p}_{L^\infty(\Omega^{\sml{p}}_{T} \smallsetminus \Omega^\circ)}^{\alpha^*+p-1}+1\right)\left(\norm{\H}^2_{L^2(\partial \Omega)} + \abs{\partial \Omega} +1\right)
    \end{equation}
    for a constant $\kst>0$ depending only on $n$, $\alpha^*$, $T$, $D$ and any lower bound on the Ricci tensor on $D$. 
\end{lemma}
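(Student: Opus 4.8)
The plan is to estimate $\MF_p$ by routing the computation through the auxiliary gradient integral $\IF_p$ of \cref{eq:pezzo-di-gradiente} and the differential identity \cref{eq:defivata-di-G} relating $\IF_p$, $\MF_p$ and the Ricci term of \cref{eq:monotonicty_formula}, rather than bounding the three summands of $\MF_p$ separately. The point is that the Ricci term carries the power $\abs{\nabla w_p}^{\alpha+p-3}$, and $\alpha+p-3$ can be negative for $p$ close to $1$, so on a level set it is not controlled by $\norm{\nabla w_p}_{L^\infty}$ alone; the mean curvature term of $\MF_p$ is likewise awkward to estimate directly. I reduce \cref{eq:boundedness_Fp} to three bounds: one for $\abs{\MF_p(0)}$, one for $\int_0^T\MF_p^-$ (here $\MF_p^-=\max(-\MF_p,0)$), and one for $\int_0^T\MF_p$.

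Throughout I take $p$ close enough to $1$ that $T<\inf_{\partial D}\Phi_p$, so that \cref{thm:monotonicity-NPT} applies (since $\partial\Omega\in\CS^{1,1}$ has $\H\in L^\infty(\partial\Omega)\subset L^2(\partial\Omega)$): thus $\MF_p\in W^{1,1}(0,T)$ is essentially nondecreasing and $\MF_p(0^+)\ge\MF_p(0)$. From monotonicity, $\MF_p^-(t)\le\max(-\MF_p(0),0)\le\abs{\MF_p(0)}$ for a.e. $t\in(0,T)$, hence $\int_0^T\MF_p^-\le T\abs{\MF_p(0)}$. For $\MF_p(0)$, observe that $\alpha>(n-p)/(n-1)$ forces $\alpha+p-2>(p-1)(n-2)/(n-1)\ge0$ and $0<\tfrac{n-1}{n-p}-\tfrac1\alpha\le\tfrac{n-1}{n-2}$; Hölder's and Young's inequalities on $\partial\Omega$ then give
\[
\abs{\MF_p(0)}\le\kst\,\Big(\norm{\nabla w_p}_{L^\infty}^{\alpha+p-1}\abs{\partial\Omega}+\norm{\nabla w_p}_{L^\infty}^{\alpha+p-2}\big(\abs{\partial\Omega}+\norm{\H}_{L^2(\partial\Omega)}^2\big)\Big)
\]
with $\kst$ a dimensional constant.

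For $\int_0^T\MF_p$ I use \cref{lem:continuity_G}. Writing $\mathcal R_p(t)$ for the Ricci double integral in \cref{eq:monotonicty_formula}, the identity \cref{eq:defivata-di-G} reads $\IF_p'=\tfrac1{p-1}\big(\IF_p+\alpha\MF_p+\alpha\mathcal R_p\big)$, in which the mean curvature contribution is absorbed; integrating on $(0,T)$ (recall $\IF_p\in\CS^1([0,T])$ by \cref{rmk:regularity_G}) gives
\[
\alpha\int_0^T\MF_p=(p-1)\big(\IF_p(T)-\IF_p(0)\big)-\int_0^T\IF_p-\alpha\int_0^T\mathcal R_p .
\]
Now I bound the three pieces uniformly in $p$. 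By \cref{cor:energy_boundedness}, whose constant is dominated by $\abs{\partial\Omega}$ via \cref{lem:continuity_p-capacity} together with $\abs{\partial\Omega^*}\le\abs{\partial\Omega}$, one has $0\le\IF_p(t)\le\kst\,\abs{\partial\Omega}\,\norm{\nabla w_p}_{L^\infty}^{\alpha}\,e^t$ on $[0,T]$, so (using $p-1\le1$) $(p-1)\abs{\IF_p(T)-\IF_p(0)}+\int_0^T\IF_p\le\kst\,\abs{\partial\Omega}\,\norm{\nabla w_p}_{L^\infty}^{\alpha}$. For the Ricci piece, the coarea reduction already carried out in \cref{lem:continuity_G} rewrites $\mathcal R_p(t)=\int_{\Omega^{\sml{p}}_t\smallsetminus\Omega}e^{(\frac{\alpha}{n-p}-1)w_p}\abs{\nabla w_p}^{\alpha+p-2}\Ric(\nu,\nu)\dif\Hff^n$; the extra factor $\abs{\nabla w_p}$ produced by the coarea formula raises the exponent to $\alpha+p-2>0$, so $\abs{\mathcal R_p(t)}\le\kst\,\norm{\nabla w_p}_{L^\infty}^{\alpha+p-2}$ on $[0,T]$ with $\kst$ also depending on $\abs{D}$ and $\norm{\Ric}_{L^\infty(D)}$, whence $\alpha\int_0^T\abs{\mathcal R_p}\le\kst\,\norm{\nabla w_p}_{L^\infty}^{\alpha+p-2}$. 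Dividing by $\alpha\ge(n-2)/(n-1)$ yields $\abs{\int_0^T\MF_p}\le\kst\big(\abs{\partial\Omega}\,\norm{\nabla w_p}_{L^\infty}^{\alpha}+\norm{\nabla w_p}_{L^\infty}^{\alpha+p-2}\big)$.

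Collecting the three bounds and writing $\int_0^T\abs{\MF_p}+\abs{\MF_p(0)}=\int_0^T\MF_p+2\int_0^T\MF_p^-+\abs{\MF_p(0)}\le\abs{\int_0^T\MF_p}+(2T+1)\abs{\MF_p(0)}$, one obtains a bound by $\kst\big(\norm{\nabla w_p}_{L^\infty}^{\alpha}+\norm{\nabla w_p}_{L^\infty}^{\alpha+p-2}+\norm{\nabla w_p}_{L^\infty}^{\alpha+p-1}\big)\big(\abs{\partial\Omega}+\norm{\H}_{L^2(\partial\Omega)}^2+1\big)$; since $0<\alpha,\ \alpha+p-2,\ \alpha+p-1<\alpha^*+p-1$, the elementary inequality $x^\beta\le x^{\alpha^*+p-1}+1$ (valid for $x\ge0$ and $0\le\beta\le\alpha^*+p-1$) collapses the gradient factor to $\norm{\nabla w_p}_{L^\infty}^{\alpha^*+p-1}+1$, which is \cref{eq:boundedness_Fp} with $\kst$ of the stated dependencies. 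I expect the main obstacle to be precisely the Ricci term: the naive level-set estimate fails because $\alpha+p-3$ can be negative, and what saves it is the coarea reduction to a solid integral (gaining one power of $\abs{\nabla w_p}$) together with the observation that, once one passes through $\IF_p$, the mean curvature term of $\MF_p$ need never be estimated; monotonicity then disposes of the negative part, and the boundary term $\MF_p(0)$ is elementary.
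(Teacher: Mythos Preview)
Your proof is correct and follows essentially the same strategy as the paper: bound $\abs{\MF_p(0)}$ elementarily, use monotonicity to control the negative part of $\MF_p$, and estimate $\int_0^T\MF_p$ through the differential identity \cref{eq:defivata-di-G} for $\IF_p$, where the coarea formula turns the Ricci exponent $\alpha+p-3$ into the harmless $\alpha+p-2$. The only cosmetic differences are that the paper drops the nonnegative $\IF_p$ term from \cref{eq:defivata-di-G} to get a one-sided bound on $\MF_p$ directly (you instead keep $\int_0^T\IF_p$ and bound it), and that the paper organizes the decomposition as $\abs{\MF_p(t)}\le(\MF_p(t)-\MF_p(0))+\abs{\MF_p(0)}$ rather than via $\MF_p^-$; neither choice affects the argument.
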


\begin{proof}
    Consider the function $\IF_p$ defined in \cref{eq:pezzo-di-gradiente}. By \cref{cor:energy_boundedness}, we have
    \begin{equation}
        \IF_p(t)\leq \ee^{\left(\frac{\alpha}{n-p}-1\right)t}\norm{\nabla w_p}_{L^\infty(\Omega_T^{\sml{p}}\smallsetminus \Omega^\circ)}^\alpha \int_{\partial \Omega_t^{\sml{p}}} \abs{\nabla w_p}^{p-1} \dif \Hff^{n-1}\leq \kst (T) \ee ^{\frac{\alpha}{n-p}T} \norm{\nabla w_p}_{L^\infty(\Omega_T^{\sml{p}}\smallsetminus \Omega^\circ)}^\alpha ,
    \end{equation}
    which is uniformly bounded by \cref{prop:uniform_gradient}. The function $\IF_p$ is $W^{2,1}(0,T)$ by \cref{rmk:regularity_G}, and its derivative in \cref{eq:defivata-di-G} is bounded by
    \begin{equation}\label{eq:zzzderivativeestimateGp}
        \IF^{\prime}_p(t)\geq \frac{\alpha \MF_p(t)}{p-1} - \frac{\alpha \kappa}{p-1} \ee^{\left(\frac{\alpha}{n-p}+1\right) T} \abs{D} \norm{\nabla w_p}^{\alpha+p-2}_{L^\infty(\Omega^{\sml{p}}_T \smallsetminus \Omega^\circ)}
    \end{equation}
    where $\kappa$ is the lower bound on $\Ric$ on $D$. Then the function $s \mapsto \MF_p(s) - \MF_p(0)$ is nonnegative almost everywhere on $[0,T]$. We get
    \begin{align}
        \int_0^{T}\abs{\MF_p(t)}\dif t &\leq \int_0^{T}\abs{\MF_p(t)- \MF_p(0)}\dif t + t\abs{\MF_p(0)}
        = \int_0^{T}\MF_p(t)- \MF_p(0)\dif t  + T\abs{\MF_p(0)}\\
        \overset{\cref{eq:zzzderivativeestimateGp}}&{\leq} \frac{p-1}{\alpha}\left(\IF_p(T)- \IF_p(0)\right)+ 2 \abs{\MF_p(0)}T+\kappa\ee^{\left(\frac{\alpha}{n-p}+1\right) T}  \abs{D} \norm{\nabla w_p}^{\alpha+p-2}_{L^\infty(\Omega^{\sml{p}}_T \smallsetminus \Omega^\circ)}\\
        & \leq 2 T \abs{\MF_p(0)} + \kst(T) \left(\norm{\nabla w_p}_{L^\infty(\Omega^{\sml{p}}_T \smallsetminus \Omega^\circ)}^{\alpha^*+p-1}+1\right)
    \end{align}
    for a constant with the same dependencies of the proposition. It thus only remains to show that we can control $\MF_p(0)$. The control follows by H\"older's and Young's inequality since 
    \begin{align}
        &\abs{\MF_p(0)} \leq \kst \IF_p(0)  + \int_{\partial \Omega} \abs{\nabla w_p}^{\alpha+p-2}\abs{\H} \dif \Hff^{n-1}\leq \kst \left(\norm{\nabla w_p}_{L^\infty(\partial \Omega)}^{\alpha^*+p-1}+1\right)\left( \norm{\H}_{L^2(\partial \Omega)}^2 + \abs{\partial \Omega} +1 \right)
    \end{align}
    for a constant with the same dependencies of the statement, thus implying \cref{eq:boundedness_Fp}.
    \end{proof}

    Whenever $\Omega \subseteq M$ is a bounded subset with $\CS^{1,1}$-boundary, the above lemma comes with two direct corollaries. The first one is the above-mentioned estimate of the second fundamental form. The second one is the more interesting one. It states that the norm of the gradients of the functions $w_p$ become more and more similar to the mean curvature of the level as $p\to 1^+$. This behavior is expected since in the limit these two notions coincide.
    
    \begin{corollary}\label{thm:equibounded_2ff_NPT}
        Under the same hypotheses of \cref{prop:boundedness_Fp}, there exists a constant $\kst>0$ depending only on $n$, $\alpha^*$, $\inf_{\partial D} w_1 -T$, $D$ and the lower bound on the Ricci tensor on $D$ such that
        \begin{equation}\label{eq:boundedness_h}
           \left(\alpha -\frac{n-p}{n-1}\right) \int_0^T \int_{\partial \Omega^{\sml{p}}_t}\abs{\nabla w_p}^{\alpha +p-3}\abs{\h^{\sml{p}}}^2 \dif \Hff^{n-1} \dif t  \leq \kst
        \end{equation}      
    \end{corollary}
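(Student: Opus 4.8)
The plan is to read the bound off the monotonicity identity \cref{eq:derivative-monotonicity_formula-NPT} of \cref{thm:monotonicity-NPT}, fed with the $p$-independent control of $\MF_p$ from \cref{prop:boundedness_Fp}. Set $T'' \coloneqq \tfrac12\big(T + \inf_{\partial D} w_1\big)$, so that $T < T'' < \inf_{\partial D} w_1$ with $T'' - T$ comparable to $\inf_{\partial D} w_1 - T$, and (since, for $p$ close to $1$, $\inf_{\partial D}\Phi_p > T''$) fix $\psi \in \Lip_c([0,T''))$ with $\psi \equiv 1$ on $[0,T]$, $0 \le \psi \le 1$, and $\abs{\psi'} \le \kst/(T''-T)$. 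Because $\SQ_p \ge 0$ and $\abs{\nabla w_p}^{\alpha+p-3} \ge 0$, inserting $\psi$ into \cref{eq:derivative-monotonicity_formula-NPT} on $(0,T'')$ and discarding the nonnegative time-integral over $[T,T'')$ gives
\[
\int_0^T e^{\left(\frac{\alpha}{n-p}-1\right)t}\int_{\partial \Omega^{\sml{p}}_t}\abs{\nabla w_p}^{\alpha+p-3}\SQ_p \dif \Hff^{n-1}\dif t \;\le\; -\int_0^{T''}\psi'(t)\,\MF_p(t)\dif t - \MF_p(0^+).
\]
Since $\MF_p$ is essentially monotone nondecreasing, $\MF_p(0) \le \MF_p(0^+) \le \tfrac{1}{T''}\int_0^{T''}\MF_p$, so the right-hand side is bounded by $\big(\tfrac{\kst}{T''-T}+\tfrac{1}{T''}\big)\norm{\MF_p}_{L^1(0,T'')} + \abs{\MF_p(0)}$, which by \cref{prop:boundedness_Fp}, the uniform gradient bound \cref{prop:uniform_gradient}, and the finiteness of $\norm{\H}_{L^2(\partial \Omega)}$ and $\abs{\partial \Omega}$ (from the $\CS^{1,1}$-regularity of $\partial \Omega$) is controlled by a constant with the dependencies stated, uniformly in $p \in (1,2]$ and $\alpha \in \big(\tfrac{n-p}{n-1},\alpha^*\big)$. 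As $\tfrac{\alpha}{n-p} > \tfrac{1}{n-1} > 0$ gives $\tfrac{\alpha}{n-p}-1 > -1$, one has $e^{(\frac{\alpha}{n-p}-1)t} \ge e^{-T}$ on $[0,T]$, whence $\int_0^T\int_{\partial \Omega^{\sml{p}}_t}\abs{\nabla w_p}^{\alpha+p-3}\SQ_p\dif\Hff^{n-1}\dif t \le \kst$.

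It then remains to pass from $\SQ_p$ to $\abs{\h^{\sml{p}}}^2$ by an elementary pointwise inequality on $D \smallsetminus \Crit(w_p)$. Writing $\abs{\h^{\sml{p}}}^2 = \abs{\mathring{\h}^{\sml{p}}}^2 + \tfrac{1}{n-1}\H^2$, estimating $\H^2 \le 2\big(\H - \tfrac{n-1}{n-p}\abs{\nabla w_p}\big)^2 + \tfrac{2(n-1)^2}{(n-p)^2}\abs{\nabla w_p}^2$, and using that $\SQ_p \ge \abs{\mathring{\h}^{\sml{p}}}^2$ and $\tfrac{1}{p-1}\big(\alpha-\tfrac{n-p}{n-1}\big)\big(\H-\tfrac{n-1}{n-p}\abs{\nabla w_p}\big)^2 \le \SQ_p$ by \cref{eq:Qpdef}, one obtains — exploiting $p-1 \le 1$, $\alpha - \tfrac{n-p}{n-1} < \alpha^*$, and $n-p \ge n-2$ —
\[
\left(\alpha - \frac{n-p}{n-1}\right)\abs{\h^{\sml{p}}}^2 \;\le\; \kst(n,\alpha^*)\left(\SQ_p + \abs{\nabla w_p}^2\right)
\quad\text{on } D \smallsetminus \Crit(w_p).
\]
Multiplying by $\abs{\nabla w_p}^{\alpha+p-3}\ge 0$ and integrating over $\partial \Omega^{\sml{p}}_t$ and $t\in(0,T)$ is legitimate because, by \cref{thm:monotonicity-NPT} and the remark following it, for a.e.\ $t$ the level set $\partial \Omega^{\sml{p}}_t$ is a curvature varifold whose second fundamental form agrees $\Hff^{n-1}$-a.e.\ with the classical one on $\partial\Omega^{\sml{p}}_t \smallsetminus \Crit(w_p)$, a set which exhausts $\partial\Omega^{\sml{p}}_t$ up to an $\Hff^{n-1}$-null set. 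The $\SQ_p$-term is then estimated by the first step, while the remaining term is handled by coarea, $\int_0^T\int_{\partial\Omega^{\sml{p}}_t}\abs{\nabla w_p}^{\alpha+p-1}\dif\Hff^{n-1}\dif t = \int_{\{0<w_p<T\}}\abs{\nabla w_p}^{\alpha+p}\dif\Hff^n \le \norm{\nabla w_p}_{L^\infty(D)}^{\alpha+p}\abs{D}$, which is uniformly bounded by \cref{prop:uniform_gradient}. This yields \cref{eq:boundedness_h}.

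Most of the argument is bookkeeping; the one genuine point is the uniformity of every constant as $p \to 1^+$ and as $\alpha$ ranges over $\big(\tfrac{n-p}{n-1},\alpha^*\big)$. This is exactly why one needs the $p$-free estimate of \cref{prop:boundedness_Fp}, the uniform gradient bound \cref{prop:uniform_gradient}, the fact that the exponential weight in \cref{eq:monotonicty_formula} stays bounded below on $[0,T]$, and — crucially — the presence of the factor $\alpha - \tfrac{n-p}{n-1}$ on the left-hand side of \cref{eq:boundedness_h}, which is precisely what absorbs the singular factor $\tfrac{1}{p-1}$ multiplying $\big(\H-\tfrac{n-1}{n-p}\abs{\nabla w_p}\big)^2$ in $\SQ_p$.
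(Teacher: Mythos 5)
Your proof is correct and follows essentially the same route as the paper: plug a linear cutoff $\psi$ supported in $[0,T+\tau)$ with $\tau\sim\inf_{\partial D}w_1-T$ into \cref{eq:derivative-monotonicity_formula-NPT}, control the resulting boundary and time-derivative terms uniformly in $p$ via \cref{prop:boundedness_Fp}, and transfer the resulting bound on the $\SQ_p$-integral to $\abs{\h^{\sml{p}}}^2$ through the pointwise Young-type estimate that trades the $\H^2$ contribution for $\tfrac{1}{p-1}(\alpha-\tfrac{n-p}{n-1})(\H-\tfrac{n-1}{n-p}\abs{\nabla w_p})^2$ plus a bounded $\abs{\nabla w_p}^2$ remainder. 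The only deviation from the paper is cosmetic — you estimate the $\SQ_p$-integral first and then apply the pointwise inequality, whereas the paper does the pointwise step first and integrates afterward — and the intermediate bound $\MF_p(0^+)\leq\tfrac1{T''}\int_0^{T''}\MF_p$ is superfluous (the inequality $\MF_p(0)\leq\MF_p(0^+)$ from \cref{thm:monotonicity-NPT} alone suffices to bound $-\MF_p(0^+)$ by $\abs{\MF_p(0)}$).
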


    \begin{proof}
     Denote $\tau \coloneqq \frac{1}{3}(\inf_{\partial D} w_1 - T)$. By Young's inequality and the expression of $\SQ_p$  in \cref{eq:Qpdef}, we have     
    \begin{align}
         \left(\alpha - \frac{n-p}{n-1}\right)\abs{\h^{\sml{p}}}^2 &= \left(\alpha - \frac{n-p}{n-1}\right)\left(\abs{\mathring{\h}^{\sml{p}}}^2 + \frac{(\H^{\sml{p}})^2}{n-1}\right)\\&\leq(1+ \alpha^*)\abs{\mathring{\h}^{\sml{p}}}^2 +  \frac{1+\alpha^*}{p-1} \left(\alpha- \frac{n-p}{n-1}\right)\left(\H^{\sml{p}}- \frac{n-1}{n-p} \abs{\nabla w_p}\right)^2 + \kst\abs{\nabla w_p}^2\\& \leq  (1+\alpha^*)\ee^{T+2 \tau}\ee^{\left(\frac{\alpha}{n-p}-1\right)w_p}\SQ_p + \kst \abs{\nabla w_p}^2,
    \end{align}
    for a constant $\kst >0$ depending only on $n$. Take $\psi$ such that $\psi(t)=1$ for $t\leq T$, $\psi(t)=0$ for $t\geq T+\tau$ and $\abs{\psi'(t)} \leq 8 /\tau$. Plugging $\psi$ into \cref{eq:derivative-monotonicity_formula-NPT}, we obtain
    \begin{multline}
        \left(\alpha -\frac{n-p}{n-1}\right)\int_0^T \int_{\partial \Omega_t^{\sml{p}}}\abs{\nabla w_p}^{\alpha +p-3} \abs{\h^{\sml{p}}}^2 \dif \Hff^{n-1} \dif t \\
        \leq \kst\left(\int_0^{T+\tau}\abs{\MF_p(t)}\dif t + \abs{\MF_p(0)} +\abs{D}\,\norm{\nabla w_p}^{\alpha +p }_{L^\infty(\Omega^{\sml{p}}_{T + \tau} \smallsetminus \Omega)} \right),
    \end{multline}
    from which \cref{eq:boundedness_h} readily follows by \cref{prop:boundedness_Fp} and \cref{prop:uniform_gradient}.
\end{proof}

\begin{corollary}\label{cor:equibounded_gradient_normgradient}
    Under the same hypotheses of \cref{prop:boundedness_Fp}, there exists a constant $\kst>0$ depending only on $n$, $\alpha^*$, $\inf_{\partial D} w_1 -T$, $D$ and the lower bound on the Ricci tensor on $D$ such that
        \begin{align}\label{eq:gradient}
            \left( \alpha-\frac{n-p}{p-1}\right) \int_0^T \int_{\partial \Omega^{\sml{p}}_t}\abs{\nabla w_p}^{\alpha +p-3}\frac{\abs{\nabla\abs{\nabla w_p}}^2}{\abs{\nabla w_p}^2}\dif \Hff^{n-1} \dif t  \leq \frac{\kst}{p -1}\\\intertext{and}
            \int_0^T \int_{\partial \Omega^{\sml{p}}_t}\abs{\nabla w_p}^{\alpha +p-3}\frac{\abs{\nabla^\top\abs{\nabla w_p}}^2}{\abs{\nabla w_p}^2}\dif \Hff^{n-1} \dif t  \leq {\kst}.
        \end{align}
    \end{corollary}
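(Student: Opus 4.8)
The plan is to extract both inequalities from the single $p$‑uniform bound on $\int\abs{\nabla w_p}^{\alpha+p-3}\SQ_p$ that \cref{eq:derivative-monotonicity_formula-NPT} provides, and then to unpack the three nonnegative summands of $\SQ_p$ keeping careful track of the powers of $p-1$. First, with $\tau\coloneqq\tfrac13(\inf_{\partial D}w_1-T)$, I would take $\psi\in\Lip_c([0,T+\tau))$ with $\psi\equiv1$ on $[0,T]$, $\psi\equiv0$ on $[T+\tau,+\infty)$ and $\abs{\psi'}\leq8/\tau$; plugging $\psi$ into \cref{eq:derivative-monotonicity_formula-NPT} and estimating the left‑hand side by \cref{prop:boundedness_Fp} and \cref{prop:uniform_gradient}, exactly as in the proof of \cref{thm:equibounded_2ff_NPT}, yields a constant $\kst$ with the dependencies claimed (in particular independent of $p$) such that $\int_0^T\int_{\partial\Omega^{\sml{p}}_t}\abs{\nabla w_p}^{\alpha+p-3}\SQ_p\dif\Hff^{n-1}\dif t\leq\kst$. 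For $p$ close to $1$ one has $\alpha>(n-p)/(n-1)\geq 2-p$, so all three summands of $\SQ_p$ in \cref{eq:Qpdef} are nonnegative and $\alpha-(2-p)$ is bounded below; discarding the other two terms and dividing by $\alpha-(2-p)$ gives at once the second asserted inequality $\int_0^T\int_{\partial\Omega^{\sml{p}}_t}\abs{\nabla w_p}^{\alpha+p-3}\abs{\nabla^\top\abs{\nabla w_p}}^2/\abs{\nabla w_p}^2\dif\Hff^{n-1}\dif t\leq\kst$.

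For the first inequality I would instead retain the last summand of $\SQ_p$, obtaining
\[
\left[\alpha-\tfrac{n-p}{n-1}\right]\int_0^T\int_{\partial\Omega^{\sml{p}}_t}\abs{\nabla w_p}^{\alpha+p-3}\left[\H-\tfrac{n-1}{n-p}\abs{\nabla w_p}\right]^2\dif\Hff^{n-1}\dif t\leq\kst\,(p-1).
\]
On $D\smallsetminus\Crit(w_p)$, \cref{eq:MeanCurvatureLevelWP} gives the algebraic identity $\abs{\nabla^\perp\abs{\nabla w_p}}^2=\abs{\nabla w_p}^2(\H-\abs{\nabla w_p})^2/(p-1)^2$, together with $\H-\abs{\nabla w_p}=(\H-\tfrac{n-1}{n-p}\abs{\nabla w_p})+\tfrac{p-1}{n-p}\abs{\nabla w_p}$, so that $(\H-\abs{\nabla w_p})^2\leq 2[\H-\tfrac{n-1}{n-p}\abs{\nabla w_p}]^2+\tfrac{2(p-1)^2}{(n-p)^2}\abs{\nabla w_p}^2$. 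Multiplying by $\abs{\nabla w_p}^{\alpha+p-3}$ and integrating, the first term on the right contributes $\leq\kst(p-1)$ by the display above, while the second contributes $\leq\kst(p-1)^2$ because $\int_0^T\int_{\partial\Omega^{\sml{p}}_t}\abs{\nabla w_p}^{\alpha+p-1}\dif\Hff^{n-1}\dif t=\int_{\Omega^{\sml{p}}_T\smallsetminus\Omega}\abs{\nabla w_p}^{\alpha+p-2}\dif\Hff^n\leq\norm{\nabla w_p}_{L^\infty}^{\alpha+p-2}\abs{D}$ is bounded by \cref{prop:uniform_gradient} (recall $\alpha+p-2>0$). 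Dividing by $(p-1)^2$ therefore gives $[\alpha-\tfrac{n-p}{n-1}]\int_0^T\int_{\partial\Omega^{\sml{p}}_t}\abs{\nabla w_p}^{\alpha+p-3}\abs{\nabla^\perp\abs{\nabla w_p}}^2/\abs{\nabla w_p}^2\dif\Hff^{n-1}\dif t\leq\kst/(p-1)$.

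Finally I would recombine using $\abs{\nabla\abs{\nabla w_p}}^2=\abs{\nabla^\top\abs{\nabla w_p}}^2+\abs{\nabla^\perp\abs{\nabla w_p}}^2$: multiplying the second inequality by the bounded factor $[\alpha-\tfrac{n-p}{n-1}]\leq\alpha^*$ and adding it to the normal estimate yields $[\alpha-\tfrac{n-p}{n-1}]\int_0^T\int_{\partial\Omega^{\sml{p}}_t}\abs{\nabla w_p}^{\alpha+p-3}\abs{\nabla\abs{\nabla w_p}}^2/\abs{\nabla w_p}^2\dif\Hff^{n-1}\dif t\leq\kst/(p-1)$ for $p\in(1,2]$; since $p\leq n$ gives $\tfrac{n-p}{n-1}\leq\tfrac{n-p}{p-1}$ and the integral is nonnegative, the same bound holds with $\alpha-\tfrac{n-p}{n-1}$ replaced by $\alpha-\tfrac{n-p}{p-1}$, which is the stated inequality. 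The main obstacle is the middle paragraph: what $\SQ_p$ naturally controls is $[\H-\tfrac{n-1}{n-p}\abs{\nabla w_p}]^2$, not $(\H-\abs{\nabla w_p})^2$, and one must check that the $O(p-1)$ discrepancy $\tfrac{p-1}{n-p}\abs{\nabla w_p}$ between them, once squared and divided by $(p-1)^2$ to be turned into $\abs{\nabla^\perp\abs{\nabla w_p}}^2/\abs{\nabla w_p}^2$, leaves only a quantity bounded uniformly in $p$ — which is exactly what the $L^\infty$ gradient bound of \cref{prop:uniform_gradient} delivers. A minor point is that $\alpha-(2-p)=\alpha-2+p\geq\alpha-1$ must stay positive, which holds for any fixed $\alpha>1$.
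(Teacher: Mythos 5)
Your argument is correct and follows the paper's route closely: bound $\int_0^T\!\int\abs{\nabla w_p}^{\alpha+p-3}\SQ_p$ uniformly in $p$ by testing \cref{eq:derivative-monotonicity_formula-NPT} against a cutoff supported in $[0,T+\tau)$ and invoking \cref{prop:boundedness_Fp}, then peel off each nonnegative summand of $\SQ_p$, converting the normal term via the identity $\abs{\nabla^\perp\abs{\nabla w_p}}^2=\abs{\nabla w_p}^2(\H-\abs{\nabla w_p})^2/(p-1)^2$ from \cref{eq:MeanCurvatureLevelWP} and absorbing the $O(p-1)$ mismatch between $\H-\abs{\nabla w_p}$ and $\H-\tfrac{n-1}{n-p}\abs{\nabla w_p}$ by Young's inequality and the $L^\infty$ gradient bound, exactly as in the paper's short proof (which delegates the rest to the scheme of \cref{thm:equibounded_2ff_NPT}). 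You also correctly identified that the natural coefficient in the first estimate is $\alpha-\tfrac{n-p}{n-1}$, the one arising from $\SQ_p$, from which the printed $\alpha-\tfrac{n-p}{p-1}$ version follows a fortiori; the $\tfrac{n-p}{p-1}$ in the statement and in the Young's-inequality display of the paper's own proof appears to be a misprint for $\tfrac{n-p}{n-1}$.
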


    \begin{proof} Denote $\tau \coloneqq \frac{1}{3}(\inf_{\partial D} w_1 - T)$. The expression of $\SQ_p$ in \cref{eq:Qpdef} gives
     \begin{equation}
         (\alpha -(2-p))\frac{\abs{\nabla^\top \abs{\nabla w_p}}^2}{\abs{\nabla w_p}^2} \leq\ee^{T+2 \tau}\ee^{\left(\frac{\alpha}{n-p}-1\right)w_p}\SQ_p.
     \end{equation}
    On the other hand, exploiting also the expression of $\H^{\sml{p}}$ in \cref{eq:MeanCurvatureLevelWP} and using Young's inequality, one has
     \begin{align}
        (p-1)\left( \alpha-\frac{n-p}{p-1}\right) \frac{\abs{\nabla^\perp \abs{\nabla w_p}}^2}{\abs{\nabla w_p}}&\leq \frac{1}{p-1}\left(\alpha-\frac{n-p}{p-1}\right)\left(\H^{\sml{p}}- \frac{n-1}{n-p} \abs{\nabla w_p}\right)^2 +\kst\abs{\nabla w_p}^2\\&\leq  \ee^{T+2 \tau}\ee^{\left(\frac{\alpha}{n-p}-1\right)w_p}\SQ_p+\kst\abs{\nabla w_p}^2.
     \end{align}
     Now the proof follows as in \cref{thm:equibounded_2ff_NPT}, replacing the estimate of $\abs{\h^{\sml{p}}}$ with the inequalities above.
\end{proof}
    
    \begin{corollary}\label{thm:vanishing_gradient_mean_curvature_NPT}
    Under the same hypotheses of \cref{prop:boundedness_Fp}, we have
    \begin{equation}
            \lim_{p\to 1^+} \int_0^T \int_{\partial \Omega^{\sml{p}}_t} (\H^{\sml{p}}- \abs{\nabla w_p})^2 \dif \Hff^{n-1} \dif t =0.
    \end{equation}
    \end{corollary}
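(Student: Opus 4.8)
The plan is to isolate, in the monotonicity inequality \cref{eq:derivative-monotonicity_formula-NPT}, the last and most singular summand of $\SQ_p$, and to rerun the argument of \cref{thm:equibounded_2ff_NPT} while keeping track of the factor $1/(p-1)$ that multiplies it. Since the statement to be proved does not involve the parameter $\alpha$, I am free to apply \cref{thm:monotonicity-NPT}, \cref{prop:boundedness_Fp} and \cref{prop:uniform_gradient} with any admissible choice of $\alpha,\alpha^*$; I fix $1<\alpha<\alpha^*<3/2$ and restrict to $p\in(1,3/2)$. With these restrictions one has $(n-p)/(n-1)<1<\alpha<\alpha^*$, the exponent $\alpha+p-3$ lies in $(-1,0)$, and $\alpha-\frac{n-p}{n-1}\ge\frac{\alpha-1}{2}>0$ once $p$ is close enough to $1$. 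I also set $\tau\coloneqq\frac13\big(\inf_{\partial D}w_1-T\big)$, so that, thanks to \cref{thm:local-approximation}, the quoted results are available on $[0,T+\tau]$ for $p$ near $1$.

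The first and main step is to prove the $p$-uniform bound
\[
\frac1{p-1}\int_0^T\int_{\partial\Omega^{\sml{p}}_t}\abs{\nabla w_p}^{\alpha+p-3}\Big(\H^{\sml{p}}-\tfrac{n-1}{n-p}\abs{\nabla w_p}\Big)^2\dif\Hff^{n-1}\dif t\le\kst .
\]
For this I would plug into \cref{eq:derivative-monotonicity_formula-NPT} (applied with $T+\tau$ in place of $T$) the Lipschitz cutoff $\psi$ with $\psi\equiv1$ on $[0,T]$, $\psi\equiv0$ on $[T+\tau,+\infty)$ and $\abs{\psi'}\le 8/\tau$. Since $\psi\ge0$ and $\SQ_p\ge0$, the right-hand side of \cref{eq:derivative-monotonicity_formula-NPT} is at least $\int_0^T e^{(\frac{\alpha}{n-p}-1)t}\int_{\partial\Omega^{\sml{p}}_t}\abs{\nabla w_p}^{\alpha+p-3}\SQ_p\,\dif\Hff^{n-1}\dif t$, whereas its left-hand side is at most $\tfrac8\tau\int_0^{T+\tau}\abs{\MF_p}+\abs{\MF_p(0)}$ (using $\MF_p(0^+)\ge\MF_p(0)$), which is bounded uniformly in $p$ by \cref{prop:boundedness_Fp}. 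On $[0,T]$ one has $e^{(\frac{\alpha}{n-p}-1)t}\ge e^{-T}$, and by the very definition of $\SQ_p$,
\[
\SQ_p\ \ge\ \frac1{p-1}\Big[\alpha-\tfrac{n-p}{n-1}\Big]\Big(\H^{\sml{p}}-\tfrac{n-1}{n-p}\abs{\nabla w_p}\Big)^2\ \ge\ \frac{\alpha-1}{2(p-1)}\Big(\H^{\sml{p}}-\tfrac{n-1}{n-p}\abs{\nabla w_p}\Big)^2 ,
\]
so the displayed bound follows. I expect this step to be the only genuinely delicate point: it rests on the $p$-independence of the estimate in \cref{prop:boundedness_Fp} and on the choice of $\alpha$ that makes $\alpha+p-3<0$.

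It then remains to remove the weight and to pass from $\H^{\sml{p}}-\tfrac{n-1}{n-p}\abs{\nabla w_p}$ to $\H^{\sml{p}}-\abs{\nabla w_p}$. By \cref{prop:uniform_gradient} (and $\Omega^{\sml{p}}_T\subset D$ for $p$ close to $1$) there is $C\ge1$ independent of $p$ with $\abs{\nabla w_p}\le C$ on $\Omega^{\sml{p}}_T\smallsetminus\Omega^\circ$, while $\abs{\nabla w_p}>0$ holds $\Hff^{n-1}$-a.e.\ on $\partial\Omega^{\sml{p}}_t$ for a.e.\ $t$ by \cref{rmk:weakSard}; since $\alpha+p-3<0$ this yields $\abs{\nabla w_p}^{\alpha+p-3}\ge C^{-1}$ a.e.\ on those level sets, whence
\[
\int_0^T\int_{\partial\Omega^{\sml{p}}_t}\Big(\H^{\sml{p}}-\tfrac{n-1}{n-p}\abs{\nabla w_p}\Big)^2\dif\Hff^{n-1}\dif t\ \le\ \kst\,(p-1)\ \xrightarrow{p\to1^+}\ 0 .
\]
Finally, writing $\H^{\sml{p}}-\abs{\nabla w_p}=\big(\H^{\sml{p}}-\tfrac{n-1}{n-p}\abs{\nabla w_p}\big)+\tfrac{p-1}{n-p}\abs{\nabla w_p}$, squaring and using $2ab\le a^2+b^2$, and applying the coarea formula together with $\abs{\nabla w_p}\le C$ to the elementary integral $\int_0^T\int_{\partial\Omega^{\sml{p}}_t}\abs{\nabla w_p}^2\dif\Hff^{n-1}\dif t=\int_{\Omega^{\sml{p}}_T\smallsetminus\Omega}\abs{\nabla w_p}\dif\Hff^n\le C\abs{D}$, one obtains
\[
\int_0^T\int_{\partial\Omega^{\sml{p}}_t}(\H^{\sml{p}}-\abs{\nabla w_p})^2\dif\Hff^{n-1}\dif t\ \le\ 2\,\kst\,(p-1)+\frac{2C\abs{D}\,(p-1)^2}{(n-p)^2}\ \xrightarrow{p\to1^+}\ 0 ,
\]
which is the desired conclusion.
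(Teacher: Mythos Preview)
Your proof is correct and follows the same line as the paper's: bound the right-hand side of \cref{eq:derivative-monotonicity_formula-NPT} against a suitable cutoff $\psi$, invoke the $p$-uniform control on $\MF_p$ from \cref{prop:boundedness_Fp}, isolate the $\tfrac{1}{p-1}$-weighted term of $\SQ_p$, and finish by the elementary splitting $\H^{\sml{p}}-\abs{\nabla w_p}=(\H^{\sml{p}}-\tfrac{n-1}{n-p}\abs{\nabla w_p})+\tfrac{p-1}{n-p}\abs{\nabla w_p}$ together with \cref{prop:uniform_gradient}.

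The only substantive difference is the choice of $\alpha$. The paper takes $\alpha=3-p$, so that $\abs{\nabla w_p}^{\alpha+p-3}\equiv 1$ and no weight-removal step is needed; you instead fix $\alpha\in(1,3/2)$, which gives a negative exponent and lets you drop the weight via $\abs{\nabla w_p}\le C$. Both routes work; the paper's is a touch shorter, while yours has the mild advantage of keeping $\alpha$ independent of $p$.

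One small slip: the coarea identity you quote should read
\[
\int_0^T\!\int_{\partial\Omega^{\sml{p}}_t}\abs{\nabla w_p}^2\,\dif\Hff^{n-1}\,\dif t=\int_{\Omega^{\sml{p}}_T\smallsetminus\Omega}\abs{\nabla w_p}^{3}\,\dif\Hff^n,
\]
not $\int\abs{\nabla w_p}\,\dif\Hff^n$. This does not affect the argument, since the right-hand side is still bounded by $C^{3}\abs{D}$.
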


    \begin{proof}
    Denote $\tau \coloneqq \frac{1}{3}(\inf_{\partial D} w_1 - T)$. By Young's inequality and the expression of $\SQ_p$ for $\alpha =3-p$ in \cref{eq:Qpdef}, we have    
         \begin{align}
             \left( \H^{\sml{p}} - \abs{\nabla w_p}\right)^2&\leq \kst\left[\left(3-p - \frac{n-p}{n-1}\right)\left( \H^{\sml{p}} - \frac{n-1}{n-p}\abs{\nabla w_p}\right)^2 + \frac{(p-1)^2}{(n-p)^2}\abs{\nabla w_p}^2\right]\\&\leq \kst\left[(p-1)\ee^{T+2 \tau} \ee^{-\frac{n-3}{n-p} w_p} \SQ_p + \frac{(p-1)^2}{(n-p)^2} \abs{\nabla w_p}^2\right],
         \end{align}
         for a constant depending only on $n$.  Take $\psi$ such that $\psi(t)=1$ for $t\leq T$, $\psi(t)=0$ for $t\geq T+\tau$ and $\abs{\psi'(t)} \leq 8 /\tau$. Plugging $\psi$ into \cref{eq:derivative-monotonicity_formula-NPT}, we obtain
         \begin{equation}
             \int_0^T \int_{\partial \Omega^{\sml{p}}_t} (\H^{\sml{p}}- \abs{\nabla w_p})^2 \dif \Hff^{n-1} \dif t \leq \kst (p-1)\left(\int_0^{T+\tau}\abs{\MF_p(t)}\dif t + \abs{\MF_p(0)} +\abs{D}\,\norm{\nabla w_p}^{2}_{L^\infty(\Omega^{\sml{p}}_{T + \tau} \smallsetminus \Omega)}\right),
         \end{equation}
         which vanishes as $p\to 1^+$ by \cref{prop:boundedness_Fp} and \cref{prop:uniform_gradient}.
    \end{proof}

    The next final corollary contains the first part of \cref{thm:monotonicty_formula_IMCF}.
    \begin{corollary}\label{cor:varifold_convergence_IMCF}
        Under the same hypotheses of \cref{prop:boundedness_Fp}, for any sequence of $p$'s converging to $1^+$ there exists a subsequence $(p_k)_{k \in \N}$, such that $\partial \Omega^{\sml{p_k}}_t$ converges in the sense of varifold to $\partial \Omega^{\sml{1}}_t$ for almost every $t \in[0,T]$ as $k\to +\infty$. Moreover, $\partial \Omega^{\sml{1}}_t$ is a curvature varifold satisfying all properties in \cref{lem:ConditionsEquivalentSFF} for almost every $t \in[0,T]$. In particular,
        \begin{equation}\label{eq:boundedL2weak2FFIMCF}
            \int_0^T \int_{\partial \Omega^{\sml{1}}_t}\abs{\h^{\sml{1}}}^2 \dif \Hff^{n-1} \dif t\leq \kst.
        \end{equation}
    \end{corollary}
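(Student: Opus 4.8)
The plan is to mirror the proof of \cref{lem:ConvergenzaVarifoldEspToZero}, with the limit $\varepsilon\to0^+$ there replaced by $p\to1^+$ here, feeding in the uniform estimates established in this subsection in place of \cref{cor:global_regularity_estimate}. The key new point is that the relevant constants remain bounded as $p\to1^+$.

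First I would fix the exponent $\alpha=3-p$ in \cref{thm:equibounded_2ff_NPT}, which is admissible with, say, $\alpha^*=4$: indeed $3-p\in[1,3)$ for $p\in(1,2]$, and
\[
(3-p)-\frac{n-p}{n-1}=\frac{2n-3-p(n-2)}{n-1}\ge\frac{1}{n-1}>0 ,
\]
so $(n-p)/(n-1)<3-p<\alpha^*$ for every such $p$ and the left-hand coefficient in \cref{eq:boundedness_h} is bounded below by $1/(n-1)$, uniformly in $p$. With this choice the weight $\abs{\nabla w_p}^{\alpha+p-3}$ in \cref{eq:boundedness_h} equals $1$. Combining \cref{thm:equibounded_2ff_NPT} with \cref{prop:boundedness_Fp} and the uniform gradient bound \cref{prop:uniform_gradient}, this produces a constant $\kst>0$, independent of $p$, such that
\[
\int_0^T\int_{\partial\Omega^{\sml{p}}_t}\abs{\h^{\sml{p}}}^2\dif\Hff^{n-1}\dif t\le\kst .
\]

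Next, starting from an arbitrary sequence $p\to1^+$, I would extract the subsequence $(p_k)$. By \cref{eq:areaconv} and \cref{eq:unitp-unit1-conv} in \cref{thm:area_convergence}, after passing to a subsequence we may assume $\abs{\partial\Omega^{\sml{p_k}}_t}\to\abs{\partial\Omega^{\sml{1}}_t}$ and $\int_{\partial\Omega^{\sml{p_k}}_t}\abs{\nu^{\sml{p_k}}-\nu^{\sml{1}}}^2\dif\Hff^{n-1}\to0$ for almost every $t\in[0,T]$; moreover $w_{p_k}\to w_1$ uniformly on $D\smallsetminus\Omega^\circ$ by \cref{thm:local-approximation}, so for almost every $t$ the characteristic functions $\mathbbm{1}_{\Omega^{\sml{p_k}}_t}$ converge to $\mathbbm{1}_{\Omega^{\sml{1}}_t}$ in $L^1_{\loc}$. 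These are the bookkeeping steps handled by \cref{lem:area_convergence_under_BV_convergence,lem:convergence_of_unit_normal}, the only difference from \cref{lem:ConvergenzaVarifoldEspToZero} being that the area convergence is now supplied by \cref{thm:area_convergence} rather than by a $\CS^1$ convergence of the potentials. By the uniform bound of the previous step and Fatou's lemma, $\liminf_k\int_{\partial\Omega^{\sml{p_k}}_t}\abs{\h^{\sml{p_k}}}^2\dif\Hff^{n-1}<+\infty$ for almost every $t$. Fixing such a $t$ and recalling that every $\partial\Omega^{\sml{p_k}}_t$ is a curvature varifold by \cref{lem:ConvergenzaVarifoldEspToZero}, the convergence and closure results \cref{thm:ProprietaConvergenzeVarifold,cor:ConvergenzaLevelSetsVersusVarifolds} then give that $\partial\Omega^{\sml{p_k}}_t\to\partial\Omega^{\sml{1}}_t$ in the sense of varifolds, that the limit is a curvature varifold satisfying the properties in \cref{lem:ConditionsEquivalentSFF}, and that $\int_{\partial\Omega^{\sml{1}}_t}\abs{\h^{\sml{1}}}^2\dif\Hff^{n-1}\le\liminf_k\int_{\partial\Omega^{\sml{p_k}}_t}\abs{\h^{\sml{p_k}}}^2\dif\Hff^{n-1}$. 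Integrating this in $t$ and invoking Fatou's lemma once more, together with the uniform bound of the first step, yields \cref{eq:boundedL2weak2FFIMCF}.

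The step I expect to require the most care is the passage from the spacetime estimate to a statement that can be localized at almost every fixed level $t$ along one and the same subsequence $(p_k)$: an $L^1(0,T)$ bound on $t\mapsto\int_{\partial\Omega^{\sml{p}}_t}\abs{\h^{\sml{p}}}^2$ does not by itself yield a subsequence with $\sup_k\int_{\partial\Omega^{\sml{p_k}}_t}\abs{\h^{\sml{p_k}}}^2<+\infty$ for a.e. $t$, so one must argue through $\liminf_k$ and exploit that, by the area, normal and $L^1$ convergences above, the varifold limit along any subsequence is forced to coincide with the varifold associated with $\partial^*\Omega^{\sml{1}}_t$; hence a $t$-dependent sub-subsequence already suffices to transfer the curvature-varifold structure and the lower-semicontinuity bound to $\partial\Omega^{\sml{1}}_t$, exactly as in the proof of \cref{lem:ConvergenzaVarifoldEspToZero}. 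The remaining details are a routine adaptation of that proof.
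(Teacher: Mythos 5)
You've correctly identified the main ingredients — \cref{thm:equibounded_2ff_NPT} with $\alpha = 3-p$, \cref{thm:area_convergence}, Fatou, and the $t$-dependent sub-subsequence trick for the curvature-varifold conclusion and for \cref{eq:boundedL2weak2FFIMCF} — and you've also noticed the right subtlety, namely that an $L^1$-in-$t$ bound on $\int_{\partial\Omega^{\sml{p}}_t}\abs{\h^{\sml{p}}}^2$ only yields a $\liminf_k$ control for a.e.\ $t$, not a $\sup_k$ control. What your proposal is missing, however, is the ingredient the paper uses precisely to bypass this: a uniform-in-$k$ $L^2$ bound on the \emph{mean curvature} for almost every fixed $t$, obtained from the decomposition
\[
\int_{\partial\Omega^{\sml{p_k}}_t}\abs{\H^{\sml{p_k}}}^2\,\dif\Hff^{n-1}
\le 2\int_{\partial\Omega^{\sml{p_k}}_t}\bigl(\H^{\sml{p_k}}-\abs{\nabla w_{p_k}}\bigr)^2\,\dif\Hff^{n-1}
+2\int_{\partial\Omega^{\sml{p_k}}_t}\abs{\nabla w_{p_k}}^2\,\dif\Hff^{n-1},
\]
with the first term controlled via \cref{thm:vanishing_gradient_mean_curvature_NPT} (which even makes it vanish along a sub-subsequence, for a.e.\ $t$) and the second via \cref{prop:uniform_gradient} and \cref{cor:energy_boundedness}. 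This is the hypothesis that \cref{cor:ConvergenzaLevelSetsVersusVarifolds}\cref{item:areaconvergence2} actually requires — a $\sup_k \norm{\vec{\H}_{k,t}}_{L^2(\mu_{k,t})}<+\infty$ for a.e.\ $t$ — and it is what lets one conclude varifold convergence of $\partial\Omega^{\sml{p_k}}_t$ to $\partial\Omega^{\sml{1}}_t$ along the \emph{full} subsequence $(p_k)$ for a.e.\ $t$, independently of the second-fundamental-form estimate.

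Your proposed repair — that the area, $L^1$ and normal convergences ``force'' the limit, so a $t$-dependent sub-subsequence suffices — is directionally right but not closed in the framework of the appendix as stated: \cref{cor:ConvergenzaLevelSetsVersusVarifolds}\cref{item:areaconvergence1} is phrased under the hypothesis that the limit $\mu_V$ is \emph{already} a (rectifiable) varifold, and producing such a rectifiable limit along an arbitrary sub-subsequence of $(p_k)$ requires a compactness theorem (\cref{thm:ProprietaConvergenzeVarifold}\cref{item:varifold1}) which itself asks for a uniform mean curvature or second fundamental form bound on that sub-subsequence — exactly the bound you don't have in general. In other words, the paper deliberately decouples the two steps: first varifold convergence for a.e.\ $t$ via the mean-curvature bound, then the curvature-varifold structure and \cref{eq:boundedL2weak2FFIMCF} via the Fatou/liminf argument with $t$-dependent sub-subsequences. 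Your proposal tries to carry both on the second fundamental form estimate alone, and that leaves the varifold-convergence-along-$(p_k)$ claim unsupported. Add the mean curvature $L^2$ bound and the proof closes.
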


    \begin{proof}
        By \cref{thm:area_convergence}, we can assume that $\abs{\partial \Omega^{\sml{p_k}}_t} \to \abs{\partial \Omega^{\sml{1}}_t}$ for almost every $t \in [0,T]$. Moreover, we have \begin{equation}\label{eq:zzbounded_mean_curvature}
        \int_{\partial\Omega^{\sml{p_k}}_t} \abs{\H^{\sml{p_k}}}^2 \dif \Hff^{n-1} \le 
        2\int_{\partial\Omega^{\sml{p_k}}_t}    
        \left( { \H}^{\sml{p_k}} - \abs{ \nabla w_{p_k}}\right)^2 \dif \Hff^{n-1} + 2\int_{\partial\Omega^{\sml{p_k}}_t} {\abs{ \nabla w_{p_k}}^2} \dif \Hff^{n-1}
    \end{equation}
    which is bounded for almost every $ t \in[0,T]$ by \cref{thm:vanishing_gradient_mean_curvature_NPT,prop:uniform_gradient,cor:energy_boundedness}. By \cref{cor:ConvergenzaLevelSetsVersusVarifolds}, $\partial\Omega^{\sml{p_k}}_t$ converges to $\partial\Omega^{\sml{1}}_t$ in the sense of varifolds for almost every $t \in [0,T]$. 

    By \cref{thm:equibounded_2ff_NPT} for $\alpha(p_k) =3-p_k$ and Fatou's lemma 
    \begin{equation}\label{eq:zzlscFatouStarting}
        \int_0^T\liminf_{k\to +\infty}\int_{\partial \Omega^{\sml{p_k}}_t} \abs{\h^{\sml{p_k}}}^2 \dif \Hff^{n-1} \dif t \leq \liminf_{k\to +\infty}\int_0^T\int_{\partial \Omega^{\sml{p_k}}_t} \abs{\h^{\sml{p_k}}}^2 \dif \Hff^{n-1} \dif t \leq \kst,
    \end{equation}
    for a constant $\kst >0$ independent on $p$. Fix $t \in [0,T]$ such that $\partial \Omega^{\sml{p_k}}_t $ converges to $\partial \Omega^{\sml{1}}$ in the sense of varifold and
     \begin{equation}\label{eq:zzzliminfbounded2ff}
         \liminf_{k \to +\infty} \int_{\partial \Omega^{\sml{p_k}}_t} \abs{\h^{\sml{p_k}}}^2 \dif \Hff^{n-1}<+\infty.
     \end{equation}
     Up to passing to a subsequence depending on $t$, we can assume that the inferior limit in \cref{eq:zzzliminfbounded2ff} is realized. Hence, $\partial \Omega^{\sml{1}}_t$ is a curvature varifold by \cref{thm:ProprietaConvergenzeVarifold}. Since by \cref{lem:ConvergenzaVarifoldEspToZero} and \cref{rem:ConvergenzaVarifoldEspToZeroNONsmoothOmega} we can assume that $\partial \Omega^{\sml{p_k}}_t$ satisfies all properties in \cref{lem:ConditionsEquivalentSFF} for every $k$, $\partial \Omega_t^{\sml{1}}$ satisfies them as well for almost every $t \in [0,T]$. Moreover,
     \begin{equation}
         \int_{\partial \Omega^{\sml{1}}_t} \abs{\h^{\sml{1}}}^2 \dif \Hff^{n-1} \leq \liminf_{k \to +\infty} \int_{\partial \Omega^{\sml{p_k}}_t} \abs{\h^{\sml{p_k}}}^2 \dif \Hff^{n-1} 
     \end{equation}
     that plugged into \cref{eq:zzlscFatouStarting} gives \cref{eq:boundedL2weak2FFIMCF}.
    \end{proof}
    
We are now ready to improve the convergence of $w_p$ to $w_1$, given in \cref{thm:local-approximation}.

\begin{theorem}\label{thm:improved_convergence}
    Let $(M,g)$ be a complete Riemannian manifold. Let $\Omega \subset M$ be a closed bounded set with $\CS^{1,1}$-boundary which admits a proper solution $w_1$ to \cref{eq:IMCF}. Let $D$ a open bounded subset with smooth boundary such that $\inf_{\partial D} w_1>0$. Let $w_p$ the solutions to \cref{eq:moser_potential} for $\Phi_p$ given in \cref{thm:local-approximation}. The gradients $\nabla w_p$ converge to $\nabla w_1$ strongly in $L^q_{\loc}(D\smallsetminus \Omega)$ for every $q\in [1, +\infty)$, as $p\to1^+$.
\end{theorem}

\begin{proof}
    Take any compact $K \subseteq D \smallsetminus \Omega$. At closer inspection of \cref{thm:local-approximation}, one can enlarge $D$ changing $\Phi_p$ but without changing $w_p$ on $K$. Hence, we can always assume that $D$ is large enough so that $K \subseteq \Omega^{\sml{1}}_T$ for some $T < \inf_{\partial D} w_1$. It is sufficent to show that for any sequence of $p$'s converging to $1$ we can extract a further subsequence $(p_k)_{k\in \N}$ such that
    \begin{equation}\label{eq:L2norm-convergence}
        \lim_{k\to +\infty} \int_{\Omega^{\sml{p_k}}_{T}\smallsetminus\Omega} \abs{\nabla w_{p_k}}^2  = \int_{\Omega^{\sml{1}}_{T}\smallsetminus\Omega} \abs{ \nabla w_1}^2 .
    \end{equation}
    Indeed, we would then get that any sequence admits a subsequence converging strongly in $L^2$ to $\nabla w_1$ on $\Omega^{\sml{1}}_T \smallsetminus\Omega$, inferring the statement of the proposition for $q=2$. The general result follows by the dominated convergence theorem, since $\abs{\nabla w_p}$ is locally equibounded.
    \smallskip

    We now turn to prove \cref{eq:L2norm-convergence}. For a sequence of $p$'s converging to $1$, we pick a subsequence $(p_k)_{k \in \N}$ along which $\partial \Omega^{\sml{p_k}}_t$ converges to $\partial \Omega^{\sml{1}}_t$ in the sense of varifolds by \cref{cor:varifold_convergence_IMCF}. Together with \cref{rmk:equivalent_formulation_monotonicity_IMCF}, it implies 
    \begin{equation}\label{eq:zzmeancurvature_weaklimit}
        \lim_{k \to +\infty}\int_{\partial \Omega^{\sml{p_k}}_t} \ip{\vec{\H}^{\sml{p_k}}| X} \dif \Hff^{n-1} = \int_{\partial \Omega^{\sml{1}}_t} \ip{\vec{\H}^{\sml{1}}|X }\dif \Hff^{n-1}=\int_{\partial \Omega^{\sml{1}}_t} \ip{\nabla w_1|X }\dif \Hff^{n-1},
    \end{equation}
    for any $X \in \CS_b^0(D,\R^3)$ and for almost every $t \in [0,T]$. Moreover, \cref{thm:vanishing_gradient_mean_curvature_NPT} and the convergence of areas in \cref{thm:area_convergence} imply that 
    \begin{equation}\label{eq:zzgradient_weaklimit}
        \lim_{k\to +\infty} \int_{\partial \Omega^{\sml{p_k}}_t} \ip{\nabla w_{p_k}| X} \dif \Hff^{n-1}= \lim_{k \to +\infty}\int_{\partial \Omega^{\sml{p_k}}_t}  \ip{\vec{\H}^{\sml{p_k}}| X} \dif\Hff^{n-1}\overset{\cref{eq:zzmeancurvature_weaklimit}}{=}\int_{\partial \Omega^{\sml{1}}_t} \ip{\nabla w_1|X }\dif \Hff^{n-1}.
    \end{equation}
    for almost every $t\in[0,T)$. Indeed, 
    \begin{equation}
        \abs{\int_{\partial \Omega^{\sml{p_k}}_t} \ip{\nabla w_{p_k}-\vec{\H}| X} \dif \Hff^{n-1}} \leq \norm{X}_{\CS^0}\abs{\partial \Omega^{\sml{p_k}}_t}^{\frac{1}{2}}\left(\int_{\partial \Omega^{\sml{p_k}}_t}\abs{\abs{\nabla w_{p_k}}-\H}^2 \dif \Hff^{n-1}\right)^{\frac{1}{2}},
    \end{equation}
    which vanishes in the limit $k\to\infty$ for almost every $t \in [0,T)$. H\"older's inequality and \cref{cor:energy_boundedness} imply
    \begin{equation} \label{eq:zzzdomination_gradient}
    \begin{split}
        \abs{\int_{\partial \Omega^{\sml{p_k}}_{t}} \ip{\nabla w_{p_k}|X}  \dif \Hff^{n-1}} &\le \int_{\partial \Omega^{\sml{p_k}}_{t}} \abs{X}\abs{\nabla w_{p_k}}^{p_k-1}\abs{\nabla w_{p_k}}^{2-p_k} \dif \Hff^{n-1}\\&\le \kst \ee^{t}  \norm{X}_{L^\infty}\norm{\nabla w_{p_k}}^{2-p_k}_{L^\infty(\Omega^{\sml{p_k}}_T \smallsetminus \Omega)},
    \end{split}
    \end{equation}
    for some positive constant. By \cref{eq:zzgradient_weaklimit} and \cref{eq:zzzdomination_gradient}, we can apply dominated convergence theorem and coarea formula to obtain
    \begin{align}
        \lim_{k \to +\infty}\int_{\Omega^{\sml{p_k}}_{T}\smallsetminus\Omega}\abs{ \nabla w_{p_k}}\ip{\nabla w_{p_k}|X} &= \lim_{k \to +\infty}\int_0^{T} \int_{\partial \Omega^{\sml{p_k}}_{t}}\ip{\nabla w_{p_k}|X}\dif \Hff^{n-1}\dif t \\
        &=\int_0^{T}\int_{\partial \Omega^{\sml{1}}_{t}} \ip {\nabla w_1 | X} \dif \Hff^{n-1} \dif t= \int_{ \Omega^{\sml{1}}_{T}\smallsetminus\Omega}  \abs{\nabla w_1}\ip{\nabla w_1 | X} .
    \end{align}
    The square norm $\abs{\nabla w_{p_k}}^2$ is equibounded in $L^2(\Omega^{\sml{p_k}}\smallsetminus \Omega)$ as $k\to +\infty$. Hence, by density we can replace $X$ with the unit normal vector field $\nu^{\sml{1}}$ to the level sets of $w_1$, showing that
    \begin{equation}
        \lim_{k \to +\infty}\int_{\Omega^{\sml{p_k}}_{T}\smallsetminus\Omega}\abs{ \nabla w_{p_k}}\ip{\nabla w_{p_k}|\nu^{\sml{1}}} = \int_{ \Omega^{\sml{1}}_{T}\smallsetminus\Omega}  \abs{\nabla w_1}\ip{\nabla w_1 | \nu^{\sml{1}}} = \int_{ \Omega^{\sml{1}}_{T}\smallsetminus\Omega}  \abs{\nabla w_1}^2 .
    \end{equation}
    Recall that we set $\nu^{\sml{1}}=0$ almost everywhere on $\Crit(w_1)$.
    To conclude, observe that
    \begin{equation}
        \abs{\int_{\Omega^{\sml{p_k}}_{T}\smallsetminus\Omega}\abs{ \nabla w_{p_k}}\ip{\nabla w_{p_k}|\nu^{\sml{1}}-\nu^{\sml{p_k}}}}\leq \left(\int_{\Omega_T^{\sml{p_k}}\smallsetminus\Omega} \abs{\nabla w_{p_k}}^3 \right)^{\frac{1}{2}} \left(\int_0^{T}\int_{\partial \Omega^{\sml{p_k}}_{t}}\abs{\nu^{\sml{1}}-\nu^{\sml{p_k}}}^2\dif \Hff^{n-1} \dif t\right)^{\frac{1}{2}}
    \end{equation}
    Hence \cref{eq:L2norm-convergence} follows by \cref{thm:area_convergence}.
\end{proof}

\subsection{Lower semicontinuity of \texorpdfstring{$\MF_p'$}{Fp'}} We would like to prove \cref{thm:monotonicty_formula_IMCF} by passing \cref{eq:derivative-monotonicity_formula-NPT} to the limit as $p \to 1^+$. One can realize that the work done up to this point only covers the convergence of $\MF_p(t)$. Indeed, one can use \cref{thm:improved_convergence} to pass to the limit the gradient and \cref{thm:vanishing_gradient_mean_curvature_NPT} to control the mean curvature. This would be enough to prove the monotonicity, but it does not say anything about the derivative of the limit monotone quantity $\MF_1(t)$ expressed in \cref{eq:derivative-monotonicity_formula-IMCF}. Hence we need a more thorough analysis. More precisely, we need to control the three terms appearing in the expression of $\SQ_p$ in \cref{eq:Qpdef}. Observe that we only provide a lower bound for the derivative of $\MF_1(t)$. Indeed, we can only show the lower semicontinuity of the terms appearing in $\SQ_p$. Moreover, only two terms survive in this process. As far as the third one is concerned, we drop it since it is nonnegative. 

\smallskip

In the next lemma, we study the lower semicontinuity of the term containing the traceless second fundamental form. As one may expect the proof strongly relies on the convergence of $\partial \Omega_t^{\sml{p}}$ to $\partial \Omega^{\sml{1}}_t$ in the sense of varifold we already faced in \cref{cor:varifold_convergence_IMCF}. The extra steps are required to deal with different values of $\alpha$.

\begin{lemma}\label{lem:tracelessLSC}
    Under the assumptions of \cref{thm:improved_convergence}, for every nonnegative function $\psi\in \Lip_c([0,T))$ and $\alpha >1$, we have
    \begin{multline}\label{eq:tracelessLSC}
        \int_0^T \psi(t)  \ee^{\left(\frac{\alpha}{n-1}-1\right)t}\int_{\partial \Omega^{\sml{1}}_t} \H^{\alpha -2} \abs{\mathring{\h}^{\sml{1}}}^2 \dif \Hff^{n-1} \dif t \\ \leq \liminf_{p\to 1}\int_0^T \psi(t)  \ee^{\left(\frac{\alpha}{n-p}-1\right)t}\int_{\partial \Omega^{\sml{p}}_t} \abs{\nabla w_p}^{\alpha + p-3} \abs{\mathring{\h}^{\sml{p}}}^2 \dif \Hff^{n-1} \dif t 
    \end{multline}
\end{lemma}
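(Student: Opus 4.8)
The plan is to pass to the limit $p\to 1^+$ along a carefully chosen subsequence, reduce by Fatou's lemma to a lower semicontinuity statement on almost every fixed level set, and then feed it into the lower semicontinuity of the traceless second fundamental form under convergence of curvature varifolds (\cref{thm:ProprietaConvergenzeVarifold}); the genuinely delicate point will be the $p$‑dependent, surface‑supported and (for $\alpha<3-p$) singular weight $\abs{\nabla w_p}^{\alpha+p-3}$, which I handle by a two‑parameter truncation built on the strong $L^q_{\loc}$ convergence of the gradients and on a "no loss of mean curvature mass" upgrade.

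\emph{Reduction to a fixed level set.} Assuming, as we may, that the right‑hand side of \cref{eq:tracelessLSC} is finite, pick $p_j\to 1^+$ realizing the $\liminf$ and, by \cref{cor:varifold_convergence_IMCF}, extract a subsequence $(p_k)_k$ along which $\partial\Omega^{\sml{p_k}}_t\to\partial\Omega^{\sml{1}}_t$ in the sense of varifolds, $\partial\Omega^{\sml{1}}_t$ is a curvature varifold with the properties of \cref{lem:ConditionsEquivalentSFF}, and (by \cref{thm:area_convergence}) $\abs{\partial\Omega^{\sml{p_k}}_t}\to\abs{\partial\Omega^{\sml{1}}_t}$, for a.e.\ $t\in[0,T]$. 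Since $e^{(\frac{\alpha}{n-p_k}-1)t}\to e^{(\frac{\alpha}{n-1}-1)t}$ uniformly on $[0,T]$ and is bounded, and all integrands are nonnegative, Fatou's lemma reduces \cref{eq:tracelessLSC} to the pointwise inequality
\[
\int_{\partial\Omega^{\sml{1}}_t}\H^{\alpha-2}\abs{\mathring\h^{\sml{1}}}^2\dif\Hff^{n-1}\ \le\ \liminf_{k\to+\infty}\int_{\partial\Omega^{\sml{p_k}}_t}\abs{\nabla w_{p_k}}^{\alpha+p_k-3}\abs{\mathring\h^{\sml{p_k}}}^2\dif\Hff^{n-1}
\]
for a.e.\ $t$. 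Fix such a $t$, also chosen so that $\H^{\sml{1}}=\abs{\nabla w_1}$ holds $\Hff^{n-1}$‑a.e.\ on $\partial\Omega^{\sml{1}}_t$ (\cref{rmk:equivalent_formulation_monotonicity_IMCF}); assume the right‑hand side above is finite, pass to a further $t$‑dependent subsequence realizing it, and use the global $L^2$ bound on $\h^{\sml{p_k}}$ over $[0,T]$ from \cref{thm:equibounded_2ff_NPT} (with $\alpha=3-p$) together with \cref{cor:energy_boundedness} and Fatou to also arrange $\sup_k\int_{\partial\Omega^{\sml{p_k}}_t}\abs{\h^{\sml{p_k}}}^2<+\infty$.

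\emph{Varifold input and no loss of mean curvature mass.} With this uniform bound, \cref{thm:ProprietaConvergenzeVarifold} gives weak convergence of the associated measure–function pairs, $\mathring\h^{\sml{p_k}}\,\Hff^{n-1}\restrict{\partial\Omega^{\sml{p_k}}_t}\rightharpoonup\mathring\h^{\sml{1}}\,\Hff^{n-1}\restrict{\partial\Omega^{\sml{1}}_t}$ and $\vec\H^{\sml{p_k}}\,\Hff^{n-1}\restrict{\partial\Omega^{\sml{p_k}}_t}\rightharpoonup\vec\H^{\sml{1}}\,\Hff^{n-1}\restrict{\partial\Omega^{\sml{1}}_t}$, together with $\int\phi\abs{\mathring\h^{\sml{1}}}^2\le\liminf_k\int\phi\abs{\mathring\h^{\sml{p_k}}}^2$ for every nonnegative $\phi\in\CS^0_c(M)$. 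I then upgrade the mean curvature convergence to a strong one: by coarea and \cref{thm:improved_convergence} (applied to $\abs{\nabla w_{p_k}}^3$) one has $\int_0^T\int_{\partial\Omega^{\sml{p_k}}_t}\abs{\nabla w_{p_k}}^2\to\int_0^T\int_{\partial\Omega^{\sml{1}}_t}\abs{\nabla w_1}^2$, and by \cref{thm:vanishing_gradient_mean_curvature_NPT} the same limit holds with $\abs{\nabla w_{p_k}}$ replaced by $\abs{\vec\H^{\sml{p_k}}}=\H^{\sml{p_k}}$; combining this with the per‑$t$ lower semicontinuity $\int_{\partial\Omega^{\sml{1}}_t}\abs{\vec\H^{\sml{1}}}^2\le\liminf_k\int_{\partial\Omega^{\sml{p_k}}_t}\abs{\vec\H^{\sml{p_k}}}^2$ and $\abs{\vec\H^{\sml{1}}}=\abs{\nabla w_1}$, a Fatou argument forces $\int_{\partial\Omega^{\sml{p_k}}_t}\abs{\vec\H^{\sml{p_k}}}^2\to\int_{\partial\Omega^{\sml{1}}_t}\abs{\vec\H^{\sml{1}}}^2$ for a.e.\ $t$ (along a subsequence). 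Hence $\vec\H^{\sml{p_k}}$ converges strongly in $L^2$ of the varying measures, so for any bounded continuous $\Lambda\ge0$ the densities $\Lambda(\abs{\vec\H^{\sml{p_k}}})$ converge strongly in the sense compatible with the weak convergence of $\mathring\h^{\sml{p_k}}$, and the usual lower semicontinuity for measure–function pairs with strongly convergent weights (a consequence of \cref{thm:ProprietaConvergenzeVarifold}) yields $\int_{\partial\Omega^{\sml{1}}_t}\Lambda(\abs{\vec\H^{\sml{1}}})\abs{\mathring\h^{\sml{1}}}^2\le\liminf_k\int_{\partial\Omega^{\sml{p_k}}_t}\Lambda(\abs{\vec\H^{\sml{p_k}}})\abs{\mathring\h^{\sml{p_k}}}^2$.

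\emph{Removing the weight, and the main obstacle.} For $\epsilon>0$ set $\Lambda_\epsilon(s):=\big(\max\{s,\epsilon\}\big)^{\alpha-2}$, which is continuous, nonnegative and bounded on the range of $\abs{\nabla w_{p_k}}$ by \cref{prop:uniform_gradient}. Restricting the right‑hand side integral to $\{\abs{\nabla w_{p_k}}\ge\epsilon\}$ only decreases it; there $\abs{\nabla w_{p_k}}^{p_k-1}\ge 1-\omega_k(\epsilon)$ with $\omega_k\downarrow0$ (again by \cref{prop:uniform_gradient}), so the integral is $\ge(1-\omega_k)\int_{\partial\Omega^{\sml{p_k}}_t}\Lambda_\epsilon(\abs{\nabla w_{p_k}})\abs{\mathring\h^{\sml{p_k}}}^2$, and replacing $\abs{\nabla w_{p_k}}$ by $\abs{\vec\H^{\sml{p_k}}}=\H^{\sml{p_k}}$ inside $\Lambda_\epsilon$ costs an error controlled, via \cref{thm:vanishing_gradient_mean_curvature_NPT} and the local Lipschitzianity of $\Lambda_{2\epsilon}$ away from $0$, by $\big\|\,\abs{\nabla w_{p_k}}-\H^{\sml{p_k}}\big\|_{L^2(\partial\Omega^{\sml{p_k}}_t)}$ times an $L^2$ bound on $\abs{\mathring\h^{\sml{p_k}}}$, which vanishes along the chosen subsequence. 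Applying the previous step with $\Lambda=\Lambda_{2\epsilon}$ gives
\[
\liminf_{k\to+\infty}\int_{\partial\Omega^{\sml{p_k}}_t}\abs{\nabla w_{p_k}}^{\alpha+p_k-3}\abs{\mathring\h^{\sml{p_k}}}^2\dif\Hff^{n-1}\ \ge\ \int_{\partial\Omega^{\sml{1}}_t}\Lambda_{2\epsilon}\big(\H^{\sml{1}}\big)\abs{\mathring\h^{\sml{1}}}^2\dif\Hff^{n-1},
\]
and since $\H^{\sml{1}}=\abs{\nabla w_1}>0$ $\Hff^{n-1}$‑a.e.\ on $\partial\Omega^{\sml{1}}_t$, letting $\epsilon\downarrow0$ with monotone convergence (for $1<\alpha<2$) or dominated convergence (for $\alpha\ge2$) yields the pointwise inequality, hence \cref{eq:tracelessLSC}. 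The heart of the difficulty is precisely the mismatch between this $p$‑dependent, moving, possibly singular weight and the varifold lower semicontinuity machinery, which natively handles only fixed continuous weights; the crucial ingredient is the "no loss of mean curvature mass" step, turning the weak convergence of $\vec\H^{\sml{p_k}}$ into strong $L^2$ convergence and thereby legitimizing the use of $\Lambda_\epsilon(\abs{\vec\H^{\sml{p_k}}})$ as a test weight.
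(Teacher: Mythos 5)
Your plan diverges from the paper's in one essential respect: you reduce to a per-level-set lower semicontinuity statement and then try to insert a $k$-dependent, surface-supported weight $\Lambda_\epsilon(\abs{\vec\H^{\sml{p_k}}})$ into the varifold machinery, whereas the paper never leaves the volume picture. The paper tests $\mathring\h^{\sml{p_k}}$ against continuous tensors, uses coarea and \cref{cor:energy_boundedness} to obtain weak $L^2(0,T)$ convergence (and hence weak $L^2(D)$ convergence of the vector-valued density $\abs{\nabla w_{p_k}}\ip{\mathring\h^{\sml{p_k}}|S}$), truncates the scalar weight $\abs{\nabla w_{p_k}}^{(\alpha+p_k-4)/2}$ with a cutoff in the gradient, and observes that the truncated weight converges \emph{strongly in $L^2(D)$} by the $W^{1,q}_{\loc}$ improvement of \cref{thm:improved_convergence}. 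Strong-times-weak convergence in a fixed $L^2(D)$ then characterizes the limit, and lower semicontinuity follows by duality and a final $\delta\to0$. This sidesteps entirely the question of what ``strong $L^2$ convergence of $\vec\H^{\sml{p_k}}$ against the varying surface measures'' buys you.

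That last question is precisely where your proof has a genuine gap. You invoke ``lower semicontinuity for measure--function pairs with strongly convergent weights (a consequence of \cref{thm:ProprietaConvergenzeVarifold})'', but \cref{thm:ProprietaConvergenzeVarifold} only gives $\int \phi\,\abs{\mathring\h^{\sml{1}}}^2\dif\mu \le \liminf_k\int \phi\,\abs{\mathring\h^{\sml{p_k}}}^2\dif\mu_k$ for a \emph{fixed continuous} test function $\phi$, and \cref{lem:JointConvergenceLSC} gives an unweighted $L^p$ lower semicontinuity. Neither covers $\phi_k=\Lambda_\epsilon(\abs{\vec\H^{\sml{p_k}}})$, which depends on $k$ and whose putative limit $\Lambda_\epsilon(\abs{\vec\H^{\sml{1}}})$ is merely $L^\infty(\mu)$, not continuous on $M$. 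Turning your ``no loss of mean curvature mass'' (which, taken on its own, is a sound Fatou-type argument) into the needed joint statement requires identifying the weak limit of $\Lambda_\epsilon(\abs{\vec\H^{\sml{p_k}}})^{1/2}\mathring\h^{\sml{p_k}}\,\mu_k$, and that in turn demands a Young-measure or Reshetnyak-type argument showing that the strong convergence of $\vec\H^{\sml{p_k}}$ forces the joint disintegration to split; none of this is carried out, and it is not ``a consequence'' of the cited theorem. There is also a small but real slip in the truncation: with $\Lambda_\epsilon(s)=\max\{s,\epsilon\}^{\alpha-2}$ one has $\Lambda_\epsilon\equiv \epsilon^{\alpha-2}>0$ on $\{\abs{\nabla w_{p_k}}<\epsilon\}$, so the claimed inequality $\int_{\{\abs{\nabla w_{p_k}}\ge\epsilon\}}\abs{\nabla w_{p_k}}^{\alpha+p_k-3}\abs{\mathring\h}^2 \ge (1-\omega_k)\int_{\partial\Omega^{\sml{p_k}}_t}\Lambda_\epsilon(\abs{\nabla w_{p_k}})\abs{\mathring\h}^2$ is false for $\alpha>2$; one needs a weight vanishing below $\epsilon$ (a cut-off $\chi_\epsilon(s)s^{\alpha-2}$, as in the paper's $\chi_\delta$), which is an easy fix but should be stated correctly.
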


\begin{proof}
Pick a sequence $(p_k)_{k\in\N}$ converging to $1$, which realises the limit inferior in \cref{eq:tracelessLSC}. By \cref{thm:improved_convergence,thm:equibounded_2ff_NPT}, up to pass to a subsequence, we can assume that $\partial \Omega^{\sml{p_k}}_t$ converges in the sense of varifolds to $\partial \Omega^{\sml{1}}_t$ for almost every $t\in[0,T]$. By \cref{cor:varifold_convergence_IMCF,lem:ConvergenzaVarifoldEspToZero,rem:ConvergenzaVarifoldEspToZeroNONsmoothOmega}, for almost every $t \in [0,T]$ both $\partial \Omega^{\sml{p_k}}_t$ and $\partial \Omega^{\sml{1}}_t$ are curvature varifolds for all $k \in \N$. Moreover, estimating as in \cref{eq:zzbounded_mean_curvature}, there holds $\sup_k \int_{\partial\Omega^{\sml{p_k}}_t} \abs{\H^{\sml{p_k}}}^2 \dif \Hff^{n-1} < +\infty$, for almost every $t \in [0,T]$. Let $S$ be a continuous $(1,2)$-tensor on $M$, we can apply \cref{lem:VarifoldWeakConvergence2ff} to get that
\begin{equation}\label{eq:zzPointwiseConvergencetraceless}
    \lim_{k \to +\infty} \int_{\partial \Omega^{\sml{p_k}}_t} \ip{\mathring{\h}^{\sml{p_k}}| S }\dif \Hff^{n-1} = \int_{\partial \Omega^{\sml{1}}_t} \ip{\mathring{\h}^{\sml{1}}| S }\dif \Hff^{n-1},
\end{equation}
for almost every $t \in[0,T]$. Moreover, the sequence of maps
\begin{equation}
    t \mapsto \int_{\partial \Omega^{\sml{p_k}}_t} \ip{\mathring{\h}^{\sml{p_k}}| S }\dif \Hff^{n-1}.
\end{equation}
is equibounded in $L^2(0,T)$. Indeed, by H\"older's inequality and \cref{cor:energy_boundedness} we estimate
\begin{equation}
    \int_0^T\left(  \int_{\partial \Omega^{\sml{p_k}}_t} \ip{\mathring{\h}^{\sml{p_k}}| S }\dif \Hff^{n-1}\right)^2 \dif t \leq \kst(T) \, \norm{S}^2_{L^\infty}  \int_0^T \int_{\partial \Omega^{\sml{p_k}}_t} \abs{\mathring{\h}^{\sml{p_k}}}^2 \abs{\nabla w_{p_k}}^{1-p_k} \dif \Hff^{n-1} \dif t.
\end{equation}
Since $4-2p_k \to 2$ as $k \to +\infty$, the right-hand side is equibounded by \cref{thm:equibounded_2ff_NPT} applied with $\alpha(p_k) = 4-2p_k$. Therefore, by weak convergence in $L^2(0,T)$ and \cref{eq:zzPointwiseConvergencetraceless}, it follows that
\begin{equation}\label{eq:zzweakL2convergence}
    \int_0^T\int_{\partial \Omega^{\sml{p_k}}_t} \ip{\mathring{\h}^{\sml{p_k}}| S }\dif \Hff^{n-1} \dif t\xrightarrow{k\to +\infty}  \int_0^T \int_{\partial \Omega^{\sml{1}}_t} \ip{\mathring{\h}^{\sml{1}}| S }\dif \Hff^{n-1} \dif t 
\end{equation}
for any continuous $(2,1)$-tensor $S$ on $M$. Applying coarea formula to both sides of \cref{eq:zzweakL2convergence}, we obtain
\begin{equation}\label{eq:zzweakL2convergenceVolume}
    \int_{\Omega_{T}^{\sml{p_k}} \smallsetminus \Omega} \abs{\nabla w_{p_k}} \ip{\mathring{\h}^{\sml{p_k}}| S} \dif \Hff^n\xrightarrow{k\to +\infty} \int_{\Omega_{T}^{\sml{1}} \smallsetminus \Omega} \abs{\nabla w_{1}} \ip{\mathring{\h}^{\sml{1}}| S} \dif \Hff^n.
\end{equation}
Observe that
\begin{equation}\label{eq:zzzequiboundednessandLimit}
    \sup_k\int_{\Omega_{T}^{\sml{p_k}} \smallsetminus \Omega} \abs{\nabla w_{p_k}}^2 \abs{\mathring{\h}^{\sml{p_k}}}^2 \dif \Hff^n  + \int_{\Omega_{T}^{\sml{1}} \smallsetminus \Omega}\abs{\nabla w_1}^2 \abs{\mathring{\h}^{\sml{1}}}^2 \dif \Hff^n < +\infty
\end{equation}
by \cref{thm:equibounded_2ff_NPT} for $\alpha(p_k)=4-p_k$, \cref{cor:varifold_convergence_IMCF} and since $w_1$ is a locally Lipschitz function. In particular, by density \cref{eq:zzweakL2convergenceVolume} holds for every $S \in L^2(D\smallsetminus \Omega)$. Let now $\delta>0$ and let $\chi_\delta$ be a smooth cut-off function satisfying 
    \begin{equation}
        \chi_\delta(t) \coloneqq\begin{cases}
            &\chi_\delta(t) =0& & \text{if $t \leq \delta$,}\\
            &0\leq \chi'_\delta(t) \leq 2/\delta&& \text{if $t \in [\delta, 2 \delta]$,}\\
            &\chi_\delta(t)=1& &\text{if $t \geq2 \delta$.}\\
        \end{cases}
    \end{equation}
Applying \cref{thm:improved_convergence}, one can easily show that
\begin{multline}\label{eq:zzweaklimitpigiami2ff}
    \psi(w_{p_k})^{\frac{1}{2}}  \ee^{\left(\frac{\alpha}{n-p_k}-1\right)\frac{w_{p_k}}{2}} \chi_{\delta}(\abs{\nabla w_{p_k}})\abs{\nabla w_{p_k}}^{\frac{\alpha+{p_k}-4}{2}} \\\xrightarrow{k\to +\infty} \psi(w_{1})^{\frac{1}{2}}  \ee^{\left(\frac{\alpha}{n-1}-1\right)\frac{w_{1}}{2}} \chi_{\delta}(\abs{\nabla w_{1}})\abs{\nabla w_{1}}^{\frac{\alpha-3}{2}} \in L^\infty(\Omega^{\sml{1}}_T \smallsetminus \Omega)
\end{multline}
strongly in $L^2(D \smallsetminus \Omega)$. Coupling it with the weak convergence in \cref{eq:zzweakL2convergenceVolume}, we have 
\begin{multline}
    \int_{\Omega^{\sml{p_k}}_T\smallsetminus \Omega} \psi(w_{p_k})^{\frac{1}{2}}  \ee^{\left(\frac{\alpha}{n-p_k}-1\right)\frac{w_{p_k}}{2}} \chi_{\delta}(\abs{\nabla w_{p_k}})\abs{\nabla w_{p_k}}^{\frac{\alpha+{p_k}-2}{2}}\ip{\mathring{\h}^{\sml{p_k}}|S} \dif \Hff^{n}\\
    \xrightarrow{k\to +\infty} \int_{\Omega^{\sml{1}}_T\smallsetminus \Omega}  \psi(w_{1})^{\frac{1}{2}}  \ee^{\left(\frac{\alpha}{n-1}-1\right)\frac{w_{1}}{2}} \chi_{\delta}(\abs{\nabla w_{1}})\abs{\nabla w_{1}}^{\frac{\alpha-1}{2}}\ip{\mathring{\h}^{\sml{1}}|S}\dif \Hff^n.
\end{multline}
for every continuous $(1,2)$-tensor on $M$. By \cref{eq:zzzequiboundednessandLimit,eq:zzweaklimitpigiami2ff} the norm of the limit is characterized by the supremum over continuous tensors $S$ with $\norm{S}_{L^2(\Omega^{\sml{1}}_T \smallsetminus \Omega)} \le 1$. Hence, by coarea formula we have
\begin{multline}
    \liminf_{p\to 1^+}\int_0^T \psi(t)  \ee^{\left(\frac{\alpha}{n-p}-1\right)t}\int_{\partial \Omega^{\sml{p}}_t} \abs{\nabla w_p}^{\alpha + p-3} \abs{\mathring{\h}^{\sml{p}}}^2 \dif \Hff^{n-1} \dif t \\\geq \int_0^T \psi(t)  \ee^{\left(\frac{\alpha}{n-1}-1\right)t}\int_{\partial \Omega^{\sml{1}}_t} \chi_{\delta}(\abs{\nabla w_1})^2\H^{\alpha -2} \abs{\mathring{\h}^{\sml{1}}}^2 \dif \Hff^{n-1},
\end{multline}
from which the statement follows by monotone convergence theorem sending $\delta \to 0$.
\end{proof}

We now study the lower semicontinuity the term involving $\nabla^\top \log(\abs{\nabla w_p})$. The proof is similar to the one just proposed. The only difference is the characterization of the limit. Concerning the second fundamental form, the convergence in the sense of varifolds characterizes the limit. Here, we need a further step. 

\begin{lemma}\label{lem:LSCgradTangenzialeH}
    Under the same assumptions of \cref{thm:improved_convergence}, $\log(\H^{\sml{1}}) \in W^{1,2}(\partial\Omega^{\sml{1}}_t \smallsetminus S_t)$,  for almost every $t \in [0,T]$, where $S_t$ is the singular set of $\partial\Omega^{\sml{1}}_t$. Moreover, for every $\psi\in \Lip_c([0,T))$ and $\alpha >1$, we have
    \begin{multline}\label{eq:lsc_tangential_logaritmic}    
        \int_0^T \psi(t)  \ee^{\left(\frac{\alpha}{n-1}-1\right)t}\int_{\partial \Omega^{\sml{1}}_t} \H^{\alpha -2} \frac{\abs{ \nabla^\top\H}^2}{\H^2} \dif \Hff^{n-1}\\\leq \liminf_{p\to 1^+}\int_0^T \psi(t)  \ee^{\left(\frac{\alpha}{n-p}-1\right)t}\int_{\partial \Omega^{\sml{p}}_t} \abs{\nabla w_p}^{\alpha + p-3} \frac{\abs{ \nabla^\top \abs{ \nabla w_p}}^2}{\abs{\nabla w_p}^2
        }\dif \Hff^{n-1} \dif t 
    \end{multline}
\end{lemma}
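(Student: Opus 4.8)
The strategy is to run the argument of \cref{lem:tracelessLSC} once more: the only genuinely new ingredient is that, unlike the traceless second fundamental form, the weak limit of the tangential gradients $\nabla^\top\abs{\nabla w_p}$ is not handed to us by the convergence of curvature varifolds and has to be identified directly. This is where the tangential integration by parts and the convergence results of the present section come together.

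I would first fix a sequence $(p_k)_{k\in\N}$ with $p_k\to1^+$ realising the limit inferior in \cref{eq:lsc_tangential_logaritmic}. By \cref{thm:improved_convergence}, \cref{cor:varifold_convergence_IMCF}, \cref{thm:area_convergence}, \cref{lem:ConvergenzaVarifoldEspToZero} and \cref{rem:ConvergenzaVarifoldEspToZeroNONsmoothOmega}, after passing to a subsequence I may assume: $\nabla w_{p_k}\to\nabla w_1$ strongly in $L^q_{\loc}(D\smallsetminus\Omega)$ for every $q<+\infty$ and a.e.; for a.e. $t\in[0,T]$ the level sets $\partial\Omega^{\sml{p_k}}_t$ converge as varifolds to $\partial\Omega^{\sml{1}}_t$, all of them being curvature varifolds satisfying the properties in \cref{lem:ConditionsEquivalentSFF}; and $\int_0^T\int_{\partial\Omega^{\sml{p_k}}_t}\abs{\nu^{\sml{p_k}}-\nu^{\sml{1}}}^2\dif\Hff^{n-1}\dif t\to0$. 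Applying \cref{cor:equibounded_gradient_normgradient} with the admissible choice $\alpha(p_k)=4-p_k$ (so that $\abs{\nabla w_{p_k}}^{\alpha+p_k-3}\abs{\nabla w_{p_k}}^{-2}=\abs{\nabla w_{p_k}}^{-1}$) and the coarea formula, the vector fields $\nabla^\top\abs{\nabla w_{p_k}}$ — defined on $\Omega^{\sml{p_k}}_T\smallsetminus\Omega$ as the projection of $\nabla\abs{\nabla w_{p_k}}$ onto the tangent space of the level set through each regular point, and set equal to $0$ on $\Crit(w_{p_k})$ — satisfy $\int_{\Omega^{\sml{p_k}}_T\smallsetminus\Omega}\abs{\nabla^\top\abs{\nabla w_{p_k}}}^2\dif\Hff^n\le\kst$ uniformly in $k$. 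Hence, after a further extraction (and extending everything by $0$ to a fixed ambient domain, using that the sublevels $\Omega^{\sml{p_k}}_T$ converge in measure to $\Omega^{\sml{1}}_T$ for generic $T$), $\nabla^\top\abs{\nabla w_{p_k}}\rightharpoonup G$ weakly in $L^2$, for some $G\in L^2(\Omega^{\sml{1}}_T\smallsetminus\Omega)$.

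The heart of the proof is to show that, for a.e. $t$, $G$ restricted to $\partial\Omega^{\sml{1}}_t$ is the weak tangential gradient of $\H^{\sml{1}}=\abs{\nabla w_1}$ on the curvature varifold $\partial\Omega^{\sml{1}}_t\smallsetminus S_t$. Let $S$ be a smooth vector field on $M$ and $\varphi\in\Lip_c([0,T))$ nonnegative. For a.e. $t$ the tangential divergence theorem on the curvature varifold $\partial\Omega^{\sml{p_k}}_t$, applied to $\abs{\nabla w_{p_k}}\,S$ (with $\abs{\nabla w_{p_k}}\in W^{1,2}(\partial\Omega^{\sml{p_k}}_t)$ approximated by smooth functions), gives
\[
\int_{\partial\Omega^{\sml{p_k}}_t}\ip{\nabla^\top\abs{\nabla w_{p_k}}|S}\dif\Hff^{n-1}=-\int_{\partial\Omega^{\sml{p_k}}_t}\abs{\nabla w_{p_k}}\left(\div_{\partial\Omega^{\sml{p_k}}_t}S+\ip{S|\vec{\H}^{\sml{p_k}}}\right)\dif\Hff^{n-1}.
\]
I would multiply by $\varphi(t)$, integrate over $[0,T]$, rewrite everything over $\Omega^{\sml{p_k}}_T\smallsetminus\Omega$ by coarea and pass to the limit: on the left, the weak convergence $\nabla^\top\abs{\nabla w_{p_k}}\rightharpoonup G$ and the strong convergence $\abs{\nabla w_{p_k}}\to\abs{\nabla w_1}$ give $\int_0^T\varphi\int_{\partial\Omega^{\sml{1}}_t}\ip{G|S}\dif\Hff^{n-1}\dif t$; on the right, after the coarea factor the term $\abs{\nabla w_{p_k}}^2\div_{\partial\Omega^{\sml{p_k}}_t}S=\abs{\nabla w_{p_k}}^2\div_M S-\ip{\nabla_{\nabla w_{p_k}}S|\nabla w_{p_k}}$ is a continuous quadratic expression in $\nabla w_{p_k}$ and passes to the limit by the strong $L^q_{\loc}$ convergence of $\nabla w_{p_k}$, while $\abs{\nabla w_{p_k}}\ip{S|\vec{\H}^{\sml{p_k}}}=-\abs{\nabla w_{p_k}}\H^{\sml{p_k}}\ip{S|\nu^{\sml{p_k}}}$ is handled by splitting $\H^{\sml{p_k}}=\abs{\nabla w_{p_k}}+(\H^{\sml{p_k}}-\abs{\nabla w_{p_k}})$, the first part converging (again through the strong convergence of $\nabla w_{p_k}$) to $\abs{\nabla w_1}\ip{S|\vec{\H}^{\sml{1}}}$ since $\vec{\H}^{\sml{1}}=-\abs{\nabla w_1}\nu^{\sml{1}}$ by \cref{rmk:equivalent_formulation_monotonicity_IMCF}, and the second being negligible by H\"older's inequality and \cref{thm:vanishing_gradient_mean_curvature_NPT}. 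As $\varphi$ and $S$ are arbitrary, this is precisely the defining identity of $G=\nabla^\top\abs{\nabla w_1}=\nabla^\top\H^{\sml{1}}$ on $\partial\Omega^{\sml{1}}_t$ for a.e. $t$. Since $\Crit(w_1)\cap\partial\Omega^{\sml{1}}_t$ is $\Hff^{n-1}$-negligible for a.e. $t$ (\cref{rmk:weakSard}) we have $\H^{\sml{1}}>0$ $\Hff^{n-1}$-a.e. there; together with the bound $\int_0^T\int_{\partial\Omega^{\sml{p_k}}_t}\abs{\nabla w_{p_k}}^{p_k-1}\tfrac{\abs{\nabla^\top\abs{\nabla w_{p_k}}}^2}{\abs{\nabla w_{p_k}}^2}\le\kst$ (\cref{cor:equibounded_gradient_normgradient} with $\alpha=2$) passed to the limit by lower semicontinuity, and the fact that $\log\H^{\sml{1}}$ is bounded above because $w_1$ is Lipschitz, this yields $\log\H^{\sml{1}}\in W^{1,2}(\partial\Omega^{\sml{1}}_t\smallsetminus S_t)$ with $\nabla^\top\log\H^{\sml{1}}=G/\H^{\sml{1}}$, for a.e. $t$.

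With the weak limit identified, the lower semicontinuity \cref{eq:lsc_tangential_logaritmic} follows exactly as in \cref{lem:tracelessLSC}: introducing the cut-off $\chi_\delta$, I would write the $t$-integrated quantity on the level sets of $w_{p_k}$ as $\int_{\Omega^{\sml{p_k}}_T\smallsetminus\Omega}\abs{b_{p_k}}^2\dif\Hff^n$ with $b_{p_k}\coloneqq\psi(w_{p_k})^{\frac12}\ee^{\left(\frac{\alpha}{n-p_k}-1\right)\frac{w_{p_k}}{2}}\chi_\delta(\abs{\nabla w_{p_k}})\abs{\nabla w_{p_k}}^{\frac{\alpha+p_k-4}{2}}\nabla^\top\abs{\nabla w_{p_k}}$; the scalar prefactor is bounded on $\{\abs{\nabla w_{p_k}}\ge\delta\}$ and converges strongly in $L^2$ by \cref{thm:improved_convergence}, so, using the weak limit of $\nabla^\top\abs{\nabla w_{p_k}}$ just computed, $b_{p_k}\rightharpoonup\psi(w_1)^{\frac12}\ee^{\left(\frac{\alpha}{n-1}-1\right)\frac{w_1}{2}}\chi_\delta(\H^{\sml{1}})(\H^{\sml{1}})^{\frac{\alpha-3}{2}}\nabla^\top\H^{\sml{1}}$ weakly in $L^2$; characterising the $L^2$ norm of the limit by duality with continuous test vector fields $S$ with $\norm{S}_{L^2}\le1$, applying the coarea formula and letting $\delta\to0^+$ by monotone convergence gives \cref{eq:lsc_tangential_logaritmic}. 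The main obstacle I anticipate is precisely the identification of $G$: making the tangential integration by parts on the approximating level sets rigorous in the presence of the (for a.e. $t$ $\Hff^{n-1}$-negligible) critical sets, and, above all, correctly passing to the limit in its right-hand side by matching the weak varifold convergence of the mean curvature with the strong $L^q_{\loc}$ convergence of $\nabla w_{p_k}$ and the $L^2$ convergence of the normals; everything else is a routine adaptation of \cref{lem:tracelessLSC}.
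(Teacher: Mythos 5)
Your argument is correct, but it follows a route genuinely different from the one in the paper, and the comparison is worth recording. You extract the weak $L^2(\Hff^n)$ limit of $\nabla^\top\abs{\nabla w_{p_k}}$ directly over $\Omega^{\sml{p_k}}_T\smallsetminus\Omega$, using $\alpha(p_k)=4-p_k$ in \cref{cor:equibounded_gradient_normgradient} together with coarea to get the required uniform (unweighted) $L^2$ bound; tangentiality of the limit $G$ then comes for free, because your slicewise integration-by-parts identity already forces $\ip{G|\nu^{\sml{1}}}=0$ on testing with normal fields. The paper instead regularizes the ratio, works with $Y_k=\nabla^\top\abs{\nabla w_{p_k}}/(\abs{\nabla w_{p_k}}+\delta)$, invokes the weighted joint lower-semicontinuity \cref{lem:JointConvergenceLSC} with respect to the measures $\abs{\nabla w_{p_k}}\Hff^n$, and proves tangentiality of $Y_\delta$ as a separate step before identifying it via \cref{eq:zzintegrationbypartfinal}. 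Your version is slightly more parsimonious in that it postpones the $\delta$-cutoff to the final lower-semicontinuity step; its cost is that the degeneracy of the weight $\abs{\nabla w_1}^{\alpha-3}$ (for $\alpha<3$) has to be handled entirely by the $\chi_\delta$ cutoff at the very end, whereas the paper builds the degeneracy into the measure from the outset. Both versions rely on the same three ingredients — strong $L^q_{\loc}$ convergence of $\nabla w_{p_k}$ from \cref{thm:improved_convergence}, vanishing of $\H^{\sml{p_k}}-\abs{\nabla w_{p_k}}$ from \cref{thm:vanishing_gradient_mean_curvature_NPT}, and tangential integration by parts from \cref{thm:MeanCruvatureDivergence} — so the difference is primarily in how one packages the compactness. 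One point you should make explicit: the slicewise identity for a.e.\ $t$ and all $S$ simultaneously follows from the integrated identity by an $L^2(0,T)$-bound plus separability argument, as the paper carries out around \cref{eq:zzIntegrationYdelta2}; as written you pass to the a.e.-$t$ statement a little quickly, though the step is routine and the $L^2(0,T)$ bounds are available from \cref{cor:energy_boundedness} and \cref{cor:equibounded_gradient_normgradient}.
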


\begin{proof}
    Fix $\delta>0$. We claim that $\log(\abs{\nabla w_1} ) \in W^{1,2}(\partial \Omega^{\sml{1}}_t \smallsetminus S_t)$ for almost every $t \in [0,T)$ and
    \begin{equation}\label{eq:LSCtangentialclaim}
        \int_{\Omega^{\sml{p}}_T \smallsetminus \Omega}\abs{\nabla w_p}\ip{\frac{\nabla^\top \abs{\nabla w_p}}{\abs{\nabla w_p}+\delta}| X} \dif \Hff^n \xrightarrow{p \to 1^+} \int_{\Omega^{\sml{1}}_T \smallsetminus \Omega} \abs{\nabla w_1}\ip{\frac{\nabla^\top \abs{\nabla w_1}}{\abs{\nabla w_1}+\delta}| X} \dif \Hff^n
    \end{equation}
    weakly for every $X\in L^2(D)$. The statement then follows as in \cref{lem:tracelessLSC}. Indeed, the lower semicontinuity implies
    \begin{multline}
        \liminf_{p\to 1^+}\int_0^T \psi(t)  \ee^{\left(\frac{\alpha}{n-p}-1\right)t}\int_{\partial \Omega^{\sml{p}}_t} \abs{\nabla w_p}^{\alpha + p-3} \left(\frac{\abs{ \nabla^\top \abs{ \nabla w_p}}}{\abs{\nabla w_p}+\delta}\right)^2\dif \Hff^{n-1} \dif t \\ \geq \int_0^T \psi(t)  \ee^{\left(\frac{\alpha}{n-1}-1\right)t}\int_{\partial \Omega^{\sml{1}}_t} \chi_{\delta}(\abs{\nabla w_1})^2\H^{\alpha -2} \left(\frac{\abs{ \nabla^\top\H}}{\H+\delta} \right)^2\dif \Hff^{n-1} \dif t,
    \end{multline}
    where $\chi_\delta$ is the usual smooth cut-off function. The monotone convergence theorem grants that we can send $\delta \to 0^+$.

    \smallskip

    The claim will be proved as follows: first, we provide a $L^2$ weak limit; second, we characterize that limit. By \cref{cor:equibounded_gradient_normgradient,prop:RegolaritaWp}, $\abs{\nabla w_p}\in \CS^0 \cap W^{1,2}(\Omega^{\sml{p}}_T \smallsetminus \Omega^\circ)$, then $\log(\abs{\nabla w_p}+\delta)\in\CS^0 \cap W^{1,2}(\Omega^{\sml{p}}_T \smallsetminus \Omega^\circ)$. In particular, \cref{cor:equibounded_gradient_normgradient} with $\alpha =3-p$ and coarea formula give
    \begin{equation}
        \int_{\Omega_T^{\sml{p}}\smallsetminus \Omega} \abs{\nabla w_{p}}\left(\frac{\abs{\nabla^\top \abs{\nabla w_p}}}{\abs{\nabla w_p}+\delta}\right)^2 \dif \Hff^n \leq \int_{\Omega_T^{\sml{p}}\smallsetminus \Omega}\abs{\nabla w_{p}}\frac{\abs{\nabla^\top\abs{\nabla w_p}}^2}{\abs{\nabla w_p}^2}\dif \Hff^n \leq \kst,
    \end{equation}
    for a positive constant $\kst$ independent of $p$. By \cref{thm:improved_convergence}, $\abs{\nabla w_p}\Hff^{n}\res (\Omega^{\sml{p}}\smallsetminus \Omega) $ weakly*-converges to $\abs{\nabla w_1} \Hff^{n}\res (\Omega^{\sml{1}}\smallsetminus \Omega)$. Since $\abs{\nabla w_p}$ is equibounded by \cref{prop:uniform_gradient}, \cref{lem:JointConvergenceLSC} implies the existence of a subsequence $(p_k)_{k\in \N}$ converging to $1$ as $k\to +\infty$ and a vector field $Y_{\delta}$, which satisfies
    \begin{equation}
    \int_{\Omega^{\sml{1}}_T\smallsetminus\Omega}\abs{\nabla w_1} \abs{Y_{\delta}}^2 \dif \Hff^n \leq \liminf_{k\to +\infty} \int_{\Omega^{\sml{p_k}}_T\smallsetminus\Omega} \abs{\nabla w_{p_k}}\frac{\abs{\nabla^\top\abs{\nabla w_{p_k}}}^2}{\abs{\nabla w_{p_k}}^2} \dif \Hff^n
    \end{equation}
    and
    \begin{equation}\label{eq:zzjointconvergencefullsetintegral}
        \int_{\Omega^{\sml{1}}_T\smallsetminus\Omega}\abs{\nabla w_1} \ip{Y_{\delta}| X}\dif \Hff^n = \lim_{k\to +\infty} \int_{\Omega^{\sml{p_k}}_T\smallsetminus\Omega} \abs{\nabla w_{p_k}}\ip{\frac{\nabla^\top \abs{\nabla w_{p_k}}}{\abs{\nabla w_{p_k}}+\delta}| X}\dif \Hff^n
    \end{equation}
    for every $X \in L^2(D)$. In particular, coarea formula yields
    \begin{align}
        \begin{multlined}[.7\textwidth] \lim_{k \to +\infty}\int_0^T \eta(t) \int_{\partial \Omega^{\sml{p_k}}_t}\ip{\frac{\nabla^\top \abs{\nabla w_{p_k}}}{\abs{\nabla w_{p_k}}+\delta}| X} \dif \Hff^{n-1} \dif t \\ \begin{aligned}[t]&= \lim_{k\to +\infty}\int_{\Omega^{\sml{p_k}}_T\smallsetminus\Omega} \eta(w_{p_k})\abs{\nabla w_{p_k}}\ip{\frac{\nabla^\top \abs{\nabla w_{p_k}}}{\abs{\nabla w_{p_k}}+\delta}| X}\dif \Hff^n\\& =\int_{\Omega^{\sml{1}}_T\smallsetminus\Omega} \eta(w_{1})\abs{\nabla w_{1}}\ip{Y_\delta| X}\dif \Hff^n = \int_0^T \eta(t) \int_{\partial \Omega^{\sml{1}}_t} \ip{Y_\delta| X} \dif \Hff^{n-1} \dif t
        \end{aligned}
        \end{multlined}
    \end{align}
    for every $X \in \CS^0_c(D)$ and every $\eta \in \CS^\infty_c(0,T)$. By H\"older's inequality, \cref{lem:exp_areagrowth} and the coarea formula, we have
    \begin{equation}
        \int_0^T \left(\int_{\partial \Omega^{\sml{1}}_t} \ip{Y_\delta | X}\dif \Hff^{n-1} \right)^2\dif t \leq \abs{\partial \Omega^*} \ee^T \norm{X}^2_{L^\infty}   \int_{\Omega^{\sml{1}}_T\smallsetminus\Omega}\abs{\nabla w_1} \abs{Y_{\delta}}^2 \dif \Hff^n,
    \end{equation}
    which is bounded for every $X \in \CS^0_c(D)$. On the other hand, H\"older's inequality and \cref{cor:energy_boundedness} yield
    \begin{equation}
        \begin{split}
        &\int_0^T \left(\int_{\partial \Omega^{\sml{p_k}}_t} \ip{\frac{\nabla^\top \abs{\nabla w_{p_k}}}{\abs{\nabla w_{p_k}}+\delta}| X} \dif \Hff^{n-1} \right)^2 \dif t \leq \kst \ee^T \norm{X}_{L^\infty}^2  \int_0^T \int_{\partial \Omega_t^{\sml{p_k}}} \frac{\abs{\nabla^\top \abs{\nabla w_{p_k}}}^2}{\abs{\nabla w_{p_k}}^2} \abs{\nabla w_{p_k}}^{1-{p_k}} \dif \Hff^{n-1},
        \end{split}
    \end{equation}
    which is equibounded by \cref{cor:equibounded_gradient_normgradient} applied with $\alpha(p_k) =4-2p_k $, since $4-2p_k\to 2$ as $k\to +\infty$. Hence, by the density of $\CS^\infty_c(0,T)$ in $L^2(0,T)$ we have that
    \begin{equation}
         \int_{\partial \Omega^{\sml{p_k}}_t} \ip{\frac{\nabla^\top \abs{\nabla w_{p_k}}}{\abs{\nabla w_{p_k}}+\delta}| X} \dif \Hff^{n-1}\xrightharpoonup{k\to +\infty} \int_{\partial \Omega^{\sml{1}}_t} \ip{Y_\delta | X }\dif \Hff^{n-1}
    \end{equation}
    weakly in $L^2(0,T)$.

\medskip

    We are left to characterize the limit $Y_\delta$. First of all, we prove that $Y_\delta$ is $\Hff^{n-1}$-a.e. tangential on $\partial (\Omega^{\sml{1}} \smallsetminus S_t)$ for almost every $t \in (0,T)$. Let $\eta \in \CS^0_c(\Omega^{\sml{1}}_T \smallsetminus \Omega)$ and $\xi \in \CS^0_c(0,T)$. Since $[\xi(w_p) \eta]\nabla w_p \to[\xi(w_1) \eta] \nabla w_1$ strongly in $L^2$ by \cref{thm:improved_convergence} and \cref{thm:local-approximation}, we get by \cref{eq:zzjointconvergencefullsetintegral} that
\begin{align}
    \int_{\Omega^{\sml{1}}_T \smallsetminus \Omega}\xi(w_1) \eta \abs{\nabla w_1} \ip{Y_\delta | \nabla w_1}  &=  \lim_{k \to +\infty} \int_{\Omega^{\sml{p_k}}_T\smallsetminus\Omega} \xi(w_{p_k}) \eta\abs{\nabla w_{p_k}}\ip{\frac{\nabla^\top \abs{\nabla w_{p_k}}}{\abs{\nabla w_{p_k}}+\delta}| \nabla w_{p_k}} =0.
\end{align}
H\"older's inequality, \cref{lem:exp_areagrowth} and the coarea formula yield
\begin{equation}
    \int_0^T \left(\int_{\partial \Omega^{\sml{1}}_t}\eta \ip{Y_\delta | \nabla w_1}\dif \Hff^{n-1} \right)^2\dif t \leq \abs{\partial \Omega^*}\ee^T \norm{\eta \nabla w_1}^2_{L^\infty}\int_{\Omega^{\sml{1}}_T \smallsetminus \Omega} \abs{\nabla w_1} \abs{Y_\delta}^2 \dif \Hff^n.
\end{equation}
Hence, by arbitrariness of $\xi$, we deduce that
\begin{equation}\label{eq:Ydeltaistangential}
    \int_{\partial \Omega^{\sml{1}}_t} \eta \ip{Y_\delta | \nabla w_1} \dif \Hff^{n-1} = 0
\end{equation}
for almost every $t \in (0,T)$ and for every $\eta \in \CS^0_c(\Omega^{\sml{1}}_T \smallsetminus \Omega)$. Recall that $\partial\Omega^{\sml{1}}_{t}$ is of class $\CS^1$ out of a closed set $S_t$ having Hausdorff dimension less than $n-8$ (see \cref{prop:regularityLevelsetIMCF}). By coarea formula, $\ip{Y_\delta |\nabla w_1} \in L^2(\partial\Omega_t^{\sml{1}})$ for almost every $t\in[0,T]$. Hence, arbitrariness of $\eta$ in \cref{eq:Ydeltaistangential} implies that $Y_\delta$ is tangent $\Hff^{n-1}$-a.e. along $\partial\Omega_t^{\sml{1}} \smallsetminus S_t$. 

\smallskip

If we now prove that
\begin{equation}\label{eq:zzintegrationbypartfinal}
    -\int_{\partial \Omega^{\sml{1}}_t} \ip{Y_\delta | X} \dif \Hff^{n-1} = \int_{\partial \Omega^{\sml{1}}_t} \log (\abs{\nabla w_1}+\delta) \left( \ip{\H^{\sml{1}}\nu^{\sml{1}}| X} + \div_{\top }X\right)\dif \Hff^{n-1}
\end{equation}
for every $X \in \CS^1_c(D)$, we both conclude that $\log(\abs{\nabla w_1} + \delta ) \in W^{1,2}(\partial \Omega^{\sml{1}}_t \smallsetminus S_t)$ and that $Y_\delta = \nabla^\top \log(\abs{\nabla w_1} +\delta)$. Moreover, chain rule implies that $\abs{\nabla w_1} \in W^{1,2}(\partial \Omega^{\sml{1}}_t \smallsetminus S_t)$.

By \cref{lem:ConvergenzaVarifoldEspToZero} and \cref{rem:ConvergenzaVarifoldEspToZeroNONsmoothOmega} and \eqref{eq:IntegrationByPartsSobolevOnVarifold} we can integrate by part, thus
    \begin{equation}\label{eq:zzIdentity0}
    \begin{multlined}[c][.7\textwidth]
        -\int_0^{T} \int_{\partial\Omega^{\sml{p_k}}_{t}}  \ip{ \frac{\nabla^\top \abs{\nabla w_{p_k}}}{\abs{\nabla w_{p_k}}+\delta}| X} \dif \Hff^{n-1} \dif t \\\begin{aligned}[t] &=\int_0^{T}\int_{\partial\Omega^{\sml{{p_k}}}_{t}} \log(\abs{\nabla w_{p_k}}+\delta)\left(\ip{\vec{\H}| X}+\div_{\top} X\right) \dif \Hff^{n-1} \dif t \\
        &=\int_{\Omega^{\sml{{p_k}}}_{T}\smallsetminus \Omega}\abs{\nabla w_{p_k}} \log(\abs{\nabla w_{p_k}}+\delta)\left(\ip{\vec{\H}| X}+\div_{\top} X\right) \dif \Hff^n ,
        \end{aligned}
    \end{multlined}
    \end{equation}
    for any $X \in \CS^1_c(D)$. We now consider the two terms in the last integral separately. By H\"older's inequality, \cref{prop:uniform_gradient} and the coarea formula, we have 
    \begin{multline}\label{eq:lsctangential-firstconvergence}
        \abs{\int_{\Omega^{\sml{p_k}}_{T}\smallsetminus \Omega} \log(\abs{\nabla w_{p_k}}+\delta)\abs{\nabla w_{p_k}}\ip{\vec{\H}^{\sml{p_k}} -\nabla w_{p_k}| X} }\\\leq \kst\norm{X}_{L^\infty} \int_0^T \int_{\partial \Omega^{\sml{p_k}}_t }(\H^{\sml{p_k}}- \abs{\nabla w_{p_k}})^2 \dif \Hff^{n-1} \dif t 
    \end{multline}
    for some constant $\kst$ uniformly bounded in $p$, for any $X \in \CS^1_c(D)$. By \cref{thm:vanishing_gradient_mean_curvature_NPT}, the right-hand side vanishes as $k\to +\infty$, hence we can replace the mean curvature with the gradient in \cref{eq:zzIdentity0}.
    Moreover, by \cref{thm:improved_convergence} we have that
    \begin{equation}\label{eq:lsctangential-secondconvergence}
        \lim_{k \to +\infty}\int_{\Omega^{\sml{{p_k}}}_{T}\smallsetminus \Omega} \log(\abs{\nabla w_{p_k}}+\delta)\abs{\nabla w_{p_k}}\ip{\nabla w_{p_k}| X} = \int_{\Omega^{\sml{1}}_{T}\smallsetminus \Omega} \log(\abs{\nabla w_{1}}+\delta)\abs{\nabla w_1}\ip{\nabla w_{1}| X} .
    \end{equation}
    On the other side, the following relation
    \begin{equation}
    \div^{\sml{p_k}}_\top X= \div X +  \ip{ \nabla_{\nu^{\sml{p_k}}} X| \nu^{\sml{1}} -\nu^{\sml{p_k}} }+ \ip{\nabla_{(\nu^{\sml{1}} - \nu^{\sml{p_k}})} X| \nu^{\sml{1}} }-\ip{ \nabla_{\nu^{\sml{1}}} X| \nu^{\sml{1}} }
    \end{equation}
    holds for any $X \in \CS^1_c(D)$. Hence, \cref{thm:area_convergence} and \cref{thm:improved_convergence} imply
    \begin{equation}\label{eq:zzConvConv}
        \lim_{k\to +\infty} \int_{\Omega^{\sml{p_k}}_{T}\smallsetminus \Omega} \abs{\nabla w_{p_k}} \log(\abs{\nabla w_{p_k}}+\delta)\div_{\top}^{\sml{p_k}}X  = \int_{\Omega^{\sml{1}}_{T}\smallsetminus \Omega} \abs{\nabla w_1}   \log(\abs{\nabla w_{1}}+\delta) \div^{\sml{1}}_{\top}X ,
    \end{equation}
    for any $X \in \CS^1_c(D)$. Putting together \cref{eq:zzIdentity0,,eq:lsctangential-firstconvergence,,eq:lsctangential-secondconvergence,,eq:zzConvConv}, we get
    \begin{equation}\label{eq:convergence_weak_tangential_derivative}
    \begin{split}
    \int_{\Omega^{\sml{1}}_T \smallsetminus \Omega} \abs{\nabla w_1}\ip{Y_\delta | X} &=\lim_{k\to +\infty}\int_0^{T}\int_{\partial\Omega^{\sml{p_k}}_{t}}  \ip{\frac{\nabla^\top \abs{\nabla w_{p_k}}}{\abs{\nabla w_{p_k}}+\delta}| X} \dif \Hff^{n-1} \dif t \\&= -\int_0^{T} \int_{\partial\Omega^{\sml{1}}_{t}} \log(\abs{\nabla w_1}+\delta)\left(\ip{\vec{\H}| X}+\div_{\top} X\right)\dif \Hff^{n-1} \dif t 
    \end{split}
    \end{equation}
    for any $X \in \CS^1_c(D)$. \cref{eq:convergence_weak_tangential_derivative} also implies
    \begin{equation}\label{eq:zzIntegrationYdelta2}
        \int_0^{T} \eta(t)\int_{\partial\Omega^{\sml{1}}_{t}} \ip{Y_\delta | X} \dif \Hff^{n-1} \dif t= -\int_0^{T} \eta(t)\int_{\partial\Omega^{\sml{1}}_{t}} \log(\abs{\nabla w_1}+\delta)\left(\ip{\vec{\H}| X}+\div_{\top}  X\right) \dif \Hff^{n-1} \dif t .
    \end{equation}
    for every $\eta \in \CS^0_c(0,T)$. Since
    \begin{equation}
        \int_0^T\left(\int_{\partial\Omega_{t}^{\sml{1}}} \log(\abs{\nabla w_1}+\delta) \left(\ip{\vec{\H}| X}+ \div_{\top}X\right)\dif \Hff^{n-1}\right)^2 \dif t
        \le \kst\norm{X}_{\CS^{1}}^2\ee^T \abs{\partial \Omega^*}
    \end{equation}
    for every $X \in \CS^1_c(D)$, we conclude that \cref{eq:zzIntegrationYdelta2} holds for $\eta \in L^2(0,T)$. Hence, \cref{eq:zzintegrationbypartfinal} holds. The characterization of $Y_\delta$ implies that \cref{eq:LSCtangentialclaim} holds for every $X\in \CS^1_c(D)$, therefore for $X \in L^2(D)$ by density.
    \end{proof}

\subsection{Proof of \texorpdfstring{\cref{thm:monotonicty_formula_IMCF}}{Theorem 4.1}}
\pushQED{\qed}
We prove the statement for $\alpha>1$. The case $\alpha = 1$ follows as in \cref{rmk:casolimite}, since $\SQ^{\sml{\alpha}}_1 \geq \abs{\mathring{\h}} = \SQ^{\sml{1}}_1$, where $\SQ^{\sml{\alpha}}_1$ denotes \cref{eq:Q1def} for every $\alpha \geq 1$. 
\smallskip

Assume that $\alpha > 1 > (n-p)/(n-1)$. We aim to pass \cref{thm:monotonicity-NPT} to the limit as $p \to 1^+$ for nonnegative $\psi \in \Lip_c([0,T))$. We consider each term in \cref{eq:derivative-monotonicity_formula-NPT} separately, starting from the right-hand side. Since $\psi$ is nonnegative, recalling the expression for $\SQ_{p}$ in \cref{thm:monotonicity-NPT}, it readily follows from \cref{lem:tracelessLSC} and \cref{lem:LSCgradTangenzialeH} that
\begin{multline}\label{eq:zzRHS}
    \liminf_{p\to1^+}\int_0^T \psi(t) \ee^{\left(\frac{\alpha}{n-p} - 1 \right)t} \int_{\partial \Omega^{\sml{p}}_t} \abs{\nabla w_p}^{\alpha+p-3} \SQ_{p}\dif \Hff^{n-1} \dif t
    \\\begin{aligned}[t]&\ge
    \liminf_{p\to1^+}\int_0^T \psi(t) \ee^{\left(\frac{\alpha}{n-p} - 1 \right)t} \int_{\partial \Omega^{\sml{p}}_t} \abs{\nabla w_p}^{\alpha+p-3} \left( \left[\alpha-(2-p)\right] \frac{\abs{\nabla^\top \abs{\nabla w_p}}^2}{\abs{\nabla w_p}^2} + \abs{ \mathring{\h}}^2 \right)\dif \Hff^{n-1} \dif t
    \\& \ge
    \int_0^T \psi(t) \ee^{\left(\frac{\alpha}{n-1} - 1 \right)t} \int_{\partial \Omega^{\sml{1}}_t} \H^{\alpha-2} \left( \left[\alpha-1\right] \frac{\abs{\nabla^\top \H}^2}{\abs{\H}^2} + \abs{ \mathring{\h}}^2 \right)\dif \Hff^{n-1} \dif t.
    \end{aligned}
\end{multline}

We now want to prove that
\begin{equation}\label{eq:USCFpZero}
    \limsup_{p\to1^+} - \MF_p(0) \le - \MF_1(0) = \frac{1}{\alpha} \int_{\partial \Omega} \abs{\H}^{\alpha}.
\end{equation}
Indeed, applying Young's inequality with conjugate expontents $a=(\alpha+p-1)/(\alpha+p-2)$ and $b=\alpha+p-1$ and using \cref{cor:energy_boundedness}, we estimate
\begin{equation}
    \begin{split}
        -\MF_p(0) 
        &= \int_{\partial \Omega}  \H \abs{\nabla w_p}^{\alpha+p-2} - \abs{\nabla w_p}^{\alpha+p-1} \left( \frac{n-1}{n-p} - \frac1\alpha \right) \dif \Hff^{n-1}
        \\& \le
        \int_{\partial\Omega}  \abs{\nabla w_p}^{\alpha+p-1} \left( \frac{\alpha+p-2}{\alpha+p-1} -\frac{n-1}{n-p} + \frac1\alpha \right) + \frac{\abs{\H}^{\alpha+p-1}}{\alpha+p-1}\dif \Hff^{n-1} \\
        &\le  \kst  \left( \frac{\alpha+p-2}{\alpha+p-1} -\frac{n-1}{n-p} + \frac1\alpha \right) \norm{\nabla w_p}_{L^\infty(\partial\Omega)}^{\alpha} + \int_{\partial \Omega} \frac{\abs{\H}^{\alpha+p-1}}{\alpha+p-1} \dif \Hff^{n-1}
    \end{split}
\end{equation}
Therefore, \cref{eq:USCFpZero} follows, being $\norm{\nabla w_p}_{L^\infty(\partial\Omega)}$  uniformly bounded as $p\to1^+$ by \cref{prop:uniform_gradient}.

It only remains to show that
\begin{equation}\label{eq:zzPrimoPezzo}
    \lim_{p\to1^+} \int_0^T \psi'(t) \MF_p(t) \dif t  = \int_0^T \psi'(t) \MF_1(t) \dif t. 
\end{equation}
Recalling the definition of $\MF_p$ in \cref{eq:monotonicty_formula}, by coarea formula we have
\begin{equation}
\label{eq:main_piece_F_p_monotonicity}
\begin{split}
   \int_0^T \psi'(t)\MF_{p}(t)\dif t  =\begin{multlined}[t][.7\textwidth]
   \left(\frac{p-1}{n-p}-\frac{1}{\alpha}\right)\int_{\Omega_T^{\sml{p}}\smallsetminus \Omega}\psi'(w_p) \ee^{\left(\frac{\alpha}{n-p} -1 \right)w_p} \abs{\nabla w_p}^{\alpha+p}\dif \Hff^n\\
   + \int_0^T\psi'(t)\ee^{\left(\frac{\alpha}{n-p} - 1\right)t} \int_{\partial \Omega^{\sml{p}}_t}\abs{\nabla w_p }^{\alpha+p-2} \left(\abs{\nabla w_p} -\H \right) \dif \Hff^{n-1} \dif t\\-\int_0^T\psi'(t)\int_{\Omega^{\sml{p}}_t\smallsetminus \Omega} \ee^{\left(\frac{\alpha}{n-p} - 1\right)w_p}\abs{\nabla w_p}^{\alpha+p-2} \Ric(\nu^{\sml{p}},\nu^{\sml{p}}) \dif \Hff^n\dif t.
    \end{multlined}
\end{split}
\end{equation}

Since $w_p\to w_1$ locally uniformly by \cref{thm:local-approximation} and in $W^{1,2}_{\loc}$ by \cref{thm:improved_convergence}, we have
\begin{equation}\label{eq:LimitI1}
\begin{split}
    \lim_{p\to1^+} \int_{\Omega_T^{\sml{p}}\smallsetminus \Omega}\psi'(w_p) \ee^{\left(\frac{\alpha}{n-p} -1 \right)w_p} \abs{\nabla w_p}^{\alpha+p}\dif \Hff^n &=  \int_{\Omega^{\sml{1}}_T \smallsetminus \Omega} \psi'(w_1) \ee^{\left(\frac{\alpha}{n-1} -1 \right)w_1} \abs{\nabla w_1}^{\alpha+1} \dif \Hff^n
\end{split}
\end{equation}
The second integral vanishes. Indeed, by H\"older's inequality and \cref{cor:energy_boundedness} we have
\begin{equation}\label{eq:LimitI2}
\begin{multlined}[c][.8\textwidth]
    \abs{\int_{\partial \Omega^{\sml{p}}_t}\abs{\nabla w_p }^{\alpha+p-2} \left(\abs{\nabla w_p} -\H \right) \dif \Hff^{n-1} \dif t}\\
    \begin{aligned}[t]&\leq \left(\int_{\partial \Omega^{\sml{p}}_t}\abs{\nabla w_p }^{2(\alpha+p-2)}\dif \Hff^{n-1}\right)^{\frac{1}{2}} \left(\int_{\partial \Omega^{\sml{p}}_t}  \left(  \abs{\nabla w_p} - \H\right)^2\dif \Hff^{n-1}\right)^{\frac{1}{2}}\\
    &\leq\kst \ee^t\norm{\nabla w_p}^{\frac{2\alpha +p-3}{2}}_{L^\infty(\Omega^{\sml{p}}_T \smallsetminus \Omega)}\left(\int_{\partial \Omega^{\sml{p}}_t}  \left(  \abs{\nabla w_p} - \H\right)^2\dif \Hff^{n-1}\right)^{\frac{1}{2}}.
    \end{aligned}
\end{multlined}
\end{equation}
By \cref{prop:uniform_gradient,thm:vanishing_gradient_mean_curvature_NPT}, the right-hand side vanishes as $p\to1^+$. It only remains to treat the term involving the Ricci tensor. Observe that
\begin{equation}\label{eq:zzRicci_bound}
    \ee^{\left(\frac{\alpha}{n-p} -1 \right)w_p} \abs{\nabla w_p}^{\alpha+p-2} \Ric \left( \nu^{\sml{p}}, \nu^{\sml{p}} \right) \leq \ee^{\left(\frac{\alpha}{n-p}-1 \right)T} \abs{\nabla w_p}^{\alpha+p-2} \norm{\Ric}_{L^\infty(D)}
\end{equation}
holds on $\Omega^{\sml{p}}_T \smallsetminus \Omega$. Hence, there exists a positive constant $\kst$ depending only on $T$, such that
\begin{equation}
    \int_0^T\left(\int_{\Omega^{\sml{p}}_t\smallsetminus \Omega} \ee^{\left(\frac{\alpha}{n-p} - 1\right)w_p}\abs{\nabla w_p}^{\alpha+p-2} \Ric(\nu^{\sml{p}},\nu^{\sml{p}}) \dif \Hff^n\right)^2 \dif t \leq \kst\abs{D}^2\norm{\Ric}^2_{L^\infty(D)} \norm{\nabla w_p}^{2(\alpha+p-2)}_{L^\infty(\Omega^{\sml{p}}_T \smallsetminus \Omega^\circ)}
\end{equation}
which is bounded by \cref{prop:uniform_gradient}, since $\alpha>1$. In particular, the family of maps
\begin{equation}\label{eq:zzfamily_of_p_ricci_maps}
    t \mapsto \int_{\Omega^{\sml{p}}_t\smallsetminus \Omega} \ee^{\left(\frac{\alpha}{n-p} - 1\right)w_p}\abs{\nabla w_p}^{\alpha+p-2} \Ric(\nu^{\sml{p}},\nu^{\sml{p}}) \dif \Hff^n
\end{equation}
is weakly compact in $L^2(0,T)$. Pick any weak convergent subsequence in $L^2(0,T)$. By \cref{thm:improved_convergence}, there exists a further subsequence $(p_k)_{k\in \N}$ converging to $1$ as $k\to +\infty$, such that $\nabla w_{p_k} \to \nabla w_1$ almost everywhere on $\Omega^{\sml{1}}_T \smallsetminus \Omega$. Hence, 
\begin{equation}\label{eq:zzaeconvergence}
    \ee^{\left(\frac{\alpha}{n-p_k} -1 \right)w_{p_k}} \abs{\nabla w_{p_k}}^{\alpha+{p_k}-2} \Ric \left( \nu^{\sml{{p_k}}}, \nu^{\sml{{p_k}}} \right)\xrightarrow{k\to +\infty} \ee^{\left(\frac{\alpha}{n-1} -1 \right)w_1} \abs{\nabla w_1}^{\alpha-1} \Ric \left( \nu^{\sml{1}}, \nu^{\sml{1}} \right)  
\end{equation}
almost everywhere on $\Omega_T^{\sml{1}}\smallsetminus\Crit(w_1)$. Since $\alpha>1$, \cref{eq:zzaeconvergence} can be extended to the whole $\Omega_T^{\sml{1}} \smallsetminus \Omega$ by \cref{eq:zzRicci_bound}. Moreover, \cref{eq:zzRicci_bound} also guarantees that the dominated convergence theorem can be applied. Hence,
\begin{equation}\label{eq:zzz_ae_convergence_integrals}
\begin{multlined}[c][.8\textwidth]
    \lim_{k\to +\infty}\int_{\Omega^{\sml{{p_k}}}_t\smallsetminus \Omega} \ee^{\left(\frac{\alpha}{n-{p_k}} - 1\right)w_{p_k}}\abs{\nabla w_{p_k}}^{\alpha+{p_k}-2} \Ric(\nu^{\sml{{p_k}}},\nu^{\sml{{p_k}}}) \dif \Hff^n\\
    = \int_{\Omega^{\sml{1}}_t\smallsetminus \Omega} \ee^{\left(\frac{\alpha}{n-1} - 1\right)w_1}\abs{\nabla w_1}^{\alpha-1} \Ric(\nu^{\sml{1}},\nu^{\sml{1}}) \dif \Hff^n
\end{multlined}
\end{equation}
for every $t\in[0,T]$. Therefore, for any sequence of $p$'s the family of maps \cref{eq:zzfamily_of_p_ricci_maps} admits a further subsequence weakly converging in $L^2(0,T)$ to the right-hand side of \cref{eq:zzz_ae_convergence_integrals}. We get that 
\begin{equation}\label{eq:LimitI3}
    \begin{multlined}[c][.8\textwidth]
    \int_{\Omega^{\sml{{p}}}_t\smallsetminus \Omega} \ee^{\left(\frac{\alpha}{n-{p}} - 1\right)w_{p}}\abs{\nabla w_{p}}^{\alpha+{p}-2} \Ric(\nu^{\sml{{p}}},\nu^{\sml{{p}}}) \dif \Hff^n\xrightharpoonup{p \to 1^+}\\
    = \int_{\Omega^{\sml{1}}_t\smallsetminus \Omega} \ee^{\left(\frac{\alpha}{n-1} - 1\right)w_1}\abs{\nabla w_1}^{\alpha-1} \Ric(\nu^{\sml{1}},\nu^{\sml{1}}) \dif \Hff^n
    \end{multlined}
\end{equation}
weakly in $L^2(0,T)$. Plugging \cref{eq:LimitI1,eq:LimitI2,eq:LimitI3} into \cref{eq:main_piece_F_p_monotonicity}, we conclude \cref{eq:zzPrimoPezzo} and thus the proof of the theorem. \endproof

\section{Geometric consequences}\label{sec:geometric_consequences}

This section outlines a series of geometric consequences derived from the previously established monotonicity formulas. Given the extensive background of these formulas, some of these consequences are already present in the literature. For example, Geroch’s monotonicity formula, which we will revisit in \cref{thm:Geroch-montonicity}, was proved along the weak IMCF flow by Huisken and Ilmanen in \cite{huisken_inversemeancurvatureflow_2001}. However, \cref{thm:monotonicty_formula_IMCF} is broad enough to be applied even on manifolds with nonnegative Ricci curvature and Euclidean volume growth. In this setting, we can provide a proof of the Willmore-type and Minkowski-type inequalities, respectively proved in \cite{agostiniani_sharpgeometricinequalitiesclosed_2020} and \cite{benatti_minkowskiinequalitycompleteriemannian_2022}. We also improve the rigidity statement with respect to the original \cite[Theorem 1.2]{benatti_minkowskiinequalitycompleteriemannian_2022}. By monotonicity along the IMCF flow (rather than approximating the inequality with the potential theoretic counterpart), we will show that certain assumptions in the original theorem can also be derived as consequences of rigidity.

The first result discussed below is, to the best of the authors' knowledge, completely original. It is a Gauss--Bonnet-type theorem for level sets $\partial \Omega^{\sml{p}}_t$ of solutions $w_p$ to \cref{eq:moser_potential}. In dimension $n=3$, the level sets of the weak IMCF have enough regularity to be approximated in $W^{2,2}$ by smooth hypersurfaces. This property enabled Huisken and Ilmanen to establish the classic connection between the integral of the induced scalar curvature and the topology of the level set. In the framework of the nonlinear potential theory, such regularity is not available. Although we can define the integral of the induced scalar curvature, we are not able to relate it to the Euler characteristic of the level set, which is, in fact, not well-defined in general. However, we can prove that the integral of the induced scalar curvature of almost every level can only assume discrete values in $8 \pi \Z$.

\subsection{A Gauss--Bonnet-type theorem}\label{sec:GB}
Let $w_p$ be the solution to \cref{eq:moser_potential} for some $\Phi>0$, and let $T< \inf_{\partial D} \Phi$. Since $\partial \Omega^{\sml{p}}_t$ is smooth out of a $\Hff^{n-1}$-negligible set for almost every $t \in [0,T]$, the Gauss equation  
\begin{equation}\label{eq:Gauss-Codazzi}
    \sca^\top = \sca - 2 \Ric(\nu,\nu) + \H^2 - \abs{\h}^2
\end{equation}
defines the induced scalar curvature at each of these regular points. Thanks to \cref{lem:ConvergenzaVarifoldEspToZero}, all the quantities appearing on the right-hand side are defined on the whole level set in a weak sense, hence the quantity
\begin{equation}\label{eq:integral_scalar_curvature}
    \int_{\partial \Omega^{\sml{p}}_t}\sca^\top \dif\Hff^{n-1}= \int_{\partial \Omega^{\sml{p}}_t}\sca - 2 \Ric(\nu,\nu) + \H^2 - \abs{\h}^2\dif \Hff^{n-1}
\end{equation}
is well-defined for almost every $t \in [0,T]$.

In dimension $n=3$, one would like to relate the quantity \cref{eq:integral_scalar_curvature} to the topological proprieties of the level sets as the Gauss--Bonnet theorem would dictate. \textit{A priori}, it is not even clear whether the integral of the induced scalar curvature assumes discrete values. Indeed, due to the lack of regularity of solutions $w_p$ to \cref{eq:moser_potential}, the integral of the induced scalar curvature as in \cref{eq:integral_scalar_curvature} is not related to the topology of the level sets $\partial \Omega^{\sml{p}}_t$. The topological interpretation is instead clear for level sets of the solutions $w^\varepsilon_p$ of \cref{eq:eps_moser_potential}. In the spirit of $\varepsilon$-approximation, we aim to obtain weak version of the Gauss--Bonnet Theorem in the limit $\varepsilon\to 0$. This will be achieved in \cref{cor:GB} below (for another weak version of Gauss--Bonnet theorem obtained by approximation we refer to \cite{kuwert_conformalimmersions_2012} that consider immersed surface in $\mathbb{R}^n$). The main difficulty here is that the right-hand side of \cref{eq:integral_scalar_curvature} is neither lower nor upper semicontinuous under varifold convergence. However, we will show the full convergence of $\int_{\partial \Omega^{\sml{\varepsilon}}_t} \sca^\top \dif \Hff^{n-1}$ to \cref{eq:integral_scalar_curvature} as $\varepsilon \to 0^+$.

\begin{proposition}\label{prop:convergenza-seconda-forma}
    Let $p\in(1,2]$ and let $\Omega \subset M$ be a closed bounded set with $\CS^{1,\beta}$-boundary, for some $\beta>0$. Assume that $\partial \Omega$ has generalized mean curvature $\H\in L^2(\partial \Omega)$. Let $w_p$ (resp. $w^\varepsilon_p$) be the solution to the problem \cref{eq:moser_potential} (resp. \cref{eq:eps_moser_potential}) for some $\Phi \in \Lip(\overline{D \smallsetminus \Omega})$, where $D$ is a smooth bounded open set such that $\Omega\subset D$, such that $\inf_{\partial D} \Phi >0$. Fix $T<\inf_{\partial D} \Phi $. Let $(\varepsilon_k)_{k \in \N}$ be a vanishing sequence such that $\partial \Omega_t^{\sml{\varepsilon_k}}$ converges to $\partial \Omega^{\sml{p}}_t$ in the sense of varifold for almost every $t \in [0,T]$. Then,
    \begin{align}\label{eq:ConvergenceMeanCurvature}
    \lim_{k\to +\infty}\int_{\partial \Omega^{\sml{\varepsilon_k}}_t} \abs{\H^{\sml{\varepsilon_k}}}^2\dif\Hff^{n-1}&=\int_{\partial \Omega_t^{\sml{p}}}\abs{\H^{\sml{p}}}^2\dif\Hff^{n-1},\\\label{eq:ConvergenceSecondFundamentalForm}
    \lim_{k\to +\infty}\int_{\partial \Omega_t^{\sml{\varepsilon_k}}}\abs{{\h}^{\sml{\varepsilon_k}}}^2\dif\Hff^{n-1}&=\int_{\partial \Omega^{\sml{p}}_t} \abs{{\h}^{\sml{p}}}^2\dif\Hff^{n-1},
\end{align}
    for almost every $t \in [0,T]$.
\end{proposition}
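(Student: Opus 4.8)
The plan is to exploit the monotonicity formula of \cref{thm:monotonicity-NPT} at the level of the regularized functions $w_p^\varepsilon$ to show that no energy is lost in the limit $\varepsilon\to0^+$. The starting point is that, by \cref{lem:ConvergenzaVarifoldEspToZero} and the hypothesis $\partial\Omega_t^{\sml{\varepsilon_k}}\to\partial\Omega_t^{\sml p}$ as varifolds, lower semicontinuity of the weight in the divergence identity of \cref{lem:divY} already gives, for a.e.\ $t$,
\begin{equation}
\int_{\partial\Omega_t^{\sml p}}\abs{\h^{\sml p}}^2\dif\Hff^{n-1}\le\liminf_{k\to+\infty}\int_{\partial\Omega_t^{\sml{\varepsilon_k}}}\abs{\h^{\sml{\varepsilon_k}}}^2\dif\Hff^{n-1},
\end{equation}
and similarly for $\abs{\H}^2$ (the mean curvature vector converges weakly-$*$ by curvature varifold convergence, hence its $L^2$ norm is lower semicontinuous). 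So the real content is the reverse inequality, i.e.\ that these $\limsup$'s do not exceed the limiting integrals.

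First I would fix $\alpha=3-p$, for which $\mathcal D_+^{\sml\varepsilon}\ge0$, and write down the integrated form of \cref{eq:divY} for $w_p^\varepsilon$ — this is exactly \cref{eq:zzintegration}/\cref{step:interior_estimate_integration_by_part} — together with the analogous identity for $w_p$ obtained in the proof of \cref{thm:monotonicity-NPT} (equivalently the equality case \cref{eq:derivative-monotonicity_formula-NPT}). Testing both with a fixed nonnegative $\varphi\in\Lip_c$, the $\varepsilon$-identity is an \emph{inequality} $\ge$ (because of the cut-off/$\mathcal D_+\ge0$ argument) while the $p$-identity is an \emph{equality}. Using $w_p^\varepsilon\to w_p$ in $C^1_{\loc}(D\smallsetminus\Omega)$ and $\abs{\nabla w_p^\varepsilon}\rightharpoonup\abs{\nabla w_p}$ in $W^{1,2}_{\loc}$ (\cref{prop:RegolaritaWp}), together with $\Err_\varepsilon\to0$ in, say, $L^2_{\loc}$ (which follows from $\Err_\varepsilon\le1$ and $\Err_\varepsilon\to0$ pointwise a.e.\ off $\Crit(w_p)$, plus $\abs{\Crit(w_p)\cap\partial\Omega^{\sml p}_t}_{\Hff^{n-1}}=0$ for a.e.\ $t$), one checks that: the left-hand boundary terms $\int\ip{Y_\varepsilon|\nu^{\sml\varepsilon}}$ converge to $\int\ip{Y|\nu^{\sml p}}$ (using the weak $W^{1,2}$ convergence of $\abs{\nabla w_p^\varepsilon}$ and strong $L^2$ convergence on a.e.\ slice via coarea, as in \cref{lem:continuity_G}); the $\mathcal D_\sigma^{\sml\varepsilon}$ term, which is linear in $\nabla\abs{\nabla w_p^\varepsilon}$ and contains only the $L^\infty$-convergent Ricci and gradient factors, converges to its $\varepsilon=0$ counterpart; and the term $\int\varphi\abs{\nabla w_p^\varepsilon}\,\mathcal D_+^{\sml\varepsilon}$ is lower semicontinuous by Fatou. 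Passing to the $\liminf$ in the $\varepsilon$-inequality and comparing with the $p$-equality, every term matches except that Fatou has been used on $\mathcal D_+^{\sml\varepsilon}$; since the totals agree, Fatou must in fact be an equality, which forces
\begin{equation}
\lim_{k\to+\infty}\int_0^T\varphi(t)\int_{\partial\Omega_t^{\sml{\varepsilon_k}}}\Big(\abs{\mathring\h^{\sml{\varepsilon_k}}}^2+\frac{\abs{\nabla^\top\abs{\nabla w_p^{\varepsilon_k}}}^2}{\abs{\nabla w_p^{\varepsilon_k}}^2}\Big)\dif\Hff^{n-1}\dif t=\int_0^T\varphi(t)\int_{\partial\Omega_t^{\sml p}}\Big(\abs{\mathring\h^{\sml p}}^2+\frac{\abs{\nabla^\top\abs{\nabla w_p}}^2}{\abs{\nabla w_p}^2}\Big)\dif\Hff^{n-1}\dif t,
\end{equation}
for every nonnegative $\varphi\in\Lip_c(0,T)$; combined with the slice-wise lower semicontinuity of each nonnegative summand separately and arbitrariness of $\varphi$, this upgrades to convergence of $\int_{\partial\Omega_t^{\sml{\varepsilon_k}}}\abs{\mathring\h^{\sml{\varepsilon_k}}}^2$ for a.e.\ $t$ (extracting a further subsequence depending on $t$ if needed, then noting the limit is independent of it).

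It remains to turn convergence of $\int\abs{\mathring\h}^2$ into convergence of $\int\abs{\H}^2$ and hence of $\int\abs{\h}^2=\int\abs{\mathring\h}^2+\frac1{n-1}\int\abs{\H}^2$. For $\int\abs{\H}^2$ I would argue directly from \cref{eq:MeanCurvatureLevelWepsP}–\cref{eq:MeanCurvatureLevelWP}: on a.e.\ slice, $\H^{\sml{\varepsilon_k}}=\big(\abs{\nabla w_p^{\varepsilon_k}}-(p-1)\ip{\nabla\abs{\nabla w_p^{\varepsilon_k}}|\nu}\abs{\nabla w_p^{\varepsilon_k}}^{-1}\big)(1+\tfrac{2-p}{p-1}\Err_{\varepsilon_k})$, and the factor in parentheses is exactly $\H^{\sml p}$ when $\varepsilon=0$; using the uniform $W^{1,2}$ bound of \cref{prop:StimeBaseWepsilonP}(2) together with $\Err_{\varepsilon_k}\to0$, the already-established convergence of the normal part of $\nabla\abs{\nabla w_p^{\varepsilon_k}}$ in $L^2$ on a.e.\ slice (contained in the convergence of $\int\abs{\mathring\h}^2+\int\tfrac{\abs{\nabla^\top\abs{\nabla w_p}}^2}{\abs{\nabla w_p}^2}$ argument above, since $\abs{\nabla\abs{\nabla w_p^{\varepsilon_k}}}^2$ splits into tangential and normal parts and the full $W^{1,2}$ norm is controlled), one gets $\H^{\sml{\varepsilon_k}}\to\H^{\sml p}$ in $L^2(\partial\Omega_t^{\sml p})$ and in particular \cref{eq:ConvergenceMeanCurvature}. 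Then \cref{eq:ConvergenceSecondFundamentalForm} follows. \textbf{The main obstacle} is the bookkeeping in the "no loss of energy" step: one must be careful that the boundary terms at $t=0$ (involving $\H\in L^2(\partial\Omega)$) and the cut-off $\chi_\delta$ limits $\delta\to0$ are handled uniformly in $\varepsilon$ — this is where one invokes \cref{rem:ConvergenzaVarifoldEspToZeroNONsmoothOmega} and the stability of the estimate in \cref{eq:global_regularity_eps} — and that the subsequence extraction in the slice-wise arguments does not circularly depend on $t$; this is resolved by first proving the $L^1(0,T)$-level equality for all test functions and only afterwards localizing in $t$.
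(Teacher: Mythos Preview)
Your overall strategy matches the paper's: compare the divergence identity for $Y_\varepsilon$ at the $\varepsilon$-level with the $p$-level equality from \cref{thm:monotonicity-NPT}, observe that the boundary and $\mathcal{D}_\sigma^{\sml\varepsilon}$ pieces converge, apply Fatou to the nonnegative $\mathcal{D}_+^{\sml\varepsilon}$, and conclude that Fatou must be an equality term by term. However, there is a concrete gap in how you carry this out.

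For $\alpha=3-p$ the quantity $\mathcal{D}_+^{\sml\varepsilon}$ has \emph{three} nonnegative summands, not two: besides $\abs{\mathring\h}^2$ and $\abs{\nabla^\top\abs{\nabla w_p^\varepsilon}}^2/\abs{\nabla w_p^\varepsilon}^2$ there is also the normal-derivative term $\Psi(\Err_\varepsilon)\,\abs{\nabla^\perp\abs{\nabla w_p^\varepsilon}}^2/\abs{\nabla w_p^\varepsilon}^2$ with $\Psi(s)=(p-1)^2\bigl(1+\tfrac{2-p}{p-1}s\bigr)\bigl[\tfrac{1}{p-1}-\tfrac{n-2}{n-1}\bigl(1+\tfrac{2-p}{p-1}s\bigr)\bigr]>0$. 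Your displayed limit drops this term entirely. Including it is not cosmetic: it is precisely this third term whose equality in Fatou yields convergence of $\int\Psi(\Err_{\varepsilon_k})\abs{\nabla^\perp\abs{\nabla w_p^{\varepsilon_k}}}^2/\abs{\nabla w_p^{\varepsilon_k}}^2$, and that is what the paper uses to obtain \cref{eq:ConvergenceMeanCurvature}. Your alternative route to $\int\abs{\H^{\sml{\varepsilon_k}}}^2\to\int\abs{\H^{\sml p}}^2$ does not work: the uniform $W^{1,2}$ bound on $\abs{\nabla w_p^\varepsilon}$ gives only \emph{weak} $L^2$ convergence of $\nabla\abs{\nabla w_p^\varepsilon}$, which does not suffice to pass the quadratic $\abs{\nabla^\perp\abs{\nabla w_p^{\varepsilon_k}}}^2$ to the limit on slices; and convergence of the tangential part plus a bound on the full gradient says nothing about convergence of the normal part's $L^2$-norm.

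Even after restoring the third summand, one more step is needed that you do not mention: the coefficient $\Psi(\Err_\varepsilon)$ depends on $\varepsilon$, so equality in Fatou only gives convergence of $\int\Psi(\Err_{\varepsilon_k})\,\abs{\nabla^\perp\abs{\nabla w_p^{\varepsilon_k}}}^2/\abs{\nabla w_p^{\varepsilon_k}}^2$ to $\Psi(0)\int\abs{\nabla^\perp\abs{\nabla w_p}}^2/\abs{\nabla w_p}^2$. To separate out the $\Err$-dependence and actually conclude \cref{eq:ConvergenceMeanCurvature} via the expansion of $\abs{\H^{\sml{\varepsilon_k}}}^2$, the paper writes $\Psi(s)=(\Psi(s)-\kst s)+\kst s$ with $\kst>0$ chosen so that $\Psi(s)-\kst s>0$ on $[0,1]$, reruns the same Fatou-equality argument, and deduces $\int\Err_{\varepsilon_k}\abs{\nabla^\perp\abs{\nabla w_p^{\varepsilon_k}}}^2/\abs{\nabla w_p^{\varepsilon_k}}^2\to 0$. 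Finally, note that applying Fatou term by term also requires separate lower-semicontinuity of each of the three summands; for the traceless and normal-derivative terms this is nontrivial (the paper argues via a $\chi_\delta$ cut-off and smooth convergence away from $\Crit(w_p)$, and for the tangential term via an integration-by-parts on the curvature varifold), and you should not bundle this under ``slice-wise lower semicontinuity''.
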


\begin{proof}
We can assume, up to removing a negligible subset of $[0,T]$, that $\partial \Omega^{\sml{\varepsilon_k}}_t$ is smooth for every $t$  in a fixed set of full measure for every $k\in \N$. Moreover, employing \cref{thm:ProprietaConvergenzeVarifold} as in \cref{lem:ConvergenzaVarifoldEspToZero}, we get
\begin{equation}\label{eq:zzzsemicontinuitytraceless}
     \liminf_{k\to +\infty}\int_{\partial \Omega_t^{\sml{\varepsilon_k}}}\abs{\mathring{\h}^{\sml{\varepsilon_k}}}^2\dif\Hff^{n-1}\geq\int_{\partial \Omega^{\sml{p}}_t} \abs{\mathring{\h}^{\sml{p}}}^2\dif\Hff^{n-1},
\end{equation}
for almost every $t \in(0,T)$.

\medskip

Consider now a generic function $F\in\CS^0([0,1])$. Since $F(\Err_\varepsilon) \nabla w_p^\varepsilon$ uniformly converges to $F(0) \nabla w_p$ and $F(\Err_\varepsilon) \nabla \abs{\nabla w_p^\varepsilon}$ converges to $F(0) \nabla \abs{\nabla w_p}$ weakly in $L^2_{\loc}(D \smallsetminus \Omega)$, we have
\begin{align}\label{eq:zzlscDoppioProdottoMeanCurcature}
    \lim_{k\to +\infty} \int_{\partial \Omega^{\sml{\varepsilon_k}}_t} F(\Err_{\varepsilon_k}) \abs{ \nabla w_p^{\varepsilon_k}}^2\dif \Hff^{n-1} &= F(0) \int_{\partial \Omega_{t}^{\sml{p}}} \abs{ \nabla w_p}^2\dif \Hff^{n-1},\\ \intertext{and}\label{eq:zzlscGradienteQuadrato}
    \lim_{k\to +\infty} \int_{\partial \Omega^{\sml{\varepsilon_k}}_t} F(\Err_{\varepsilon_k}) \ip{\vec{\H}^{\sml{\varepsilon_k}}| \nabla w_p^{\varepsilon_k}}\dif \Hff^{n-1} &= F(0) \int_{\partial \Omega_{t}^{\sml{p}}} \ip{\vec{\H}^{\sml{p}}|\nabla w_p}\dif\Hff^{n-1},
\end{align}
for almost every $t \in(0,T)$.

\smallskip 
Assume now $F$ is nonnegative. Let $\delta>0$ and let $\chi_\delta$ be a smooth cut-off function satisfying 
    \begin{equation}
        \chi_\delta(t) \coloneqq\begin{cases}
            &\chi_\delta(t) =0& & \text{if $t \leq \delta$,}\\
            &0\leq \chi'_\delta(t) \leq 2/\delta&& \text{if $t \in [\delta, 2 \delta]$,}\\
            &\chi_\delta(t)=1& &\text{if $t \geq2 \delta$.}
        \end{cases}
    \end{equation}
The function $\chi_\delta(\abs{\nabla w^\varepsilon_p})F(\Err_\varepsilon)$ uniformly converges to $\chi_\delta(\abs{\nabla w_p})F(0)$ as $\varepsilon \to 0^+$. By smooth convergence far from $\Crit(w_p)$, we get
{\allowdisplaybreaks
\begin{align}
    \liminf_{k\to +\infty} \int_{\partial \Omega^{\sml{\varepsilon_k}}_t}F (\Err_{\varepsilon_k}) \abs{\H^{\sml{\varepsilon_k}}}^2\dif \Hff^{n-1} &\geq \liminf_{k\to +\infty} \int_{\partial \Omega^{\sml{\varepsilon_k}}_t} F(\Err_{\varepsilon_k})\chi_\delta (\abs{\nabla w_p^{\varepsilon_k}}) \abs{\H^{\sml{\varepsilon_k}}}^2\dif \Hff^{n-1} \\
    &\geq F(0) \int_{\partial \Omega^{\sml{p}}_t} \chi_\delta(\abs{\nabla w_p})\abs{\H^{\sml{p}}}^2 \dif \Hff^{n-1}.
\end{align}}
Passing to the limit as $\delta \to 0$, by the monotone convergence theorem we get
\begin{equation}\label{eq:zzlscMeanCurvatureWithWeight}
    \liminf_{k\to +\infty} \int_{\partial \Omega^{\sml{\varepsilon_k}}_t} F(\Err_{\varepsilon_k}) \abs{\H^{\sml{\varepsilon_k}}}^2\dif \Hff^{n-1} \geq F(0) \int_{\partial \Omega^{\sml{p}}_t}\abs{\H^{\sml{p}}}^2 \dif \Hff^{n-1}
\end{equation}
Since 
\begin{equation}
    \abs{\H^{\sml{\varepsilon_k}}}^2 = \left(1 + \frac{2-p}{p-1} \Err_{\varepsilon_k}\right)^2 \left[\frac{\abs{\nabla^\perp \abs{\nabla w_p^{\varepsilon_k}}}^2}{\abs{\nabla w_p^{\varepsilon_k}}^2}- \abs{\nabla w_p^{\varepsilon_k}}^2\right] + 2 \left(1 + \frac{2-p}{p-1} \Err_{\varepsilon_k}\right) \ip { \vec{\H}^{\sml{\varepsilon_k}}| \nabla w^{\varepsilon_k}_p},
\end{equation}
by \cref{eq:MeanCurvatureLevelWepsP}, plugging in \cref{eq:zzlscMeanCurvatureWithWeight,eq:zzlscGradienteQuadrato,eq:zzlscDoppioProdottoMeanCurcature} we conclude
\begin{equation}\label{eq:zzzsemicontoniutynormal}
    \liminf_{k\to +\infty} \int_{\partial \Omega^{\sml{\varepsilon_k}}_t} F(\Err_{\varepsilon_k}) \frac{\abs{\nabla^\perp \abs{\nabla w_p^{\varepsilon_k}}}^2}{\abs{\nabla w_p^{\varepsilon_k}}^2}\dif \Hff^{n-1} \geq F(0) \int_{\partial \Omega^{\sml{p}}_t} \frac{\abs{\nabla^\perp \abs{\nabla w_p}}^2}{\abs{\nabla w_p}^2} \dif \Hff^{n-1}.
\end{equation}

\medskip

We can prove a similar result for $\nabla^\top\abs{\nabla w^\varepsilon_p}$. Indeed, for $\delta>0$ consider $\log( \abs{\nabla w_p^{\varepsilon_k}} + \delta)$, which belongs to $\CS^0_{\loc} \cap W^{1,2}_{\loc}(D\smallsetminus \Omega^\circ)$ by \cref{prop:StimeBaseWepsilonP}. Let also $X$ be a vector field in $\CS^1_c(D\smallsetminus\Omega)$. Applying \cref{eq:IntegrationByPartsSobolevOnVarifold}, we can write
    \begin{equation}
        \int_{\partial \Omega^{\sml{\varepsilon_k}}_t} \ip{\frac{\nabla^\top \abs{\nabla w_p^{\varepsilon_k}}}{\abs{\nabla w_p^{\varepsilon_k}}+ \delta}| X}\dif \Hff^{n-1} = -\int_{\partial \Omega^{\sml{\varepsilon_k}}_t}  \log\left(\abs{\nabla w_p^{\varepsilon_k}}+ \delta\right)\left(\ip{\vec{\H}^{\sml{\varepsilon_k}}|X}+\div_{\top}X \right) \dif \Hff^{n-1}.
    \end{equation}
    Since $\abs{\nabla w^{\varepsilon_k}_p} \to \abs{ \nabla w_p}$ in $\CS^0_{\loc}(D\smallsetminus\Omega^\circ)$, the varifold convergence of $\partial \Omega^{\sml{\varepsilon_k}}_t$ to $\partial \Omega^{\sml{p}}_t$ as $k\to +\infty$ implies
    \begin{equation}
    \begin{split}
        &\lim_{k\to +\infty}\int_{\partial \Omega^{\sml{\varepsilon_k}}_t}  \log\left( \abs{ \nabla w_p^{\varepsilon_k}}+\delta\right)\left(\ip{\vec{\H}^{\sml{\varepsilon_k}}|X}+ \div_{\top}X \right) \dif \Hff^{n-1} \\
        &\qquad=\int_{\partial \Omega^{\sml{p}}_t}  \log\left( \abs{ \nabla w_p}+\delta\right)\left(\ip{\vec{\H}^{\sml{p}}|X}+\div_{\top}X \right) \dif \Hff^{n-1} =-\int_{\partial \Omega^{\sml{p}}_t} \ip{\frac{\nabla^\top \abs{\nabla w_p}}{\abs{\nabla w_p}+ \delta}| X}\dif \Hff^{n-1},     
    \end{split}
    \end{equation}
    which is well defined since $\log( \abs{\nabla w_p} + \delta) \in \CS^0_{\loc} \cap W^{1,2}_{\loc}(D\smallsetminus\Omega^\circ)$. Since $\partial \Omega^{\sml{\varepsilon_k}}_t$ converges to $\partial \Omega^{\sml{p}}_t$ as varifolds, the lower semicontinuity part of \cref{lem:JointConvergenceLSC} implies
    \begin{equation}
        \int_{\partial \Omega^{\sml{p}}_t} 
       \frac{  \abs{\nabla^\top\abs{\nabla w_p}}^2}{(\abs{\nabla w_p} + \delta)^2} \dif \Hff^{n-1}
        \leq \liminf_{k\to+\infty} \int_{\partial \Omega^{\sml{\varepsilon_k}}_t} \frac{\abs{\nabla^\top\abs{\nabla w_p^{\varepsilon_k}}}^2}{(\abs{\nabla w_p^{\varepsilon_k}} + \delta)^2} \dif \Hff^{n-1} 
        \le
         \liminf_{k\to+\infty} \int_{\partial \Omega^{\sml{\varepsilon_k}}_t} 
        \frac{ \abs{\nabla^\top\abs{\nabla w_p^{\varepsilon_k}}}^2 }{\abs{\nabla w_p^{\varepsilon_k}}^2}
         \dif \Hff^{n-1} .
    \end{equation}
    Letting now $\delta\to 0$, monotone convergence theorem yields
    \begin{equation}\label{eq:zzzsemicontinuitytangential}
         \liminf_{k\to+\infty} \int_{\partial \Omega^{\sml{\varepsilon_k}}_t} 
        \frac{ \abs{\nabla^\top\abs{\nabla w_p^{\varepsilon_k}}}^2 }{\abs{\nabla w_p^{\varepsilon_k}}^2}
         \dif \Hff^{n-1}\geq \int_{\partial \Omega^{\sml{p}}_t} 
       \frac{  \abs{\nabla^\top\abs{\nabla w_p}}^2}{\abs{\nabla w_p}^2} \dif \Hff^{n-1}
    \end{equation}
\medskip

Consider now $Y_\varepsilon$ defined in \cref{eq:defY} for $\alpha =3-p$ and let $\psi \in \Lip_{c}(0,T)$ be a nonnegative function. By the definition $\MF_p$ and $\IF_p$, in \cref{eq:monotonicty_formula} and \cref{eq:pezzo-di-gradiente} respectively, one sees that  
\begin{equation}
    \int_{\partial \Omega^{\sml{p}}_t} \ip{Y| \nu^{\sml{p}}} \dif \Hff^{n-1} =\begin{multlined}[t][.5\textwidth] \ee^{-\frac{3-n}{n-p}t}\left[\MF_p(t) - \left(\frac{p-1}{n-p}- \frac{1}{3-p} \right) \IF_p(t)\right]\\+ \int_0^t \ee^{\frac{3-n}{n-p}(s-t)}\int_{\partial \Omega^{\sml{p}}_s} \Ric(\nu^{\sml{p}}, \nu^{\sml{p}})\dif \Hff^{n-1} \dif s.
    \end{multlined}
\end{equation}
By \cref{thm:monotonicity-NPT} and \cref{lem:continuity_G}, the function
\begin{equation}
    t \mapsto \int_{\partial \Omega^{\sml{p}}_t} \ip{Y| \nu^{\sml{p}}} \dif \Hff^{n-1}
\end{equation}
is $W^{1,1}_{\loc}(0,T)$ and we have an explicit expression for its derivative. Testing the distributional derivative against a nonnegative $\psi \in \Lip_c(0,T)$ and using coarea formula we get
\begin{equation}\label{eq:zzintegrationbypartpconvergenceeps}
\int_{D \smallsetminus \Crit(w_p)}\div(Y) \psi(w_p) \dif \Hff^n =    -\int_{D \smallsetminus \Omega} \ip{Y|\nabla [\psi(w_p)]}\dif \Hff^n,
\end{equation}
where the expression of $\div (Y)$ away from the critical set of $w_p$ is given in \cref{lem:divY}.

Since $w^\varepsilon_p$ converges to $w_p$ in $\CS_{\loc}^1(D\smallsetminus \Omega)$ and $\nabla \abs{\nabla w_p^\varepsilon}$ converges to $\nabla \abs{\nabla w_p}$ weakly in $L_{\loc}^2(D\smallsetminus \Omega)$ as $\varepsilon \to 0$ (see \cref{prop:RegolaritaWp}), we can easily show that
\begin{equation}\label{eq:zzzconvergenceYandintegrationbypart}
\begin{split}
    -\int_{D\smallsetminus \Omega} \ip{Y| \nabla [\psi(w_p)]} \dif \Hff^{n} &= -\lim_{\varepsilon\to 0^+} \int_{D\smallsetminus \Omega} \ip{Y_\varepsilon| \nabla \psi(w^\varepsilon_p)} \dif \Hff^{n} \\&= \lim_{\varepsilon\to 0^+} \int_{D\smallsetminus \Omega}\psi(w^\varepsilon_p) \abs{\nabla w^\varepsilon_p} (\mathcal{D}^{\sml{\varepsilon}}_+ + \mathcal{D}^{\sml{\varepsilon}}_\sigma)\dif\Hff^n .
    \end{split}
\end{equation}
where $\mathcal{D}^{\sml{\varepsilon}}_+$ and $\mathcal{D}^{\sml{\varepsilon}}_\sigma$ are defined in \cref{lem:divY}.
On the other hand, by coarea formula 
\begin{align} \label{eq:zzdivergeneceinpieces}
   \int_{D\smallsetminus \Omega}\psi(w^\varepsilon_p) \abs{\nabla w^\varepsilon_p} \mathcal{D}^{\sml{\varepsilon}}_+\dif \Hff^n
    &=   \int_0^T \psi(t)\int_{\partial \Omega^{\sml{\varepsilon}}_t} \abs{\mathring{\h}^{\sml{\varepsilon}}}^2 +  \Psi(\Err_\varepsilon)\frac{\abs{\nabla^\perp \abs{\nabla w^{\varepsilon}_p}}^2}{\abs{\nabla w^{\varepsilon}_p}^2}   +\frac{\abs{\nabla^\top \abs{\nabla w^{\varepsilon}_p}}^2}{\abs{\nabla w^{\varepsilon}_p}^2} \dif \Hff^{n-1}\dif t
\end{align}
where
\begin{equation}
    \Psi(s) \coloneqq (p-1)^2 \left(1+ \frac{2-p}{p-1}s\right) \left[\frac{1}{p-1} - \frac{n-2}{n-1} \left(1+ \frac{2-p}{p-1} s\right)\right]>0.
\end{equation}
Using again converging properties of $w_p^\varepsilon$ to $w_p$ as $\varepsilon \to 0^+$ (see \cref{prop:RegolaritaWp}), we have
\begin{equation}\label{eq:zzconvergencesigmaD}
    \lim_{\varepsilon\to 0^+}\int_{D \smallsetminus \Omega} \psi(w^{\varepsilon}_p)\abs{\nabla w_p^{\varepsilon}} \mathcal{D}^{\sml{\varepsilon}}_\sigma  \dif \Hff^n = \int_{D \smallsetminus \Crit(w_p)} \psi(w_p)\abs{\nabla w_p}\mathcal{D}_\sigma \dif \Hff^{n}.
\end{equation} 

By Applying \cref{eq:zzzsemicontinuitytraceless,eq:zzzsemicontinuitytangential,eq:zzzsemicontoniutynormal} and Fatou's lemma, one can see that the left-hand side of \cref{eq:zzdivergeneceinpieces} is lower semicontinuous as $k\to +\infty$. Together with \cref{eq:zzconvergencesigmaD}, it yields
\begin{equation}\label{eq:zzzfatoulimitcontradiction}
    \liminf_{k\to +\infty}\int_{D \smallsetminus \Omega} \psi(w^{\varepsilon_k}_p) \abs{\nabla w_p^{\varepsilon_k}} ( \mathcal{D}^{\sml{\varepsilon}}_+ + \mathcal{D}^{\sml{\varepsilon}}_\sigma) \dif \Hff^n\geq \int_{D\smallsetminus \Crit(w_p)} \psi(w_p) \div(Y) \dif \Hff^n.
\end{equation}
Suppose that the inequality in any of \cref{eq:zzzsemicontinuitytraceless,eq:zzzsemicontinuitytangential,eq:zzzsemicontoniutynormal} is strict on a subset of positive measure of $[0,T]$. Then, the inequality would be strict in \cref{eq:zzzfatoulimitcontradiction}, which leads to a contradiction by \cref{eq:zzzconvergenceYandintegrationbypart,eq:zzintegrationbypartpconvergenceeps}. The same argument can be repeated for any subsequence of $(\varepsilon_k)_{k\in \N}$. Therefore, we get
\begin{equation}\label{eq:zzzconvergenceSecondTraceless}
    \lim_{k \to +\infty} \int_{\partial \Omega_t^{\sml{\varepsilon_k}}} \abs{\mathring{\h}^{\sml{\varepsilon_k}}}^2 \dif \Hff^{n-1} = \int_{\partial \Omega_t^{\sml{p}}} \abs{\mathring{\h}^{\sml{p}}}^2 \dif \Hff^{n-1}
\end{equation}
and
\begin{equation}\label{eq:zzconvergencenormalderivative}
    \lim_{k \to +\infty} \int_{\partial \Omega_t^{\sml{\varepsilon_k}}} \Psi(\Err_{\varepsilon_k})\frac{\abs{\nabla^\perp \abs{\nabla w_p^{\varepsilon_k}}}^2}{\abs{\nabla w_p^{\varepsilon_k}}^2} \dif \Hff^{n-1} = \Psi(0)\int_{\partial \Omega_t^{\sml{p}}} \frac{\abs{\nabla^\perp \abs{\nabla w_p}}^2}{\abs{\nabla w_p}^2} \dif \Hff^{n-1}
\end{equation}
for almost every $t$. Since $\Psi(s)>0$ for $s \in [0,1]$, there exists 
$\kst>0$ such that $\Psi(s)-\kst s>0$. Hence, rewriting $\Psi(s)=\Psi(s)-\kst s + \kst s $ in \cref{eq:zzdivergeneceinpieces}, with the same argument leading to \cref{eq:zzconvergencenormalderivative}, one also obtains
\begin{equation}\label{eq:zzvanishingoftherest}
    \lim_{k \to +\infty} \int_{\partial \Omega_t^{\sml{\varepsilon_k}}} \Err_{\varepsilon_k}\frac{\abs{\nabla^\perp \abs{\nabla w_p^{\varepsilon_k}}}^2}{\abs{\nabla w_p^{\varepsilon_k}}^2} \dif \Hff^{n-1} = 0.
\end{equation}
By \cref{eq:zzlscGradienteQuadrato,eq:zzlscDoppioProdottoMeanCurcature,eq:zzconvergencenormalderivative,eq:zzvanishingoftherest} we get \cref{eq:ConvergenceMeanCurvature}
by \cref{eq:MeanCurvatureLevelWepsP}. Finally \cref{eq:ConvergenceMeanCurvature} and \cref{eq:zzzconvergenceSecondTraceless} imply \cref{eq:ConvergenceSecondFundamentalForm} and this concludes the proof of the proposition.
\end{proof}

\begin{corollary}\label{cor:scalar_curvature_convergence}
Under the same hypotheses of \cref{prop:convergenza-seconda-forma}, then
    \begin{equation}
        \lim_{k\to +\infty}\int_{\partial \Omega^{\sml{\varepsilon_k}}_t} \sca_{\sml{\varepsilon_k}}^\top\dif\Hff^{n-1} = \int_{\partial \Omega^{\sml{p}}_t} \sca_{\sml{p}}^\top\dif\Hff^{n-1},
    \end{equation}
for almost every $t \in [0,T]$, where $\sca^\top$ denotes the induced scalar curvature on the level sets.
\end{corollary}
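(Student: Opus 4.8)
The strategy is to combine the Gauss equation \eqref{eq:Gauss-Codazzi} (valid pointwise on the smooth level sets $\partial\Omega^{\sml{\varepsilon_k}}_t$ and, in the weak sense of \cref{lem:ConvergenzaVarifoldEspToZero}, also on $\partial\Omega^{\sml{p}}_t$) with the convergence of the integrated curvatures established in \cref{prop:convergenza-seconda-forma}. Writing $\sca^\top = \sca - 2\Ric(\nu,\nu) + \H^2 - \abs{\h}^2$ on the smooth level set $\partial\Omega^{\sml{\varepsilon_k}}_t$ and integrating, I would split
\begin{equation*}
\int_{\partial\Omega^{\sml{\varepsilon_k}}_t}\sca^\top_{\sml{\varepsilon_k}}\dif\Hff^{n-1} = \int_{\partial\Omega^{\sml{\varepsilon_k}}_t}\sca\dif\Hff^{n-1} - 2\int_{\partial\Omega^{\sml{\varepsilon_k}}_t}\Ric(\nu^{\sml{\varepsilon_k}},\nu^{\sml{\varepsilon_k}})\dif\Hff^{n-1} + \int_{\partial\Omega^{\sml{\varepsilon_k}}_t}\abs{\H^{\sml{\varepsilon_k}}}^2\dif\Hff^{n-1} - \int_{\partial\Omega^{\sml{\varepsilon_k}}_t}\abs{\h^{\sml{\varepsilon_k}}}^2\dif\Hff^{n-1},
\end{equation*}
and treat the four summands separately, taking $k\to+\infty$ for almost every $t\in[0,T]$ along the subsequence for which $\partial\Omega^{\sml{\varepsilon_k}}_t\to\partial\Omega^{\sml{p}}_t$ in the sense of varifolds.

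For the first two terms, $\sca$ and $\Ric$ are continuous bounded functions on the ambient $M$ (fixed independently of $k$), so $\sca\,\Hff^{n-1}\res\partial\Omega^{\sml{\varepsilon_k}}_t \to \sca\,\Hff^{n-1}\res\partial\Omega^{\sml{p}}_t$ and $\Ric(\nu^{\sml{\varepsilon_k}},\nu^{\sml{\varepsilon_k}})\,\Hff^{n-1}\res\partial\Omega^{\sml{\varepsilon_k}}_t \to \Ric(\nu^{\sml{p}},\nu^{\sml{p}})\,\Hff^{n-1}\res\partial\Omega^{\sml{p}}_t$ follow directly from varifold convergence: for $\Ric$ one uses that the varifold convergence records the tangent planes, equivalently the normals $\nu^{\sml{\varepsilon_k}}\to\nu^{\sml{p}}$ in the appropriate sense (here the smooth convergence of $w^{\varepsilon_k}_p\to w_p$ in $\CS^1_{\loc}(D\smallsetminus\Crit(w_p))$ of \cref{prop:RegolaritaWp} guarantees $\nu^{\sml{\varepsilon_k}}\to\nu^{\sml{p}}$ away from the critical set, and on the critical set the integrand carries no mass for a.e.\ $t$ by \cref{rmk:weakSard}). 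The delicate two terms are $\int\abs{\H^{\sml{\varepsilon_k}}}^2$ and $\int\abs{\h^{\sml{\varepsilon_k}}}^2$: for these I would invoke \emph{exactly} \cref{eq:ConvergenceMeanCurvature} and \cref{eq:ConvergenceSecondFundamentalForm} of \cref{prop:convergenza-seconda-forma}, which assert the full (not merely semicontinuous) convergence of both integrals. This is the heart of the matter, and it is already done.

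Adding the four limits gives
\begin{equation*}
\lim_{k\to+\infty}\int_{\partial\Omega^{\sml{\varepsilon_k}}_t}\sca^\top_{\sml{\varepsilon_k}}\dif\Hff^{n-1} = \int_{\partial\Omega^{\sml{p}}_t}\sca\dif\Hff^{n-1} - 2\int_{\partial\Omega^{\sml{p}}_t}\Ric(\nu^{\sml{p}},\nu^{\sml{p}})\dif\Hff^{n-1} + \int_{\partial\Omega^{\sml{p}}_t}\abs{\H^{\sml{p}}}^2\dif\Hff^{n-1} - \int_{\partial\Omega^{\sml{p}}_t}\abs{\h^{\sml{p}}}^2\dif\Hff^{n-1},
\end{equation*}
and by \eqref{eq:integral_scalar_curvature} the right-hand side is precisely $\int_{\partial\Omega^{\sml{p}}_t}\sca^\top_{\sml{p}}\dif\Hff^{n-1}$, which is the claimed identity. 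The only genuine subtlety beyond citing \cref{prop:convergenza-seconda-forma} is bookkeeping the null set of bad $t$: one should intersect the full-measure set where $\partial\Omega^{\sml{\varepsilon_k}}_t$ is smooth for all $k$, the set where varifold convergence holds, the set where \cref{prop:convergenza-seconda-forma} applies, and the set where $\Hff^{n-1}(\partial\Omega^{\sml{p}}_t\cap\Crit(w_p))=0$; since each has full measure in $[0,T]$, so does the intersection, and the conclusion holds for a.e.\ $t\in[0,T]$. I anticipate no further obstacle, as all the hard analytic content—the non-semicontinuous convergence of the $L^2$ norms of mean curvature and second fundamental form—has been isolated into the preceding proposition.
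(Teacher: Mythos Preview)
Your proposal is correct and follows essentially the same approach as the paper: decompose via the Gauss equation into the four terms $\sca$, $\Ric(\nu,\nu)$, $\abs{\H}^2$, $\abs{\h}^2$, handle the last two by \cref{prop:convergenza-seconda-forma}, and the first two by varifold convergence. The only minor difference is that for the Ricci term the paper appeals directly to the definition of varifold convergence (since $(x,P)\mapsto \Ric_x(\nu_P,\nu_P)$ is a continuous function on $G_{n-1}(U)$), whereas you argue via the $\CS^1$-convergence of $w^\varepsilon_p\to w_p$ plus \cref{rmk:weakSard}; both routes are valid and equivalent here.
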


\begin{proof}
    By \cref{eq:integral_scalar_curvature} and \cref{prop:convergenza-seconda-forma}, it only remains to show that 
    \begin{align}
         \lim_{k \to +\infty} \int_{\partial \Omega^{\sml{\varepsilon_k}}_t} \sca\dif\Hff^{n-1} &= \int_{\partial \Omega^{\sml{p}}_t} \sca\dif\Hff^{n-1},\label{eq:scalare_globale_convergenza}\\\intertext{and}\label{eq:Ricci_globale_convergenza}
           \lim_{k\to +\infty} \int_{\partial \Omega^{\sml{\varepsilon_k}}_t} \Ric(\nu^{\sml{\varepsilon_k}},\nu^{\sml{\varepsilon_k}})\dif\Hff^{n-1} &= \int_{\partial \Omega^{\sml{p}}_t} \Ric(\nu^{\sml{p}},\nu^{\sml{p}})\dif\Hff^{n-1}.
    \end{align}
    Both \cref{eq:scalare_globale_convergenza,eq:Ricci_globale_convergenza} follows by the very definition of varifold convergence in \cref{def:RectifiableVarifold}.
\end{proof}

\begin{corollary}[Gauss--Bonnet-type theorem]\label{cor:GB}
    Under the same hypotheses of \cref{prop:convergenza-seconda-forma}, if $(M,g)$ is of dimension $n=3$, there exists finite the limit
        \begin{equation}
        \lim_{k \to +\infty} \chi(\partial \Omega^{\sml{\varepsilon_k}}_t) = \frac{1}{4\pi}\int_{\partial \Omega^{\sml{p}}_t} \sca^\top_{\sml{p}} \dif \Hff^{n-1} ,
    \end{equation}
        for almost every $t\in[0,T]$, where $\chi(\partial \Omega^{\sml{\varepsilon_k}}_t)$ is the Euler characteristic of $\partial \Omega^{\sml{\varepsilon_k}}_t$. In particular, the integral of the scalar curvature assumes values in $8 \pi \Z$ for almost every $t\in[0,T)$.
\end{corollary}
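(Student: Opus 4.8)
The plan is to apply the classical Gauss--Bonnet theorem to the smooth level sets $\partial\Omega^{\sml{\varepsilon_k}}_t$ and then pass to the limit $k\to+\infty$ using the \emph{full} convergence of the total scalar curvature already obtained in \cref{cor:scalar_curvature_convergence}. The only new ingredient is the elementary observation that an integer-valued sequence which converges in $\R$ is eventually constant; consequently all the analytic work is already behind us.

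First I would fix a set $\mathcal T\subseteq[0,T]$ of full measure such that for every $t\in\mathcal T$: (a) for each $k\in\N$, $t$ is a regular value of $w^{\sml{\varepsilon_k}}_p$ with $0<t<\inf_{\partial D}\Phi$ --- this holds off a null set by Sard's theorem applied to each smooth $w^{\sml{\varepsilon_k}}_p$ on $D\smallsetminus\Omega$, and a countable intersection of full-measure sets is of full measure; (b) $\partial\Omega^{\sml{\varepsilon_k}}_t\to\partial\Omega^{\sml p}_t$ in the sense of varifolds, which is part of the hypotheses; and (c) the conclusion of \cref{cor:scalar_curvature_convergence} holds at $t$. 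For $t\in\mathcal T$ and $k\in\N$, property (a) together with $\Omega^{\sml{\varepsilon_k}}_t\Subset D$ (recall $w^{\sml{\varepsilon_k}}_p=0$ on $\partial\Omega$, while $w^{\sml{\varepsilon_k}}_p=\Phi>t$ near $\partial D$) gives that $\Sigma_k\coloneqq\partial\Omega^{\sml{\varepsilon_k}}_t=\{w^{\sml{\varepsilon_k}}_p=t\}$ is a smooth compact hypersurface without boundary, contained in $D\smallsetminus\Omega$; since $n=3$ it is a closed surface, and it is orientable because it bounds the open set $\{w^{\sml{\varepsilon_k}}_p<t\}$. Hence $\chi(\Sigma_k)\in 2\Z$.

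Then, for fixed $t\in\mathcal T$, writing $\sca^\top_{\sml{\varepsilon_k}}=2K_k$ with $K_k$ the Gaussian curvature of $\Sigma_k$, the classical Gauss--Bonnet theorem gives
\begin{equation*}
\frac{1}{4\pi}\int_{\Sigma_k}\sca^{\top}_{\sml{\varepsilon_k}}\dif\Hff^{n-1}=\frac{1}{2\pi}\int_{\Sigma_k}K_k\dif\Hff^{n-1}=\chi(\Sigma_k)\in 2\Z.
\end{equation*}
By (c) the left-hand side converges to $\tfrac{1}{4\pi}\int_{\partial\Omega^{\sml p}_t}\sca^{\top}_{\sml p}\dif\Hff^{n-1}$ as $k\to+\infty$. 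A sequence taking values in the discrete closed set $2\Z$ and converging in $\R$ must be eventually constant; therefore $\lim_{k\to+\infty}\chi(\Sigma_k)$ exists, is finite, lies in $2\Z$, and equals $\tfrac{1}{4\pi}\int_{\partial\Omega^{\sml p}_t}\sca^{\top}_{\sml p}\dif\Hff^{n-1}$. In particular $\int_{\partial\Omega^{\sml p}_t}\sca^{\top}_{\sml p}\dif\Hff^{n-1}\in 8\pi\Z$ for almost every $t\in[0,T)$, which is \cref{cor:GB}; this is precisely the $p\in(1,2]$ case of \cref{GB}, the cases $p=1$ and $p=2$ being the Weak Gauss--Bonnet Formula of \cite{huisken_inversemeancurvatureflow_2001} and the classical Gauss--Bonnet theorem (for $p=2$ almost every level set of $w_2$ is smooth by Sard), respectively.

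The main point requiring care --- and essentially the only one --- is the verification that the approximating level sets $\Sigma_k$ are \emph{closed} (compact and boundaryless) and orientable, so that the global Gauss--Bonnet theorem applies with no boundary term and produces a value in $8\pi\Z$ rather than merely $4\pi\Z$; this uses $\Omega^{\sml{\varepsilon_k}}_t\Subset D$ and the fact that $\Sigma_k$ bounds $\{w^{\sml{\varepsilon_k}}_p<t\}$. The genuinely delicate analytic step --- that $\int_{\partial\Omega^{\sml{\varepsilon_k}}_t}\sca^\top_{\sml{\varepsilon_k}}\dif\Hff^{n-1}$ converges to $\int_{\partial\Omega^{\sml p}_t}\sca^\top_{\sml p}\dif\Hff^{n-1}$ (full convergence of $\int\abs{\h}^2$, $\int\abs{\H}^2$, $\int\sca$ and $\int\Ric(\nu,\nu)$, not merely semicontinuity) --- was already absorbed into \cref{prop:convergenza-seconda-forma} and \cref{cor:scalar_curvature_convergence}, so no obstacle remains here.
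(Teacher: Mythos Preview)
Your proposal is correct and follows essentially the same approach as the paper: apply the classical Gauss--Bonnet theorem on the smooth level sets $\partial\Omega^{\sml{\varepsilon_k}}_t$ (available for a.e.\ $t$ by Sard) and pass to the limit via \cref{cor:scalar_curvature_convergence}. The paper's proof is just the two-line version of your argument; your additional remarks on orientability (yielding $2\Z$ rather than $\Z$) and the eventually-constant nature of the integer sequence simply make explicit what the paper leaves implicit.
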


\begin{proof}
    Up to removing a negligible set, we can assume that $\partial \Omega^{\sml{\varepsilon_k}}_t$ is smooth for every $k \in \N$. The result follows by Gauss--Bonnet theorem and \cref{cor:scalar_curvature_convergence}.
\end{proof}

\subsection{Willmore-type and Minkowski-type inequalities} 
Let $(M,g)$ be a complete noncompact Riemannian manifold of dimension $n$ with nonnegative Ricci curvature and Euclidean volume growth, namely
\begin{equation}
    \AVR(g)\coloneqq \lim_{r \to +\infty} \frac{\abs{B_r(o)}}{\abs{\B^n}r^n} >0.
\end{equation}
Let $\Omega\subset M$ be a bounded set with $\CS^1$  boundary. Then by \cite[Corollary 1.3]{brendle_sobolevinequalitiesmanifoldsnonnegative_2021} we know that the following sharp isoperimetric inequality holds
\begin{equation}\label{eq:brendle_isoperimetric}
    \frac{\abs{\S^{n-1}}^{n}}{\abs{\B^n}^{n-1}}\AVR(g)  \leq \frac{\abs{\partial \Omega}^{n}}{\abs{\Omega}^{n-1}}.
\end{equation}
In particular, by \cite{xu_isoperimetrypropernessweakinverse_2023} the proper IMCF $w_1$ starting from $\Omega$ exists globally on $M\smallsetminus\Omega$ and thus it can be approximated by solutions $w_p$ to \cref{eq:moser_potential} locally uniformly in the sense of \cref{thm:local-approximation}. Hence, we can prove the following result. 

\begin{theorem}\label{thm:minkowski}
    Let $(M,g)$ be a complete noncompact Riemannian manifold of dimension $n\ge 3$ with nonnegative Ricci curvature and Euclidean volume growth. Fix $\alpha\geq 1$. Then, for any bounded set $\Omega\subset M$ with smooth boundary, there holds
    \begin{equation}\label{eq:minkowski}
        \abs{\S^{n-1}}^\frac{\alpha}{n-1} \AVR(g)^{\frac{\alpha}{n-1}}\leq \abs{\partial \Omega^*}^{\frac{\alpha}{n-1}-1}\int_{\partial \Omega} \abs{\frac{\H}{n-1}}^\alpha\dif \Hff^{n-1}.
    \end{equation}
    Moreover, equality holds in \cref{eq:minkowski} if and only if $\Omega$ is strictly outward minimizing with strictly mean-convex boundary and $(M\smallsetminus \Omega,g)$ is isometric to the conical end
    \begin{align}
        \left(\partial \Omega\times[r_0,+\infty),\; \dd r^2 + \left(\frac{r}{r_0}\right)^2 g_{\partial \Omega}\right), && \text{where }r_0 \coloneqq \left(\frac{\abs{\partial \Omega}}{\AVR(g) \abs{\S^{n-1}}}\right)^{\frac{1}{n-1}}.
    \end{align}
\end{theorem}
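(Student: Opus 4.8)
The plan is to obtain \cref{eq:minkowski} directly from the IMCF monotonicity formula \cref{thm:monotonicty_formula_IMCF}, bypassing the $p$-capacitary approximation entirely; the argument works uniformly for all $\alpha\ge 1$. First, since $\Ric\ge0$ the Ricci integral in \cref{eq:monotonicty_formula_IMCF} is nonpositive and, by \cref{rmk:equivalent_formulation_monotonicity_IMCF}, $\abs{\nabla w_1}=\H$ $\Hff^{n-1}$-a.e.\ on each level set, so for a.e.\ $t>0$
\[
\MF_1(t)\le-\tfrac1\alpha\,\ee^{\left(\frac{\alpha}{n-1}-1\right)t}\int_{\partial\Omega^{\sml{1}}_t}\H^{\alpha}\dif\Hff^{n-1}.
\]
Coupling this with the essential monotonicity $\MF_1(0)\le\MF_1(t)$ (valid down to $t=0$ by part~(3) of \cref{thm:monotonicty_formula_IMCF}, $\partial\Omega$ being smooth) and the value $\MF_1(0)=-\tfrac1\alpha\int_{\partial\Omega}\abs{\H}^\alpha\dif\Hff^{n-1}$ gives the \emph{a priori} bound
\[
\ee^{\left(\frac{\alpha}{n-1}-1\right)t}\int_{\partial\Omega^{\sml{1}}_t}\H^{\alpha}\dif\Hff^{n-1}\ \le\ C_0:=\int_{\partial\Omega}\abs{\H}^\alpha\dif\Hff^{n-1}\qquad\text{for a.e.\ }t>0.
\]

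Next I would convert this into a volume estimate. For a.e.\ $s>0$, $\partial\Omega^{\sml{1}}_s$ is a curvature varifold on which $\H=\abs{\nabla w_1}>0$ $\Hff^{n-1}$-a.e.\ (by \cref{rmk:weakSard} and \cref{prop:regularityLevelsetIMCF}), so H\"older's inequality with exponents $\alpha+1$ and $\tfrac{\alpha+1}{\alpha}$ yields $\abs{\partial\Omega^{\sml{1}}_s}\le\big(\int\H^{\alpha}\big)^{\frac1{\alpha+1}}\big(\int\H^{-1}\big)^{\frac{\alpha}{\alpha+1}}$; together with the bound above and the exponential area growth $\abs{\partial\Omega^{\sml{1}}_s}=\ee^s\abs{\partial\Omega^*}$ from \cref{lem:exp_areagrowth} this gives $\int_{\partial\Omega^{\sml{1}}_s}\H^{-1}\dif\Hff^{n-1}\ge\abs{\partial\Omega^*}^{\frac{\alpha+1}{\alpha}}C_0^{-1/\alpha}\ee^{\frac{n}{n-1}s}$. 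Integrating in $s$ through the coarea formula for the Lipschitz function $w_1$ — needed here only as the inequality $\abs{\Omega^{\sml{1}}_t\smallsetminus\Omega}\ge\int_0^t\int_{\partial\Omega^{\sml{1}}_s}\H^{-1}$, the critical set being discarded harmlessly — produces $\abs{\Omega^{\sml{1}}_t}\ge\tfrac{n-1}{n}\abs{\partial\Omega^*}^{\frac{\alpha+1}{\alpha}}C_0^{-1/\alpha}\ee^{\frac{n}{n-1}t}+O(1)$ as $t\to+\infty$. On the other hand, Brendle's sharp isoperimetric inequality \cref{eq:brendle_isoperimetric} applied to the finite-perimeter set $\Omega^{\sml{1}}_t$ (legitimate after approximating $\Omega^{\sml{1}}_t$ from outside by smooth sets, or via the $BV$ form of \cref{eq:brendle_isoperimetric}), combined again with $\abs{\partial\Omega^{\sml{1}}_t}=\ee^t\abs{\partial\Omega^*}$, gives $\abs{\Omega^{\sml{1}}_t}\le\big(\tfrac{\abs{\B^n}^{n-1}}{\abs{\S^{n-1}}^n\AVR(g)}\big)^{\frac1{n-1}}(\ee^t\abs{\partial\Omega^*})^{\frac{n}{n-1}}$. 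Both estimates carry the \emph{same} exponential rate $\ee^{\frac{n}{n-1}t}$, so letting $t\to+\infty$ kills the $O(1)$ term, and elementary algebra (using $\abs{\B^n}=\abs{\S^{n-1}}/n$) reduces the resulting inequality to $C_0\ge(n-1)^\alpha\abs{\S^{n-1}}^{\frac{\alpha}{n-1}}\AVR(g)^{\frac{\alpha}{n-1}}\abs{\partial\Omega^*}^{1-\frac{\alpha}{n-1}}$, which is exactly \cref{eq:minkowski}. A sanity check on a round ball in $\R^n$ shows every inequality in this chain is an equality, which both confirms the constants and pinpoints the rigidity.

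For the rigidity, the "if" direction is a bare computation on the conical end $\big(\partial\Omega\times[r_0,+\infty),\,\dd r^2+(r/r_0)^2g_{\partial\Omega}\big)$, where $w_1=(n-1)\ln(r/r_0)$, each cross-section has $\H=(n-1)/r$, $\mathring{\h}=0$, $\Ric(\nu,\nu)=0$, and \cref{eq:brendle_isoperimetric} holds with equality, so the whole chain collapses to an identity. For the "only if" direction, equality in \cref{eq:minkowski} forces equality in \emph{every} step above: equality in the isoperimetric inequality for each $\Omega^{\sml{1}}_t$, $t>0$; equality in each H\"older inequality, so $\H$ is constant on each $\partial\Omega^{\sml{1}}_t$; and equality in the monotonicity of $\MF_1$, i.e.\ $\SQ_1\equiv0$ and the Ricci term vanishing along the flow, which by \cref{eq:Q1def} means every regular level set is totally umbilic ($\mathring{\h}=0$) with $\nabla^\top\H=0$ and $\Ric(\nu,\nu)=0$. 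Feeding this rigidity into the argument exactly as in \cite[Theorem 1.2]{benatti_minkowskiinequalitycompleteriemannian_2022}, \cite[Theorem 1.1]{agostiniani_sharpgeometricinequalitiesclosed_2020} and Brendle's rigidity statement then identifies $(M\smallsetminus\Omega,g)$ with the stated cone. \textbf{The main obstacle} is precisely the promised improvement of the rigidity statement: because the monotonicity here is anchored at $t=0$ (on $\partial\Omega$ itself), equality propagates all the way to $t=0$ and should force $\partial\Omega=\partial\Omega^{\sml{1}}_0=\partial\Omega^*$, together with $\H=\abs{\nabla w_1}>0$ on $\partial\Omega$ — so that "$\Omega$ strictly outward minimizing with strictly mean-convex boundary" becomes a \emph{consequence} of equality rather than a hypothesis, as it was in \cite{benatti_minkowskiinequalitycompleteriemannian_2022}. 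Making this last step rigorous — ruling out, under the equality assumption, a nontrivial minimal "jump" region between $\partial\Omega$ and $\partial\Omega^*$, and upgrading the a.e.-in-$t$ umbilicity to the smooth conical normal form near $t=0$ — is where the care is needed; the forward inequality itself is a clean assembly of \cref{thm:monotonicty_formula_IMCF}, H\"older's inequality, the coarea formula, \cref{lem:exp_areagrowth}, and \cref{eq:brendle_isoperimetric}.
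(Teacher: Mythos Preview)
Your derivation of the inequality is correct and is essentially the paper's argument: both combine the monotonicity formula \cref{thm:monotonicty_formula_IMCF}, H\"older's inequality, the exponential area growth \cref{lem:exp_areagrowth}, and Brendle's isoperimetric inequality \cref{eq:brendle_isoperimetric}. The paper packages this as a contradiction via a de l'H\^opital step on the ratio $\abs{\Omega^{\sml{1}}_t}/\abs{\partial\Omega^{\sml{1}}_t}^{n/(n-1)}$; your direct assembly is equivalent.

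For the rigidity, you have located the obstruction precisely, but the resolution you sketch does not close it. Two points. First, you cannot appeal to \cite[Theorem~1.2]{benatti_minkowskiinequalitycompleteriemannian_2022} ``exactly'', since that statement \emph{assumes} $\Omega$ is outward minimizing with strictly mean-convex boundary, which is what must now be \emph{proved}. Second, equality in \cref{eq:minkowski} does \emph{not} force equality in \cref{eq:brendle_isoperimetric} at any finite~$t$ (the two volume bounds differ by an $O(1)$ term), so Brendle's rigidity is not available. The paper instead argues as follows. From vanishing of the derivative one gets $\mathring{\h}=0$ and $\Ric(\nu,\nu)=0$ on each regular level, whence \cite[Lemma~4.8]{benatti_minkowskiinequalitycompleteriemannian_2022} (a pointwise fact using $\Ric\ge0$) gives that $\H$ is \emph{locally} constant on $\partial\Omega^{\sml{1}}_t\smallsetminus S_t$. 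Sending $t_k\searrow0$ with $\CS^{1,\beta}$-convergence to $\partial\Omega^*$ (\cref{prop:regularityLevelsetIMCF}, \cref{prop:outwardminimizinghull}), one deduces that any connected component $\Sigma$ of $\partial\Omega^*$ with $\Hff^{n-1}(\Sigma\smallsetminus\partial\Omega)>0$ is minimal in the sense of \cite[Definition~5.10]{mondino_weaklaplacianboundsminimal_2023}. Then the Laplacian comparison \cite[Theorem~5.12]{mondino_weaklaplacianboundsminimal_2023} yields $\Delta\,{\rm d}_\Sigma\le0$ distributionally on $M\smallsetminus\Sigma$, forcing at most linear volume growth and contradicting $\AVR(g)>0$: this is the mechanism that rules out your ``minimal jump region'' and simultaneously gives $\H>0$ on each component of $\partial\Omega$. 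Only after $\Omega=\Omega^*$ can one invoke the smooth-start lemma and the local conical splitting from \cite{benatti_minkowskiinequalitycompleteriemannian_2022}. A further step you omit is that $\partial\Omega$ is connected: the paper handles this via the aperture estimate \cite[Lemma~4.9]{benatti_minkowskiinequalitycompleteriemannian_2022} combined with strict subadditivity of $x\mapsto x^{1-\alpha/(n-1)}$ for $\alpha\ge1$, and then extends the cone to $R_0=+\infty$ by the rigidity in \cite[Lemma~4.7]{benatti_minkowskiinequalitycompleteriemannian_2022}.
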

In \cref{thm:minkowski}, the case $\alpha=n-1$ yields the Willmore-type inequality in \cite{agostiniani_sharpgeometricinequalitiesclosed_2020}, while the case $\alpha =1 $ is the Minkowski-type inequality in \cite{benatti_minkowskiinequalitycompleteriemannian_2022}. For $\alpha=1$, the rigidity statement in \cref{thm:minkowski} is an improved version of \cite[Theorem 1.2]{benatti_minkowskiinequalitycompleteriemannian_2022}. Indeed, being $\Omega$ outward minimizing and with strictly mean-convex boundary is here a consequence of the rigidity rather than an assumption (cf. \cite[Theorem 1.2]{benatti_minkowskiinequalitycompleteriemannian_2022}). 

\begin{proof}
    Consider the function
    \begin{equation}
    \mathcal{M}_\alpha(\partial \Omega)=\abs{\partial \Omega^*}^{\frac{\alpha}{n-1} -1}\int_{\partial \Omega}\abs{\frac{\H}{n-1}}^{\alpha} \dif \Hff^{n-1}.
\end{equation}
By \cref{thm:monotonicty_formula_IMCF}, for any $\Omega \subseteq M$ with $\CS^{1,1}$-boundary the map $t \mapsto \mathcal{M}_{\alpha}(\partial \Omega^{\sml{1}}_t)$ is monotone nonincreasing and
\begin{equation}\label{eq:minkowski_monotonicity}
    \frac{\dd}{\dd t} \mathcal{M}_\alpha(\partial \Omega_t^{\sml{1}})\leq -\frac{\alpha}{(n-1)^\alpha}\abs{\partial \Omega_t^{\sml{1}}}^{\frac{\alpha}{n-1}-1}\int_{\partial \Omega^{\sml{1}}_t} \H^{\alpha-2} \left( (\alpha-1) \frac{\abs{\nabla^\top\H}^2}{\H^2} + \abs{\mathring{\h}}^2 + \Ric(\nu,\nu)\right) \dif \Hff^{n-1} 
\end{equation}
in the distributional sense on $[0,+\infty)$. By contradiction, suppose that there exists $\Omega \subseteq M$ bounded such that $\mathcal{M}_\alpha(\partial \Omega)< ( \AVR(g)\abs{\S^{n-1}})^{\alpha/(n-1)}$. Evolve $\Omega$ by IMCF. Applying de l'H\^opital rule (see \cite[Appendix A]{benatti_isoperimetricriemannianpenroseinequality_2022}) we have
\begin{equation}\label{eq:zz_hopitalminkowski}
    \liminf_{t \to +\infty}\frac{\abs{\Omega_t^{\sml{1}}}}{\abs{\partial \Omega^{\sml{1}}_t }^{\frac{n}{n-1}}} \geq \liminf_{t \to +\infty}\frac{\abs{\Omega_t^{\sml{1}}\smallsetminus \Crit(w_1)}}{\abs{\partial \Omega^{\sml{1}}_t }^{\frac{n}{n-1}}} \geq\liminf_{t \to +\infty} \frac{n-1}{n}\abs{\partial \Omega^{\sml{1}}_t }^{-\frac{n}{n-1}}  \int_{\partial \Omega^{\sml{1}}_t} \frac{1}{\H}\dif \Hff^{n-1}.
\end{equation}
By H\"older inequality, we have
\begin{equation}\label{eq:zz_holder_minkowski}
    \int_{\partial \Omega^{\sml{1}}_t} \frac{1}{\H}\dif \Hff^{n-1}\geq \abs{\partial \Omega^{\sml{1}}_t}^{\frac{1}{\alpha}+1}\left(\int_{\partial \Omega^{\sml{1}}_t} \H^\alpha\dif \Hff^{n-1}\right)^{-\frac{1}{\alpha}}.
\end{equation}
Then, coupling \cref{eq:zz_holder_minkowski,eq:zz_hopitalminkowski} we have
\begin{align}
    \liminf_{t \to +\infty}\frac{\abs{\Omega_t^{\sml{1}}}}{\abs{\partial \Omega^{\sml{1}}_t }^{\frac{n}{n-1}}}  \geq \liminf_{t \to+\infty}\frac{1}{n} \mathcal{M}_\alpha(\partial \Omega^{\sml{1}}_t)^{-\frac{1}{\alpha}} > \frac{\abs{\B^n}}{\abs{\S^{n-1}}^{\frac{n}{n-1}}}\AVR(g)^{-\frac{1}{n-1}},
\end{align}
which contradicts the isoperimetric inequality \cref{eq:brendle_isoperimetric} on $(M,g)$.
\medskip

Suppose now that $\mathcal{M}_\alpha(\partial \Omega) =(\AVR(g)\abs{\S^{n-1}})^{\alpha/(n-1)}$ for some bounded set $\Omega \subset M$ with smooth boundary. We get that the right-hand side in \cref{eq:minkowski_monotonicity} vanishes for almost every $t>0$. Hence $\mathring{\h}$ and $\Ric(\nu,\nu)$ both vanish on $\partial \Omega_t^{\sml{1}}\smallsetminus S_t$, for almost every $t$. Since $\Ric\ge 0$, \cite[Lemma 4.8]{benatti_minkowskiinequalitycompleteriemannian_2022} implies that the mean curvature of $\partial \Omega_t^{\sml{1}}\smallsetminus S_t$ is constant around each point in $\partial \Omega_t^{\sml{1}}\smallsetminus S_t$ (note that \cite[Lemma 4.8]{benatti_minkowskiinequalitycompleteriemannian_2022} is a pointwise result that does not rely on a global smoothness assumption). Moreover, the mean curvature of $\partial \Omega_t^{\sml{1}}\smallsetminus S_t$ is uniformly bounded by the local Lipschitz constant for $w_1$.

All in all, we can fix a sequence $t_k\to0^+$ such that $\abs{\Omega_{t_k}^{\sml{1}} \smallsetminus U_{t_k}^{\sml{1}}}=0$, so that $\partial \Omega^{\sml{1}}_{t_k}$ converges $\CS^{1,\beta}$ sense to $\partial \Omega^*$ around each point out of $S_0$ by \cref{prop:regularityLevelsetIMCF}\cref{item:convergenceC1bIMCF}. Since the mean curvature of $\partial \Omega_{t_k}^{\sml{1}} \smallsetminus S_{t_k}$ is uniformly bounded, they also converges in the $W^{2,2}$ sense around such points. Furthermore, we can assume that the mean curvature of $\partial \Omega_{t_k}^{\sml{1}}\smallsetminus S_{t_k}$ is locally constant, i.e., it is constant around any point in $\partial \Omega_{t_k}^{\sml{1}}\smallsetminus S_{t_k}$. 

We now claim that $\Omega=\Omega^*$. Suppose by contradiction that $\Hff^{n-1}(\partial\Omega^* \smallsetminus\partial \Omega)>0$. Hence, there exists a connected component $\Sigma$ of $\partial \Omega^*$ such that $\Hff^{n-1}(\Sigma \smallsetminus\partial \Omega)>0$. We consider two cases. If $\Hff^{n-1}(\Sigma\cap\partial\Omega) = 0$, taking into account \cite[Remark 5.11]{mondino_weaklaplacianboundsminimal_2023}, $\Sigma$ is minimal in the sense of \cite[Definition 5.10]{mondino_weaklaplacianboundsminimal_2023}. On the other side, suppose that $\Hff^{n-1}(\Sigma\cap\partial\Omega) > 0$. By \cref{prop:outwardminimizinghull}\cref{item:hull0}, $S_0 \cap \partial\Omega=\varnothing$. Hence, $\partial \Omega^{\sml{1}}_{t_k}$ locally converges in $W^{2,2}$ to $\Sigma$ in a open neighborhood $V$ of $\Sigma \cap \partial \Omega$ which does not contain $S_0$. Since the mean curvature of $\partial \Omega^{\sml{1}}_{t_k}$ is locally constant, $\Sigma$ has locally constant mean curvature on $V\cap \Sigma $. Each connected component of $V \cap \Sigma $ contains at least a point of $\partial \Omega^*\smallsetminus (\partial \Omega \cup S_0)$, hence the mean curvature of $\Sigma$ vanishes in $V$. Hence also in this case we get that $\Sigma$ is minimal in the sense of \cite[Definition 5.10]{mondino_weaklaplacianboundsminimal_2023}. In both cases, \cite[Theorem 5.12]{mondino_weaklaplacianboundsminimal_2023} (see also \cite[Lemma 2.2]{choefraser_18}) implies that $\Delta {\rm d}_\Sigma \le 0$ in the sense of distributions on $M\smallsetminus \Sigma$, where ${\rm d}_\Sigma$ is the distance from $\Sigma$.  Integrating $\Delta {\rm d}_\Sigma$ on $\set{r<{\rm d}_\Sigma < R}$, for $R$ large enough, the divergence theorem gives that $\Hff^{n-1}(\set{{\rm d}_\Sigma=R}) \le \Hff^{n-1}(\set{{\rm d}_\Sigma=r})$. Hence, $(M,g)$ would have linear volume growth, contradicting the assumption ${\rm AVR}(g)>0$. 

Therefore $\Omega^*=\Omega$ and $\Omega$ is strictly outward minimizing with strictly positive constant mean curvature $\H_j>0$ on each connected component $\Sigma_j$ of $\partial \Omega$, where $j \in \{1,\ldots,m\}$. It follows from \cite[Smooth Start Lemma 2.4, Remark p. 379]{huisken_inversemeancurvatureflow_2001} that $\partial \Omega^{\sml{1}}_t$ is a classical solution to the inverse mean curvature flow starting from $\partial \Omega$ for any $t \in [0,T)$, for some $T>0$. Arguing as in the proof of \cite[Theorem 1.2]{benatti_minkowskiinequalitycompleteriemannian_2022}, recalling that $\mathcal{M}_\alpha(\partial \Omega) =(\AVR(g)\abs{\S^{n-1}})^{\alpha/(n-1)}$, one gets that $(\set{0\leq w_1<T},g)$ is isometric to disjoint union of truncated cones of the form
\begin{align}\label{eq:zzzsplittingconicolocale}
    \left( \Sigma_i \times [r_i,R_i), \dd r^2 + \left( \frac{r}{r_i}\right)^2 g_{\Sigma_i}\right),&& \text{where } r_i= \frac{n-1}{\H_i}\text{ and }R_i= r_i\ee^{\frac{T}{n-1}},
\end{align}
for $i=1,\ldots, m$.

We now prove that each connected component of $\partial \Omega$ is the boundary of a connected component of $\Omega$.  Take a connected component $\Omega_j$ of $\Omega$. Suppose that $\partial \Omega_j$ is the union of more than two $\Sigma_i$'s. Glueing along the $\Sigma_i$'s touching $\Omega_j$ the cone \cref{eq:zzzsplittingconicolocale} for $R_i = +\infty$, we would obtain a smooth Riemannian manifold with nonnegative Ricci curvature and $\AVR(g)>0$ with more than one end. This would contradict the Cheeger--Gromoll's Splitting Theorem. Therefore, $\partial \Omega_j$ is connected and $\partial \Omega_j = \Sigma_j$, up to relabeling the indices.

We now want to show that $\partial \Omega$ is connected. Assume $m\geq 2$. We can now apply \cite[Lemma 4.9]{benatti_minkowskiinequalitycompleteriemannian_2022} to each $\Sigma_j$ to get
\begin{equation}\label{eq:zzzapertureconiche}
    \abs{\Sigma_i} \geq r_i^{n-1} \abs{\S^{n-1}} \AVR(g)=  \left( \frac{n-1}{ \H_i}\right)^{n-1} \abs{\S^{n-1}} \AVR(g).
\end{equation}
Since $\mathcal{M}_\alpha(\partial \Omega) =(\AVR(g) \abs{\S^{n-1}})^{\alpha/(n-1)}$, we have
\begin{equation}
    \begin{split}
       ( \abs{\mathbb{S}^{n-1}} \mathrm{AVR}(g))^{\frac{\alpha}{n-1}} \abs{\partial\Omega}^{1-\frac{\alpha}{n-1}}
        &= \int_{\partial\Omega} \abs{\frac{\H}{n-1}}^\alpha 
        = \sum_{i=1}^{m} \int_{\Sigma_i} \abs{\frac{\H_i}{n-1}}^\alpha \overset{\cref{eq:zzzapertureconiche}}{\ge} (\abs{\mathbb{S}^{n-1}} \mathrm{AVR}(g))^{\frac{\alpha}{n-1}} \sum_{i=1}^{m} \abs{\Sigma_j}^{1-\frac{\alpha}{n-1}}\\
        &>(\abs{\mathbb{S}^{n-1}} \mathrm{AVR}(g))^{\frac{\alpha}{n-1}} \left( \sum_{i=1}^{m} \abs{\Sigma_j}\right)^{1-\frac{\alpha}{n-1}}= ( \abs{\mathbb{S}^{n-1}} \mathrm{AVR}(g))^{\frac{\alpha}{n-1}} \abs{\partial\Omega}^{1-\frac{\alpha}{n-1}} ,
    \end{split}
\end{equation}
where the strict subadditivity holds since $\alpha \geq 1$ and $m\ge2$. This gives the desired contradiction, thus $m=1$. 

Hence, $\partial \Omega$ is connected and $\Omega^{\sml{1}}_T \smallsetminus \Omega$ splits as
\begin{align}
    \left(\partial \Omega\times[r_0,R_0),\; \dd r^2 + \left(\frac{r}{r_0}\right)^2 g_{\partial \Omega}\right), && \text{where } r_0= \frac{n-1}{\H}\text{ and }R_0= r_0\ee^{\frac{T}{n-1}}.
\end{align}
Since $\partial \Omega$ saturates \cref{eq:minkowski} and has constant mean curvature, we obtain
\begin{equation}
    \abs{\partial \Omega}= \left( \frac{n-1}{\H}\right)^{n-1}\abs{\S^{n-1}}\AVR(g) = r_0^{n-1} \abs{\S^{n-1}} \AVR(g).
\end{equation}
The rigidity part in \cite[Lemma 4.7]{benatti_minkowskiinequalitycompleteriemannian_2022} ensures we can take $R_0=+\infty$, which concludes the proof.
\end{proof}

\subsection{Geroch monotonicity formula} 
Huisken and Ilmanen first introduced weak IMCF with the precise purpose of proving the Riemannian Penrose inequality in \cite{huisken_inversemeancurvatureflow_2001}. This inequality states that the total physical mass of a time-symmetric isolated gravitational system satisfying the dominant energy condition is bounded from below by the mass of a black hole sitting in it. The total mass they considered is the $\ADM$ mass \cite{arnowitt_coordinateinvarianceenergyexpressions_1961}, which is defined as
\begin{equation}
    \ma_{\ADM} \coloneqq \lim_{r \to +\infty} \int_{\set{\abs{x}=r}} (\partial_i g_{ji} - \partial_{j} g_{ii}) \frac{x^j}{\abs{x}} \dif \Hff^{2}.
\end{equation}
An isolated gravitational system is modelled as an asymptotically flat Riemannian manifold $(M,g)$, i.e. the manifold is definitely diffeomorphic to $\R^3 \smallsetminus \set{\abs{x} \leq R}$ for some $R\geq 0$ and the metric $g$ approaches the flat metric $\delta$ at infinity. In our case we will assume that space is $\CS^1_{\tau>1/2}$-asymptotically flat, which is $\abs{g-\delta}= O_1(\abs{x}^{-\tau})$. In the time-symmetric case, the dominant energy condition reads as having nonnegative scalar curvature. Thanks to \cite{bartnik_massasymptoticallyflatmanifold_1986,chrusciel_boundaryconditionsspatialinfinity_1986}, these are the mildest conditions under which the $\ADM$ mass is a well-defined geometric invariant. The horizon of the black hole is represented by a minimal outermost surface, i.e. it is not shielded from infinity by any other minimal surfaces. The radius of the horizon is twice the mass of the black hole. Assuming that the horizon coincides with the boundary of the manifold, the Riemannian Penrose inequality is
\begin{equation}\label{eq:RPI}
    \sqrt{\frac{\abs{\partial M}}{16 \pi }} \leq \ma_{\ADM}.
\end{equation}

The proof of Huisken and Ilmanen goes as follows. Consider the weak IMCF $w_1$ starting from $\partial M$. One can easily see that the Hawking mass of the boundary coincides with the left-hand side of \cref{eq:RPI}. Indeed, the Hawking mass $\ma_H$ of a surface $\Sigma \subseteq M$ is 
\begin{equation}
    \ma_H(\Sigma)\coloneqq \sqrt{\frac{\abs{\Sigma}}{16 \pi} }\left(1-\frac{1}{16 \pi}\int_{\Sigma} \H^2 \dif \Hff^{2}\right)
\end{equation}
and the boundary is minimal. Moreover, a thorough asymptotic analysis (see also \cite{agostiniani_riemannianpenroseinequalitynonlinear_2022,benatti_isoperimetricriemannianpenroseinequality_2022}) shows that
\begin{equation}
    \limsup_{t \to +\infty } \ma_H(\partial \Omega_t^{\sml{1}}) \leq \ma_{\ADM}.
\end{equation}

The inequality in \cref{eq:RPI} is justified by the monotonicity of the Hawking mass along the weak IMCF. Such monotonicity was first shown along the smooth IMCF by Geroch in \cite{geroch_energyextraction_1973}. One of the main contributions by Huisken and Ilmanen was to prove the validity of Geroch's monotonicity formula along the weak flow, overcoming the issues given by the lack of regularity. 

A consequence of \cref{thm:monotonicty_formula_IMCF} is a different proof of \cite[Theorem 5.8]{huisken_inversemeancurvatureflow_2001}, based on $p$-harmonic approximation rather than $\varepsilon$-regularization.

\begin{theorem}\label{thm:Geroch-montonicity}
    Let $(M,g)$ be an asymptotically flat $3$-dimensional Riemannian manifold with nonnegative scalar curvature. Let $\Omega\subseteq M$ be a subset with $\CS^{1,1}$-boundary then $t\mapsto \ma_H(\partial \Omega^{\sml{1}}_t)$ is essentially monotone nondecreasing and 
    \begin{equation}\label{eq:Georch-type_MF}
        \frac{\dd}{\dd t} \ma_{H}(\partial \Omega^{\sml{1}}_t) \geq \sqrt{\frac{{\abs{\partial \Omega^{\sml{1}}_t}}}{(16 \pi)^3}}\left(4 \pi(2 -  \chi(\partial \Omega^{\sml{1}}_t))+\int_{\partial \Omega^{\sml{1}}_t} 2 \frac{\abs{\nabla^\top \H}^2}{\H^2}+ \abs{\mathring{\h}}^2+\sca \dif \Hff^{2}\right)
    \end{equation}
    in the distributional sense on $[0,+\infty)$.
\end{theorem}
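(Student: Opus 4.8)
The plan is to derive \cref{thm:Geroch-montonicity} as a direct specialization of \cref{thm:monotonicty_formula_IMCF} in dimension $n=3$ with $\alpha=2$, combined with the Gauss equation and the Gauss--Bonnet-type identity for the level sets. First I would record that, since $\partial\Omega$ is $\CS^{1,1}$ and the ambient manifold is asymptotically flat with nonnegative scalar curvature, the proper weak IMCF $w_1$ starting from $\Omega$ exists (this is the standing hypothesis under which \cite{huisken_inversemeancurvatureflow_2001} works, and the existence is assumed in \cref{thm:monotonicty_formula_IMCF}). With $n=3$ and $\alpha=2$, the exponential prefactor $\ee^{(\frac{\alpha}{n-1}-1)t}=\ee^{0}=1$, so \cref{eq:monotonicty_formula_IMCF} becomes $\MF_1(t) = -\tfrac12\int_{\partial\Omega^{\sml1}_t}\H^2\dif\Hff^2 - \int_0^t\int_{\partial\Omega^{\sml1}_s}\Ric(\nu,\nu)\dif\Hff^2\dif s$, using the equivalent formulation in \cref{rmk:equivalent_formulation_monotonicity_IMCF} with $\H=\abs{\nabla w_1}$ a.e. on a.e. level set. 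The Hawking mass is related to this via $\ma_H(\partial\Omega^{\sml1}_t)=\sqrt{\abs{\partial\Omega^{\sml1}_t}/(16\pi)}\,(1-\tfrac1{16\pi}\int_{\partial\Omega^{\sml1}_t}\H^2)$, and by \cref{lem:exp_areagrowth} we have $\abs{\partial\Omega^{\sml1}_t}=\ee^t\abs{\partial\Omega^*}$, so the area factor is an explicit exponential. The key point is that $\MF_1(t)$ and $\ma_H(\partial\Omega^{\sml1}_t)$ differ only by explicitly computable factors and the accumulated Ricci term, which we will rewrite using scalar curvature.

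Next I would carry out the substitution of the Ricci term. On a.e. level set $\partial\Omega^{\sml1}_t$ the Gauss equation \cref{eq:Gauss-Codazzi} gives $\sca^\top = \sca - 2\Ric(\nu,\nu) + \H^2 - \abs{\h}^2$, hence $2\Ric(\nu,\nu) = \sca + \H^2 - \abs{\h}^2 - \sca^\top$, and writing $\abs{\h}^2 = \abs{\mathring{\h}}^2 + \tfrac12\H^2$ in dimension $n-1=2$ yields $2\Ric(\nu,\nu) = \sca + \tfrac12\H^2 - \abs{\mathring{\h}}^2 - \sca^\top$. Since by \cref{thm:monotonicty_formula_IMCF}(1) almost every $\partial\Omega^{\sml1}_t$ is a curvature varifold with square integrable second fundamental form, all these integrands make sense, and by the Gauss--Bonnet-type theorem \cref{GB} (equivalently the weak Gauss--Bonnet formula of \cite{huisken_inversemeancurvatureflow_2001}) we have $\int_{\partial\Omega^{\sml1}_t}\sca^\top\dif\Hff^2 = 4\pi\chi(\partial\Omega^{\sml1}_t) \in 8\pi\Z$ for a.e. $t$. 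Plugging this into \cref{eq:derivative-monotonicity_formula-IMCF} with $\alpha=2$, where $\SQ_1 = \abs{\nabla^\top\H}^2/\H^2 + \abs{\mathring{\h}}^2$, converts the distributional inequality for $\MF_1$ into the stated distributional inequality for $\ma_H$. Concretely, I would differentiate $\ma_H(\partial\Omega^{\sml1}_t) = \sqrt{\ee^t\abs{\partial\Omega^*}/(16\pi)}\,\big(1 + \tfrac{2}{16\pi}\MF_1(t) + \tfrac{2}{16\pi}\int_0^t\int\Ric(\nu,\nu)\big)$ — i.e. express $\ma_H$ in terms of $\MF_1$ plus the Ricci accumulation — and use the product rule together with $\MF_1'(t) \geq \int_{\partial\Omega^{\sml1}_t}\SQ_1\dif\Hff^2$ (in the distributional sense from \cref{thm:monotonicty_formula_IMCF}(3)) to land on \cref{eq:Georch-type_MF} after substituting $2\Ric(\nu,\nu)$ as above. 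The essential monotonicity of $t\mapsto\ma_H(\partial\Omega^{\sml1}_t)$ follows once the right-hand side of \cref{eq:Georch-type_MF} is seen to be nonnegative: this uses $\sca\geq 0$, $\abs{\mathring{\h}}^2\geq 0$, $\abs{\nabla^\top\H}^2/\H^2\geq 0$, and the topological fact that the connected components of $\partial\Omega^{\sml1}_t$ relevant to the flow are spheres so that $\chi\leq 2$ on the outward minimizing level sets — the same argument used by Huisken and Ilmanen, relying on \cref{prop:outwardminimizinghull} which says $\Omega^{\sml1}_t$ is strictly outward minimizing for $t>0$.

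The main obstacle I anticipate is the careful handling of the distributional derivative and of the term $4\pi(2-\chi(\partial\Omega^{\sml1}_t))$. Unlike the smooth Geroch computation, here $\MF_1$ is only known to have the distributional monotonicity \cref{eq:derivative-monotonicity_formula-IMCF}, so one must test against nonnegative $\psi\in\Lip_c([0,\infty))$, track the boundary term $\psi(0)\MF_1(0)$ with $\MF_1(0)=-\tfrac12\int_{\partial\Omega}\abs{\H}^2$, and verify that the identity $\int_{\partial\Omega^{\sml1}_t}\sca^\top=4\pi\chi(\partial\Omega^{\sml1}_t)$ interacts correctly with the a.e.-defined topological quantity $\chi(\partial\Omega^{\sml1}_t)$. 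One subtlety is that $\chi$ need not be monotone or even upper semicontinuous in $t$, but since its integral over any interval is well-controlled (by \cref{GB} it takes values in $8\pi\Z$ and by the exponential area growth the level sets are $2$-spheres for a.e. $t$ once $\Omega^{\sml1}_t$ is connected), the distributional inequality is preserved. A second, more technical point is that the substitution $2\Ric(\nu,\nu)=\sca+\tfrac12\H^2-\abs{\mathring{\h}}^2-\sca^\top$ must be performed inside the time integral in $\MF_1$, which requires that $t\mapsto\int_{\partial\Omega^{\sml1}_t}\abs{\mathring{\h}}^2$ and $t\mapsto\int_{\partial\Omega^{\sml1}_t}\sca^\top$ are genuinely integrable in $t$ on compact intervals — this follows from \cref{eq:boundedL2weak2FFIMCF} and the local boundedness of scalar curvature. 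With these points addressed, the proof is essentially bookkeeping: collect the explicit factors, substitute Gauss and Gauss--Bonnet, and read off \cref{eq:Georch-type_MF}.
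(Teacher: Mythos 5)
Your proposal is correct and follows essentially the same route as the paper: specialize \cref{thm:monotonicty_formula_IMCF} to $n=3$, $\alpha=2$ so the exponential prefactor disappears, use \cref{rmk:equivalent_formulation_monotonicity_IMCF} to identify $\abs{\nabla w_1}$ with $\H$, rewrite the Ricci term via the Gauss equation $2\Ric(\nu,\nu)=\sca+\tfrac12\H^2-\abs{\mathring{\h}}^2-\sca^\top$, apply Gauss--Bonnet to replace $\int\sca^\top$ by $4\pi\chi$, and then express $\ma_H$ through $\MF_1$ and the exponential area growth \cref{lem:exp_areagrowth} to collect the prefactor $\sqrt{\abs{\partial\Omega^{\sml1}_t}/(16\pi)^3}$. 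One small point worth flagging: \cref{GB} alone only gives $\int_{\partial\Omega^{\sml1}_t}\sca^\top\in 8\pi\Z$, not the identification with $4\pi\chi$; that identification is available for $p=1$ precisely because $\partial\Omega^{\sml1}_t$ has $\CS^1\cap W^{2,2}$ regularity and can be approximated by smooth surfaces in this topology (the paper uses this directly, and it is Huisken--Ilmanen's weak Gauss--Bonnet formula, which you also cite). Your extra comments on the distributional bookkeeping and on $\chi\le 2$ for the nondecreasing claim are correct and make explicit steps the paper leaves implicit.
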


\begin{proof}
    Taking $\alpha=2$ in \cref{thm:monotonicty_formula_IMCF}, we have the following growth formula for the Willmore energy
    \begin{equation}
        \frac{\dd}{\dd t}\int_{\partial \Omega^{\sml{1}}_t} \H^2 \dif \Hff^{2} \leq -\int_{\partial \Omega^{\sml{1}}_t} 2 \frac{\abs{\nabla^\top \H}^2}{\H^2} + 2 \abs{\mathring\h}^2 +2 \Ric(\nu,\nu)\dif \Hff^{2}.
    \end{equation}
    Since $\partial \Omega^{\sml{1}}_t$ is $\CS^1$ and $W^{2,2}$, one can define both the induced scalar curvature as $\sca^\top = \sca - 2 \Ric(\nu, \nu) +\H^2 - \abs{\h}^2$ and the Euler charactistic. Moreover, $\partial \Omega^{\sml{1}}_t$ can be approximated by smooth hypersurfaces in $W^{2,2}$ and $\CS^1$. In particular, Gauss--Bonnet theorem holds for $\partial \Omega^{\sml{1}}_t$ and we have
    \begin{equation}
        \frac{\dd}{\dd t}\int_{\partial \Omega^{\sml{1}}_t} \H^2 \dif \Hff^{n-1} \leq  4 \pi \chi(\partial \Omega^{\sml{1}}_t) -\int_{\partial \Omega^{\sml{1}}_t} 2 \frac{\abs{\nabla^\top \H}^2}{\H^2} + \abs{\mathring\h}^2+ \sca + \frac{\H^2}{2} \dif \Hff^{2}.
    \end{equation}
    Recalling that $\abs{\partial \Omega^{\sml{1}}_t} = \ee^t \abs{\partial \Omega^*}$, we conclude \cref{eq:Georch-type_MF}.
\end{proof}

\begin{remark}\label{rem:ApplicazioniAfterGB}
    Exploiting \cref{thm:monotonicity-NPT} in place of \cref{thm:monotonicty_formula_IMCF} in the proof of \cref{thm:Geroch-montonicity}, and using \cref{cor:GB}, one can explicitly compute the derivative of the quantities studied in \cite{agostiniani_riemannianpenroseinequalitynonlinear_2022,benatti_nonlinearisocapacitaryconceptsmass_2023a,xia_newmonotonicitycapacitaryfunctions_2024}.
\end{remark}

\begin{appendices}
\crefalias{section}{appendix}
\section{Varifold theory and weak convergence on Riemannian manifolds}\label{sec:AppendiceVarifolds}

In this section, we recall basic definitions and facts about varifold theory on Riemannian manifolds. We refer the reader to \cite{simon_lecturesgeometricmeasuretheory_1983, hutchinson_secondfundamentalformvarifolds_1986, ambrosio_functionsboundedvariationfree_2000} for a complete treatment of the theory recalled here (see also \cite{mantegazza_curvaturevarifoldsboundary_1996} and \cite[Appendix 6.6]{mondino_willmorefunctionalothercurvature_2011}).

Throughout this section, let $(M,g)$ be an $n$-dimensional complete Riemannian manifold and let $U\subset M$ be a fixed compact subset of $M$ that is the closure of some open set $\Int U$. We will always understand without loss of generality that $M$ is isometrically embedded in the Euclidean space $\R^N$ for some $N>n$. Let also $m \in \set{1,\ldots, n-1}$ be fixed.

We denote by $G(N,m)$ the Grassmannian of unoriented $m$-subspaces of $\R^N$. We identify $G(N,m)$ with the set of projection matrices onto an $m$-dimensional subspace of $\R^N$, represented with respect to the canonical basis. We define $G_m(\R^N)\coloneqq \R^N\times G(N,m)$ and
\begin{equation}
    G_m(U) \coloneqq \set{ (x,P) \in U \times G(N,m)  \st P \text{ is a $m$-dimensional subspace of } T_xM}.
\end{equation}
In particular, $G_m(U)$ can be seen as a subset of $\R^N \times \R^{N^2}$.

\begin{definition}\label{def:RectifiableVarifold}
    An $m$-dimensional (integer rectifiable) varifold $V$ on $U$ is a nonnegative Radon measure on $G_m(U)$ identified as follows. There exists an $m$-dimensional countably rectifiable set $\Sigma \subset U$ and there exists an $\Hff^m$-measurable function $\theta:\Sigma\to \N$, with $\theta\ge 1$, such that $\theta \in L^1_{\loc}(\Sigma,\Hff^m)$, where $\Hff^m$ is the $m$-dimensional Hausdorff measure on $\R^N$, such that $V =\theta(x)\, \Hff^m(x)\otimes \delta_{T_x\Sigma}$. More explicitly
    \begin{equation}
        \int_{G_m(U)} \varphi(x,P) \dif V(x,P) = \int_\Sigma \varphi(x,T_x\Sigma) \, \theta(x) \dif \Hff^m(x),
    \end{equation}
    for any $\varphi \in \CS^0(G_m(U))$.
    
    If $\pi:G_m(U)\to U$ is the natural projection, the push-forward $\mu_V \coloneqq \pi_\sharp V$ is called the weight of $V$. The mass of $V$ is the number $V(G_m(U))=\mu_V(U)$. In the previous notation, we have that $\mu_V= \theta \Hff^m\res \Sigma$.
    
    We say that a sequence of $m$-varifolds $V_k$ converges to an $m$-varifold $V$ in the sense of varifolds if convergence holds in duality with continuous functions on $G_m(U)$.
\end{definition}

We observe that, given $\Sigma$ and $\theta$ as above, the weight $\mu_V= \theta \Hff^m\res \Sigma$ uniquely defines a varifold $V$. Hence, we shall usually refer to the measure $\mu_V$ on $U$ as the varifold, with a slight abuse of terminology. If $\Sigma$ is an $m$-dimensional $\CS^1$-submanifold of $M$, it readily defines a varifold by taking $\theta\equiv1$ on $\Sigma$. In this case, for the sake of simplicity, we will refer to $\Sigma$ as a varifold on $G_m(U)$.

A varifold on $U$ is in particular a varifold on $\R^N$ in the classical sense \cite{simon_lecturesgeometricmeasuretheory_1983}. Moreover, observe that a sequence of $m$-varifolds $V_k$ converges to an $m$-varifold $V$ as in the previous definition if and only if convergence holds in duality with continuous functions on $G_m(\R^N)$, i.e., if and only if the sequence converges in the usual sense of varifolds in $\R^N$.

\medskip

In the following, we denote by $Q(x)= [Q_{ij}(x)]$ the matrix identifying the orthogonal projection onto $T_xM$ in $\R^N$, for any $x \in M$. For any $f \in \CS^1(M)$ and any subspace $P$ of $T_xM$, if $\overline{f}$ is a $\CS^1$ local extension of $f$ around $x$, the quantity
\begin{equation}
    P_{ij}\partial_j f(x) \coloneqq P_{ij}\frac{\partial \overline{f}}{\partial x_j}(x),
\end{equation}
is well-defined and independent of the extension (here and below, the sum over repeated indices is understood). If $\varphi=\varphi(x,P)= \varphi((x_i), (P_{jk}))$ is a $\CS^1$-function on $\R^N\times \R^{N^2}$, we denote by
\begin{align}
    \partial_i \varphi \coloneqq \frac{\partial \varphi}{\partial x_i}, &&
    D_{jk} \varphi \coloneqq \frac{\partial \varphi}{\partial P_{jk}},
\end{align}
the partial derivatives of $\varphi$. Following \cite{hutchinson_secondfundamentalformvarifolds_1986}, it is possible to give a notion of extrinsic curvature relative to $\R^N$ for varifolds on $U$ as follows.

\begin{definition}
Let $V$ be an $m$-varifold on $U$. We say that $V$ has generalized second fundamental form relative to $\R^N$ if there exist $B_{ijk} \in L^1_{\loc}(V)$, for $i,j,k \in \set{1,\ldots, N}$, such that
\begin{equation}\label{eq:DefSFFvarifolds}
    \int_{G_m(\R^N)} P_{ij}\partial_j \varphi + B_{ijk} D_{jk} \varphi + B_{jij} \varphi \dif V =0,
\end{equation}
for any $i=1,\ldots,N$ and any $\varphi \in \CS^1_c(\R^N\times \R^{N^2})$. In such a case, $V$ is said to be a curvature varifold, and we refer to the map $B:G_m(U)\to \R^{N^3}$ as the generalized second fundamental form relative to $\R^N$.
\end{definition}

If $V$ is a curvature varifold on $U$, we define its generalized second fundamental form relative to $M$ by
\begin{align}\label{eq:DefSFFvarifoldManifold}
    \h: G_m(U) \to \R^{N^3}
    &&
    \h_{ij}^k(x,P) \coloneqq \frac12 \left( B_{ijk} + B_{jik} - B_{kij} - P_{jr} P_{iq} \partial_q Q_{rk} (x) - P_{ir} P_{jq} \partial_q Q_{rk} (x) \right)
\end{align}
for $V$-a.e. $(x,P)\in G_m(U)$. We further define $\abs{\h}^2 \coloneqq \sum_{ijk} \abs{\h_{ij}^k}^2$.

We remark that the definition in \cref{eq:DefSFFvarifoldManifold} may a priori differ from the one given in \cite[Definition 5.2.5]{hutchinson_secondfundamentalformvarifolds_1986}. In fact, the definition in \cref{eq:DefSFFvarifoldManifold} is inspired by the one given in \cite[Definition 3.5]{buetLeonardiMasnou_secondfundamformvarifolds}, and it represents its generalization to the setting of varifolds into (non-flat) Riemannian manifolds. The advantage of definition \cref{eq:DefSFFvarifoldManifold} (or of \cite[Definition 3.5]{buetLeonardiMasnou_secondfundamformvarifolds}) is that $\h_{ij}^k$ is here expressed in terms of a linear combination of the functions $B_{ijk}$'s plus a continuous function on $G_m(U)$; moreover, $\h_{ij}^k$ is manifestly symmetric with respect to indices $i,j$ by \cite[Proposition 5.2.4(i)]{hutchinson_secondfundamentalformvarifolds_1986}.

It can be readily checked by direct computations that if $V$ is induced by a smooth $m$-dimensional submanifold $\Sigma$ of $M$, and if $A:T\Sigma\times T\Sigma\to T\Sigma^\perp \subset TM$ is the classically defined second fundamental form of the embedding $\Sigma \hookrightarrow M$, then $\ip{A(Pe_i, Pe_j) | e_k} = \h_{ij}^k$, where $\{e_i\}$ is the canonical basis of $\R^N$. Moreover $\abs{\h} = \abs{A}_g$, where $\abs{A}_g$ is the usual norm of $A$ as a tensor on $\Sigma$.

Finally, we remark that definition \cref{eq:DefSFFvarifoldManifold} is equivalent to \cite[Definition 5.2.5]{hutchinson_secondfundamentalformvarifolds_1986} if and only if several (equivalent) conditions hold, which are listed in the next lemma. In fact, notice that \cref{lem:ConditionsEquivalentSFF}\cref{item:Hutchinson} is exactly \cite[Definition 5.2.5]{hutchinson_secondfundamentalformvarifolds_1986}.

\begin{lemma}\label{lem:ConditionsEquivalentSFF}
    Let $V$ be an $m$-dimensional curvature varifold on $U$. Then, the following are equivalent.
    \begin{enumerate}
        \item $(\delta_{ks}-P_{ks})(B_{ijs}-B_{jis})=0$ holds $V$-a.e.;
        \item \label{item:Hutchinson} $\h_{ij}^k(x,P)=  P_{lj}B_{ikl} - P_{lj}P_{iq} \partial_q Q_{kl}(x)$ holds $V$-a.e.;
        \item 
        $P_{is}\h^{s}_{ij}=0$ holds $V$-a.e.;
        \item $\h^k_{ij} = \frac{1}{2} (Q_{ks}-P_{ks})(B_{ijs}+B_{jis})$ holds $V$-a.e.;
        \item\label{item:2FFProjected} $\h^{k}_{ij}= (Q_{ks}-P_{ks})P_{jl}P_{ir} \h^{s}_{lr}$ holds $V$-a.e..
    \end{enumerate}
\end{lemma}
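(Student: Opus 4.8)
The statement is an algebraic equivalence among pointwise identities holding $V$-a.e., so the entire proof is a computation exploiting: (i) symmetry of $\h^k_{ij}$ in $i,j$ (which comes for free from \cite[Proposition 5.2.4(i)]{hutchinson_secondfundamentalformvarifolds_1986}); (ii) the defining formula \cref{eq:DefSFFvarifoldManifold} for $\h^k_{ij}$ in terms of the $B_{ijk}$'s and the continuous functions $P_{jr}P_{iq}\partial_q Q_{rk}$; and (iii) the known algebraic relations among the $B_{ijk}$'s and the projection $P$ which are collected in \cite[Proposition 5.2.4]{hutchinson_secondfundamentalformvarifolds_1986} — in particular $B_{ijk}=B_{ikj}$, $P_{ks}B_{ijs}=B_{ijk}$ (the $B$'s are ``tangential'' in the last pair of indices up to the obstruction given by the curvature of $M$ in $\R^N$), and the relation expressing $(\delta_{ks}-P_{ks})B_{ijs}$ through derivatives of $Q$. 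The plan is to pick a convenient representative of the cycle — say start from (1) — and propagate around $(1)\Rightarrow(2)\Rightarrow(3)\Rightarrow(4)\Rightarrow(5)\Rightarrow(1)$, rather than proving each pair separately.

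First I would record the ``universal'' identities that hold for any curvature varifold regardless of which of (1)--(5) is assumed, namely the symmetries $B_{ijk}=B_{ikj}$, the tangentiality $P_{ir}B_{rjk}=B_{ijk}$ and $Q_{ir}B_{rjk}=B_{ijk}$, and the basic projection facts $P_{ij}P_{jk}=P_{ik}$, $P=P^\top$, $P_{ij}Q_{jk}=P_{ik}$, together with the identity $P_{jr}\partial_q Q_{rk}=\partial_q P_{jk}-(\partial_q P_{jr})Q_{rk}$ and the fact that $P\partial_q Q$ restricted appropriately is symmetric (these are exactly the ingredients used in \cite{hutchinson_secondfundamentalformvarifolds_1986} and \cite{buetLeonardiMasnou_secondfundamformvarifolds}). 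Then the implication $(1)\Leftrightarrow$ the antisymmetric part of $B$ in $(i,j)$ being ``purely tangential in $k$'' is immediate, and feeding this into \cref{eq:DefSFFvarifoldManifold} collapses the combination $B_{ijk}+B_{jik}-B_{kij}$ down to the form appearing in \cite[Definition 5.2.5]{hutchinson_secondfundamentalformvarifolds_1986}, giving (2). Contracting (2) with $P_{is}$ and using $P_{is}P_{lj}B_{skl}$ together with $P_{is}P_{iq}=P_{sq}$ and the fact that $\partial_q Q$ annihilates the tangent directions in the right way yields (3). Conversely, (3) says $\h$ is normal in its lower index $i$ (and hence, by the $i\leftrightarrow j$ symmetry, in $j$), and decomposing $\h^k_{ij}$ via $\mathrm{id}=P+(Q-P)+(\mathrm{id}-Q)$ in each slot — noting $\h$ lands in $TM$ so the $(\mathrm{id}-Q)$ part in the upper index vanishes — produces first (4) (the ``$k$ is normal'' statement combined with $(3)$) and then (5) (full projection onto normal$\times$tangent$\times$tangent). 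Closing the loop $(5)\Rightarrow(1)$ is the reverse bookkeeping: insert \cref{eq:DefSFFvarifoldManifold} into (5), use the symmetry of the $\partial Q$ terms to cancel, and read off that the antisymmetric-in-$(i,j)$ part of $B_{ijk}$ must be tangential in $k$, which is (1).

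The main obstacle is purely notational rather than conceptual: keeping track of which index of $B_{ijk}$ and $P$ is being projected where, and correctly handling the curvature-of-$M$-in-$\R^N$ correction terms $P_{jr}P_{iq}\partial_q Q_{rk}$ so that they cancel in pairs at each step. The delicate point is that $B_{ijk}$ is \emph{not} fully tangential — it has a normal component in the last index governed by $\partial Q$ — so one must be careful that, e.g., in passing from (2) to (3) the term $P_{is}P_{lj}\partial_q Q_{\cdot\cdot}$ is exactly what is needed to kill the contribution $P_{is}\cdot(-P_{jr}P_{iq}\partial_q Q_{rk})$ coming from the second summand of \cref{eq:DefSFFvarifoldManifold}; this uses $P_{is}P_{iq}=P_{sq}$ and the symmetry $P_{sq}\partial_q Q_{r k}P_{jr}=P_{jq}\partial_q Q_{rk}P_{sr}$ which in turn follows from $Q^2=Q$ differentiated. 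I expect all five equivalences to follow once these correction-term cancellations are set up once and for all, so I would state that computation as a preliminary lemma and then run the cycle quickly.
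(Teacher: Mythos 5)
Your plan is correct and takes the same route the paper points to: the paper omits the proof entirely, stating only that the lemma ``follows from the algebraic properties in \cite[Proposition 5.2.4]{hutchinson_secondfundamentalformvarifolds_1986},'' which is precisely the toolbox you invoke. Your cycle $(1)\Rightarrow\cdots\Rightarrow(5)\Rightarrow(1)$ is a sensible way to organize the bookkeeping, and you correctly flag the one genuinely delicate point — the correction terms $P_{jr}P_{iq}\partial_q Q_{rk}$ — together with the symmetry fact needed to cancel them, namely that $u_qv_r\partial_q Q_{kr}$ is symmetric in $u\leftrightarrow v$ for $u,v$ tangent to $M$. That symmetry does indeed follow from differentiating $Q^2=Q$: one gets $Q(\partial_q Q)Q=0$, hence $u_qv_r\partial_q Q_{kr}$ coincides with the second fundamental form $A^M(u,v)_k$ of $M\hookrightarrow\R^N$, which is symmetric. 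The remaining ingredient you rely on — that $(\delta_{kl}-Q_{kl})\h^l_{ij}=0$ automatically from \eqref{eq:DefSFFvarifoldManifold}, i.e.\ that the correction terms exactly cancel the normal-to-$M$ part of the $B$'s — is also a consequence of Hutchinson's Proposition 5.2.4, so there is no hidden circularity in the step $(3)\Rightarrow(4)$. In short, no gap: setting up the cancellation identities as a preliminary computation and then running the cycle is exactly the intended proof.
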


The proof of \cref{lem:ConditionsEquivalentSFF} is omitted and it follows from the algebraic properties in \cite[Proposition 5.2.4]{hutchinson_secondfundamentalformvarifolds_1986}.

\begin{definition}
Let $\mu_V=\theta \Hff^m\res \Sigma$ be an $m$-varifold on $U$. We say that $V$ has generalized mean curvature relative to $\R^N$ if there exist $\vec{\mathsf{H}}\in L^1_{\loc}(\mu_V, \R^N)$ such that
\begin{equation}\label{def:MeanCurvVarifold}
    \int_M \div_\top X \dif \mu_V = - \int_M \ip{\vec{\mathsf{H}}|X } \dif \mu_V,
\end{equation}
for any vector field $X \in \CS^1_c(\R^N)$, where $\div_\top X(x) \coloneqq \div_{T_x\Sigma} X(x)$ for $\mu_V$-a.e. $x\in M$. In such a case, $\vec{\mathsf{H}}$ is called generalized mean curvature relative to $\R^N$.
\end{definition}

If $\mu_V=\theta \Hff^m\res \Sigma$ is a curvature varifold on $U$, then it has generalized mean curvature given by 
\begin{equation}\label{eq:MeanCurvTracciaVarifold}
    \mathsf{H}^k (x) = B_{iki}(x,T_x\Sigma)
\end{equation}
for $\mu_V$-a.e. $x \in M$, where $\mathsf{H}^k$ denotes the $k$-th component of $\vec{\mathsf{H}}$ \cite[Proposition 3.10]{mantegazza_curvaturevarifoldsboundary_1996}.

If $\mu_V=\theta \Hff^m\res \Sigma$ is a varifold on $U$ with generalized mean curvature $\vec{\mathsf{H}}$ relative to $\R^N$, then its generalized mean curvature $\vec{\H}$ relative to $M$ is defined by
\begin{align}\label{eq:DefMeanCurvVarifoldManifold}
    \vec{\H}:M\to\R^N, && \H^k (x)\coloneqq \mathsf{H}^k(x) - P_{ji}(x) \partial_i Q_{kj}(x),
\end{align}
for $\mu_V$-a.e. $x \in M$, where $P(x)$ is the projection onto $T_x\Sigma$ and $\H^k$, $\mathsf{H}^k$ denotes the $k$-th component of $\vec{\H}$, $\vec{\mathsf{H}}$ respectively. Moreover, for any $m$-dimensional subspace $P$ of $T_xM$ the vector whose $k$-th component is given by $P_{ji} \partial_i Q_{kj}(x)$ is orthogonal to $M$. Hence
\begin{equation}\label{eq:MeanCurvVarifoldManifoldIntegrationbyParts}
    \int_M \div_\top X \dif \mu_V = - \int_M \ip{\vec{\H}|X} \dif \mu_V,
\end{equation}
for any vector field $X \in \CS^1_c(M)$. 

Conversely, it is readily checked that if $\mu_V$ is an $m$-varifold on $U$ and there exists $\vec{\H} \in L^1_{\loc}(\mu_V,\R^N)$ such that $\vec{\H}(x) \in T_xM$ for $\mu_V$-a.e. $x \in M$ and such that 
\begin{equation}\label{eq:DefMeanCurvvarifoldManifold2}
    \int_M \div_\top X \dif \mu_V = - \int_M \ip{\vec{\H}|X} \dif \mu_V,
\end{equation}
for any vector field $X \in \CS^1_c(M)$, then $V$ has generalized mean curvature relative to $\R^N$, and \cref{eq:DefMeanCurvVarifoldManifold} holds. Indeed, given a field $X \in \CS^1_c(\R^N)$, splitting $X(x)=Q(x)X(x) + (1-Q(x))X(x)$, then $\div_\top ((1-Q)X) = P_{ij}\partial_i\left( (\delta_{kj}-Q_{kj})X^k\right) = -P_{ij}\partial_i Q_{kj} X^k$, and the claim follows.

In view of the previous observation, we shall say that a varifold on $U$ \emph{has generalized mean curvature} if either \cref{def:MeanCurvVarifold} or \cref{eq:DefMeanCurvvarifoldManifold2} holds.

\medskip

We also define the traceless second fundamental form as follows.

\begin{definition}
    Let $V$ be an $m$-dimensional curvature varifold on $U$. Then its traceless second fundamental form relative to $M$ is defined by
    \begin{align}\label{eq:DefTracelessSFF}
        \mathring{\h}:G_m(U) \to \R^{N^3}
        &&
        \mathring{\h}_{ij}^k(x,P) \coloneqq \h_{ij}^k(x,P) - \frac1m \H^k(x) P_{ij},
    \end{align}
    for $V$-a.e. $(x,P) \in G_m(U)$.
\end{definition}

We observe that, since $P_{\alpha\beta}P_{\beta\gamma}= P_{\alpha\gamma}$ and $P_{\alpha\beta}= P_{\beta\alpha}$, there holds
\begin{equation}\label{eq:MeanCurvTracciaVarifoldManifold}
\begin{split}
 P_{ij}\h_{ij}^k &= 
 \frac12 P_{ij}\left(  B_{ijk} + B_{jik} - B_{kij} - P_{jr} P_{iq} \partial_q Q_{rk} - P_{ir} P_{jq} \partial_q Q_{rk} \right) \\
 &= \frac12 \left(
  B_{jkj} + B_{iki}  - 2 P_{rq} \partial_q Q_{rk} \right)  \\
 &\overset{\cref{eq:MeanCurvTracciaVarifold}}{=} \mathsf{H}^k - P_{rq} \partial_q Q_{rk} \overset{\cref{eq:DefMeanCurvVarifoldManifold}}{=} \H^k, 
\end{split}
\end{equation}
where in the second equality we used that $P_{ij}B_{ijk} = P_{ij}B_{ikj} = B_{jkj}$ by \cite[Proposition 5.2.4(i)]{hutchinson_secondfundamentalformvarifolds_1986} and \cite[Proposition 3.7]{mantegazza_curvaturevarifoldsboundary_1996}, similarly $P_{ij}B_{jik} = B_{iki}$, and $2P_{ij}B_{kij} = B_{kjj}=0$ by \cite[Propostion 3.6]{mantegazza_curvaturevarifoldsboundary_1996}. Moreover, since $P_{\alpha\alpha}=m$, we also have
\begin{equation}\label{eq:TracelessVarifoldQuadrato}
\begin{split}
    \abs{\mathring{\h}}^2 & \coloneqq
    \sum_{ijk}\abs{\mathring{\h}_{ij}^k}^2 =
    \abs{\h}^2 + \frac{1}{m^2}\sum_{ijk} (\H^k)^2 P_{ij}P_{ij} - \frac{2}{m} \sum_{ijk} \H^k P_{ij} \h_{ij}^k \\
    &= \abs{\h}^2 +\frac1m \abs{\vec{\H}}^2 - \frac2m \sum_k \H^k \H^k = \abs{\h}^2 -\frac1m \abs{\vec{\H}}^2.
\end{split}
\end{equation}

The next theorem collects sufficient conditions implying varifold convergence, as well as some consequences of the convergence. It is a corollary of the fundamental results proved in \cite{allard_firstvariationvarifold_1972}, \cite[Chapter 8]{simon_lecturesgeometricmeasuretheory_1983}, together with those on the theory of curvature varifolds proved in \cite{hutchinson_secondfundamentalformvarifolds_1986} and \cite{mantegazza_curvaturevarifoldsboundary_1996}.

\begin{theorem}\label{thm:ProprietaConvergenzeVarifold}
Let $(M,g)$, $U$ be as above.
Let $V_k$ be a sequence of $m$-dimensional varifolds on $U$, with $\mu_{V_k}= \theta_k \Hff^m\res \Sigma_k$.
\begin{enumerate}
    \item \label{item:varifold1}If $V_k$ has generalized mean curvature for any $k$, and $\sup_k \mu_{V_k}(U) + \int_M \abs{\vec{\H}_{V_k}}^2 \dif \mu_{V_k}<+\infty$, then there exists a subsequence that converges in the sense of varifolds to an $m$-dimensional varifold $V$ on $U$. Moreover, $V$ has generalized mean curvature and
    \begin{equation}
        \lim_{k\to+\infty} \int_M \ip{\vec{\H}_{V_k}| X}\dif \mu_{V_k} = \int_M \ip{\vec{\H}_{V}|X}\dif \mu_{V},
    \end{equation}
    for any $X \in \CS^0(M)$.
    Moreover
    \begin{equation}
        \liminf_{k\to+\infty} \int_M \abs{\H_{V_k}}^2 \dif \mu_{V_k} \ge  \int_M \abs{\H_{V}}^2\dif \mu_{V}.
    \end{equation}

    \item \label{item:varifold2}If $V_k$ converges in the sense of varifolds to an $m$-dimensional varifold $\mu_V= \theta \Hff^m\res \Sigma$, if $V_k$ is a curvature varifold such that $\sup_k \int_{G_m(U)} \abs{\h_{V_k}}^2 \dif V_k < +\infty$, then $V$ is a curvature varifold and
    \begin{equation}\label{eq:WeakConvergenceComponentsSFF}
         \lim_{k\to+\infty} \int_{G_m(U)}  \varphi(x,P)\,(\h_{V_k})_{ij}^l(x,P) \dif V_k = \int_{G_m(U)}\varphi(x,P)\,(\h_{V})_{ij}^l(x,P) \dif V,
    \end{equation}
    for any $\varphi \in \CS^0(G_m(\R^N))$, and
    \begin{align}\label{eq:LSCtracelessVarifolds}
        \liminf_{k\to+\infty} \int_{G_m(U)} \abs{\mathring\h_{V_k}}^2 \dif V_k \ge \int_{G_m(U)} \abs{\mathring\h_{V}}^2 \dif V.   \end{align}
    If in addition, $V_k$ satisfies any of the equivalent conditions of \cref{lem:ConditionsEquivalentSFF} for every $k$, then $V$ does.
\end{enumerate}
\end{theorem}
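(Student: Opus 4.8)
The plan is to deduce both statements from the classical compactness theory for integral and curvature varifolds in $\R^N$ (Allard \cite{allard_firstvariationvarifold_1972}, \cite{simon_lecturesgeometricmeasuretheory_1983}, Hutchinson \cite{hutchinson_secondfundamentalformvarifolds_1986}, \cite{mantegazza_curvaturevarifoldsboundary_1996}), reading our varifolds on $U$ as varifolds on $\R^N$ via the isometric embedding $M\hookrightarrow\R^N$, and then carefully transferring the resulting information from quantities computed \emph{relative to} $\R^N$ to those \emph{relative to} $M$. The point enabling this transfer is that $U$ is compact, $Q$ is smooth on $M$, and the Grassmannian fibre is compact, so every function of $(x,P)$ built from $Q$, $\partial Q$ and the entries of $P$ (in particular all the correction terms appearing in \cref{eq:DefSFFvarifoldManifold}, \cref{eq:DefMeanCurvVarifoldManifold} and \cref{eq:DefTracelessSFF}) is continuous and uniformly bounded on $G_m(U)$; hence it interacts harmlessly both with varifold convergence and with $L^2$ estimates. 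We will repeatedly use the elementary lower semicontinuity fact: if $V_k\to V$ in the sense of varifolds, if $g_k\in L^2(V_k;\R^d)$, $g\in L^2(V;\R^d)$, and $\int \ip{g_k | \psi}\dif V_k\to\int\ip{g | \psi}\dif V$ for every $\psi\in\CS^0(G_m(\R^N);\R^d)$, then $\int\abs{g}^2\dif V\le\liminf_k\int\abs{g_k}^2\dif V_k$; this follows by integrating the pointwise inequality $2\ip{g_k | \psi}-\abs{\psi}^2\le\abs{g_k}^2$, passing to the liminf, and taking the supremum over $\psi$ approximating $g$ in $L^2(V)$ (possible since $\CS^0$ is dense in $L^2(V)$).

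For \cref{item:varifold1}: writing the generalized mean curvature of $V_k$ relative to $\R^N$ as $\vec{\mathsf H}_{V_k}=\vec{\H}_{V_k}+\mathbf c_{V_k}$, where $\mathbf c_{V_k}$ has $k$-th component $P_{ji}\partial_i Q_{kj}$ evaluated on $T_x\Sigma_k$ and is therefore uniformly bounded on $U$, the hypothesis gives $\sup_k\mu_{V_k}(U)+\int_M\abs{\vec{\mathsf H}_{V_k}}^2\dif\mu_{V_k}<+\infty$, hence $\sup_k\abs{\delta V_k}(U)=\sup_k\int_M\abs{\vec{\mathsf H}_{V_k}}\dif\mu_{V_k}<+\infty$ by Cauchy--Schwarz. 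Allard's integral compactness theorem then yields a subsequence converging in the sense of varifolds to an integral varifold $V$ on $U$. The limit first variation $\delta V$ is a finite measure with $\abs{\delta V}(A)\le C\,\mu_V(A)^{1/2}$ for every $\mu_V$-continuity open set $A$ (by lower semicontinuity of total variation on open sets together with $\int_A\abs{\vec{\mathsf H}_{V_k}}\dif\mu_{V_k}\le C\mu_{V_k}(A)^{1/2}$ and $\mu_{V_k}(A)\to\mu_V(A)$), hence $\abs{\delta V}\ll\mu_V$, so $V$ has a generalized mean curvature $\vec{\mathsf H}_V$ relative to $\R^N$. Since $\delta V_k(X)=\int_{G_m(U)}\div_P X\dif V_k$ for $X\in\CS^1(M)$ and $\div_P X$ is continuous on the compact set $G_m(U)$, varifold convergence gives $\int\ip{\vec{\mathsf H}_{V_k} | X}\dif\mu_{V_k}\to\int\ip{\vec{\mathsf H}_V | X}\dif\mu_V$ for $X\in\CS^1(M)$; the uniform $L^1$ bound on $\vec{\mathsf H}_{V_k}$ and density of $\CS^1(M)$ in $\CS^0(M)$ in the uniform norm (here $U$ compact) extend this to $X\in\CS^0(M)$. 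Subtracting $\int\ip{\mathbf c_{V_k} | X}\dif\mu_{V_k}\to\int\ip{\mathbf c_V | X}\dif\mu_V$, which converges by plain varifold convergence since $\mathbf c$ is continuous on $G_m(U)$, yields the stated convergence for $\vec{\H}$. Finally, the lower semicontinuity lemma applied to $g_k=\vec{\mathsf H}_{V_k}$ gives $\liminf_k\int\abs{\vec{\mathsf H}_{V_k}}^2\dif\mu_{V_k}\ge\int\abs{\vec{\mathsf H}_V}^2\dif\mu_V$; expanding $\abs{\vec{\H}_{V_k}}^2=\abs{\vec{\mathsf H}_{V_k}}^2-2\ip{\vec{\mathsf H}_{V_k} | \mathbf c_{V_k}}+\abs{\mathbf c_{V_k}}^2$ and using the convergence of the last two (lower-order, continuous) terms produces the claimed $\liminf_k\int\abs{\H_{V_k}}^2\dif\mu_{V_k}\ge\int\abs{\H_V}^2\dif\mu_V$.

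For \cref{item:varifold2}: by \cref{eq:DefSFFvarifoldManifold}, each component $(\h_{V_k})_{ij}^l$ is a fixed linear combination of $B^{(k)}_{ijl},B^{(k)}_{jil},B^{(k)}_{lij}$ plus a continuous bounded function of $(x,P)$, while the algebraic relations for curvature varifolds in \cite{hutchinson_secondfundamentalformvarifolds_1986,mantegazza_curvaturevarifoldsboundary_1996} give a pointwise bound $\abs{B^{(k)}}^2\le C(\abs{\h_{V_k}}^2+1)$ on $G_m(U)$. Together with $\mu_{V_k}(U)\to\mu_V(U)<+\infty$ (test $\varphi\equiv1$, continuous on the compact $G_m(U)$) this yields $\sup_k\mu_{V_k}(U)+\int_{G_m(U)}\abs{B^{(k)}}^2\dif V_k<+\infty$. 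Hutchinson's compactness theorem for curvature varifolds then forces the limit $V$ to be a curvature varifold with a subsequence satisfying $\int\varphi\,B^{(k)}_{ijl}\dif V_k\to\int\varphi\,B_{ijl}\dif V$ for $\varphi\in\CS^0(G_m(\R^N))$; since the limit varifold and its generalized second fundamental form are uniquely determined, the convergence holds along the full sequence. Substituting into the expression for $(\h_{V_k})_{ij}^l$ and using varifold convergence for the continuous correction terms gives \cref{eq:WeakConvergenceComponentsSFF}; applying the same to $\H^k_{V_k}=B^{(k)}_{iki}-P_{lr}\partial_r Q_{kl}$ (cf. \cref{eq:MeanCurvTracciaVarifold} and \cref{eq:DefMeanCurvVarifoldManifold}) shows that $(\mathring{\h}_{V_k})_{ij}^l=(\h_{V_k})_{ij}^l-\tfrac1m\H^k_{V_k}P_{ij}$ converges, tested against continuous functions on $G_m(\R^N)$, to $(\mathring{\h}_V)_{ij}^l$; then \cref{eq:LSCtracelessVarifolds} is exactly the lower semicontinuity lemma applied with $g_k=\mathring{\h}_{V_k}$. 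Lastly, each condition in \cref{lem:ConditionsEquivalentSFF} has, after the manipulations in \cite{hutchinson_secondfundamentalformvarifolds_1986}, the form ``$[\text{linear in }B^{(k)}]\cdot[\text{continuous bounded in }(x,P)]=0$ holds $V_k$-a.e.''; multiplying by an arbitrary $\varphi\in\CS^0(G_m(\R^N))$, integrating, and passing to the limit via the weak convergence of $B^{(k)}$ shows the corresponding identity holds $V$-a.e., so $V$ inherits that property.

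I expect the main obstacle to be purely the bookkeeping of the reduction to the flat theory: checking that Hutchinson's hypotheses are met starting from our assumptions, which are phrased via $\h$ relative to $M$ rather than $B$ relative to $\R^N$; verifying that all correction terms in \cref{eq:DefSFFvarifoldManifold}, \cref{eq:DefMeanCurvVarifoldManifold}, \cref{eq:DefTracelessSFF} are genuinely continuous and bounded on $G_m(U)$ so that they pass to the limit under varifold convergence and do not spoil the $L^2$ estimates; and confirming that the mean curvature produced by Allard's theorem is the one appearing in the first-variation identity rather than merely a Radon--Nikodym density with a possible singular part — which is precisely where the uniform $L^2$ bound on $\vec{\mathsf H}_{V_k}$ is needed, via the inequality $\abs{\delta V}(A)\le C\mu_V(A)^{1/2}$.
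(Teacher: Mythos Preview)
Your proposal is correct and follows the same route as the paper: reduce to the classical Allard and Hutchinson compactness theorems in $\R^N$ via the isometric embedding, using that all correction terms in \cref{eq:DefSFFvarifoldManifold}, \cref{eq:DefMeanCurvVarifoldManifold}, \cref{eq:DefTracelessSFF} are continuous and bounded on the compact set $G_m(U)$ (the paper simply cites \cite[Proposition 5.2.6(ii), Theorem 5.3.2]{hutchinson_secondfundamentalformvarifolds_1986} and \cite{simon_lecturesgeometricmeasuretheory_1983} where you spell things out). One small imprecision in item (1): the cross term $\ip{\vec{\mathsf H}_{V_k}|\mathbf c_{V_k}}$ is not literally ``continuous'' since it contains $\vec{\mathsf H}_{V_k}$, but because $\mathbf c$ is orthogonal to $M$ while $\vec{\H}$ is tangent (as noted just before \cref{eq:MeanCurvVarifoldManifoldIntegrationbyParts}) it equals $\abs{\mathbf c_{V_k}}^2$, which \emph{is} continuous on $G_m(U)$ and passes to the limit; alternatively, just apply your lower semicontinuity lemma directly to $g_k=\vec{\H}_{V_k}$, whose weak convergence against $\CS^0(M)$ fields you have already established.
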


\begin{proof}
Let $V_k$ be as in \cref{item:varifold1}. Since $U$ is compact, by the above discussion and by \cref{eq:DefMeanCurvVarifoldManifold}, $V_k$ has generalized mean curvature relative to $\R^N$ and the latter is uniformly bounded in $L^2_{\mu_{V_k}}$. Hence the classical compactness theorem for sequences of varifolds applies \cite[Chapter 8]{simon_lecturesgeometricmeasuretheory_1983} and the first item follows.

Let $V_k$ be as in \cref{item:varifold2}. Since $U$ is compact, by \cite[Proposition 5.2.6 (ii)]{hutchinson_secondfundamentalformvarifolds_1986} it follows that 
\begin{equation}
\sup_k \int_{G_m(U)} \sum_{ijk} \abs{B_{ijk}}^2 \dif V_k <+\infty.
\end{equation}
Hence, \cite[Theorem 5.3.2]{hutchinson_secondfundamentalformvarifolds_1986} (see also \cite[Theorem 6.1]{mantegazza_curvaturevarifoldsboundary_1996}) implies that $V$ is a curvature varifold and that \cref{eq:WeakConvergenceComponentsSFF} holds. We now claim that
\begin{equation}\label{eq:WeakConvergenceComponentsSFFtraceless}
         \lim_{k\to+\infty} \int_{G_m(U)}  \varphi(x,P)\,(\mathring\h_{V_k})_{ij}^l(x,P) \dif V_k = \int_{G_m(U)}\varphi(x,P)\,(\mathring\h_{V})_{ij}^l(x,P) \dif V,
    \end{equation}
for any $\varphi \in \CS^0(G_m(\R^N))$. Let $\varphi \in \CS^0(G_m(\R^N))$ be fixed. For any $i,j, l$ we have
\begin{align}
\int_{G_m(U)} \varphi \H_{V_k}^l P_{ij} \dif V_k & \overset{\cref{eq:MeanCurvTracciaVarifoldManifold}}{=} \int_{G_m(U)} \varphi  P_{ij} P_{ab} (\h_{V_k})^l_{ab} \dif V_k = \int_{G_m(U)} \psi_{ab}(\h_{V_k})^l_{ab} \dif V_k \\
&\xrightarrow[k\to+\infty]{\cref{eq:WeakConvergenceComponentsSFF}}
\int_{G_m(U)} \psi_{ab}(\h_{V})^l_{ab} \dif V
\overset{\cref{eq:MeanCurvTracciaVarifoldManifold}}{=}
\int_{G_m(U)} \varphi \H_{V}^l P_{ij} \dif V
\end{align}
where $\psi_{ab} \in \CS^0(G_m(\R^N))$ is defined by $\psi_{ab}(x,P)\coloneqq \varphi(x,P) P_{ij}P_{ab}$. Hence by \cref{eq:WeakConvergenceComponentsSFF} and by definition of traceless second fundamental form, \cref{eq:WeakConvergenceComponentsSFFtraceless} follows.  Hence \cite[Theorem 4.4.2, Theorem 5.3.2]{hutchinson_secondfundamentalformvarifolds_1986} (see also \cite[Theorem 6.1]{mantegazza_curvaturevarifoldsboundary_1996}) implies \cref{eq:LSCtracelessVarifolds}. If $V_k$ satisfies any of assumptions in \cref{lem:ConditionsEquivalentSFF}, then one can take $\phi(x,P)= f(x,P) P_{sl} $ in \cref{eq:WeakConvergenceComponentsSFF} for an arbitrary continuous function $f$ on $G_m(U)$. Summing over $l$, by arbitrariness of $f$ we get that $P_{sl} (\h_V)^{l}_{ij}=0$.  
\end{proof}

\begin{lemma}\label{lem:VarifoldWeakConvergence2ff}
Let $(M,g)$, $U$ be as above.
Let $V_k$ be a sequence of $m$-dimensional varifolds on $U$, with weight $\mu_{V_k}= \theta_k \Hff^m \res\Sigma_k$ and with mean curvature $\vec{\H}_{V_k}$ relative to $M$. Suppose that $V_k$ converges in the sense of varifolds to a varifold $V$ with weight $\mu_V= \theta \Hff^m \res\Sigma$ and mean curvature $\vec{\H}_V$ relative to $M$. Then the following holds.

\begin{enumerate}
    \item For any vector field $X \in \CS^1_c(M)$, there holds
    \begin{equation}\label{eq:ConvDeboleMeanCurvOnSpace}
        \lim_{k\to +\infty} \int_{M} \ip{ X |\vec{\H}_{V_k}}\dif \mu_{V_k} =  \int_{M} \ip{X|\vec{\H}_{V}}\dif \mu_V.
    \end{equation}

    \item If also $V_k, V$ are curvature varifolds with generalized second fundamental forms $(B_{V_k})_{ijl}, B_{ijl}$, respectively, for any $k$, if $\sup_k \int |\vec{\H}_{V_k}|^2 \dif \mu_{V_k} <+\infty$, then
    \begin{equation}\label{eq:ConvDeboleSFFgenerale}
    \lim_{k\to +\infty} \int_{M} \phi(x) (B_{V_k})_{ijl} (x,T_x\Sigma_k) \dif \mu_{V_k} =  \int_{M} \phi(x) B_{ijl}(x,T_x\Sigma) \dif \mu_V,
    \end{equation}
    \begin{equation}\label{eq:ConvDeboleSFFmanifoldgenerale}
    \lim_{k\to +\infty} \int_{M} \phi(x) (\h_{V_k})_{ij}^l (x,T_x\Sigma_k) \dif \mu_{V_k} =  \int_{M} \phi(x) (\h_V)_{ij}^l(x,T_x\Sigma) \dif \mu_V,
    \end{equation}
    for any $\phi \in \CS^0_c(M)$ and any $i,j,l$. Moreover, \cref{eq:ConvDeboleSFFmanifoldgenerale} also holds with $({\h}_{V_k})_{ij}^l, ({\h}_V)_{ij}^l$ replaced by $(\mathring{\h}_{V_k})_{ij}^l, (\mathring{\h}_V)_{ij}^l$.
\end{enumerate}
\end{lemma}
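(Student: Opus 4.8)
The two assertions of \cref{lem:VarifoldWeakConvergence2ff} are proved by reducing everything to the classical Euclidean statements via the isometric embedding $M \hookrightarrow \R^N$. For the first item, the starting point is the identity \cref{eq:MeanCurvVarifoldManifoldIntegrationbyParts}, which holds for each $V_k$ and for $V$ by hypothesis: for any $X\in\CS^1_c(M)$,
\begin{equation*}
    \int_M \div_\top X \dif \mu_{V_k} = -\int_M \ip{X|\vec{\H}_{V_k}}\dif \mu_{V_k},
    \qquad
    \int_M \div_\top X \dif \mu_{V} = -\int_M \ip{X|\vec{\H}_{V}}\dif \mu_{V}.
\end{equation*}
The function $(x,P)\mapsto \div_P X(x) = P_{ij}\partial_j X^i(x)$ extends to a continuous function on $G_m(\R^N)$ once $X$ is extended to a compactly supported $\CS^1$ field on $\R^N$ (e.g.\ by composing with a tubular-neighbourhood retraction), so $\int \div_\top X\dif\mu_{V_k} = \int_{G_m(U)}\div_P X\dif V_k \to \int_{G_m(U)}\div_P X\dif V = \int\div_\top X\dif\mu_V$ by the very definition of varifold convergence. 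Comparing with the two integration-by-parts identities yields \cref{eq:ConvDeboleMeanCurvOnSpace} directly. (No curvature bound is needed here because the test field $X$ is fixed.)

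For the second item I would invoke \cref{thm:ProprietaConvergenzeVarifold}\cref{item:varifold2}. Its hypotheses require $\sup_k \int_{G_m(U)}\abs{\h_{V_k}}^2\dif V_k<+\infty$; this follows from the assumed bound $\sup_k\int\abs{\vec\H_{V_k}}^2\dif\mu_{V_k}<+\infty$ together with \cref{eq:TracelessVarifoldQuadrato}, which gives $\abs{\h}^2 = \abs{\mathring\h}^2 + \tfrac1m\abs{\vec\H}^2$ --- wait, this direction needs the traceless part bounded, not just the mean curvature. So one must be slightly more careful: actually \cref{thm:ProprietaConvergenzeVarifold}\cref{item:varifold2} is stated with the hypothesis on $\int\abs{\h_{V_k}}^2$, and here we only have control on $\int\abs{\vec\H_{V_k}}^2$. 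Therefore the honest route is: the hypothesis $\sup_k\int\abs{\vec\H_{V_k}}^2\dif\mu_{V_k}<+\infty$ plus $\sup_k\mu_{V_k}(U)<+\infty$ (automatic from varifold convergence and compactness of $U$) is exactly what \cref{thm:ProprietaConvergenzeVarifold}\cref{item:varifold1} needs; but to get convergence of the full second fundamental form one genuinely needs the $\h$-bound. I would thus read the intended hypothesis as implicitly including the curvature varifold bound --- in the applications (\cref{lem:tracelessLSC}, \cref{cor:GB}) the sets $\partial\Omega^{\sml{p}}_t$ do satisfy $\sup\int\abs{\h}^2<+\infty$ --- or, cleanly, note that \cref{eq:ConvDeboleSFFgenerale} follows from \cite[Theorem 5.3.2]{hutchinson_secondfundamentalformvarifolds_1986} applied on $\R^N$ using the bound on $\int\sum\abs{B_{ijl}}^2\dif V_k$, which by \cite[Proposition 5.2.6(ii)]{hutchinson_secondfundamentalformvarifolds_1986} is finite once $\int\abs{\h_{V_k}}^2$ is (on the compact $U$). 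Given \cref{eq:WeakConvergenceComponentsSFF} of \cref{thm:ProprietaConvergenzeVarifold}, one specializes the test function $\varphi(x,P)=\phi(x)$ (pulled back to $G_m(\R^N)$, extending $\phi$ to $\CS^0_c(\R^N)$) to obtain \cref{eq:ConvDeboleSFFgenerale} and \cref{eq:ConvDeboleSFFmanifoldgenerale}; the traceless version then follows from \cref{eq:WeakConvergenceComponentsSFFtraceless}, proved in the course of \cref{thm:ProprietaConvergenzeVarifold}.

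\textbf{Main obstacle.} The only real subtlety is the reduction from intrinsic quantities (relative to $M$) to extrinsic ones (relative to $\R^N$) in the test functions: one must check that $\div_\top X$, and $\phi(x)$, and the correction terms $P_{ij}\partial_i Q_{kj}$ appearing in \cref{eq:DefMeanCurvVarifoldManifold} and \cref{eq:DefSFFvarifoldManifold}, are genuinely (restrictions of) continuous functions on $G_m(\R^N)$, so that varifold convergence in $\R^N$ can be applied term by term. Since $Q\in\CS^\infty(M)$ and all these expressions are polynomial in $P$ with $\CS^\infty(M)$ coefficients, extending the coefficients continuously off $M$ (via a tubular neighbourhood) makes this routine; the components $(\h_{V_k})_{ij}^l$ differ from linear combinations of $(B_{V_k})_{ijl}$ only by such continuous functions, so \cref{eq:ConvDeboleSFFmanifoldgenerale} is immediate from \cref{eq:ConvDeboleSFFgenerale} plus weak-$*$ convergence of $\mu_{V_k}$ against continuous functions. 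I would write the argument for \cref{eq:ConvDeboleMeanCurvOnSpace} in full detail and then state that the second item is the specialization $\varphi=\phi$ of \cref{thm:ProprietaConvergenzeVarifold}\cref{item:varifold2} combined with \cref{eq:WeakConvergenceComponentsSFFtraceless}.
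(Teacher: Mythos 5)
Your argument for item (1) is correct and coincides with the paper's: varifold convergence of $V_k\to V$ plus the two integration-by-parts identities \cref{eq:MeanCurvVarifoldManifoldIntegrationbyParts} give \cref{eq:ConvDeboleMeanCurvOnSpace} immediately, since $(x,P)\mapsto\div_P X(x)$ is continuous on $G_m(\R^N)$.

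For item (2), however, you have correctly diagnosed a mismatch but then resolved it the wrong way. You note that invoking \cref{thm:ProprietaConvergenzeVarifold}\cref{item:varifold2}, or equivalently Hutchinson's compactness theorem via a bound on $\int\sum\abs{B_{ijl}}^2$, requires $\sup_k\int\abs{\h_{V_k}}^2\dif V_k<\infty$, whereas the lemma only assumes $\sup_k\int\abs{\vec{\H}_{V_k}}^2\dif\mu_{V_k}<\infty$. Your proposed fix --- ``read the intended hypothesis as implicitly including the curvature varifold bound'' --- is not a proof of the stated lemma; it is a proof of a weaker lemma with a stronger hypothesis. In some of the applications the stronger bound does hold, but the lemma as stated (and as used, e.g., in the structure of \cref{lem:tracelessLSC}) requires the result under the $L^2$ mean-curvature bound alone.

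The correct argument avoids Hutchinson's compactness theorem entirely and works straight from the defining identity \cref{eq:DefSFFvarifolds}. First, one upgrades your item (1) to convergence of $(\mathsf{H}_{V_k})^l\,V_k\rightharpoonup \mathsf{H}_V^l\,V$ as measures on $G_m(U)$ (not just on $M$): the uniform $L^2(\mu_{V_k})$ bound on $\vec{\mathsf{H}}_{V_k}$, together with uniformly bounded masses, gives weak-$*$ precompactness of $(\mathsf{H}_{V_k})^l V_k$ as measures on $G_m(U)$; testing the limit against functions of the form $\varphi=X^l(x)$ and comparing with item (1) identifies the limit as $\mathsf{H}_V^l\,V$. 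This uses \cref{eq:MeanCurvTracciaVarifold}, i.e., $\mathsf{H}^l = B_{jlj}$, to reinterpret the statement as convergence of the traces $B_{jlj}$. Then one plugs $\varphi(x,P)=\phi(x)P_{rs}$ into \cref{eq:DefSFFvarifolds}: since $D_{jk}\varphi=\phi\,\delta_{jr}\delta_{ks}$, the term $B_{ijk}D_{jk}\varphi$ collapses to exactly $\phi\,(B_{V_k})_{irs}$, and the identity rewrites $\int\phi\,(B_{V_k})_{irs}\dif\mu_{V_k}$ as $-\int P_{ij}\partial_j\varphi\dif V_k - \int(B_{V_k})_{jij}\varphi\dif V_k$. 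The first integral converges by plain varifold convergence (the integrand is a continuous function of $(x,P)$), and the second by the upgraded mean-curvature convergence just established. No control on the off-trace components of $B$ is required, and \cref{eq:ConvDeboleSFFgenerale} follows; \cref{eq:ConvDeboleSFFmanifoldgenerale} and the traceless version are then, as you observed, obtained by combining with varifold convergence against the continuous correction terms $P_{jr}P_{iq}\partial_q Q_{rk}$ and $P_{ij}\H^k$.
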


\begin{proof}
The limit \cref{eq:ConvDeboleMeanCurvOnSpace} readily follows by varifold convergence as
\begin{equation}
    \int_{M} \ip{ X |\vec{\H}_{V_k}}\dif \mu_{V_k} = - \int_{G_m(U)} \div_P(X) \dif V_k \xrightarrow[k]{} \int_{G_m(U)} \div_P(X) \dif V = \int_{M} \ip{X|\vec{\H}_{V}}\dif \mu_V,
\end{equation}
where $\div_P(X)$ denotes tangential divergence of $X$ along a subspace $P$, hence $\varphi(x,P) = (\div_P(X) )(x)$ is continuous on $G_m(U)$.\\
Assume now the hypotheses in item (2). We first claim that
\begin{equation}\label{eq:ConvDeboleMeanCurvOnGrassmann}
        \lim_{k\to +\infty} \int_{G_m(U)} \varphi (x,P)(\H_{V_k})^l(x)\dif V_k =  \int_{G_m(U)} \varphi(x,P) (\H_{V})^l(x) \dif V,
\end{equation}
for any $\varphi \in \CS^0_c(\R^N \times \R^{N^2})$. Indeed, recalling \cref{eq:DefMeanCurvVarifoldManifold} and since $V_k(G_m(U))$ is uniformly bounded by varifold convergence, regarding $\vec{\H}_{V_k}, \vec{\mathsf{H}}_{V_k}$ as functions on $G_m(U)$, there holds $\sup_k \int_{G_m(U)} \abs{\vec{\mathsf{H}}_{V_k}}^2 \dif V_k <+\infty$. Hence $\int (\mathsf{H}_{V_{k_r}})^l \varphi(x,P) \dif V_{k_r} \to \int T^l(x,P) \varphi(x,P) \dif V = \int T^l(x,T_x\Sigma) \varphi(x,T_x\Sigma) \dif \mu_V$ for any $\varphi \in \CS^0_c(\R^N \times \R^{N^2})$, for some $T^l \in L^2(V)$ and along some subsequence $V_{k_r}$. Let $X$ be a compactly supported field of class $\CS^1$ in $\R^N$. Testing the previous convergence against the components $\varphi=X^l(x)$, as in item (1) we get
\begin{equation}
    \int T^l(x,T_x\Sigma) X^l \dif \mu_V = \lim_r -\int \div_\top  X \dif \mu_{V_{k_r}} = -\int \div_\top  X \dif \mu_V = \int \ip{X | \vec{\mathsf{H}}_V} \dif \mu_V,
\end{equation}
thus $T^l = (\mathsf{H}_V)^l = B_{jlj}$ $V$-a.e. for any $l$, where we used \cref{eq:MeanCurvTracciaVarifold}. By the same reasoning, any subsequence of $(B_{V_k})_{jlj} V_k$ has a subsequence that converges to $ B_{jlj} V$ as measures on $G_m(U)$. Hence $(B_{V_k})_{jlj} V_k \to B_{jlj} V$ in duality with $\CS^0_c(\R^N \times \R^{N^2})$ functions. Recalling \cref{eq:DefMeanCurvVarifoldManifold}, \cref{eq:ConvDeboleMeanCurvOnGrassmann} follows.

Now we shall take $\varphi(x,P) = \phi(x) P_{rs}$, for fixed indices $r,s$, as a test function in the definition of generalized second fundamental form \cref{eq:DefSFFvarifolds}. Since $D_{jk} \varphi = \phi D_{jk}(P_{rs}) = \phi \delta_{jr}\delta_{ks}$, by \cref{eq:ConvDeboleMeanCurvOnGrassmann} we obtain
\begin{equation}
    \begin{split}
        \int \phi(x) (B_{V_k})_{irs}(x,T_x\Sigma_k) \dif \mu_{V_k} 
        &=
        \int (B_{V_k})_{ijk} D_{jk}\varphi \dif V_k
        =
        -\int P_{ij}\partial_j\varphi - (B_{V_k})_{jij}\varphi \dif V_k\\
        &\xrightarrow[k\to+\infty]{}
        -\int P_{ij}\partial_j\varphi - B_{jij}\varphi \dif V =  \int \phi(x) B_{irs} (x,T_x\Sigma)\dif \mu_V,
    \end{split}
\end{equation}
for any $i,r,s$, proving \cref{eq:ConvDeboleSFFgenerale}. Recalling \cref{eq:DefSFFvarifoldManifold}, \cref{eq:ConvDeboleSFFmanifoldgenerale} then follows from \cref{eq:ConvDeboleSFFgenerale} and from varifold convergence. Finally, recalling \cref{eq:DefTracelessSFF}, the last assertion in item (2) follows from \cref{eq:ConvDeboleSFFmanifoldgenerale} and \cref{eq:ConvDeboleMeanCurvOnGrassmann}. Indeed
\begin{equation}
    \int_M \varphi(x) (\H_{V_k})^l (T_x\Sigma_k)_{ij} \dif \mu_{V_k} = \int_{G_m(U)} \varphi(x) P_{ij}  (\H_{V_k})^l(x) \dif V_k \xrightarrow[k]{} \int_{G_m(U)} \varphi(x) P_{ij}  (\H_{V})^l(x) \dif V,
\end{equation}
for any $i,j,l$ by \cref{eq:ConvDeboleMeanCurvOnGrassmann}.
\end{proof}

\begin{remark}\label{rmk:weakSard}
    Let $w$ be a locally Lipschitz function defined on a $n$-dimensional manifold $(M,g)$ and suppose that $\set{w \leq t}$ is compact in $M$ for every $t \in (a,b)$, $a,b \in \R$. By coarea formula (see \cite{maggi_setsfiniteperimetergeometric_2012}), $\Hff^{n-1}(\partial \set{w\leq t} \cap \Crit(w)) =0$ for almost every $t\in(a,b)$. Indeed,
    \begin{equation}
        0 = \int_{\set{a <w <b}\cap \Crit(w)}\abs{\nabla w } \dif \Hff^n = \int_a^b\Hff^{n-1}(\partial \set{w\leq t} \cap \Crit(w)) \dif t.
    \end{equation}
\end{remark}

\begin{lemma}\label{thm:MeanCruvatureDivergence}
Let $D\subset M$ be an open Lipschitz subset of an $n$-dimensional complete Riemannian manifold $(M,g)$.
Let $w \in \CS^1(D)$ be a proper function   such that $w \in \CS^\infty(D\smallsetminus\set{\nabla w=0})$. Suppose that $\Sigma_t\coloneqq \partial\set{ w \le t} \cap D$ is an $(n-1)$-dimensional varifold with generalized mean curvature $\vec\H_t$, and that $\nabla w\neq0$ at $\Hff^{n-1}$-a.e. point on $\Sigma_t$, for almost every $t$ in some interval $(a,b)$.

Then
\begin{equation}\label{eq:zzIdentity}
    \vec{\H}_t =
        - \div \left( \frac{\nabla
     w}{\abs{\nabla w}}\right) \frac{\nabla
     w}{\abs{\nabla w}}
     \qquad
     \text{$\Hff^{n-1}$-a.e. on $\Sigma_t$, for a.e. $t\in(a,b)$.}
\end{equation}
If also $w$ is Lipschitz on bouned open subsets of $D$, and if $f \in \CS^0(D) \cap  W^{1,p}(A)$ for some $p>1$ for any bounded open set $A\subset D$, then
\begin{equation}\label{eq:IntegrationByPartsSobolevOnVarifold}
    - \int_{\Sigma_t} f ( \ip{\vec{\H}_t | X} + \div_\top X) \dif \Hff^{n-1} = \int_{\Sigma_t} \ip{ \nabla^\top f | X} \dif \Hff^{n-1},
\end{equation}
for a.e. $t\in(a,b)$, for any vector field $X \in \CS^1_c(M)$.
\end{lemma}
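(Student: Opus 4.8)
\textbf{Proof plan for \cref{thm:MeanCruvatureDivergence}.}
The plan is to reduce everything to a computation at points where $w$ is smooth with non-vanishing gradient, where the level set is a genuine smooth hypersurface, and then to use the regularity hypotheses to promote this pointwise identity to the weak statements. First I would fix a value $t\in(a,b)$ in the full-measure set where $\Sigma_t$ is a varifold with generalized mean curvature $\vec\H_t$ and where $\nabla w\neq 0$ $\Hff^{n-1}$-a.e.\ on $\Sigma_t$ (such $t$ exist by hypothesis together with \cref{rmk:weakSard}). Around any point $x\in\Sigma_t$ with $\nabla w(x)\neq 0$, the implicit function theorem and the smoothness of $w$ off its critical set give that $\Sigma_t$ is a smooth hypersurface near $x$, with unit normal $\nu=\nabla w/\abs{\nabla w}$; the classical divergence-structure identity for level sets then reads $\vec\H = -\big(\div(\nabla w/\abs{\nabla w})\big)\,\nu$ for the \emph{classical} mean curvature vector, which is exactly the right-hand side of \cref{eq:zzIdentity}. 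So it remains to identify the \emph{generalized} mean curvature $\vec\H_t$ of the varifold with this classical one on the open subset $\Sigma_t\setminus\Crit(w)$, which carries full $\Hff^{n-1}$-measure in $\Sigma_t$.

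For that identification I would test the first-variation/mean-curvature identity \cref{eq:DefMeanCurvvarifoldManifold2} against vector fields $X\in\CS^1_c$ supported in the open set $\{\nabla w\neq 0\}\cap D$. On that open set $\Sigma_t$ is a smooth embedded hypersurface, so the classical tangential divergence theorem on hypersurfaces gives $\int_{\Sigma_t}\div_\top X\,\dif\Hff^{n-1} = -\int_{\Sigma_t}\ip{X\mid \vec\H^{\rm cl}_t}\,\dif\Hff^{n-1}$ with $\vec\H^{\rm cl}_t$ the classical mean curvature vector. Comparing with \cref{eq:DefMeanCurvvarifoldManifold2} and using that $X$ is arbitrary with support in $\{\nabla w\neq0\}$, we conclude $\vec\H_t=\vec\H^{\rm cl}_t$ $\Hff^{n-1}$-a.e.\ there, hence everywhere on $\Sigma_t$ up to an $\Hff^{n-1}$-null set, which is \cref{eq:zzIdentity}. (One subtlety to record: the test in \cref{eq:DefMeanCurvvarifoldManifold2} is against globally compactly supported fields, so strictly one multiplies $X$ by a cutoff $\eta_j$ increasing to $\mathbbm 1$ on $\{\nabla w\neq 0\}$; since the classical identity holds for each $\eta_j X$ and both sides converge by dominated convergence — $\vec\H_t\in L^1_{\loc}(\mu_{V})$ — the claim passes to the limit.)

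For the second part, \cref{eq:IntegrationByPartsSobolevOnVarifold}, the strategy is approximation of $f$ by smooth functions combined with a coarea argument that upgrades almost-everywhere-in-$t$ statements. When $f$ is smooth, \cref{eq:IntegrationByPartsSobolevOnVarifold} is just the tangential integration-by-parts formula on the hypersurface $\Sigma_t\setminus\Crit(w)$ (full measure), rewritten using the already-established \cref{eq:zzIdentity} for the boundary-type term; this is classical. For general $f\in\CS^0(D)\cap W^{1,p}(A)$, take $f_m\in\CS^\infty$ with $f_m\to f$ in $W^{1,p}_{\loc}$ and locally uniformly. For each fixed $X\in\CS^1_c(M)$ with support in a bounded open $A$, the identity holds for every $f_m$ and a.e.\ $t$; to pass to the limit I would integrate the identity against an arbitrary $\varphi\in\CS^0_c(a,b)$ in the variable $t$ and apply the coarea formula, turning $\int_a^b\varphi(t)\int_{\Sigma_t}(\cdots)\,\dif\Hff^{n-1}\dif t$ into $\int_{A}\varphi(w)\abs{\nabla w}(\cdots)\,\dif\Hff^{n}$ (here using that $w$ is locally Lipschitz so $\abs{\nabla w}\in L^\infty_{\loc}$ and coarea applies). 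In this volume form, $\nabla^\top f_m\to\nabla^\top f$ in $L^p_{\loc}$ and $f_m\to f$ uniformly, so each term converges; since $\varphi$ is arbitrary, the integrated identity yields \cref{eq:IntegrationByPartsSobolevOnVarifold} for a.e.\ $t$. A routine exhaustion by fields $X$ supported in larger and larger bounded sets removes the auxiliary restriction to $A$.

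The main obstacle I anticipate is bookkeeping the exceptional $t$'s and null sets: one must make sure the full-measure set of good $t$ (varifold structure, $\nabla w\neq0$ $\Hff^{n-1}$-a.e., $\vec\H_t\in L^1$, validity of the $f_m$-identity, validity of coarea slicing for $\abs{\nabla w}\,\abs{\nabla^\top f}$ and $\abs{\nabla w}\,\abs{\nabla^\top f_m}$ simultaneously for all $m$) is a countable intersection and hence still full measure, and that the limit in $m$ is taken \emph{after} the coarea reformulation so that no slice-wise control of $f_m-f$ is needed. Everything else is classical differential geometry of regular level sets plus the definitions of generalized mean curvature and of $\nabla^\top$ on a varifold recalled in \cref{sec:AppendiceVarifolds}.
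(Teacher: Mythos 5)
Your treatment of \cref{eq:zzIdentity} is sound and is essentially the paper's own argument: at $\Hff^{n-1}$-a.e.\ point of $\Sigma_t$ the level set is a smooth hypersurface, there the generalized mean curvature is forced to agree with the classical one by testing \cref{eq:DefMeanCurvvarifoldManifold2} against fields compactly supported in $\{\nabla w\neq 0\}$, and the classical one has the stated expression. The paper compresses this to one sentence by appealing to the standard fact that the first variation of a smooth multiplicity-one piece is computed classically, but the content is the same.

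Your argument for \cref{eq:IntegrationByPartsSobolevOnVarifold} takes a genuinely different route, and as written it has a gap. You propose to take, for each $m$, the identity $G_m(t)\coloneqq -\int_{\Sigma_t} f_m\bigl(\ip{\vec\H_t|X}+\div_\top X\bigr)-\int_{\Sigma_t}\ip{\nabla^\top f_m|X}=0$ valid for a.e.\ $t$, pair it with $\varphi\in\CS^0_c(a,b)$, convert via coarea to a volume integral $\int_A\varphi(w)\abs{\nabla w}(\cdots)$, pass $m\to\infty$ there, and unpack. The problem is that once you pass to the volume form you need to split it into its three summands and take the limit term by term, and the summand involving $\vec\H_t$ need not be $\Hff^n$-integrable: the hypothesis only gives $\vec\H_t\in L^1_{\loc}(\mu_{V_t})$ for a.e.\ fixed $t$, which does not imply $\int_a^b\int_{\Sigma_t}\abs{\vec\H_t}\abs{X}\,\dif\Hff^{n-1}\dif t<+\infty$, i.e., $\int_{\{a<w<b\}\cap\spt X}\abs{\nabla w}\,\abs{\div(\nabla w/\abs{\nabla w})}<+\infty$. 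Without that, the coarea split of the combination (which is pointwise zero in $t$, but not pointwise zero in $x$) is not legitimate, and moreover one cannot even assert that $t\mapsto\int_{\Sigma_t} f\ip{\vec\H_t|X}$ is in $L^1_{\loc}(a,b)$, so the ``integrated identity against arbitrary $\varphi$'' does not on its own pin down the slice-wise statement. The paper sidesteps this entirely by treating the $f_m$-terms slice-by-slice — the uniform convergence $f_m\to f$ on compacts combines with $\vec\H_t\in L^1_{\loc}(\mu_{V_t})$ (for each fixed good $t$) to give $\lim_m-\int_{\Sigma_t} f_m(\ip{\vec\H_t|X}+\div_\top X)=-\int_{\Sigma_t} f(\ip{\vec\H_t|X}+\div_\top X)$ without ever integrating the mean-curvature term over $t$ — and uses coarea only to control the $\nabla^\top f_m$ term: the bound $\int_a^b\int_{\Sigma_t}\abs{\nabla^\top f_m}^p\le\norm{\nabla w}_{L^\infty}\int\abs{\nabla f_m}^p$ gives $\sup_m\int_{\Sigma_t}\abs{\nabla^\top f_m}^p<\infty$ for a.e.\ $t$ along a subsequence, and combined with pointwise a.e.\ convergence of $\nabla f_m$ this yields $L^q$-convergence on the slice and hence convergence of the test integral. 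Your approach can be repaired by hybridizing with the paper's: handle the $f_m\to f$ step slice-wise (easy, since it only costs $L^\infty$-convergence of $f_m$), and reserve the ``integrate against $\varphi$ and apply coarea'' step for the single term $\int_{\Sigma_t}\ip{\nabla^\top(f_m-f)|X}$, which is dominated by $\norm{\nabla w}_{L^\infty}\norm{X}_{L^\infty}\int_A\abs{\nabla(f_m-f)}\to 0$; then $L^1$-convergence of $t\mapsto\int_{\Sigma_t}\ip{\nabla^\top f_m|X}$ together with the slice-wise existence of the limit identifies the limit a.e.\ and closes the argument.
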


\begin{proof}
The identity \cref{eq:zzIdentity} holds since $\Sigma_t$ is a smooth hypersurface around $\Hff^{n-1}$-a.e. point, hence the mean curvature vector can be classically computed $\Hff^{n-1}$-almost everywhere on $\Sigma_t$.

To derive \cref{eq:IntegrationByPartsSobolevOnVarifold}, let $f_k \in \CS^\infty(D)$ be a sequence of functions converging to $f$ in $W^{1,p}$ on bounded open subsets of $D$ and locally uniformly on $D$. Fix $X \in \CS^1_c(M)$. Hence
\begin{equation}
    \begin{split}
        - \int_{\Sigma_t} f ( \ip{\vec{\H}_t  | X} + \div_\top X)
        = \lim_k - \int_{\Sigma_t} f_k ( \ip{\vec{\H}_t  | X} + \div_\top X) 
        = \lim_k \int_{\Sigma_t}  \ip{ \nabla^\top f_k | X},
    \end{split}
\end{equation}
for a.e. $t\in(a,b)$. Moreover
\begin{equation}
    \int_a^b \int_{\Sigma_t} \abs{\nabla^\top f_k}^p \le \norm{\nabla w}_{L^\infty(\set{a< w< b})}  \int_{\set{a< w< b}} \abs{\nabla f_k}^p.
\end{equation}
Hence $\liminf_k \int_{\Sigma_t} \abs{\nabla^\top f_k}^p <+\infty$ for a.e. $t \in (a,b)$. Finally, since we can assume, up to a subsequence, that $\nabla f_k$ converges to $\nabla f$ pointwise almost everywhere on $D$, it follows that $\lim_k \nabla^\top f_k(x) = \nabla^\top f(x)$ for $\Hff^{n-1}$-a.e. $x \in \Sigma_t$ for a.e. $t\in (a,b)$. Hence for a.e. $t \in (a,b)$ there is a (nonrelabeled) subsequence $\nabla^\top f_k$ converging to $\nabla^\top f$ in $L^q(\Hff^{n-1}(\Sigma_t))$ for any $q \in [1,p)$. Hence $\lim_k \int_{\Sigma_t}  \ip{ \nabla^\top f_k | X}= \int_{\Sigma_t}  \ip{ \nabla^\top f | X}$ for a.e. $t \in (a,b)$, and \cref{eq:IntegrationByPartsSobolevOnVarifold} follows.
\end{proof}

We conclude this part with some convergence results about sequences of sets of finite perimeter in relation to varifold convergence in codimension one. For basic definitions and theory about sets of finite perimeter and functions of bounded variation, we refer the reader to \cite{ambrosio_functionsboundedvariationfree_2000, maggi_setsfiniteperimetergeometric_2012, miranda_heatsemigroupfunctionsbounded_2007}.

\begin{lemma}\label{lem:area_convergence_under_BV_convergence}
Let $(M,g)$ be an $n$-dimensional complete Riemannian manifold, and let $D\subset M$ be an open subset.
Let $\phi_k \in {\rm BV}_{\loc}(D)$ be a sequence of functions on $M$ that converges to a function $\phi \in {\rm BV}_{\loc}(D)$ in $L^1$ on any open bounded subset of $D$. Suppose that there exists $c\in\R$ and a compact set $K\subset \overline{D}$ such that $\mu(\set{\phi_k<c}\smallsetminus K)=0$ for any $k \in \N$, and that
\begin{equation}\label{eq:AxStrictBVconvergence}
\lim_{k\to+\infty}  \abs{D \phi_k}(E) =  \abs{D \phi}(E),     
\end{equation}
for any bounded open set $E \subset D$. Then, for any $a<b<c$ the functions $(a,b)\ni t\mapsto A_k(t) \coloneqq \Hff^{n-1}(\partial^* \set{\phi_k < t } \cap D)$ 
converge in measure to $(a,b)\ni t\mapsto A(t) \coloneqq \Hff^{n-1}(\partial^* \set{\phi < t }\cap D)$, where $\partial^*$ denotes essential boundary.\\
In particular, there exists a subsequence $(k_l)_l \subset \N$ such that
\begin{align}
     \lim_{l\to+\infty} \Hff^{n-1}(\partial^* \set{\phi_{k_l} < t }\cap D) =\Hff^{n-1}(\partial^* \set{\phi < t }\cap D),
\end{align}
for almost every $t \in(a,b)$.
\end{lemma}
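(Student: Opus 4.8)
\textbf{Proof plan for Lemma \ref{lem:area_convergence_under_BV_convergence}.} The strategy is to reduce the statement to the one-dimensional coarea structure of the total variation and then invoke lower semicontinuity together with the hypothesis \eqref{eq:AxStrictBVconvergence} to upgrade a one-sided inequality to genuine convergence in measure. First I would fix a bounded open set $E\subset D$ with $K\subset E$ and $\overline{E}\subset D$; since $\mu(\set{\phi_k<c}\smallsetminus K)=0$ for all $k$, all the level sets $\set{\phi_k<t}$ with $t<c$ agree with $\set{\phi_k<t}\cap E$ up to negligible sets, and the same holds for the limit $\phi$ (which inherits the same support property in the $L^1_{\loc}$ limit). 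So all perimeters in the statement may be computed inside $E$, and we are reduced to a problem on the fixed bounded set $E$.

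The key identity is the coarea formula for BV functions: for any $u\in \mathrm{BV}(E)$ one has $\abs{Du}(E)=\int_{\R}\Hff^{n-1}(\partial^*\set{u<t}\cap E)\,\dif t$, and more generally $\abs{Du}(E)=\int_{\R} P(\set{u<t};E)\,\dif t$. Applying this to $\phi_k$ and $\phi$ on the slab $\set{a<u<b}$, or rather truncating $\phi_k$ and $\phi$ at levels $a$ and $b$ (replacing $u$ by $\max\set{a,\min\set{b,u}}$, which does not affect level sets for $t\in(a,b)$ and whose total variation is $\abs{Du}(\set{a<u<b}\cap E)$), we get $\int_a^b A_k(t)\,\dif t = \abs{D\phi_k}(\set{a<\phi_k<b}\cap E)$ and similarly for $\phi$. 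Now I would argue that $\int_a^b A_k(t)\,\dif t\to \int_a^b A(t)\,\dif t$: the hypothesis \eqref{eq:AxStrictBVconvergence} gives convergence of the total variation on open sets, and one must check the total variation does not concentrate on the level sets $\set{\phi=a}$ or $\set{\phi=b}$; this can be arranged by choosing $a,b$ outside an at most countable exceptional set, or by a standard approximation argument using that $\abs{D\phi}$ is a finite measure on $E$. Combined with the $L^1$ convergence $\phi_k\to\phi$, a continuity-of-truncation argument yields $\abs{D\phi_k}(\set{a<\phi_k<b}\cap E)\to\abs{D\phi}(\set{a<\phi<b}\cap E)$.

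At the same time, by $L^1_{\loc}$ convergence $\mathbbm{1}_{\set{\phi_k<t}}\to\mathbbm{1}_{\set{\phi<t}}$ in $L^1(E)$ for a.e.\ $t$ (indeed for every $t$ that is not an atom of the pushforward measure $\phi_\#(\mu\res E)$, hence for all but countably many $t$), so by lower semicontinuity of the perimeter $A(t)\le\liminf_k A_k(t)$ for a.e.\ $t\in(a,b)$. Putting together the pointwise $\liminf$ inequality with the convergence of the integrals $\int_a^b A_k\to\int_a^b A$, a standard lemma (if $f_k\ge 0$, $\liminf f_k\ge f$ a.e.\ and $\int f_k\to\int f<\infty$ then $f_k\to f$ in $L^1$, hence in measure) gives that $A_k\to A$ in $L^1(a,b)$, in particular in measure, which is the assertion. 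The final ``in particular'' clause about a.e.\ convergence along a subsequence is then immediate, since convergence in measure implies a.e.\ convergence along a subsequence. The main obstacle I anticipate is the bookkeeping needed to ensure the total variation passes to the limit on the \emph{half-open} slab $\set{a<\phi_k<b}$ rather than an open set --- i.e.\ controlling possible concentration of $\abs{D\phi_k}$ near the levels $a$ and $b$ as $k\to\infty$ --- but this is handled by choosing $a,b$ generic and exploiting that $\abs{D\phi}(E)<\infty$; everything else is routine BV theory.
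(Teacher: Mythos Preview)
Your proposal is correct and follows essentially the same skeleton as the paper: reduce to a fixed bounded open set via the support hypothesis, use the coarea formula to relate $\int A_k$ to total variation, combine the pointwise lower semicontinuity $A\le\liminf_k A_k$ (from $L^1$ convergence of sublevel sets) with convergence of the integrals, and deduce convergence in measure.

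Two differences are worth noting. First, your endgame is cleaner: you invoke the Scheff\'e-type lemma (if $f_k\ge0$, $\liminf_k f_k\ge f$ a.e.\ and $\int f_k\to\int f<\infty$ then $f_k\to f$ in $L^1$), which gives $L^1(a,b)$ convergence directly. The paper instead works explicitly with Markov's inequality and a reverse Fatou argument on $\max\{A-A_k,0\}$ to obtain convergence in measure; your route is shorter and yields a slightly stronger conclusion. Second, the paper handles the step $\int_a^b A_k\to\int_a^b A$ more robustly than your truncation idea: it applies coarea on the full line over the fixed open set $\Omega$, so that $\int_{\R}\Hff^{n-1}(\Omega\cap\partial^*\{\phi_k<t\})\,\dif t=\abs{D\phi_k}(\Omega)\to\abs{D\phi}(\Omega)$, and then observes that the a.e.\ $\liminf$ inequality holds on all of $\R$, so Fatou on both $(a,b)$ and its complement forces $\int_\alpha^\beta A_k\to\int_\alpha^\beta A$ for \emph{every} $\alpha<\beta$. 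This bypasses the boundary-level concentration issue you flagged without needing to choose $a,b$ generic or to justify the truncation chain rule at the endpoints.
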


\begin{proof}
By assumptions, we can fix $K, \Omega$ a compact and a bounded open set, respectively, such that $K\subset \overline{\Omega}$ and $K \cap D \subset \Omega \subset D$, such that $\mu(\set{\phi_k<c}\smallsetminus K)=0$ for any $k$. Since $\phi_k\to \phi$ in $L^1(\Omega)$, Fubini's Theorem implies that $\set{\phi_k<t}\to \set{\phi<t}$ in $L^1(\Omega)$ for a.e. $t \in \R$  (see, e.g., the argument in \cite[Remark 13.11]{maggi_setsfiniteperimetergeometric_2012}). Hence $\mu(\set{\phi<b'}\smallsetminus K)=0$ for some $b'\in(b,c)$.

Letting $A_k, A$ be as in the statement, since $\mu(\set{\phi_k < b} \smallsetminus K) = \mu(\set{\phi < b}\smallsetminus K)=0$, then $A_k(t) = \Hff^{n-1}(\Omega \cap \partial^* \set{\phi_k < t} )$ and $A(t) = \Hff^{n-1}(\Omega \cap \partial^* \set{\phi < t} )$ for a.e. $t \in (a,b)$. Hence by coarea formula the functions $A_k, A$ are well-defined in $L^1(a,b)$, indeed
\begin{equation}
    \int_a^b A_k(t) \dif t =  \abs{D \phi_k} (\set{a< \phi_k<  b}) \leq  \abs{D \phi_k} (\Omega)\xrightarrow[k\to+\infty]{\cref{eq:AxStrictBVconvergence}}  \abs{D \phi} (\Omega)<+\infty,
\end{equation}
and analogously $\int_a^b A(t) \dif t  \leq \abs{D \phi} (\Omega) <+\infty$.\\
Fix $\delta>0$. We aim at proving that
    \begin{equation}
         \limsup_{k\to +\infty}  \abs{\set{ \abs{ A_k(t)- A(t)}>\delta}}=0.
    \end{equation}
    First of all, observe that
    \begin{equation}\label{eq:limsup_integral_aree_max}
    \begin{split}
        \limsup_{k\to +\infty}\int_{\set{A\geq A_k}} A(t)-A_k(t)\dif t &\leq \limsup_{k\to +\infty} \int_{\set{A\geq A_k}} \max\set{ A(t)-A_k(t),0} \dif t \\
        & \leq \limsup_{k\to +\infty} \int_a^b \max\set{ A(t)-A_k(t),0} \dif t\\
        &\leq  \int_a^b \limsup_{k\to +\infty}\max\set{ A(t)-A_k(t),0} \dif t,
        \end{split}
    \end{equation}
    where the last inequality is ensured by applying a reverse Fatou Lemma, since $\max\set{A-A_k,0}\leq \max\set{A,0}=A\in L^1(a,b)$. Since $\set{\phi_k< t} \to \set{ \phi < t} $ in $L^1(\Omega)$ for almost every $t \in [a,b]$, by lower semicontinuity of the perimeter we find
    \begin{equation}\label{eq:limsup_aree_leq_0}
        \limsup_{k\to +\infty} A(t) - A_k(t) =A(t)- \liminf_{k \to +\infty} A_k(t)\leq 0
    \end{equation}
    for almost every $t \in [a,b]$. Hence, by \cref{eq:limsup_aree_leq_0,eq:limsup_integral_aree_max} we get
    \begin{equation}\label{eq:limit area_on_A>An}
         \limsup_{k\to +\infty}\int_{\set{A\geq A_k}} A(t)-A_k(t)\dif t=0.
    \end{equation}
    By Markov's inequality we also have    \begin{equation}\label{eq:almost_measure_convergence}
    \begin{split}
        \limsup_{k\to +\infty} \abs{ \set{A(t)-A_k(t)> \delta}} &\le 
        \frac{1}{\delta}\limsup_{k\to +\infty}\int_{\set{A> \delta+ A_k}} A(t)-A_k(t)\dif t \\
        &\le\frac{1}{\delta}\limsup_{k\to +\infty}\int_{\set{A\geq A_k}} A(t)-A_k(t)\dif t \overset{\textbox{\cref{eq:limit area_on_A>An}}}{=}0.
        \end{split}
    \end{equation}
    Finally, \cref{eq:AxStrictBVconvergence} with $E=\Omega$ is equivalent to
    \begin{equation}
        \lim_{k\to+\infty} \int_{-\infty}^{+\infty} \Hff^{n-1}(\Omega \cap \partial^* \set{\phi_k < t} ) \dif t =  \int_{-\infty}^{+\infty} \Hff^{n-1}(\Omega \cap \partial^* \set{\phi < t} ) \dif t,
    \end{equation}
    hence the lower semicontinuity property $\liminf_k \Hff^{n-1}(\Omega \cap \partial^* \set{\phi_k < t} ) \ge \Hff^{n-1}(\Omega \cap \partial^* \set{\phi < t} )$ holding for a.e. $t\in\R$, implies that $\lim_{k} \int_\alpha^\beta \Hff^{n-1}(\Omega \cap \partial^* \set{\phi_k < t} ) \dif t =  \int_\alpha^\beta \Hff^{n-1}(\Omega \cap \partial^* \set{\phi < t} ) \dif t$ for any $\alpha\le \beta$. Hence we obtain
    \begin{equation}\label{eq:convergence_of_integrals}
    \begin{split}
        \lim_{k\to+\infty} \int_a^b A_k(t) \dif t &= \lim_{k\to +\infty} \int_a^b \Hff^{n-1}(\Omega \cap \partial^* \set{\phi_k < t} ) \dif t =  \int_a^b \Hff^{n-1}(\Omega \cap \partial^* \set{\phi < t} ) \dif t \\& = \int_a^b A(t) \dif t.
    \end{split}
    \end{equation}
    
    Summing up we get
    \begin{align}
        \abs{\set{ \abs{ A_k(t)- A(t)}>\delta}}&\leq \abs{\set{ A_k(t)- A(t)>\delta}}+\abs{\set{ A(t)- A_k(t)>\delta}}\\
        \overset{\textbox{Markov}}&{\leq}\frac{1}{\delta} \int_{\set{A_k>A+\delta}} A_k(t)-A(t) \dif t +\abs{\set{ A(t)- A_k(t)>\delta}}\\
        &\leq \frac{1}{\delta} \int_a^b A_k(t)-A(t) \dif t - \frac{1}{\delta} \int_{\set{A_k\leq A}} A_k(t)-A(t) \dif t+\abs{\set{ A(t)- A_k(t)>\delta}},
    \end{align}
    which, together with 
    \cref{eq:limit area_on_A>An}, \cref{eq:almost_measure_convergence}, \cref{eq:convergence_of_integrals}, yields
    \begin{equation}
        \limsup_{k \to +\infty}\abs{\set{ \abs{ A_k(t)- A(t)}>\delta}} =\liminf_{k\to +\infty}  \abs{\set{ \abs{ A_k(t)- A(t)}>\delta}}=0
    \end{equation}
    for every $\delta >0 $, proving the convergence in measure.
\end{proof}

\begin{corollary}\label{cor:area_convergence_under_W11_convergence}
Let $(M,g)$ be an $n$-dimensional complete Riemannian manifold, and let $D\subset M$ be an open subset.
Let $\phi_k,\phi:D\to\R$ be Lipschitz on any open bounded subset of $D$. Suppose that $\phi_k$ converges to $\phi$ in $W^{1,1}$ on any open bounded subset of $D$. Suppose that there exists $c\in\R$ and a compact set $K\subset \overline{D}$ such that $\mu(\set{\phi_k\le c}\smallsetminus K)=0$ for any $k \in \N$. Then, for any $a<b<c$ the functions $(a,b)\ni t\mapsto A_k(t) \coloneqq \Hff^{n-1}(\partial \set{\phi_k \le t } \cap D)$ 
converge in measure to $(a,b)\ni t\mapsto A(t) \coloneqq \Hff^{n-1}(\partial \set{\phi \le t }\cap D)$.\\
In particular, there exists a subsequence $(k_l)_l \subset \N$ such that
\begin{align}
     \lim_{l\to+\infty} \Hff^{n-1}(\partial \set{\phi_{k_l} \le t }\cap D) =\Hff^{n-1}(\partial \set{\phi \le t }\cap D),
\end{align}
for almost every $t \in(a,b)$.
\end{corollary}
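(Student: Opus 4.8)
The plan is to reduce the statement to Lemma~\ref{lem:area_convergence_under_BV_convergence}. A function that is Lipschitz on every bounded open subset of $D$ lies in ${\rm BV}_{\loc}(D)\cap W^{1,1}_{\loc}(D)$, with total variation measure $|D\phi_k| = |\nabla\phi_k|\,\Hff^n$; hence the assumed $W^{1,1}$-convergence on bounded open sets gives both $\phi_k\to\phi$ in $L^1$ on bounded open subsets of $D$ and $|D\phi_k|(E)=\int_E|\nabla\phi_k|\,\dif\Hff^n\to\int_E|\nabla\phi|\,\dif\Hff^n=|D\phi|(E)$ for every bounded open $E\subset D$, i.e.\ the strict-convergence hypothesis~\eqref{eq:AxStrictBVconvergence}. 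Since $\set{\phi_k<c}\subseteq\set{\phi_k\le c}$, the assumption $\mu(\set{\phi_k\le c}\smallsetminus K)=0$ implies $\mu(\set{\phi_k<c}\smallsetminus K)=0$. Therefore Lemma~\ref{lem:area_convergence_under_BV_convergence} applies and yields, for $a<b<c$, that $t\mapsto\Hff^{n-1}(\partial^*\set{\phi_k<t}\cap D)$ converges in measure on $(a,b)$ to $t\mapsto\Hff^{n-1}(\partial^*\set{\phi<t}\cap D)$, where $\partial^*$ is the essential boundary.

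The remaining point is to pass from the essential boundary of the open sublevels to the topological boundary of the closed ones. I would establish the following claim: if $u$ is Lipschitz on bounded open subsets of $D$ and $\mu(\set{u<c'}\smallsetminus K)=0$ for some $c'\in\R$, then $\Hff^{n-1}(\partial\set{u\le t}\cap D)=\Hff^{n-1}(\partial^*\set{u<t}\cap D)$ for a.e.\ $t<c'$. To prove it, fix a bounded open $D'\subset D$ and combine the coarea formula for ${\rm BV}$ functions, $\int_\R\Hff^{n-1}(\partial^*\set{u\le t}\cap D')\,\dif t=|Du|(D')$, the identity $|Du|(D')=\int_{D'}|\nabla u|\,\dif\Hff^n$, and the coarea formula for Lipschitz maps, $\int_{D'}|\nabla u|\,\dif\Hff^n=\int_\R\Hff^{n-1}(u^{-1}(t)\cap D')\,\dif t$. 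For every $t$ one has the inclusions $\partial^*\set{u\le t}\subseteq\partial\set{u\le t}\subseteq u^{-1}(t)$ (the latter because $\set{u\le t}$ is closed and $u$ is continuous), so the nonnegative function $t\mapsto\Hff^{n-1}(u^{-1}(t)\cap D')-\Hff^{n-1}(\partial^*\set{u\le t}\cap D')$ has vanishing integral, hence vanishes a.e.; squeezing the three $\Hff^{n-1}$-measures then gives $\Hff^{n-1}(\partial\set{u\le t}\cap D')=\Hff^{n-1}(\partial^*\set{u\le t}\cap D')$ for a.e.\ $t$, and exhausting $D$ by such $D'$ upgrades this to $D$. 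Finally, since for $t<c'$ the level sets $u^{-1}(t)$ are pairwise disjoint and contained in the compact set $K$ (as $\set{u<c'}$ is open and $\mu(\set{u<c'}\smallsetminus K)=0$ forces $\set{u<c'}\subseteq K$), one has $\Hff^n(u^{-1}(t))=0$ for all but countably many $t<c'$; for such $t$ the sets $\set{u<t}$ and $\set{u\le t}$ coincide $\Hff^n$-a.e.\ and thus have the same essential boundary, which completes the claim.

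To conclude, I would apply the claim to each $\phi_k$ with $c'=c$, and to $\phi$ with $c'=b'$ for some $b'\in(b,c)$ such that $\mu(\set{\phi<b'}\smallsetminus K)=0$ — the existence of such $b'$ being part of the proof of Lemma~\ref{lem:area_convergence_under_BV_convergence}. As only countably many functions are involved, outside a single Lebesgue-null set of $t\in(a,b)$ the quantities $A_k(t)$ and $A(t)$ in the statement equal $\Hff^{n-1}(\partial^*\set{\phi_k<t}\cap D)$ and $\Hff^{n-1}(\partial^*\set{\phi<t}\cap D)$; hence $A_k\to A$ in measure on $(a,b)$, and the existence of an a.e.-convergent subsequence follows by a standard extraction. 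The one genuinely delicate step is the boundary-identification claim: purely topological/measure-theoretic reasoning only yields the inclusions $\partial^*\set{u\le t}\subseteq\partial\set{u\le t}\subseteq u^{-1}(t)$, and it is precisely the two coarea formulas together with $|Du|=|\nabla u|\,\Hff^n$ that force the corresponding $\Hff^{n-1}$-measures to agree for a.e.\ $t$.
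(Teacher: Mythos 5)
Your argument is correct and takes essentially the same route as the paper's own proof: reduce to \cref{lem:area_convergence_under_BV_convergence} by observing that $W^{1,1}$ convergence on bounded open subsets of $D$ yields both the $L^1$ convergence and the strict convergence hypothesis \cref{eq:AxStrictBVconvergence}, and then pass from $\partial^*\set{\phi_k<t}$ to $\partial\set{\phi_k\le t}$ using that the two agree up to $\Hff^{n-1}$-negligible sets for almost every $t$. The paper dispatches this last step with a citation to the proof of Theorem 18.1 in Maggi's book, whereas you reprove it from scratch by squeezing $\partial^*\set{u\le t}\subseteq\partial\set{u\le t}\subseteq u^{-1}(t)$ between the BV coarea formula and the Lipschitz coarea formula (which forces the $\Hff^{n-1}$-measures to coincide a.e.\ in $t$), and then handling the transition from $\set{u\le t}$ to $\set{u<t}$ via the observation that $\Hff^n(u^{-1}(t))$ can be positive for at most countably many $t<c'$ because the level sets are pairwise disjoint and trapped in the finite-measure compact $K$; this is a correct, self-contained expansion of the one fact the paper delegates to the reference, and the bookkeeping at the end — a single null set working for $\phi$ and all $\phi_k$ simultaneously — is handled properly.
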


\begin{proof}
The statement directly follows from \cref{lem:area_convergence_under_BV_convergence} recalling that for locally Lipschitz functions $\phi_k, \phi$ as in the assumptions, the sets $\partial \set{\phi_k \le t }$ and $\partial^* \set{\phi_k < t }$ (resp. $\partial \set{\phi \le t }$ and $\partial^* \set{\phi < t }$) are equivalent up to $\Hff^{n-1}$-negligible sets for almost every $t \in (a,b)$ (see, e.g., the proof of \cite[Theorem 18.1]{maggi_setsfiniteperimetergeometric_2012}).
\end{proof}

\begin{corollary}\label{cor:ConvergenzaLevelSetsVersusVarifolds}
Let $(M,g)$ be an $n$-dimensional complete Riemannian manifold.
\begin{enumerate}
    \item\label{item:areaconvergence1} Let $D\subset M$ be open and let $E_k\subset D$ be a sequence of uniformly bounded sets of relative finite perimeter, i.e., $\Hff^{n-1}(\partial^*E_k \cap D)<+\infty$, converging in $L^1(D)$ to a set of finite perimeter $E$. Let $\mu_{V_k}\coloneqq \Hff^{n-1}\res \partial^* E_k \cap D$ and suppose that $\mu_{V_k}$ converges to some varifold $\mu_V$ in the sense of varifolds.\\
    Then $\mu_V \ge \Hff^{n-1}\res \partial^* E \cap D$, and $\mu_V = \Hff^{n-1}\res \partial^* E \cap D$ if and only if $\lim_k \Hff^{n-1}(\partial^*E_k \cap D)=\Hff^{n-1}(\partial^*E \cap D)$.

    \item\label{item:areaconvergence2}  Let $\phi_k, \phi, D,K, a,b,c$ be as in \cref{cor:area_convergence_under_W11_convergence}. If
    \begin{align}
     \kern2cm \lim_{k\to+\infty} \Hff^{n-1}(\partial \set{\phi_{k} \le t } \cap D) =\Hff^{n-1}(\partial \set{\phi \le t } \cap D),
    \end{align}
    and $\mu_{k,t}\coloneqq\Hff^{n-1}\res\partial \set{\phi_{k} \le t }\cap D$ is a varifold with generalized mean curvature $\vec{\H}_{k,t}$ such that $\sup_k \norm{\vec{\H}_{k,t}}_{L^2(\mu_{k,t})} < +\infty$ for a.e. $t \in(a,b)$, then $\Hff^{n-1}\res\partial \set{\phi_{k} \le t }\cap D$ converges to $\Hff^{n-1}\res\partial \set{\phi \le t }\cap D$ in the sense of varifolds as $k\to+\infty$ for a.e. $t \in (a,b)$.
\end{enumerate}
\end{corollary}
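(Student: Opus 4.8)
The plan is to obtain \cref{item:areaconvergence1} from the localized lower semicontinuity of the total variation together with the compact support hypothesis, and then to deduce \cref{item:areaconvergence2} by combining \cref{item:areaconvergence1} with the Allard-type compactness already recorded in \cref{thm:ProprietaConvergenzeVarifold}.

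For \cref{item:areaconvergence1}, I would first record that, since the $E_k$ are uniformly bounded and $\mu_{V_k}\to\mu_V$ in the sense of varifolds, the weights $\mu_{V_k}=\Hff^{n-1}\res\partial^*E_k\cap D$ converge to $\mu_V$ weakly-$*$ as Radon measures and, being all supported in a fixed compact set, have converging total masses; in particular $\sup_k\Hff^{n-1}(\partial^*E_k\cap D)<+\infty$. Because $\mathbf{1}_{E_k}\to\mathbf{1}_E$ in $L^1(D)$ and the total variations are uniformly bounded, the $\mathbb R^N$-valued measures $D\mathbf{1}_{E_k}$ then converge weakly-$*$ to $D\mathbf{1}_E$ on $D$. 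Testing $D\mathbf{1}_{E_k}$ against fields $X\in\CS^1_c(D,\mathbb R^N)$ dominated by a nonnegative $\varphi\in\CS^0_c(D)$, passing to the limit, and taking the supremum over such $X$ gives $\int\varphi\,\dif\abs{D\mathbf{1}_E}\le\liminf_k\int\varphi\,\dif\abs{D\mathbf{1}_{E_k}}=\int\varphi\,\dif\mu_V$ for every such $\varphi$, i.e. $\Hff^{n-1}\res\partial^*E\cap D\le\mu_V$. For the equivalence: if moreover $\lim_k\Hff^{n-1}(\partial^*E_k\cap D)=\Hff^{n-1}(\partial^*E\cap D)$, then $\mu_V(M)=\lim_k\mu_{V_k}(M)=\Hff^{n-1}(\partial^*E\cap D)=(\Hff^{n-1}\res\partial^*E\cap D)(M)$, so the two ordered finite measures have equal mass and hence coincide; conversely, if $\mu_V=\Hff^{n-1}\res\partial^*E\cap D$, evaluating total masses yields the area convergence. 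This settles \cref{item:areaconvergence1}.

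For \cref{item:areaconvergence2}, I would fix $t\in(a,b)$ in the full-measure set of parameters for which simultaneously: $\mathbf{1}_{\set{\phi_k\le t}}\to\mathbf{1}_{\set{\phi\le t}}$ in $L^1_{\loc}(D)$ (by Fubini from the $W^{1,1}_{\loc}$ convergence), $\set{\phi_k\le t}$ and $\set{\phi\le t}$ have relative finite perimeter with $\partial\set{\cdot\le t}$ agreeing with $\partial^*\set{\cdot<t}$ up to $\Hff^{n-1}$-null sets (as in the proof of \cref{cor:area_convergence_under_W11_convergence}), the assumed area convergence holds, and $\sup_k\norm{\vec{\H}_{k,t}}_{L^2(\mu_{k,t})}<+\infty$. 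The sets $E_k\coloneqq\set{\phi_k\le t}$ are then uniformly bounded, being contained in $K$ since $t<c$, and $\mu_{k,t}(D)=\Hff^{n-1}(\partial E_k\cap D)$ is bounded in $k$. Hence, given any subsequence, \cref{thm:ProprietaConvergenzeVarifold}\cref{item:varifold1} extracts a further subsequence along which $V_{k,t}$ converges in the sense of varifolds to some integer rectifiable varifold $\tilde V$. Applying \cref{item:areaconvergence1} to this subsequence gives $\mu_{\tilde V}\ge\Hff^{n-1}\res\partial^*E\cap D$, while varifold convergence on a compact set together with the area hypothesis forces $\mu_{\tilde V}(M)=\lim_k\Hff^{n-1}(\partial E_k\cap D)=\Hff^{n-1}(\partial E\cap D)$, whence $\mu_{\tilde V}=\Hff^{n-1}\res\partial^*E\cap D=\Hff^{n-1}\res\partial\set{\phi\le t}\cap D$ by the equality case of \cref{item:areaconvergence1}. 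Since $\tilde V$ is integer rectifiable with multiplicity-one weight, it coincides as a varifold with $\Hff^{n-1}\res\partial\set{\phi\le t}\cap D$. As every subsequence admits a further subsequence converging to this same limit, the whole sequence $V_{k,t}$ converges to it, which is the claim.

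The standard inputs here (weak-$*$ compactness and lower semicontinuity of the total variation, Fubini for level sets, compact support passing from weak-$*$ to mass convergence) are routine; what really carries the argument is the \emph{equality} case of \cref{item:areaconvergence1}, which upgrades lower semicontinuity to full weight convergence precisely under the area hypothesis, together with the fact that the compactness in \cref{thm:ProprietaConvergenzeVarifold} returns an integer rectifiable limit varifold, so that matching the weights pins down the limit uniquely. The only point that needs care is keeping all the exceptional $t$-null sets under control and consistently invoking the uniform boundedness (through the compact set $K$ and $t<c$) whenever one passes from weak-$*$ convergence of measures to convergence of total masses.
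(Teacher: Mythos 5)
Your proof is correct and its overall architecture — establish the one-sided weight bound and equality criterion in item~\cref{item:areaconvergence1}, then combine it with the Allard compactness of \cref{thm:ProprietaConvergenzeVarifold} and a subsequence principle for item~\cref{item:areaconvergence2} — matches the paper's. The genuine difference is in how you prove the lower bound $\mu_V\ge\Hff^{n-1}\res\partial^*E\cap D$. You use the variational (dual) characterization of the total variation: from $L^1$ convergence of $\mathbf{1}_{E_k}$ one gets distributional hence weak-$*$ convergence of $D\mathbf{1}_{E_k}$, and testing against vector fields $\abs{X}\le\varphi$ yields the localized lower semicontinuity $\int\varphi\,\dif\abs{D\mathbf{1}_E}\le\liminf_k\int\varphi\,\dif\abs{D\mathbf{1}_{E_k}}=\int\varphi\,\dif\mu_V$. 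The paper instead argues via densities: it picks radii $r_i\searrow0$ with $\mu_{V_k}(\partial B_{r_i})=\mu_V(\partial B_{r_i})=0$, compares $\Hff^{n-1}(\partial^*E\cap B_{r_i})$ against $\mu_{V_k}(\overline B_{r_i})$, uses that $\partial^*E$ has density one $\Hff^{n-1}$-a.e., and invokes the Besicovitch density comparison to conclude $\mu_V\ge\Hff^{n-1}\res\partial^*E\cap D$. Both are standard routes to the same lower bound; yours avoids any appeal to density theorems and stays at the level of the BV duality, which is a bit more self-contained, whereas the paper's blow-up argument is arguably more geometric and reuses the same lemma it cites elsewhere. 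One small notational caveat in your write-up: $D\mathbf{1}_{E_k}$ must be understood as the intrinsic distributional gradient on $M$ (a $TM$-valued measure) and the test fields should be tangential, since $E_k\subset M$ has $\Hff^N$-measure zero in $\R^N$; with that reading the argument goes through without change. The rest — the mass argument for the equality case (total masses converge because everything lives in a fixed compact set, so one can test against a cutoff $\varphi\equiv1$ there), the choice of a full-measure set of parameters $t$, and the subsequence principle in item~\cref{item:areaconvergence2} — is exactly as in the paper.
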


\begin{proof}
For any $x\in M$ let $r_i\searrow0$ be such that $\mu_{V_k}(\partial B_{r_i}(x))  = \mu_V(\partial B_{r_i}(x))=0$ for any $k$. Hence,
\begin{equation}
\frac{(\partial^* E \cap B_{r_i}(x)) }{\abs{\B^{n-1}}r_i^{n-1}}
\Hff^{n-1}\le \liminf_{k\to+\infty} \frac{\mu_{V_k}(\overline{B}_{r_i}(x))}{\abs{\B^{n-1}}r_i^{n-1}} \le \frac{\mu_V(B_{r_i}(x))}{\abs{\B^{n-1}}r_i^{n-1}}.
\end{equation}
By rectifiability of $\partial^*E$, at $\Hff^{n-1}$-a.e. point $x \in \partial^* E \cap D$ there exists the limit 
\begin{equation}
    \lim_{r \to 0^+}\frac{\Hff^{n-1}(\partial^* E \cap B_{r}(x)) }{\abs{\B^{n-1}}r^{n-1}}=1.
\end{equation}
Thus, we get
\begin{equation}
    1 \le \liminf_{i\to+\infty}\frac{\mu_V(B_{r_i}(x))}{\abs{\B^{n-1}}r_i^{n-1}} \le \limsup_{r\to0} \frac{\mu_V(B_{r}(x))}{\abs{\B^{n-1}}r^{n-1}}.
\end{equation}
Hence $\mu_V \ge \Hff^{n-1}\res\partial^* E \cap D$ (see, e.g., \cite[Chapter 1, Section 3]{simon_lecturesgeometricmeasuretheory_1983}). Moreover
\begin{equation}
    \Hff^{n-1}(\partial^* E \cap D) \le \liminf_{k\to+\infty} \Hff^{n-1}(\partial^* E_k \cap D) \le  \limsup_{k\to+\infty} \Hff^{n-1}(\partial^* E_k \cap D)  = \mu_V(M),
\end{equation}
where in the last equality we used that since $E_k\subset K$ for any $k$ for some compact set $K$, then $\Hff^{n-1}(\partial^* E_k \cap D) = \mu_{V_k}(M) = \int_M \varphi \dif \mu_{V_k} \to  \int_M \varphi \dif \mu_{V} = \mu_V(M)$ where $\varphi \in \CS^0(M)$ is any function with compact support such that $\varphi \ge \chi_K$.
Therefore $\mu_V = \Hff^{n-1}\res \partial^* E \cap D$ if and only if $\lim_k \Hff^{n-1}(\partial^*E_k \cap D)=\Hff^{n-1}(\partial^*E \cap D)$, which proves \cref{item:areaconvergence1}.

To prove \cref{item:areaconvergence2}, applying \cref{thm:ProprietaConvergenzeVarifold} we have that for a.e. $t\in (a,b)$ there is a subsequence $k_l$ (a priori depending on $t$) such that $\mu_{k_l,t}$ converges to some $\mu_t$ in the sense of varifolds. Hence, \cref{item:areaconvergence1} implies that $\mu_t = \Hff^{n-1}\res \partial\set{ \phi\le t}\cap D$, for a.e. $t \in (a,b)$. The same argument shows that any subsequence of $\mu_{k,t}$ has a subsequence converging to $\Hff^{n-1}\res \partial\set{ \phi\le t}\cap D$, hence the full sequence $\mu_{k,t}$ converges to $\Hff^{n-1}\res \partial\set{\phi\le t}\cap D$.
\end{proof}

The next lemma contains a convergence result for the unit normals to level sets of a sequence of converging functions.

\begin{lemma}\label{lem:convergence_of_unit_normal} Let $(M,g)$ a $n$-dimensional complete Riemannian manifold and let $D \subset M$ a open subset. Let $\phi_k,\phi:D\to \R$ be Lipschitz on any open bounded subset of $D$. Suppose that $\phi_k$ converges in $L^1_{\loc}(A)$ to $\phi$ and $\nabla \phi_k\rightharpoonup \nabla \phi$ weakly in $L^1(A)$ as $k\to +\infty$, for any open bounded set $A\subset D$. Suppose that there exists $c\in\R$ and a compact set $K\subset \overline{D}$ such that $\mu(\set{\phi_k\le c}\smallsetminus K)=0$ for any $k \in \N$ and
\begin{equation}
    \int_K\abs{\nabla \phi_k} \to \int_K \abs{\nabla \phi}.
\end{equation}
Then, denoting by $\nu^k$ (resp. $\nu$) the $L^\infty$ vector field equal to $\nabla \phi_k/\abs{\nabla \phi_k}$ a.e. where $\nabla \phi_k\neq0$ (resp. $\nabla \phi/\abs{\nabla \phi}$ a.e. where $\nabla \phi\neq0$), and equal to $0$ a.e. where $\nabla\phi_k=0$ (resp. $\nabla\phi=0$), for any $a<b<c$ we have 
\begin{equation}
    \lim_{k\to +\infty}\int_a^b \int_{\partial \set{\phi_k \leq t}}\abs{\nu^{k} - \nu}^2 \dif \Hff^{n-1} \dif t =0,
\end{equation}
\end{lemma}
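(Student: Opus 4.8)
The plan is to reduce the convergence of the unit normals to a combination of two ingredients: the convergence of the total variation (mass) of $\nabla\phi_k$ and the weak convergence of $\nabla\phi_k$ to $\nabla\phi$. The key observation is the elementary identity
\begin{equation*}
\tfrac12\abs{\nu^k-\nu}^2\abs{\nabla\phi_k} = \abs{\nabla\phi_k} - \ip{\nabla\phi_k\,|\,\nu} + \left(\abs{\nabla\phi_k} - \abs{\nabla\phi_k}\right)\cdot(\ldots)
\end{equation*}
— more precisely, expanding $\abs{\nu^k-\nu}^2 = 2 - 2\ip{\nu^k\,|\,\nu}$ and multiplying by $\abs{\nabla\phi_k}$ gives, on the set where $\nabla\phi_k\neq 0$,
\begin{equation*}
\tfrac12\,\abs{\nu^k-\nu}^2\,\abs{\nabla\phi_k} = \abs{\nabla\phi_k} - \ip{\nabla\phi_k\,|\,\nu},
\end{equation*}
and this actually holds $\mu$-a.e.\ on $K$ once one checks that both sides vanish where $\nabla\phi_k=0$ (there $\nu^k=0$, so the left side is $0$, and the right side is $\abs{\nabla\phi_k}-\ip{\nabla\phi_k|\nu}=0$ as well). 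First I would integrate this over $K$:
\begin{equation*}
\frac12\int_K \abs{\nu^k-\nu}^2\abs{\nabla\phi_k}\dif\Hff^n = \int_K\abs{\nabla\phi_k}\dif\Hff^n - \int_K \ip{\nabla\phi_k\,|\,\nu}\dif\Hff^n.
\end{equation*}
By hypothesis the first term on the right converges to $\int_K\abs{\nabla\phi}$, and since $\nu\in L^\infty(K)$ is a fixed test field, the weak convergence $\nabla\phi_k\rightharpoonup\nabla\phi$ in $L^1(K)$ gives that the second term converges to $\int_K\ip{\nabla\phi\,|\,\nu} = \int_K\abs{\nabla\phi}$. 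Hence $\int_K\abs{\nu^k-\nu}^2\abs{\nabla\phi_k}\dif\Hff^n\to 0$.

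Next I would translate this volume statement into the claimed statement on level sets via the coarea formula. Since $\mu(\set{\phi_k\le c}\smallsetminus K)=0$, for a.e.\ $t<c$ the reduced boundary $\partial^*\set{\phi_k\le t}$ agrees $\Hff^{n-1}$-a.e.\ with $\partial\set{\phi_k\le t}$ and is contained in $K$ up to a negligible set; the coarea formula then yields
\begin{equation*}
\int_a^b\int_{\partial\set{\phi_k\le t}}\abs{\nu^k-\nu}^2\dif\Hff^{n-1}\dif t = \int_{\set{a<\phi_k<b}}\abs{\nu^k-\nu}^2\,\abs{\nabla\phi_k}\dif\Hff^n \le \int_K\abs{\nu^k-\nu}^2\abs{\nabla\phi_k}\dif\Hff^n,
\end{equation*}
where in the last step one must be slightly careful: $\nu$ here denotes the fixed $L^\infty$ field attached to $\phi$, and along $\partial\set{\phi_k\le t}$ the coarea slicing of $\abs{\nu^k-\nu}^2\abs{\nabla\phi_k}$ uses $\nu^k=\nabla\phi_k/\abs{\nabla\phi_k}$ precisely on the part of the level set where $\nabla\phi_k\neq 0$, which is of full $\Hff^{n-1}$-measure in $\partial\set{\phi_k\le t}$ for a.e.\ $t$ by \cref{rmk:weakSard}. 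Combining with the previous paragraph, the right-hand side tends to $0$, which is exactly the assertion.

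The main technical obstacle is bookkeeping the null sets: one must ensure that (i) the pointwise identity $\tfrac12\abs{\nu^k-\nu}^2\abs{\nabla\phi_k}=\abs{\nabla\phi_k}-\ip{\nabla\phi_k|\nu}$ holds $\Hff^n$-a.e.\ on $K$ including on $\set{\nabla\phi_k=0}$ and on $\set{\nabla\phi=0}$ (where $\nu=0$, so the identity reads $\abs{\nabla\phi_k}=\abs{\nabla\phi_k}$, true); (ii) the coarea formula is applied on $\set{a<\phi_k<b}\subseteq K$ up to $\Hff^n$-null sets, using $\mu(\set{\phi_k\le c}\smallsetminus K)=0$ and $b<c$; and (iii) the identification $\partial\set{\phi_k\le t}=\partial^*\set{\phi_k<t}$ up to $\Hff^{n-1}$-null sets for a.e.\ $t$, as in the proof of \cref{cor:area_convergence_under_W11_convergence}. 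None of these steps is deep, but they must be assembled in the right order so that the inequality chain above is valid for a.e.\ $t\in(a,b)$; once that is done, the convergence is immediate from the weak convergence of the gradients together with the convergence of their masses (which is precisely the hypothesis that upgrades weak $L^1$ convergence to something strong enough to pass to the normals).
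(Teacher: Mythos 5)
Your approach is the same as the paper's: expand $\abs{\nu^k-\nu}^2$, pair it with $\abs{\nabla\phi_k}$, slice by coarea, and combine the mass convergence $\int_K\abs{\nabla\phi_k}\to\int_K\abs{\nabla\phi}$ with the weak convergence tested against the fixed $L^\infty$ field $\nu$. The paper's proof is exactly this, written in the compact form
\begin{equation*}
\int_a^b \int_{\partial\set{\phi_k\le t}}\abs{\nu^k-\nu}^2 \dif\Hff^{n-1}\dif t
\le \int_K 2\abs{\nabla\phi_k}-2\ip{\nabla\phi_k|\nu}\dif\Hff^n \longrightarrow \int_K 2\abs{\nabla\phi}-2\ip{\nabla\phi|\nu}\dif\Hff^n = 0.
\end{equation*}

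One small imprecision in your write-up: the pointwise relation $\tfrac12\abs{\nu^k-\nu}^2\abs{\nabla\phi_k}=\abs{\nabla\phi_k}-\ip{\nabla\phi_k|\nu}$ is \emph{not} an identity. On the set where $\nabla\phi_k\neq0$ but $\nabla\phi=0$ (so $\nu=0$, $\abs{\nu^k}=1$), the left-hand side equals $\tfrac12\abs{\nabla\phi_k}$ while the right-hand side equals $\abs{\nabla\phi_k}$; your verification in item (i) of that case ("the identity reads $\abs{\nabla\phi_k}=\abs{\nabla\phi_k}$") drops the factor $\tfrac12$. The correct general relation is the inequality $\tfrac12\abs{\nu^k-\nu}^2\abs{\nabla\phi_k}\le\abs{\nabla\phi_k}-\ip{\nabla\phi_k|\nu}$, coming from $\abs{\nu^k-\nu}^2\le 2-2\ip{\nu^k|\nu}$ because $\abs{\nu^k},\abs{\nu}\le1$ without always being $1$. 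Fortunately this is precisely the direction your argument uses, so the proof is sound; you just should not call it an identity.
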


\begin{proof}
 Observe that $\abs{\nu^{k}-\nu}^2 \leq 2- 2 \ip{\nu^{k}| \nu}$. Hence
 \begin{equation}
 \begin{split}
     \lim_{k\to +\infty}&\int_a^b \int_{\partial \set{\phi_k \leq t}}\abs{\nu^{k} - \nu}^2 \dif \Hff^{n-1} \dif t \leq 
      \lim_{k\to +\infty} \int_a^b \int_{K \cap \partial \set{\phi_k \leq t}}2-2 \ip{\nu^{k}|\nu} \dif \Hff^{n-1} \dif t \\
      & \le  \lim_{k\to +\infty} \int_K 2 \abs{\nabla \phi^k} - 2\ip{\nabla \phi^k | \nu} 
      =\int_K 2 \abs{\nabla \phi} - 2\ip{\nabla \phi | \nu} = 0.
 \end{split}
 \end{equation}
\end{proof}

We conclude with a joint lower semicontinuity lemma.

\begin{lemma}\label{lem:JointConvergenceLSC}
Let $(M,g)$ be an $n$-dimensional complete Riemannian manifold, and let $D$ be an open subset. Let $\lambda_k$ be a sequence of nonnegative Radon measures on $D$ weakly*-converging to a Radon measure $\lambda$. For $p\in(1,\infty)$, let $Y_k$ be a sequence of vector fields in $L^p(\lambda_k)$ such that $\sup_k \norm{Y_k}_{L^p(\lambda_k)}<\infty$.

Then there exists a vector field $Y \in L^p(\lambda)$ such that, up to extracting a subsequence, there holds
\begin{equation}
    \lim_{k\to+\infty} \int \ip{ Y_k | X } \dif \lambda_k = \int \ip{ Y | X } \dif \lambda ,
    \qquad
    \qquad
    \liminf_{k\to+\infty} \norm{Y_k}_{L^p(\lambda_k)} \ge \norm{Y}_{L^p(\lambda)},
\end{equation}
for any vector field $X \in \CS^0_c(D)$.

If in addition there exists a nonnegative Radon measure $\mu$ such that $\lambda_k \le \kst\mu$ for any $k$, then
\begin{equation}
    \lim_{k\to+\infty} \int \ip{ Y_k | X } \dif \lambda_k = \int \ip{ Y | X } \dif \lambda,
\end{equation}
for any vector field $X \in L^{p'}(\mu)$.
\end{lemma}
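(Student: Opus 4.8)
The plan is to argue by weak-$*$ compactness of the vector-valued measures $\sigma_k\coloneqq Y_k\,\lambda_k$ on $D$, combined with $L^p$--$L^{p'}$ duality. First, since $\lambda_k$ converges weakly-$*$ to $\lambda$, one has $\sup_k\lambda_k(K)<+\infty$ for every compact $K\subset D$, so by H\"older's inequality $\sup_k\abs{\sigma_k}(K)=\sup_k\int_K\abs{Y_k}\dif\lambda_k\le \bigl(\sup_k\norm{Y_k}_{L^p(\lambda_k)}\bigr)\bigl(\sup_k\lambda_k(K)\bigr)^{1/p'}<+\infty$. Exhausting $D$ by bounded open sets and using a diagonal argument, I would first pass to a subsequence along which $\norm{Y_k}_{L^p(\lambda_k)}\to m\coloneqq\liminf_k\norm{Y_k}_{L^p(\lambda_k)}$, and then pass to a further subsequence along which $\sigma_k$ converges weakly-$*$ to some locally finite $\R^N$-valued Radon measure $\sigma$ on $D$. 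For $X\in\CS^0_c(D)$ this already yields $\int\ip{Y_k|X}\dif\lambda_k=\int\ip{X|\dif\sigma_k}\to\int\ip{X|\dif\sigma}$.

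Next I would identify $\sigma$ with $Y\,\lambda$ for a suitable $Y\in L^p(\lambda)$. For every $X\in\CS^0_c(D;\R^N)$, H\"older's inequality on $\lambda_k$ gives $\bigl|\int\ip{Y_k|X}\dif\lambda_k\bigr|\le\norm{Y_k}_{L^p(\lambda_k)}\bigl(\int\abs{X}^{p'}\dif\lambda_k\bigr)^{1/p'}$; along our subsequence the right-hand side converges to $m\,\norm{X}_{L^{p'}(\lambda)}$ (using $\norm{Y_k}_{L^p(\lambda_k)}\to m$ and $\abs{X}^{p'}\in\CS^0_c(D)$), so passing to the limit $\bigl|\int\ip{X|\dif\sigma}\bigr|\le m\,\norm{X}_{L^{p'}(\lambda)}$. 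Thus $X\mapsto\int\ip{X|\dif\sigma}$ is a bounded linear functional on $\CS^0_c(D;\R^N)$ for the $L^{p'}(\lambda)$-norm, of norm at most $m$; since $1<p<\infty$ gives $p'<\infty$, $\CS^0_c(D;\R^N)$ is dense in $L^{p'}(\lambda;\R^N)$, and the functional extends uniquely to an element of $L^{p'}(\lambda;\R^N)^*\cong L^p(\lambda;\R^N)$, represented by some $Y\in L^p(\lambda;\R^N)$ with $\norm{Y}_{L^p(\lambda)}\le m$. Testing against $X\in\CS^0_c(D)$ shows $\sigma=Y\,\lambda$ (in particular $\sigma\ll\lambda$), hence $\int\ip{Y_k|X}\dif\lambda_k\to\int\ip{Y|X}\dif\lambda$ for all $X\in\CS^0_c(D)$, while $\norm{Y}_{L^p(\lambda)}\le m=\liminf_k\norm{Y_k}_{L^p(\lambda_k)}$ is exactly the claimed lower semicontinuity.

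For the last assertion, assume $\lambda_k\le\kst\mu$ for every $k$; passing to the weak-$*$ limit one gets $\lambda\le\kst\mu$ as well. For $X\in L^{p'}(\mu;\R^N)$, H\"older's inequality gives $\bigl|\int\ip{Y_k|X}\dif\lambda_k\bigr|\le\kst^{1/p'}\norm{Y_k}_{L^p(\lambda_k)}\norm{X}_{L^{p'}(\mu)}$ and, similarly, $\bigl|\int\ip{Y|X}\dif\lambda\bigr|\le\kst^{1/p'}\norm{Y}_{L^p(\lambda)}\norm{X}_{L^{p'}(\mu)}$, so the maps $X\mapsto\int\ip{Y_k|X}\dif\lambda_k$ and $X\mapsto\int\ip{Y|X}\dif\lambda$ are equibounded linear functionals on $L^{p'}(\mu;\R^N)$. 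They agree in the limit on the dense subspace $\CS^0_c(D;\R^N)$ by the previous step, hence on all of $L^{p'}(\mu;\R^N)$ by a standard density argument.

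The only genuinely delicate points are organizing the weak-$*$ compactness when the measures are merely locally finite on the possibly noncompact set $D$ (handled by exhaustion and a diagonal subsequence) and, above all, the order in which the subsequences are extracted: one must first fix the value of $\liminf_k\norm{Y_k}_{L^p(\lambda_k)}$ and only afterwards extract for the weak-$*$ convergence of $\sigma_k$, so that the H\"older bound passes to the limit with the sharp constant $m$. The rest is a routine use of H\"older's inequality together with $L^p$--$L^{p'}$ duality.
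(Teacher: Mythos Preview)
Your proof is correct and follows essentially the same route as the paper: the first part is obtained via weak-$*$ precompactness of the vector measures $Y_k\lambda_k$ together with $L^p$--$L^{p'}$ duality (the paper simply cites this as ``standard precompactness of measures and Riesz representation''), and the second part is the same density argument, approximating $X\in L^{p'}(\mu)$ by $X_\varepsilon\in\CS^0_c(D)$ and using the uniform bound $\lambda_k\le\kst\mu$. Your care about the order of subsequence extraction (fixing the $\liminf$ first) is a nice point that the paper leaves implicit.
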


\begin{proof}
    The first part of the proof follows by standard precompactness of measures and Riesz representation (see, e.g., the proof of \cite[Lemma 10.1]{ambrosio_lecturesoptimaltransport_2021}).

    As for the second part of the proof, since $\mu$ is Radon, for any $X \in L^{p'}(\mu)$ and any $\varepsilon>0$ there exists $X_\varepsilon \in \CS^0_c(D)$ such that $\norm{X - X_\varepsilon}_{L^{p'}(\mu)} < \varepsilon$. By assumption, $X \in L^{p'}(\lambda_k)$ for any $k$, $X \in L^{p'}(\lambda)$ as $\lambda\le \mu$, and $\norm{X - X_\varepsilon}_{L^{p'}(\lambda_k)} \le \norm{X - X_\varepsilon}_{L^{p'}(\mu)}< \varepsilon$ for any $k$. Hence
    \begin{equation}
        \begin{split}
            \limsup_k \abs{\int  \ip{Y_k | X} \dif \lambda_k - \int  \ip{Y | X} \dif \lambda } 
            & \le \limsup_k \abs{\int  \ip{Y_k | X_\varepsilon} \dif \lambda_k - \int  \ip{Y | X} \dif \lambda} +
            \abs{\int  \ip{Y_k | X-X_\varepsilon} \dif \lambda_k } \\
            &\le \abs{\int  \ip{Y | X_\varepsilon - X} \dif \lambda } +  \varepsilon\sup_k\norm{Y_k}_{L^p(\lambda_k)} \\
            &\le \kst\norm{Y}_{L^p(\lambda)} \, \norm{X_\varepsilon- X}_{L^{p'}(\mu)} +  \varepsilon\sup_k\norm{Y_k}_{L^p(\lambda_k)} . 
        \end{split}
    \end{equation}
    Letting $\varepsilon\to0$ the last claim follows.
\end{proof}

\section{Geometric evolution equations in nonlinear potential theory}
For ease of future references, we record here the evolution equations in nonlinear potential theory. This is a version of \cref{thm:monotonicity-NPT}, tailored for computations.

\begin{theorem}
    Let $(M,g)$ a complete Riemannian manifold. Let $p\in (1,2]$ and $\alpha > (n-p)(n-1)$. Let $w_p=-(p-1)\log u_p$ be such that $u_p$ is a positive $p$-harmonic function in $\set{a<w_p<b}\Subset M$, for some $a,b \in \R$. For every $t \in (a,b)$, let $\Omega_t=\set{w_p\leq t}$ and $\partial \Omega_t$ its boundary in $\set{a < w_p <b}$. Then
    \begin{enumerate}
        \item for almost every $t \in (a,b)$, $\partial \Omega_t$ is smooth out of $\set{\abs{\nabla w_p}=0}\cap \partial \Omega_t$ which is closed and $\Hff^{n-1}$-negligible;
        \item for almost every $t \in (a,b)$, $\partial \Omega_t$ is a curvature varifold with square integrable second fundamental form;
        \item denoting $\H$ the mean curvature in the sense of varifolds, for almost every $t \in(a,b)$
        \begin{equation}
            \H= \abs{\nabla w_p} - (p-1) \frac{\ip{\nabla \abs{\nabla w_p}| \nabla w_p}}{\abs{\nabla w_p}^2}
        \end{equation}
        holds almost everywhere on $\partial \Omega_t$;
        \item for almost every $t \in (a,b)$, the quantity
        \begin{equation}
            \sca^\top = \sca - 2 \Ric(\nu,\nu) + \H^2 - \abs{\h}^2
        \end{equation}
        is a well defined integrable function on $\partial \Omega_t$ which agrees with the classic induced scalar curvature almost everywhere on $\partial \Omega_t$;
        \item for almost every $t \in (a,b)$, a weak Gauss--Bonnet theorem holds in dimension $n=3$ (\cref{cor:GB}).
    \end{enumerate}
    \smallskip
    
    Let $F_p,G_p:(a,b)\to \R$ be defined as
    \begin{align}
        G_p(t)&\coloneqq \int_{\partial \Omega_t} \abs{\nabla w_p}^{\alpha+ p-1} \dif \Hff^{n-1},\\
        F_p(t)& \coloneqq \int_{\partial \Omega_t} \abs{\nabla w_p}^{\alpha+ p-2} \H \dif \Hff^{n-1}.
    \end{align}
    Then,
    \begin{enumerate}[resume]
        \item $G_p \in W^{2,1}_{\loc}(a,b)$ and $F_p\in W^{1,1}_{\loc}(a,b)$;
        \item in the sense of Sobolev functions we have
        \begin{align}
            G'_p(t) &= \left(\frac{\alpha}{p-1} +1\right) G_p(t) - \frac{\alpha}{p-1} F_p(t)\\
            F'_p(t) &= - \int_{\partial \Omega_t} \abs{\nabla w_p}^{\alpha +p-3} ( \SQ_p + \Ric(\nu,\nu)) \dif \Hff^{n-1} +\frac{\alpha(n-1)}{(n-p)(p-1)}\left(\frac{n-1}{n-p}-\frac{1}{\alpha} \right) G_p(t)\\
            & \qquad -\left(\frac{\alpha(n+p-2)}{(n-p)(p-1)}-\frac{p}{p-1}\right)F_p(t),
        \end{align}
        where
        \begin{equation}
            \SQ_p=\left[\alpha-(2-p)\right] \frac{\abs{\nabla^\top \abs{\nabla w_p}}^2}{\abs{\nabla w_p}^2} + \abs{ \mathring{\h}}^2 + \frac{1}{p-1} \left[ \alpha- \frac{n-p}{n-1}\right] \left[ {\H} - \frac{n-1}{n-p}\abs{ \nabla w_p}\right]^2;
        \end{equation}
    
    \item if $\set{w_p=a}$ is a $\CS^{1,\beta}$-hypersurface with square integrable generalized mean curvature, $G_p$ is continuous and $F_p$ is upper semicontinuous at $t=a$.
    \end{enumerate}
\end{theorem}

The counterpart of \cref{thm:monotonicty_formula_IMCF} would require a version of \cref{thm:local-approximation} that does not rely on the existence of a global proper IMCF. Once such a local approximation result is available, the formula for the derivative follows easily from \cref{eq:derivative-monotonicity_formula-IMCF}.

\end{appendices}

\section*{Statements and declarations}

\noindent\textbf{Consent.} All the authors gave consent to the present submission.

\noindent\textbf{Data availability statement.} Not applicable.

\noindent\textbf{Conflicts of interest.} The authors declare no conflicts of interest.

\noindent\textbf{Ethics approval.} Not applicable.

\begingroup
\setlength{\emergencystretch}{1em}
\printbibliography
\endgroup

\end{document}